\definecolor{reference}{rgb}{0.20,0.36,0.74}
\definecolor{citation}{rgb}{0,.40,.80}
\DeclareFontFamily{OT1}{pzc}{}
\DeclareFontShape{OT1}{pzc}{m}{it}{<-> s * [1.200] pzcmi7t}{}
\DeclareMathAlphabet{\mathpzc}{OT1}{pzc}{m}{it}
\DeclareSymbolFontAlphabet{\mathbb}{AMSb} 
\DeclareSymbolFontAlphabet{\mathbbl}{bbold}
\newcommand{\Prism}{{\mathlarger{\mathbbl{\Delta}}}}
\DeclareTextFontCommand{\emdef}{\it}
\newtheorem{thm}{Theorem}
\newtheorem*{thm*}{Theorem}
\newtheorem{lem}[thm]{Lemma}
\newtheorem{cor}[thm]{Corollary}
\newtheorem*{cor*}{Corollary}
\newtheorem{prop}[thm]{Proposition}
\newtheorem*{prop*}{Proposition}
\theoremstyle{definition}
\newtheorem{defn}[thm]{Definition}
\newtheorem{construction}[thm]{Construction}
\newtheorem{notation}[thm]{Notation}
\newtheorem{ex}[thm]{Example}
\newtheorem{rem}[thm]{Remark}
\newtheorem{question}[thm]{Question}
\newtheorem*{conj*}{Conjecture}
\newtheorem{warn}[thm]{Warning}
\definecolor{note_color}{rgb}{0.0,0.7,0.0}
\newcommand{\fcat}{\mathscr}
\newcommand{\inj}{\hookrightarrow}
\newcommand{\surj}{\twoheadrightarrow}
\newcommand{\areq}{\mathbin{{\xrightarrow{\,\sim\,}}}}
\renewcommand{\phi}{\varphi}
\renewcommand{\epsilon}{\varepsilon}
\DeclareMathOperator{\rank}{rank}
\newcommand{\Mod}{\mathrm{Mod}}
\DeclareMathOperator{\SL}{SL}
\DeclareMathOperator{\gr}{gr}
\DeclareMathOperator{\rk}{rk}
\DeclareMathOperator{\Frac}{Frac}
\renewcommand{\Im}{\mathrm{Im}}
\newcommand{\Top}{ {\mathrm{Top}} }
\DeclareMathOperator{\Spec}{Spec}
\DeclareMathOperator{\Spf}{Spf}
\DeclareMathOperator{\Coh}{Coh}
\DeclareMathOperator{\QCoh}{QCoh}
\newcommand{\mstack}{\mathpzc}
\DeclareMathOperator{\PStk}{{\mathscr{PS}tk}}
\DeclareMathOperator{\Stk}{{\mathscr{S}tk}}
\newcommand{\Hdg}{{\mathrm H}}
\newcommand{\dR}{{\mathrm{dR}}}
\newcommand{\crys}{{\mathrm{crys}}}
\DeclareMathOperator{\Aff}{Aff}
\newcommand{\sm}{{\mathrm{sm}}}
\DeclareMathOperator{\Spa}{Spa}
\newcommand{\ket}{{\mathrm{k\acute et}}}
\newcommand{\et}{{\mathrm{\acute et}}}
\newcommand{\proket}{{\mathrm{prok\acute et}}}
\newcommand{\Set}{ {\fcat{S}\mathrm{et}} }
\DeclareMathOperator{\Ab}{Ab}
\DeclareMathOperator{\Hom}{Hom}
\DeclareMathOperator{\Ext}{Ext}
\DeclareMathOperator{\Tor}{Tor}
\DeclareMathOperator{\End}{End}
\DeclareMathOperator{\Comp}{Comp}
\DeclareMathOperator*{\colim}{colim}
\DeclareMathOperator*{\fib}{fib}
\DeclareMathOperator*{\cofib}{cofib}
\DeclareMathOperator*{\coker}{coker}
\DeclareMathOperator{\Ran}{Ran}
\newcommand{\prolim}[1][]{\lim\limits_{\xleftarrow[#1]{}}}
\newcommand{\indlim}[1][]{\lim\limits_{\xrightarrow[#1]{}}}
\DeclareMathOperator{\Tot}{Tot}
\DeclareMathOperator{\Id}{Id}
\newcommand{\Shv}{{\mathcal S\mathrm{hv}}}
\newcommand{\op}{{\mathrm{op}}}
\DeclareMathOperator{\Kos}{Kos}
\mathchardef\mdef="2D
\DeclareFontFamily{OT1}{pzc}{}
\DeclareFontShape{OT1}{pzc}{m}{it}{<-> s * [1.200] pzcmi7t}{}
\DeclareMathAlphabet{\mathpzc}{OT1}{pzc}{m}{it}
\newcommand{\cotimes}{\mathbin{\widehat\otimes}}
\newcommand{\ol}{\overline}
\newcommand{\Ainf}{{A_{\mathrm{inf}}}}
\newcommand{\mf}{\mathfrak}
\newcommand{\mc}{\mathcal}
\newcommand{\mbb}{\mathbb}
\newcommand{\mr}{\mathrm}
\newcommand{\ul}{\underline}
\newcommand{\xra}{\xrightarrow}
\newcommand\proet{\text{pro\'et}}
\newcommand{\RG}{R\Gamma}
\newcommand{\ra}{\rightarrow}
\newcommand{\nc}{{\mathrm{nc}}}
\newcommand{\fg}{{\mathrm{fg}}}
\newcommand{\free}{{\mathrm{free}}}
\newcommand{\id}{{\mathrm{id}}}
\newcommand{\fr}{^{(1)}}
\newcommand{\ism}{{\xymatrix{\ar[r]^\sim &}}}
\newcommand{\arr}{{\xymatrix{\ar[r]&}}}
\newcommand{\comp}{\mr{comp}}
\newcommand{\DMod}[1]{D(\Mod_{#1})}
\newcommand{\UMod}[1]{\Mod_{#1}}
\newcommand{\BK}{{\mathrm{BK}}}
\newcommand{\an}{{\mathrm{an}}}
\newcommand{\sing}{{\mathrm{sing}}}
\newcommand{\bt}{\blacktriangle}
\newcommand{\ccond}{\blacktriangle}
\newcommand{\tto}{{\xymatrix{\ar[r]&}}}
\newcommand{\Solid}{{\mathrm{Solid}}}
\def\sq{\mathbin{\scalerel*{\strut\rule[-.5ex]{10ex}{10ex}}{\cdot}}}
\newcommand{\solid}{{\sq}}
\newcommand{\sotimes}{\otimes^\solid}
\newcommand{\Cond}{{\mathrm{Cond}}}
\newcommand{\mmax}{{\mathrm{max}}}
\newcommand{\innf}{{\mathrm{inf}}}
\let\emptyset\varnothing
\numberwithin{thm}{section}
\numberwithin{equation}{section}
\title{Rational $p$-adic Hodge theory for $d$-de Rham-proper stacks}
\author{Dmitry Kubrak and Artem Prikhodko \smallskip\\
	\MakeLowercase{with an appendix by} Haoyang Guo}
\date{}
\begin{document}
\maketitle

\begin{abstract}
	In this follow-up paper we show that smooth Hodge-proper stacks over $\mc O_K$ are $\mbb Q_p$-locally acyclic: namely the natural map between \'etale $\mbb Q_p$-cohomology of the algebraic and Raynaud generic fibers is an equivalence. This establishes the $\mbb Q_p$-case of general conjectures made in \cite{KubrakPrikhodko_pHodge}. As a corollary, we get that if a smooth Artin stack over $K$ has a smooth Hodge-proper model over $\mc O_K$, its $\mbb Q_p$-\'etale cohomology is a crystalline Galois representation.
	 We then also establish a truncated version of the above results in more general setting of smooth $d$-de Rham-proper stacks over $\mc O_K$: here we only require first $d$ de Rham cohomology groups be finitely-generated over $\mc O_K$. As an application, we deduce a certain purity-type statement for \'etale $\mbb Q_p$-cohomology of Raynaud generic fiber, as well as crystallinity of a first several \'etale cohomology groups in the presence of a Cohen--Macauley model over $\mc O_K$ in the schematic setting.
\end{abstract}

\tableofcontents

\section{Introduction}
\subsection{$p$-adic Hodge theory for Hodge-proper stacks}
Let $K$ be a complete discretely valued extension of $\mathbb Q_p$ with the ring of integers $\mathcal O_K$ and perfect residue field $k$. In \cite{KubrakPrikhodko_pHodge} we developed some aspects of $p$-adic Hodge theory in the setting of Artin stacks over $\mathcal O_K$. The following concept played a key role:

\begin{defn}[{\cite{KubrakPrikhodko_HdR}}]\label{intro_defn_of_hodge_properness}
	A smooth quasi-compact quasi-separated Artin stack $\mstack X$ over a Noetherian ring $R$ is called \emdef{Hodge-proper} if for any $i,j\in \mathbb Z_{\ge 0}$ the cohomology $H^j(\mstack X, \wedge^i \mathbb L_{\mstack X/R})\in \UMod{R}$ is a finitely generated $R$-module.
\end{defn}

Our observation, that we investigated in previous works \cite{KubrakPrikhodko_HdR}, \cite{KubrakPrikhodko_pHodge}, was that Hodge-proper stacks\footnote{Or slight variations like Hodge-properly spreadable stacks (see \cite[Definition 1.4.1]{KubrakPrikhodko_HdR}).} are often as good as smooth proper schemes when considering certain cohomological features (like Hodge-to-de Rham degeneration in char 0 \cite{KubrakPrikhodko_HdR} or some aspects $p$-adic Hodge theory \cite{KubrakPrikhodko_pHodge}). This work establishes more results supporting this idea in the context of \emph{rational} $p$-adic Hodge theory (see Remark \ref{rem:Hodge-proper as proper} below). 

Let us start by reminding some examples of smooth Hodge-proper stacks:
\begin{ex}\label{ex:Hodge-proper stack}\begin{enumerate}
		\item 
	A smooth proper stack $\mstack X$ over $R$ is Hodge-proper (\cite[Corollary 2.2.13]{KubrakPrikhodko_HdR}).
\item
	Let $X$ be a smooth proper scheme over $R$ with an action of a reductive group $G$. Then the quotient stack $[X/G]$ is Hodge-proper (\cite[Section 1.3]{KubrakPrikhodko_pHodge}).
	
\item Let $X$ be a smooth scheme with an action $G$ and assume that $X$ has a $G$-invariant affine cover such that all intersections are affine. In this situation there is a well-defined "categorical quotient" scheme $X/\!\!/G$. Then (see \cite[Proposition 1.4.6]{KubrakPrikhodko_pHodge}) if $X/\!\!/G$ is proper, the stack $[X/G]$ is Hodge-proper. This includes many examples when $X$ is not necessary proper (see \cite[Example 3.1.6]{KubrakPrikhodko_HdR}), e.g. $[\mbb A^n/\SL_n]$ with the tautological action.

\item Some smooth $\Theta$-stratified stacks (for the definition see \cite{HL-instability} and \cite{Halpern-Leistner_Theta}) are also Hodge-proper. Namely (see \cite[Example 1.4.8]{KubrakPrikhodko_pHodge}), if a smooth Artin stack $\mstack X$ with affine diagonal is endowed with a finite $\Theta$-stratification and all $\Theta$-strata are cohomologically proper (see \cite[Definition 2.2.2]{KubrakPrikhodko_HdR}), then $\mstack X$ itself is Hodge-proper. The unstable strata in this criterion can also be replaced by their centra. The particular examples then include even more general quotient stacks $[X/G]$ where the action is only Kempf-Ness complete.
		\end{enumerate}
\end{ex}

A particularly important aspect studied in \cite{KubrakPrikhodko_pHodge} was the potential discrepancy between the \'etale cohomology of the algebraic and Raynaud generic fibers of a Hodge-proper stack which one doesn't see in the usual smooth proper setting. Let us recall this in some detail. Namely, let $K$ be a finite extension of $\mbb Q_p$ and let ${C}=\widehat{\ol K}$ be the completion of the algebraic closure of $K$. Let $\mc O_K\subset K$ and $\mc O_{C}\subset {C}$ be the corresponding rings of integers. In \cite{KubrakPrikhodko_pHodge} to an Artin stack $\mstack X$ over $\mc O_{C}$ we associated three other stacks: the algebraic generic fiber $\mstack X_{C}$, its analytification $\mstack X_{C}^\an$ and the Raynaud generic fiber $\widehat{\mstack X}_{C}$ (the latter two are rigid-analytic stacks, see \cite[Section 3.3]{KubrakPrikhodko_pHodge}). One then has natural maps (see \cite[Section 3.3, Remark 4.1.16]{KubrakPrikhodko_pHodge})
	$$
\xymatrix{
	\RG_{\et}(\mstack X_{C},\mbb Z_p) \ar@/_2.0pc/[rr]_{\Upsilon_{\mstack X}}\ar[r]_\sim^{\phi_{\mstack X}^{-1}}& \RG_\et(\mstack X^\an_{C}, \mbb Z_p) \ar[r]^{\psi_{\mstack X}^{-1}}&\RG_\et(\widehat{\mstack X}_{C}, \mbb Z_p)},
$$
between the $\mbb Z_p$-\'etale cohomology. Here the map  $\phi_{\mstack X}^{-1}$ is always an equivalence, while $\psi_{\mstack X}^{-1}$ is a priori an equivalence only in the smooth proper case. Still, in \cite{KubrakPrikhodko_pHodge} we have shown that the map $\Upsilon_{\mstack X}$ is an equivalence when $\mstack X$ is as in part 2 of \Cref{ex:Hodge-proper stack}, but the situation in the general Hodge-proper case remained unclear. 

In this work we study the map $\Upsilon_{\mstack X}$ rationally, namely when we replace $\mbb Z_p$-coefficients with $\mbb Q_p$ ones. We show that then the following general result holds:
\begin{thm*}[\ref{thm:main theorem}]
Let $\mstack X$ be a smooth Hodge-proper stack over $\mc O_K$. Then the map 
	$$
	\Upsilon_{\mstack X,\mbb Q_p}\colon R\Gamma_\et(\mstack X_{C}, \mathbb Q_p) \tto R\Gamma_\et(\widehat{\mstack X}_{C}, \mathbb Q_p)
	$$ 
	is an equivalence.
\end{thm*}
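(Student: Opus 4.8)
The plan is to verify that $\Upsilon_{\mstack X,\mbb Q_p}$ becomes an equivalence after the base change $-\otimes_{\mbb Q_p}\mbb B_{\dR}$; since $\mbb B_{\dR}$ is a field extension of $\mbb Q_p$, hence faithfully flat, this suffices. Write $\widehat{\mstack X}$ for the $p$-adic formal Artin stack obtained from $\mstack X\otimes_{\mc O_K}\mc O_C$ by completion along the special fibre, so that $\mstack X_C$ and $\widehat{\mstack X}_C$ are respectively the algebraic and the Raynaud generic fibre of $\widehat{\mstack X}$ (all base-changed to $C$). The key structural fact is that Hodge-properness forces the relevant coherent cohomology to be finite: each $\RG(\mstack X,\wedge^i\mbb L_{\mstack X/\mc O_K})$ is perfect by hypothesis, and hence, via the Hodge-to-de Rham spectral sequence (which converges since $\wedge^i\mbb L_{\mstack X}$ has cohomological amplitude $[0,i]$), $\RG_{\dR}(\mstack X/\mc O_K)$ is a bounded-below complex with finitely generated cohomology over the discrete valuation ring $\mc O_K$. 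Such a complex is derived $p$-complete and its formation commutes with the flat base change $\mc O_K\to\mc O_C$; consequently $\widehat{\mstack X}$ is a smooth \emph{cohomologically proper} formal stack, $\RG_{\dR}(\widehat{\mstack X}/\mc O_C)\simeq\RG_{\dR}(\mstack X/\mc O_K)\otimes_{\mc O_K}\mc O_C$, and $\RG_{\dR}(\mstack X_C/C)\simeq\RG_{\dR}(\widehat{\mstack X}/\mc O_C)[1/p]$.

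Next I would apply $p$-adic de Rham comparison on both sides of $\Upsilon_{\mstack X}$. On the Raynaud side, since $\widehat{\mstack X}$ is smooth and cohomologically proper, the $\Ainf$-cohomology of $\widehat{\mstack X}$ together with Scholze-type $p$-adic Hodge theory for its Raynaud fibre (in the cohomologically-proper rigid-analytic stack generality) yields a Galois-equivariant equivalence $\RG_\et(\widehat{\mstack X}_C,\mbb Q_p)\otimes_{\mbb Q_p}\mbb B_{\dR}\simeq\RG_{\dR}(\widehat{\mstack X}_C/C)\otimes_C\mbb B_{\dR}$. Moreover, the canonical map $\RG_{\dR}(\widehat{\mstack X}/\mc O_C)[1/p]\to\RG_{\dR}(\widehat{\mstack X}_C/C)$, from the de Rham complex of the formal model to that of its Raynaud fibre, is an equivalence: via the Hodge filtration this reduces to the formal-GAGA comparison of coherent cohomology of the cohomologically proper $\widehat{\mstack X}$ with that of $\widehat{\mstack X}_C$. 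On the algebraic side, $p$-adic de Rham comparison for the Hodge-proper stack $\mstack X$ (as developed in \cite{KubrakPrikhodko_pHodge} and its companions) gives a Galois-equivariant equivalence $\RG_\et(\mstack X_C,\mbb Q_p)\otimes_{\mbb Q_p}\mbb B_{\dR}\simeq\RG_{\dR}(\mstack X_C/C)\otimes_C\mbb B_{\dR}$.

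Combining these with the first paragraph, both $\RG_\et(\mstack X_C,\mbb Q_p)\otimes_{\mbb Q_p}\mbb B_{\dR}$ and $\RG_\et(\widehat{\mstack X}_C,\mbb Q_p)\otimes_{\mbb Q_p}\mbb B_{\dR}$ become identified with $\RG_{\dR}(\mstack X/\mc O_K)\otimes_{\mc O_K}\mbb B_{\dR}$, and what remains is the naturality statement that $\Upsilon_{\mstack X,\mbb Q_p}\otimes_{\mbb Q_p}\mbb B_{\dR}$ goes over to the identity under these identifications. This should be bookkeeping with the open immersion $\widehat{\mstack X}_C\hookrightarrow\mstack X^{\an}_C$ and the equivalence $\phi_{\mstack X}$ of the Introduction, once all the comparison maps are arranged functorially. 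Granting it, $\Upsilon_{\mstack X,\mbb Q_p}\otimes_{\mbb Q_p}\mbb B_{\dR}$ is an equivalence, and therefore so is $\Upsilon_{\mstack X,\mbb Q_p}$.

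The main obstacle I anticipate is the formal-GAGA input on the Raynaud side: that for a smooth cohomologically proper formal stack $\widehat{\mstack X}$ — which, for examples such as $[\mbb A^n/\SL_n]$ of \Cref{ex:Hodge-proper stack}(3), is genuinely not proper — the coherent cohomology of $\widehat{\mstack X}$, and its de Rham complex, agree after inverting $p$ with those of its Raynaud generic fibre. For honestly proper formal schemes this is classical formal GAGA over a $p$-adic base; without properness, the Raynaud fibre could a priori acquire extra "overconvergent" cohomology classes not visible on the formal model, and ruling this out — along with putting in place enough of the de Rham comparison over $\widehat{\mstack X}_C$ in the required stacky generality — is where Hodge-properness genuinely enters and where the bulk of the work lies. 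The truncated $d$-de Rham-proper version should then follow by running the same argument only through cohomological degree $d$, where one needs only the first $d$ de Rham cohomology groups to be finitely generated.
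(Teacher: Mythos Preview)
Your strategy is viable and is in fact the alternative route the paper sketches in Remark~3.7: prove the de Rham comparison on \emph{both} sides of $\Upsilon_{\mstack X,\mbb Q_p}$ and then check compatibility. But two points deserve correction. First, your attribution of the algebraic-side comparison $\RG_\et(\mstack X_C,\mbb Q_p)\otimes_{\mbb Q_p}B_\dR\simeq\RG_\dR(\mstack X_K/K)\otimes_K B_\dR$ to \cite{KubrakPrikhodko_pHodge} is circular: that paper only establishes comparisons for the \emph{Raynaud} generic fibre, and transporting them to the algebraic fibre is precisely what $\Upsilon_{\mstack X,\mbb Q_p}$ being an equivalence would give. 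The algebraic-side comparison is instead an input from the log-$B_\dR$ theory of \cite{DiaoKaiWenLiuZhu_LogarithmicRH}, and it holds for \emph{any} smooth qcqs Artin stack over $K$, not only Hodge-proper ones --- this is the left vertical arrow of diagram~\eqref{eq_main_diagram}. Second, your ``formal GAGA'' step and the compatibility check you defer at the end are exactly where the work lies, and you have not carried them out; the paper's Remark~3.7 flags this identification of comparison maps as a nontrivial step.

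The paper's actual proof avoids this compatibility check entirely. It builds the commutative square~\eqref{eq_main_diagram} via log period sheaves; the left vertical arrow is an equivalence by the log-$B_\dR$ comparison, and the bottom arrow is an equivalence in the Hodge-proper case by a direct completed-tensor-product argument (\Cref{prop:key proposition}). This gives only that $\Upsilon_{\mstack X,\mbb Q_p}$ is \emph{injective} on each cohomology group. The proof concludes by a dimension count: \cite[Proposition~4.3.14]{KubrakPrikhodko_pHodge} (proved via prismatic methods) already gives $\dim_{\mbb Q_p}H^i_\et(\widehat{\mstack X}_C,\mbb Q_p)=\dim_K H^i_\dR(\mstack X_K/K)=\dim_{\mbb Q_p}H^i_\et(\mstack X_C,\mbb Q_p)$, so injectivity plus equal dimensions finishes. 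What this buys is that one never needs to identify the right vertical map with the prismatic comparison; what your approach would buy, if completed, is a proof that does not rely on the prismatic dimension count from the earlier paper.
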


\begin{rem}\label{rem:Q_p-acyclic stacks} This was stated as Conjecture 4.3.16 in \cite{KubrakPrikhodko_pHodge}. By analogy with \cite[Definition 4.1.18]{KubrakPrikhodko_pHodge} stacks for which the map $\Upsilon_{\mstack X,\mbb Q_p}$ is an equivalence could be called \textit{$\mbb Q_p$-locally acyclic}.
\end{rem}
Let $G_K$ be the absolute Galois group of $K$. From \Cref{thm:main theorem}, using the results in \cite{KubrakPrikhodko_pHodge} one deduces that the Galois representation in \'etale cohomology of $\mstack X_{C}$ is crystalline:
\begin{cor*}[\ref{cor:Fontaine's conjecture for Hodge-proper stacks}]
	Let $\mstack X$ be a smooth Hodge-proper stack over $\mc O_K$. Then for any $i\ge 0$ the $G_K$-representation given by $H^i({\mstack X}_{C},\mbb Q_p)$ is crystalline.
\end{cor*}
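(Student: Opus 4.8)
The plan is to use \Cref{thm:main theorem} to transport the assertion from the algebraic generic fiber $\mstack X_C$ to the Raynaud generic fiber $\widehat{\mstack X}_C$, for which a crystalline comparison was already established in \cite{KubrakPrikhodko_pHodge}.

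First I would record that $\mstack X_C$, $\mstack X_C^\an$ and $\widehat{\mstack X}_C$ are all obtained from the fixed $\mc O_K$-stack $\mstack X$ by algebraic or analytic base change, and that the comparison maps $\phi_{\mstack X}$ and $\psi_{\mstack X}$ — hence $\Upsilon_{\mstack X}$ and its rationalization $\Upsilon_{\mstack X,\mbb Q_p}$ — are defined over $\mc O_K$, so in particular are $G_K$-equivariant. By \Cref{thm:main theorem} the map $\Upsilon_{\mstack X,\mbb Q_p}$ is an equivalence, and passing to cohomology in degree $i$ it yields an isomorphism of $G_K$-representations $H^i(\mstack X_C,\mbb Q_p)\cong H^i(\widehat{\mstack X}_C,\mbb Q_p)$. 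Since crystallinity is stable under isomorphism of $G_K$-representations, it suffices to treat the right-hand side.

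This is where Hodge-properness is used. By \cite{KubrakPrikhodko_pHodge} it implies that the prismatic (equivalently, $\Ainf$-) cohomology of the $p$-adic formal completion $\widehat{\mstack X}_{\mc O_C}$ is a perfect complex, so $H^i(\widehat{\mstack X}_C,\mbb Z_p)$ is finitely generated and $H^i(\widehat{\mstack X}_C,\mbb Q_p)$ is a finite-dimensional $\mbb Q_p$-representation; the same input provides a crystalline comparison for $\widehat{\mstack X}_C$, namely a $B_{\crys}$-linear, Frobenius- and $G_K$-equivariant equivalence
\[
R\Gamma_\et(\widehat{\mstack X}_C,\mbb Q_p)\otimes_{\mbb Q_p}B_{\crys}\;\simeq\;R\Gamma_{\crys}(\mstack X_k/W(k))[1/p]\otimes_{W(k)[1/p]}B_{\crys},
\]
refining over $B_{\dR}$ to a filtered comparison with de Rham cohomology. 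Taking cohomology in degree $i$ and using the finiteness just noted, one gets $\dim_{W(k)[1/p]}\big(H^i(\widehat{\mstack X}_C,\mbb Q_p)\otimes_{\mbb Q_p}B_{\crys}\big)^{G_K}=\dim_{\mbb Q_p}H^i(\widehat{\mstack X}_C,\mbb Q_p)$, which is exactly the statement that $H^i(\widehat{\mstack X}_C,\mbb Q_p)$ is crystalline.

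The substantive ingredient is \Cref{thm:main theorem} itself; granting it, the corollary is essentially bookkeeping. The only points that still need attention are that the comparison of \cite{KubrakPrikhodko_pHodge} is compatible with Frobenius, the Hodge filtration and the $G_K$-action all at once, and that the finiteness afforded by Hodge-properness lets one descend from the complex-level comparison to a degreewise admissibility statement; both are covered by the results cited above, so I expect no obstacle beyond \Cref{thm:main theorem}.
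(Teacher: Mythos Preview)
Your proposal is correct and follows essentially the same route as the paper: use \Cref{thm:main theorem} to obtain a $G_K$-equivariant isomorphism $H^i(\mstack X_C,\mbb Q_p)\simeq H^i(\widehat{\mstack X}_C,\mbb Q_p)$, then invoke the crystallinity of the latter established in \cite{KubrakPrikhodko_pHodge}. The paper simply cites \cite[Theorem 4.3.25]{KubrakPrikhodko_pHodge} directly for the crystallinity (and the identification of $D_\crys$), whereas you spell out the mechanism via the $B_\crys$-comparison and admissibility.
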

As a consequence we also obtain all the comparisons that one usually has for a smooth proper scheme (see \Cref{rem:rational p-adic Hodge theory for Hodge proper stacks}).

Let us try to motivate \Cref{thm:main theorem} and \Cref{cor:Fontaine's conjecture for Hodge-proper stacks} by putting them into some broader context with remarks below.

\begin{rem}[Hodge-proper is (almost) as good as proper]\label{rem:Hodge-proper as proper} Let $Y$ be a smooth scheme over $K$. Then using the work of de Jong on alterations \cite{deJong_alterations} Kisin showed in \cite{kisin2002potential} that $H^n(Y_{C},\mbb Q_p)$ is a potentially semi-stable $G_K$-representation. However, by the Fontaine's $C_\crys$-conjecture if $Y$ has a smooth proper model over $\mc O_K$ this representation has a finer structure: namely it is crystalline.
	
	Let now $\mstack Y$ be a smooth quasi-compact quasi-separated Artin stack over $K$. Resolving $\mstack Y$ by a simplicial scheme and using smooth descent by a similar argument one can show that the \'etale cohomology $H^i({\mstack Y}_{C},\mbb Q_p)$ is also a potentially semistable representation of the Galois group $G_K$. However, \Cref{cor:Fontaine's conjecture for Hodge-proper stacks} tells us that  for the $G_K$-representation $H^i({\mstack Y}_{C},\mbb Q_p)$ to be crystalline it is in fact enough for $\mstack Y$ to have a smooth Hodge-proper model over $\mc O_K$. This gives some evidence that from the point of view of $\mbb Q_p$-\'etale cohomology smooth Hodge-proper stacks are as good as smooth proper schemes. Let us remark that there are some examples of smooth schemes that are Hodge-proper but not proper\footnote{Though we do not know such an example over $\mc O_K$ with $K/\mbb Q_p$ finite, we do know some explicit examples over $W(k)$ with $k$ being the perfection of ${\mbb F_p(t)}$ (e.g. adapting the construction in \cite[Section 2.3.3]{KubrakPrikhodko_HdR}).} (see \cite[Section 2.3.3]{KubrakPrikhodko_HdR}).
\end{rem} 
	
\begin{rem}($G$-equivariant $p$-adic Hodge theory).\label{rem:G-equivaraiant p-adix Hodge} In \cite{KubrakPrikhodko_pHodge} we proved a $G$-equivariant variant of  $p$-adic Hodge theory for a smooth proper $\mc O_K$-scheme $X$ with a reductive group action (see \cite[Section 0.4]{KubrakPrikhodko_pHodge}). It is natural to ask whether the properness condition is necessary. \Cref{thm:main theorem} and \Cref{cor:Fontaine's conjecture for Hodge-proper stacks} show that in fact a much milder condition is sufficient: $[X/G]$ should just be Hodge-proper (and smooth). This includes all the examples listed in \ref{ex:Hodge-proper stack}. 
	
	To be more precise (see \Cref{rem:rational p-adic Hodge theory for Hodge proper stacks} and apply to $\mstack X=[X/G]$) in this situation we have a $(\phi,G_K)$-equivariant isomorphism 
	$$
	H^n_\et({[X/G]}_{{C}}, \mathbb Q_p)\otimes_{\mbb Q_p} B_\crys \simeq H^n_{\crys}({[X/G]}_k/W(k))[\tfrac{1}{p}]\otimes_{K_0} B_\crys,
	$$
	a filtered $G_K$-equivariant isomorphism 
	$$
	H^n_\et({[X/G]}_{{C}}, \mathbb Q_p)\otimes_{\mbb Q_p} B_\dR \simeq H^n_\dR([X/G]/K)\otimes_K B_\dR
	$$
	and the Hodge-Tate decomposition 
	$$
	H^n_{\et}({[X/G]}_{{C}},\mbb Q_p)\otimes_{\mbb Q_p}{C} \simeq \bigoplus_{i+j=n} H^j({[X/G]}_{K},\wedge^i\mbb L_{\mstack [X/G]/K})\otimes_{K}{C}(-i).
	$$
Moreover, if $K$ is a finite extension of $\mathbb Q_p$, after a choice of isomorphism $\iota\colon {C}\xra{\sim }\mbb {\mbb C}$ by \cite[Example 4.1.5 and Proposition 4.1.6]{KubrakPrikhodko_pHodge} one can identify $H^n_{\et}({[X/G]}_{{C}},\mbb Q_p)$ with the $G(\mbb C)$-equivariant cohomology $H^n_{\sing, G(\mbb C)}(X(\mbb C),\mbb Q_p)$.
\end{rem}

\subsection{Generalization: $d$-de Rham proper stacks} In fact, in this work we show a more subtle statement in a more general setting. Here is our framework:

\begin{defn}[$d$-de Rham-proper stacks]\label{def:intro_d-de Rham proper}
	Fix a positive integer $d\in \mbb Z_{\ge 0}$. A smooth quasi-compact quasi-separated Artin stack $\mstack X$ over a Noetherian ring $R$ is called $d$-de Rham-proper if $H^i_\dR(\mstack X/R)$ is a finitely generated $R$-module for $i\le d$.
\end{defn}
 This is a $d$-truncated version of \Cref{intro_defn_of_hodge_properness}, where Hodge cohomology is replaced by de Rham. This notion is much more general (in particular, a Hodge-proper stack is $d$-de Rham for any $d$), and the main motivation to consider it comes from the fact that there are many natural examples of $d$-de Rham-proper schemes that are not necessarily proper (see \Cref{lem:example of d-Hodge-proper}).
 
 The possible expectation could be that a truncated variant of $p$-adic Hodge theory exists in this setting: namely, the standard properties of and comparisons between different cohomology should still be there, but maybe only up to a certain degree. This is exactly what we show.
 
 \begin{thm}\label{intro_thm_d-de Rham proper}
 	Let $\mc X$ be a smooth $(d+1)$-de Rham-proper stack over $\mc O_K$. Then \begin{enumerate}
 		\item The natural map $\Upsilon_{\mstack X,\mbb Q_p}\colon \RG_\et(\mstack X_{C}, \mathbb Q_p)\ra \RG_\et(\widehat{\mstack X}_{C}, \mathbb Q_p)$ induces isomorphisms 
 		$$
 		H^i_\et(\mstack X_{C}, \mathbb Q_p) \simeq H^i_\et(\widehat{\mstack X}_{C}, \mathbb Q_p)  
 		$$ for $i\le d$, and an embedding
 		$$      
 		H^{d+1}_\et(\mstack X_{C}, \mathbb Q_p) \hookrightarrow H^{d+1}_\et(\widehat{\mstack X}_{C}, \mathbb Q_p)   
 		$$
 		for $i=d+1$ (\Cref{thm:even mainer theorem}).
 		\item  The $G_K$-representation $
 		H^i_\et(\mstack X_{C}, \mathbb Q_p) \simeq H^i_\et(\widehat{\mstack X}_{C}, \mathbb Q_p)  
 		$ is crystalline for $i\le d$ with $D_\crys(H^i_\et(\mstack X_{C}, \mathbb Q_p))\simeq H^i_\crys(\mstack X_k/W(k))[\frac{1}{p}]$. Consequently, one has a $(G_K,\phi)$-equivariant isomorphism 
 		$$
 		H^i_\et(\mstack X_{C}, \mathbb Q_p)\otimes_{\mbb Q_p}B_\crys \ism H^i_\crys(\mstack X_k/W(k))\otimes_{W(k)} B_\crys
 		$$
 		in that range of degrees (\Cref{cor:Fontaine's conjecture for Hodge-proper stacks}).
 	\item  Provided $H^i_\crys(\mstack X/W(k))$ is $p$-torsion free and $i\le d$, the Breuil-Kisin module $H^i_\Prism(\mstack X/\mf S)$ corresponds to the $G_K$-equivariant crystalline lattice $H^i_\et(\widehat{\mstack X}_{C},\mbb Z_p)\subset H^i_\et(\mstack X_{C},\mbb Q_p)$ under the Breuil-Kisin functor:
 	$$
 	\BK(H^i_\et(\widehat{\mstack X}_{C},\mbb Z_p))\simeq H^i_\Prism(\mstack X/\mf S),
 	$$	
 	(\Cref{rem:lattice corresponding to prismatic cohomology}).
 	\end{enumerate}
 \end{thm}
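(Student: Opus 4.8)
The three assertions are of decreasing depth; I would establish (1) first and deduce (2) and (3) from it. The key structural point is that $\widehat{\mc X}_{C}$ is the rigid generic fiber of a qcqs smooth formal stack $\widehat{\mc X}_{\mathcal O_K}$ and therefore behaves like a proper smooth object: its $\mathbb Z_p$-\'etale cohomology is computed by the Breuil--Kisin prismatic cohomology $\RG_{\Prism}(\widehat{\mc X}_{\mathcal O_K}/\mathfrak S)$ of the formal model, via the \'etale comparison (base change $\mathfrak S\to\Ainf$, invert the Hodge--Tate ideal, $p$-complete, descend along $G_K$) set up for stacks in \cite{KubrakPrikhodko_pHodge}. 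Granting (1), the crystalline comparison and the identification $D_{\crys}(H^i_\et(\mc X_C,\mathbb Q_p))\simeq H^i_{\crys}(\mc X_k/W(k))[\tfrac1p]$ in (2) come from the $W(k)$-specialization of that same prismatic cohomology --- this is where the ``proper-like'' behaviour of $\widehat{\mc X}_C$ is used --- and (3) is the prismatic--Breuil--Kisin dictionary, applicable once $H^i_{\Prism}(\mc X/\mathfrak S)$ is a finite free Breuil--Kisin module, which the $p$-torsion-freeness hypothesis on $H^i_{\crys}$ together with the perfectness established while proving (1) secures for $i\le d$. So I concentrate on (1).

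For (1) the plan is to compute both sides of $\Upsilon_{\mc X,\mathbb Q_p}$ degreewise and check the map is an isomorphism. On the Raynaud side, the de Rham specialization of $\RG_{\Prism}(\widehat{\mc X}_{\mathcal O_K}/\mathfrak S)$ is $\RG_{\dR}(\widehat{\mc X}_{\mathcal O_K}/\mathcal O_K)\simeq\RG_{\dR}(\mc X/\mathcal O_K)^{\wedge}_p$, the $p$-completion of algebraic de Rham cohomology. Here $(d+1)$-de Rham-properness enters: $H^i_{\dR}(\mc X/\mathcal O_K)$ is a finite, hence $p$-adically complete, $\mathcal O_K$-module for $i\le d+1$, so a Mittag--Leffler/$\varprojlim$ computation shows that $H^i(\RG_{\dR}(\mc X/\mathcal O_K)^{\wedge}_p)$ equals $H^i_{\dR}(\mc X/\mathcal O_K)$ for $i\le d$ and, at $i=d+1$, only contains it, the discrepancy lying in the uncontrolled rational Tate module of $H^{d+2}_{\dR}$. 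By a derived Nakayama argument over the regular two-dimensional ring $\mathfrak S$ this forces $\RG_{\Prism}(\widehat{\mc X}_{\mathcal O_K}/\mathfrak S)$ to have finite cohomology in degrees $\le d$; in particular $H^i_\et(\widehat{\mc X}_{C},\mathbb Q_p)$ is finite-dimensional for $i\le d$.

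To match the two sides I would use the de Rham comparison on the algebraic generic fiber. Via the always-invertible comparison $\phi_{\mc X}$ one may replace $\mc X_C$ by its analytification, and Scholze's $\mathcal O\mathbb B_{\dR}$-Poincar\'e lemma --- being local, valid for any smooth rigid-analytic stack --- together with finiteness of de Rham cohomology in the range of interest identifies $\RG_\et(\mc X_C,\mathbb Q_p)\otimes_{\mathbb Q_p}\mathbb B_{\dR}$ with $\RG_{\dR}(\mc X_C/C)\otimes_C\mathbb B_{\dR}$, filtered, and likewise for $\widehat{\mc X}_C$, naturally in the inclusion of the tube $\widehat{\mc X}_C\hookrightarrow\mc X^{\an}_C$. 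Hence $\Upsilon_{\mc X,\mathbb Q_p}\otimes_{\mathbb Q_p}\mathbb B_{\dR}$ is identified with the base change to $\mathbb B_{\dR}$ of the de Rham restriction $\RG_{\dR}(\mc X/\mathcal O_K)[\tfrac1p]\to\RG_{\dR}(\mc X/\mathcal O_K)^{\wedge}_p[\tfrac1p]$, which by the previous paragraph is an isomorphism in degrees $\le d$ and injective in degree $d+1$. Since $\mathbb B_{\dR}$ is faithfully flat over $\mathbb Q_p$, the same holds for $\Upsilon_{\mc X,\mathbb Q_p}$ itself, giving the isomorphisms for $i\le d$ and the embedding for $i=d+1$. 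Crystallinity and the remaining statements of (2)--(3) then follow as in the first paragraph.

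The hardest part will be the degreewise bookkeeping in the absence of properness. For a proper stack all the complexes are perfect and every comparison is clean; here de Rham cohomology is finite only through a single degree and one must track precisely how far this propagates --- through the $(p,E(u))$-completeness of $\RG_{\Prism}$ and its interaction with truncation (bearing in mind $\RG_{\Prism}$ is only cohomologically bounded below), and through the convergence of the $\mathcal O\mathbb B_{\dR}$-comparison spectral sequence on the non-proper generic fibers. The loss of exactly one degree (hypothesis $(d+1)$-de Rham-proper, conclusion isomorphisms through degree $d$ plus injectivity at $d+1$) is forced by the $p$-completion artifact at $H^{d+1}$ noted above; keeping all of this uniform and compatible with the filtered, Frobenius- and Galois-equivariant structures needed for (2) and (3) is the technical core.
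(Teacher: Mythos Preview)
Your outline for parts (2) and (3), granted (1), is broadly in line with the paper (prismatic \'etale comparison, base change to $W(k)$, Breuil--Kisin theory). The real issue is with your argument for (1).

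The gap is the assertion that Scholze's $\mathcal O\mathbb B_{\dR}$-Poincar\'e lemma, ``being local'', furnishes an equivalence
\[
\RG_\et(\widehat{\mstack X}_C,\mathbb Q_p)\otimes_{\mathbb Q_p} B_\dR \simeq \RG_\dR(\widehat{\mstack X}_K)\otimes_K B_\dR
\]
on the Raynaud side. The Poincar\'e lemma is indeed local, and globalizing it does give an equivalence $\RG_{\proet}(\widehat{\mstack X}_C,\mathbb B_\dR)\simeq \RG(\widehat{\mstack X}_K,\Omega^\bullet\widehat\otimes B_\dR)$. But the left-hand side here is \emph{not} $\RG_\et(\widehat{\mstack X}_C,\mathbb Q_p)\otimes_{\mathbb Q_p} B_\dR$: identifying the two is precisely where properness (via the primitive comparison theorem) enters in Scholze's argument, and the right-hand side is the completed object $\RG_\dR(\widehat{\mstack X}_K/B_\dR)$ rather than an uncompleted tensor product. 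Finiteness of de Rham cohomology in a range does not by itself force the map $\RG_\et\otimes B_\dR\to \RG_{\proet}(\mathbb B_\dR)$ to be an equivalence in that range. So the right vertical arrow in your square is only a map $\Theta_{\widehat{\mstack X}_K,\emptyset}$, and you cannot conclude that $\Upsilon_{\mstack X,\mathbb Q_p}\otimes B_\dR$ is identified with the de Rham restriction map.

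The paper circumvents exactly this point. On the \emph{algebraic} side one has access to proper snc compactifications $(X,D)$ of each smooth affine chart $U_K$, and the log-$B_\dR$ comparison of Diao--Lan--Liu--Zhu (which uses properness of $X$) gives $\RG_\et(U_C,\mathbb Q_p)\otimes B_\dR\simeq \RG_{\log\dR}(X/K)\otimes B_\dR\simeq \RG_\dR(U_K/K)\otimes B_\dR$; one then passes to a colimit over the weakly contractible category of such compactifications and right Kan extends. On the \emph{Raynaud} side one only uses the existence of the map $\Theta_{\widehat{\mstack X}_K,\emptyset}$, not that it is an equivalence. The resulting commutative square, together with the fact that the bottom map $\RG_\dR(\mstack X_K/K)\otimes B_\dR\to \RG_\dR(\widehat{\mstack X}_K/B_\dR)$ is injective through degree $d+1$ (this is where $(d+1)$-de Rham-properness is used, via your Mittag--Leffler analysis of $p$-completion), yields only \emph{injectivity} of $\Upsilon_{\mstack X,\mathbb Q_p}$ through degree $d+1$. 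Isomorphism for $i\le d$ is then obtained by a separate dimension count: one shows $\dim H^i_\et(\widehat{\mstack X}_C,\mathbb Q_p)=\rk H^i_{\Prism}(\mstack X/\mathfrak S)[\tfrac1p]\le \dim H^i_\dR(\widehat{\mstack X}_K/K)=\dim H^i_\dR(\mstack X_K/K)=\dim H^i_\et(\mstack X_C,\mathbb Q_p)$, with prismatic coherence in degrees $\le d$ (derived Nakayama over $\mathfrak S$, as you note) doing the work.

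In short: your commutative square is the right idea, but only three of its four arrows are controlled, and the argument is two-step (injectivity from the square, surjectivity from prismatic dimension bounds) rather than one-step.
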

We note right away that the proof of \Cref{intro_thm_d-de Rham proper} essentially follows the same ideas as the proof of analogous results in \cite{KubrakPrikhodko_pHodge}. However, to get the optimal degree range in the comparisons above one needs to analyze very carefully what happens with the completed tensor products in the boundary degrees (typically $d$ and $d-1$). Condensed mathematics of Clausen-Scholze turned out to be a very convenient (and, in fact, so far the only suitable for us) framework to do so.

\begin{question}
	Note that in part 3 of \Cref{intro_thm_d-de Rham proper} the crystalline lattice corresponding to prismatic cohomology is given by the cohomology of the \textit{Raynaud} generic fiber of $\mstack X$. This poses a natural question: do the lattices $H^i_\et(\widehat{\mstack X}_{C},\mbb Z_p)$ and $H^i_\et({\mstack X}_{C},\mbb Z_p)$ agree inside $H^i_\et({\mstack X}_{C},\mbb Q_p)$ (under the assumption that $\mstack X$ is $d$-de Rham proper and $i\le d$)? We do not expect this to be true, but also don't know a counterexample. 
\end{question}

We then apply \Cref{intro_thm_d-de Rham proper} to the schematic setting. A quite general example of a $d$-de Rham-proper scheme over $\mc O_K$ can be constructed as follows: take a proper Cohen-Macauley scheme $X$ over $\mc O_K$ and assume that the singularities $Z\hookrightarrow X$ have codimension $d+1$ in $X$; then the complement $U\coloneqq X\setminus Z$ is $(d-1)$-de Rham proper over $\mc O_K$. Most interesting is the case when $Z$ is in fact contained in the closed fiber $X_k$ (and is of codimension $d$): then $U_C\simeq X_C$, but the Raynaud generic fiber $\widehat U_C$ is given by the complement in $X_C$ to an "open tube" around $Z\subset X_k$. Thus application of \Cref{intro_thm_d-de Rham proper} in this situation leads to a purity-type result for erasing the latter, as well as the crystallinity of \'etale cohomology of $X_C$ in certain range.

\begin{thm*}[\ref{cor:applications for schemes}]\label{intro_applications for schemes}
	Let $X$ be a proper scheme over $\mc O_K$ that is Cohen-Macauley. Let $Z\hookrightarrow X_k$ be a codimension $d$ closed subscheme such that complement $X\backslash Z$ is smooth over $\mc O_K$. Then one has
	\begin{enumerate}
		\item  (Purity) There are natural isomorphisms $H^i_\et(\widehat{(X\backslash Z)}_C,\mbb Q_p)\simeq H^i_\et(\widehat X_C,\mathbb Q_p)$ for $i\le d-2$ and an embedding $H^{d-1}_\et(\widehat{X}_C,\mbb Q_p)\hookrightarrow  H^{d-1}_\et(\widehat{(X\backslash Z)}_C,\mathbb Q_p)$;
		\item (Crystallineness) $H^i_\et(X_C,\mbb Q_p)$ is a crystalline Galois representation for $i\le d-2$.
	\end{enumerate}
\end{thm*}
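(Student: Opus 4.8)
The plan is to deduce both parts from \Cref{intro_thm_d-de Rham proper} applied to the smooth open complement $U\coloneqq X\setminus Z$, combined with the classical comparison between the \'etale cohomology of a proper scheme and that of its analytification.

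The first step is to see that $U$ is $(d-1)$-de Rham-proper over $\mc O_K$. Since $X$ is Cohen--Macaulay it has no embedded points, and $X\setminus Z$, being smooth over $\mc O_K$ and (for $d\ge 1$) dense, is flat; hence a uniformizer of $\mc O_K$ is a non-zerodivisor on $X$, so $X$ is flat over $\mc O_K$, its special fiber $X_k$ is a Cartier divisor, and $Z$ --- of codimension $d$ in $X_k$ --- has codimension $d+1$ in $X$. This is exactly the situation of \Cref{lem:example of d-Hodge-proper}, which gives that $U$ is smooth and $(d-1)$-de Rham-proper; the point there is a depth/local-cohomology estimate along $Z$ (using Cohen--Macaulayness of $X$ and $\operatorname{codim}(Z,X)=d+1$) which makes $H^i_\dR(X/\mc O_K)\to H^i_\dR(U/\mc O_K)$ an isomorphism for $i\le d-1$, finiteness being inherited from properness of $X$.

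Next I would apply \Cref{intro_thm_d-de Rham proper} to $U$, with its parameter ``$d$'' specialized to ``$d-2$'' (so that ``$(d+1)$-de Rham-proper'' there reads as ``$(d-1)$-de Rham-proper'' here). This gives that $\Upsilon_{U,\mbb Q_p}$ induces isomorphisms $H^i_\et(U_C,\mbb Q_p)\xra{\ \sim\ }H^i_\et(\widehat U_C,\mbb Q_p)$ for $i\le d-2$ and an injection $H^{d-1}_\et(U_C,\mbb Q_p)\hookrightarrow H^{d-1}_\et(\widehat U_C,\mbb Q_p)$, and that $H^i_\et(U_C,\mbb Q_p)$ is a crystalline $G_K$-representation for $i\le d-2$. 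It remains to transport this to $X$. As $Z\subseteq X_k$, removing $Z$ does not affect the algebraic generic fiber, so $U_C=X_C$ and $H^i_\et(U_C,\mbb Q_p)=H^i_\et(X_C,\mbb Q_p)$ --- which is part (2). For part (1), properness of $X$ identifies its Raynaud generic fiber $\widehat X_C$ with the analytification $X_C^\an$, and the rigid-analytic comparison theorem for \'etale cohomology of a proper scheme (no smoothness needed) gives $H^i_\et(\widehat X_C,\mbb Q_p)\cong H^i_\et(X_C,\mbb Q_p)$ for all $i$; composing the identifications, $H^i_\et(\widehat X_C,\mbb Q_p)\cong H^i_\et(X_C,\mbb Q_p)=H^i_\et(U_C,\mbb Q_p)\cong H^i_\et(\widehat U_C,\mbb Q_p)$ for $i\le d-2$, and for $i=d-1$ the last map becomes the asserted embedding $H^{d-1}_\et(\widehat X_C,\mbb Q_p)\hookrightarrow H^{d-1}_\et(\widehat{(X\setminus Z)}_C,\mbb Q_p)$.

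The main obstacle is the first step --- the $(d-1)$-de Rham-properness of the non-proper scheme $U$ --- which is where the Cohen--Macaulay hypothesis and the codimension bound on $Z$ are genuinely used (through the local-cohomology estimate on the Hodge-graded pieces of the de Rham complex). Everything after that is essentially formal, given \Cref{intro_thm_d-de Rham proper}, whose proof is the technical core of the paper, and the standard proper comparison.
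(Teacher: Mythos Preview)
Your argument is correct and follows essentially the same route as the paper: apply \Cref{lem:example of d-Hodge-proper} to see that $U=X\setminus Z$ is $(d-1)$-Hodge-proper (hence $(d-1)$-de Rham-proper), then invoke the main theorem together with the identifications $U_C\simeq X_C$ (since $Z\subset X_k$) and $H^i_\et(\widehat X_C,\mbb Q_p)\simeq H^i_\et(X_C,\mbb Q_p)$ (since $X$ is proper). One small inaccuracy in your parenthetical: the lemma does not compare with $H^i_\dR(X/\mc O_K)$ (which is ill-behaved for singular $X$) but rather bounds $R^ij_*\Omega^k_{U/\mc O_K}$ directly via the depth estimate and then uses properness of $X$ to get coherence of $\tau^{\le d-1}R\Gamma(U,\Omega^k_{U/\mc O_K})$.
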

Another way to phrase Part 2 of \Cref{intro_applications for schemes} is that having a smooth proper scheme over $K$, if we found a Cohen-Macaulay model over $\mc O_K$ such that singularities are in codimension $d$ in the closed fiber, then \'etale cohomology $H^i_\et(\mstack X_C,\mbb Q_p)$ is still a crystalline $G_K$-representation at least up to degree $d-2$.

\begin{question}
	Is the bound in Theorem \ref{cor:applications for schemes}(2) sharp?
\end{question}

\subsection{Plan of the proof}
The original goal of this paper was to prove \Cref{thm:main theorem}: namely that for Hodge-proper stack over $\mc O_K$ the \'etale $\mbb Q_p$-cohomology of $\widehat{\mstack X}_C$ and $\mstack X_C$ agree. The general strategy for our argument was inspired by the proof of Totaro's inequality given in \cite{BhattLi} by Bhatt and Li. Namely, by \cite[Corollary 4.3.15]{KubrakPrikhodko_pHodge} (at least in the case when $K/\mbb Q_p$ is finite) we already knew that their dimensions agree, and so it remained to show that the map $H^i_\et(\mstack X_{C}, \mathbb Q_p) \ra H^i_\et(\widehat{\mstack X}_{C}, \mathbb Q_p)$ is injective for any $i\ge 0$. Our idea then was to replace "approximation by proper schemes" that had been used in \cite{BhattLi} by taking values of some other cohomology theory. As a result we established injectivity of the above map in the more general $d$-de Rham-proper setting (where it holds in degrees up to $d$, see \Cref{prop:Upsilon for d-Hodge-proper}).

Let us briefly sketch our proof of injectivity. The key idea is to look at the "log-de Rham complex over $B_\dR$" that was defined in \cite{DiaoKaiWenLiuZhu_LogarithmicRH} and that we denote by $\Omega^\bullet_{X,D,\log \dR}\cotimes B_\dR$.  For $\Omega^\bullet_{X,D,\log \dR}\cotimes B_\dR$ one has a natural "$B_\dR$-comparison map" (see \Cref{constr:map theta}): namely, given a smooth adic space $X$ over $K$ with a simple normal crossings divisor $D$ and the complement $U$ one has a natural map $$\Theta_{X,D}\colon \RG_\et(U_{C},\mbb Q_p)\otimes_{\mbb Q_p}B_\dR \tto \RG(X, \Omega^\bullet_{X,D,\log \dR}\widehat\otimes B_\dR).$$ As shown in \cite{DiaoKaiWenLiuZhu_LogarithmicRH}, when $X$ is proper the map $\Theta_{X,D}$ is an equivalence. Now, given an affine smooth $\mc O_K$-scheme $U$ one can take the Raynaud generic fiber $\widehat U_K$ with an empty divisor $D=\emptyset$, or take the algebraic generic fiber $U_K$ and consider the analytification of any compactification $X$ of $U_K$ by a normal crossings divisor $D$. For any choice of $(X,D)$ there is a natural map of pairs $(\widehat{U}_K,\emptyset)\ra (X^\an,D^\an)$ and, consequently,  a transformation between maps $\Theta_{\widehat{U}_K,\emptyset}$ and $\Theta_{X^\an,D^\an}$.

We then notice two things: first, that the category $\Comp(U_K)_\nc$ of compactifications as above is weakly contractible and, second, that the functor $(X,D)\mapsto \RG(X^\an, \Omega^\bullet_{X^\an,D^\an ,\log \dR}\widehat\otimes B_\dR)$ is identified with the constant functor $(X,D)\mapsto \RG_\dR(U_K/K)\otimes_K B_\dR$. From this we obtain a functorial commutative square \Cref{eq:key commutative diagram} for any $U$, which after right Kan extension gives a commutative square 
\begin{equation*}
\xymatrix{
	R\Gamma_\et(\mstack X_{C}, \mathbb Q_p)\otimes_{\mathbb Q_p} B_\dR \ar[r]^{\Upsilon_{\mstack X,\mbb Q_p}}\ar[d]_\sim & R\Gamma_\et(\widehat{\mstack X}_{C}, \mathbb Q_p)\otimes_{\mathbb Q_p} B_\dR \ar[d] \\
	R\Gamma_\dR(\mstack X_K/K)\otimes_{K} B_\dR \ar[r] & R\Gamma_\dR(\widehat{\mstack X}_K/B_\dR),
}\end{equation*}
(see \Cref{constr:key construction} and \Cref{prop:commutative diagram}). The left vertical arrow here is an equivalence for any smooth quasi-compact quasi-separated Artin stack $\mstack X$, but when $\mstack X$ is $d$-de Rham proper over $\mc O_K$, the bottom horizontal one gives an isomorphism in degrees up to $d-1$ and an embedding in degree $d$ (\Cref{prop:key proposition}). Consequently, the composition induces an embedding in cohomological degrees up to $d$, which forces $\Upsilon_{\mstack X,\mbb Q_p}$ to be injective in the same range (\Cref{prop:Upsilon for d-Hodge-proper}).

To prove the $B_\crys$-comparison in the case of $(d+1)$-de Rham-proper stacks over $\mc O_K$ we extend some relevant results of \cite[Section 2 and 4]{KubrakPrikhodko_pHodge} to this setting. This we do mainly in Sections \ref{ssec:complete modules miscellany} and \ref{ssec:integral p-adic Hodge theory}. In this context it is only true that the truncation $\tau^{\le d}\RG_{\Prism}(\mstack X/\mf S)$ is coherent (\Cref{cor:BK prismatic cohomology d-coherent}), and it causes a problem for establishing the usual comparisons with other cohomology theories (like $\Ainf$ or \'etale). The problem is simple: the completed tensor products are not necessarily $t$-exact and if one blindly follows the strategy of \textit{loc.cit.} one usually loses 1 or 2 last cohomological degrees in the comparisons. The problem is there even in the case when we tensor up with an $I$-completely free module: namely, if $I$ has at least two generators the derived $I$-completed direct sum functor can very well be not $t$-exact. 
At least in the latter case situation is better in the condensed world: namely, for prodiscrete $I$-complete solid modules derived $I$-completed direct sums are $t$-exact (see \Cref{prop:completed direct sum preserves t-structure}). By an elaborate argument, that we perform in \Cref{ssec: crystalline comparison} we are able to deduce the $B_\crys$-comparison from a condensed version of it. However to do so, we need to use a slightly unusual, however nicely behaved, period ring $B_\mmax$. The main property of the non-coherent part $\tau^{\le d+1}\RG_{\Prism}(\mstack X/\mf S)$ of prismatic cohomology that allows to control the interplay between usual and condensed worlds is the following.  Namely $d$-th and $(d-1)$-st cohomology groups of the derived reduction $[\tau^{\le d+1}\RG_{\Prism}(\mstack X/\mf S))/(p,u)]$ are finite dimensional $k$-vector spaces.  As we show in \Cref{sec:appendix about complete sums}, this property implies a good behavior with respect to completed direct sums or certain more complicated completed tensor products, and, most importantly, that the derived $(p,u)$-completion $(\tau^{\le d+1}\RG_{\Prism}(\mstack X/\mf S))^\bt_{(p,u)}$ in solid modules is still concentrated in cohomological degrees $\ge d+1$ (\Cref{lem_discretness_of_cond_completion}).

Finally, in \Cref{sec:applications for schemes} we record the applications of the above results in the case of schemes (\Cref{cor:applications for schemes}).

\subsection{Notations and conventions}\label{ssec: Notations}
\begin{enumerate}[wide,itemindent=*]
\item In this work by Artin stacks we always mean (higher) Artin stacks in the sense of \cite[Section 1.3.3]{TV_HAGII} or \cite[Chapter 2.4]{GaitsRozI}: these are sheaves in \'etale topology admitting a smooth $(n-1)$-representable atlas for some $n\ge 0$. We stress that we (mostly) work with non-derived Artin stacks, i.e. they are defined on the category of \emph{ordinary} commutative rings.


\item Let $R$ be a commutative ring equipped with an ideal $I$ and let $\fcat C$ be a presentable stable $R$-linear $(\infty, 1)$-category. An object $X \in \fcat C$ is called \emdef{derived $I$-complete} if for every $f\in I$ the limit
$$T(X, f) := \prolim\left(\xymatrix{\ldots \ar[r]^-{\cdot f} & X \ar[r]^-{\cdot f} & X \ar[r]^-{\cdot f} & X} \right)$$
vanishes. We denote the full subcategory of $\fcat C$ spanned by derived $I$-complete objects by $\fcat C_{I-\comp}$. If $I$ is finitely generated then the inclusion functor $\fcat C_{I-\comp} \inj \fcat C$ admits a left adjoint $X \mapsto X_I^\wedge$ that can be explicitly described (see formula \eqref{eq:derived completion}). Also see \cite[Tag 091N]{StacksProject} and \cite[Chapter 7]{Lur_SAG} for more details.

\item Some arguments in the work rely on the theory of condensed mathematics developed by Clausen--Scholze in \cite{ClausenScholze_Condensed1}. We closely follow the notations from \textit{loc.~cit.} In particular, for an $(\infty, 1)$-category $\fcat C$ we denote the category of $\fcat C$-valued sheaves on the pro-\'etale site of a point by $\Cond(\fcat C)$. We also denote the natural functor $\Top \to \Cond(\Set)$ for \cite[Lecture I]{ClausenScholze_Condensed1} by $X\mapsto \underline X$. This functor induces a fully-faithful embedding $D(\Ab) \inj D(\Cond(\Ab))$ which by abuse of notation we will also denote by $M \mapsto \underline M$.

\end{enumerate}

\paragraph{Acknowledgments.} This paper owes its existence to Sasha Petrov who pointed us to the log-$B_\dR$-cohomology of Diao--Lan--Liu--Zhu and sketched how it could help to prove the injectivity of the map between \'etale cohomology of two generic fibers. The generalization to $d$-de Rham stacks was fundamentally inspired by conversations with Shizhang Li who pointed us to some potential applications in schematic setting (which are now \Cref{cor:applications for schemes}). The optimal bounds for crystalline comparison would not be there without the help of Peter Scholze, who showed to the first author how condensed mathematics could be of some help here and patiently explained some basic aspects of the theory. We are also grateful to Sasha Petrov, Shizhang Li and Haoyang Guo for the comments on different versions of the draft and, in the latter case, also writing the Appendix \ref{Appendix: Cohomological descent, de Rham comparison, and resolution of singularities} that extends the generality of \Cref{cor:applications for schemes}.

The first author would like to express his gratitude to Max Planck Institute for the excellent work conditions during his stay there. He is also grateful to Institut des Hautes Études Scientifiques where the last parts of this manuscript were written. The study has been funded within the framework of the HSE University Basic Research Program.

\section{$d$-de Rham and $d$-Hodge-proper stacks}

\subsection{Definition and basic properties}
One can naturally introduce the following truncated analogue of Hodge-properness condition.

\begin{defn}\label{def:Hodge-proper up to degree d}
	A smooth quasi-compact quasi-separated Artin stack $\mstack X$ over a Noetherian ring $R$ is called \emph{$d$-Hodge-proper} if the Hodge cohomology $H^{j,i}(\mstack X/R)\coloneqq H^i(\mstack X,\wedge^j\mbb L_{\mstack X/R})$ is finitely generated for all $i+j\le d$.
\end{defn}
\begin{rem}\label{rem:reformulation of d-Hodge-properness}
	In other words, a smooth qcqs Artin stack $\mstack X$ is {$d$-Hodge-proper} if $\tau^{\le d-j}\RG(\mstack X,\wedge^j\mbb L_{\mstack X/R})\in \Coh(R)$ for any $j\ge 0$.
\end{rem}
\begin{rem}
	Obviously, a stack $\mstack X$ is Hodge-proper (see \Cref{intro_defn_of_hodge_properness}) if and only if it is $d$-Hodge-proper for any $d\ge 0$.
\end{rem}

The $d$-Hodge-properness condition is much more flexible than just Hodge-properness. In particular, there are many more schemes that are Hodge-proper up to some degree, but are not themselves proper:

\begin{lem}\label{lem:example of d-Hodge-proper}
	Let $X$ be a Cohen--Macauley scheme that is proper over $\Spec R$. Assume $Z\subset X$  is a closed $R$-subscheme that has codimension $d+2$. Assume that the complement $U\coloneqq X\!\setminus \!Z$ is smooth over $R$. Then $U$ is Hodge-proper up to degree $d$.
\end{lem}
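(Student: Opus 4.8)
The plan is to deduce the statement from a standard local‑cohomology dévissage together with the fact that, because $X$ is Cohen--Macaulay, the coherence of higher direct images along the high‑codimension locus $Z$ is controlled by $\operatorname{codim}(Z,X)$ rather than by the behaviour of an extension of $\Omega^j_{U/R}$ near $Z$.

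First I would reduce. Since $U$ is smooth over $R$ we have $\wedge^j\mathbb L_{U/R}\simeq\Omega^j_{U/R}$, a locally free $\mathcal O_U$-module, for every $j\ge0$; so by \Cref{rem:reformulation of d-Hodge-properness} it is enough to prove that $H^i(U,\Omega^j_{U/R})$ is a finitely generated $R$-module whenever $i+j\le d$, and I will in fact prove the (formally stronger) statement that $H^i(U,\Omega^j_{U/R})$ is finitely generated for all $i\le d$ and all $j$. Fix $j$, write $\jmath\colon U\hookrightarrow X$ for the open immersion, and pick a coherent $\mathcal O_X$-module $\mathcal F$ with $\mathcal F|_U\simeq\Omega^j_{U/R}$ (which exists since $X$ is Noetherian). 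Since $R\Gamma(U,\Omega^j_{U/R})=R\Gamma(X,R\jmath_*\Omega^j_{U/R})$ and $X$ is proper over $R$, it suffices to show that $R^k\jmath_*\Omega^j_{U/R}$ is a coherent $\mathcal O_X$-module for every $k\le d$: granting this, $\tau^{\le d}R\jmath_*\Omega^j_{U/R}$ has bounded coherent cohomology sheaves, hence $R\Gamma(X,\tau^{\le d}R\jmath_*\Omega^j_{U/R})\in\Coh(R)$ by the coherence theorem for proper morphisms, while the cofiber of $\tau^{\le d}R\jmath_*\Omega^j_{U/R}\to R\jmath_*\Omega^j_{U/R}$ sits in cohomological degrees $\ge d+1$, so $H^i(U,\Omega^j_{U/R})\simeq H^i(X,\tau^{\le d}R\jmath_*\Omega^j_{U/R})$ is finitely generated for $i\le d$. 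Combined with the fact that $\tau^{\le d-j}R\Gamma(U,\Omega^j_{U/R})$ is concentrated in degrees $[0,d-j]$, this gives $\tau^{\le d-j}R\Gamma(U,\wedge^j\mathbb L_{U/R})\in\Coh(R)$, i.e.\ $U$ is Hodge-proper up to degree $d$.

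The coherence of $R^k\jmath_*\Omega^j_{U/R}$ for $k\le d$ is the crux, and it is here that Cohen--Macaulayness enters. Writing $\mathcal H^\bullet_Z$ for the local cohomology sheaves along $Z$, the triangle $R\underline\Gamma_Z(\mathcal F)\to\mathcal F\to R\jmath_*(\mathcal F|_U)$ gives $R^k\jmath_*\Omega^j_{U/R}\simeq\mathcal H^{k+1}_Z(\mathcal F)$ for $k\ge1$, while $\jmath_*\Omega^j_{U/R}$ sits in $0\to\mathcal H^0_Z(\mathcal F)\to\mathcal F\to\jmath_*\Omega^j_{U/R}\to\mathcal H^1_Z(\mathcal F)\to0$ with $\mathcal H^0_Z(\mathcal F)$ automatically coherent; so I am reduced to showing $\mathcal H^i_Z(\mathcal F)$ is coherent for all $i\le d+1$. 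This is an instance of Grothendieck's finiteness theorem for local cohomology (see \cite{StacksProject}, or SGA~2, Exp.~VIII): at a point $y\in Z$ one has $\mathcal H^i_Z(\mathcal F)_y=H^i_{\mathfrak a_y}(\mathcal F_y)$, which is finitely generated over $\mathcal O_{X,y}$ for $i$ strictly below the finiteness dimension $\mathrm f_{\mathfrak a_y}(\mathcal F_y)=\min_{\mathfrak a_y\not\subseteq\mathfrak p}\big(\operatorname{depth}(\mathcal F_{y,\mathfrak p})+\operatorname{ht}((\mathfrak a_y+\mathfrak p)/\mathfrak p)\big)$, the minimum over primes $\mathfrak p\subset\mathcal O_{X,y}$ not containing the ideal $\mathfrak a_y$ of $Z$. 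Such a $\mathfrak p$ corresponds to a point $x\in U$, where $\mathcal F$ is locally free; since $X$ is Cohen--Macaulay, $\operatorname{depth}(\mathcal F_{y,\mathfrak p})=\operatorname{depth}(\mathcal O_{X,x})=\dim\mathcal O_{X,x}=\operatorname{ht}(\mathfrak p)$, and since $\mathcal O_{X,y}$ is catenary this yields $\operatorname{depth}(\mathcal F_{y,\mathfrak p})+\operatorname{ht}((\mathfrak a_y+\mathfrak p)/\mathfrak p)=\operatorname{ht}(\mathfrak a_y+\mathfrak p)\ge\operatorname{ht}(\mathfrak a_y)=\operatorname{codim}_y(Z,X)\ge d+2$. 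Hence $\mathrm f_{\mathfrak a_y}(\mathcal F_y)\ge d+2$ for every $y\in Z$, so $\mathcal H^i_Z(\mathcal F)$ is coherent for $i\le d+1$, as required.

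The delicate point is exactly this last estimate: the argument hinges on knowing that for an arbitrary coherent extension $\mathcal F$ of $\Omega^j_{U/R}$ the sheaves $\mathcal H^i_Z(\mathcal F)$ remain coherent up to degree $\operatorname{codim}(Z,X)-1$, and it is the Cohen--Macaulayness of $X$ --- which makes $\operatorname{depth}$ agree with $\dim$ along $U$ and makes the local rings of $X$ catenary and equidimensional --- together with local freeness of $\Omega^j_{U/R}$ on the smooth locus that forces the bound $d+2$. Everything else is formal manipulation of distinguished triangles and an application of Grothendieck's coherence theorem for proper morphisms.
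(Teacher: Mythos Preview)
Your argument is correct and arrives at the same conclusion as the paper's, but via a different and somewhat heavier route. Both proofs share the same skeleton: reduce to showing that $R^k\jmath_*\Omega^j_{U/R}$ is a coherent $\mathcal O_X$-module for $k\le d$, and then use properness of $X$ over $R$ together with \Cref{lem:useful lemma} to conclude. The difference lies in how that coherence is established.

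The paper's proof is more elementary. Cohen--Macaulayness of $X$ and $\operatorname{codim}(Z,X)\ge d+2$ give outright \emph{vanishing} $\mathcal H^i_Z(\mathcal O_X)=0$ for $i\le d+1$ (equivalently $R^0\jmath_*\mathcal O_U=\mathcal O_X$ and $R^i\jmath_*\mathcal O_U=0$ for $0<i\le d$), directly from $\operatorname{depth}_{\mathfrak a_y}\mathcal O_{X,y}=\operatorname{ht}(\mathfrak a_y)\ge d+2$ at each $y\in Z$. One then invokes \cite[Tag~0BLT]{StacksProject}, which says that coherence of $R^i\jmath_*\mathcal O_U$ for $i\le d$ automatically yields the same for any finite locally free $\mathcal F$ on $U$. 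No coherent extension of $\Omega^j_{U/R}$ across $Z$ is ever chosen, and nothing beyond Noetherianness of $R$ is used.

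Your route through an arbitrary coherent extension $\mathcal F$ and the finiteness-dimension bound is more direct but costs a little more: the implication from $\operatorname{depth}(\mathcal F_{\mathfrak p})+\operatorname{ht}((\mathfrak a+\mathfrak p)/\mathfrak p)\ge d+2$ for all $\mathfrak p\not\supseteq\mathfrak a$ to finite generation of $H^i_{\mathfrak a}(\mathcal F)$ for $i\le d+1$ is Faltings' finiteness theorem (SGA~2, Exp.~VIII), and in the stated generality it requires the local rings of $X$ to admit a dualizing complex (e.g.\ to be quotients of regular rings). This is not part of the hypotheses of the lemma, so strictly speaking your argument has a small gap over a general Noetherian base; it is of course harmless for all the applications in the paper, where $R=\mathcal O_K$. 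What your approach buys is a one-shot argument that avoids the bootstrap via Tag~0BLT; what the paper's approach buys is that vanishing of $\mathcal H^i_Z(\mathcal O_X)$ is an elementary depth statement valid over any Noetherian base, so the lemma holds exactly as stated.
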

\begin{proof}
	Let $j\colon U\ra X$ be the embedding. Since $X$ is Cohen--Macaulay and $Z$ has codimension $d+2$ we have $R^0j_*\mc O_U=\mc O_X$ and $R^ij_* \mc O_U=0$ for $0< i\le d$. From \cite[Tag 0BLT]{StacksProject} it follows that for $0\le i\le d$ $R^ij_* \mc F$ is a coherent sheaf when $\mc F$ is finite locally free. In other words, $\tau^{\le d}\mc F\in \Coh(X)$. Taking $\mc F=\Omega^k_{U/R}$ and applying \Cref{lem:useful lemma} to the global sections functor $\RG\colon \QCoh(X) \ra \DMod{R}$ (which is left $t$-exact) and $M=Rj_* \Omega^k_{U/R}$ we get that for any $k\ge 0$
	$$
	\tau^{\le d}\RG(X, \tau^{\le d}Rj_*\Omega^k_{U/R})\ism \tau^{\le d}\RG(U, \Omega^k_{U/R}).
	$$
	Since $X$ is proper over $R$ we get $\tau^{\le d}\RG(U, \Omega^k_{U/R})\in \Coh(R)$ for any $k\ge 0$. We are done by \Cref{rem:reformulation of d-Hodge-properness}. 
\end{proof}

The property that will be actually used by us in the proofs will even be slightly weaker, though completely analogous in the spirit:
\begin{defn}
A smooth quasi-compact quasi-separated Artin stack $\mstack X$ over a Noetherian ring $R$ is called \emph{$d$-de Rham-proper} if $H^i_\dR(\mstack X/R)$ is finitely generated for $i\le d$.
\end{defn}
\begin{rem}\label{rem:reformulation of d-de Rham-properness}
	In other words, a smooth qcqs Artin stack $\mstack X$ is {$d$-de Rham-proper} if $\tau^{\le d}\RG_\dR(\mstack X/R)\in \Coh(R)$.
\end{rem}
The following is immediate:
\begin{lem}
If a smooth Artin stack $\mstack X$ over $R$ is $d$-Hodge-proper it is also $d$-de Rham-proper. 
\end{lem}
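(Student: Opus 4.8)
The plan is to play the Hodge filtration on de Rham cohomology against the $d$-Hodge-properness hypothesis. For any smooth qcqs Artin stack $\mstack X$ over $R$, the complex $\RG_\dR(\mstack X/R)$ carries a complete decreasing filtration $\Fil^{\ge \bullet}$ with $\Fil^{\ge 0}=\RG_\dR(\mstack X/R)$ and $\gr^j\simeq \RG(\mstack X,\wedge^j\mbb L_{\mstack X/R})[-j]$ (the Hodge filtration, see \cite{KubrakPrikhodko_HdR}). By \Cref{rem:reformulation of d-de Rham-properness} it suffices to show $\tau^{\le d}\RG_\dR(\mstack X/R)\in\Coh(R)$.

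First I would cut the filtration down to a finite piece. Since $\RG(\mstack X,-)$ is left $t$-exact, $\RG(\mstack X,\wedge^j\mbb L_{\mstack X/R})\in D^{\ge 0}(R)$, hence $\gr^j\in D^{\ge j}(R)$; as $D^{\ge d+1}(R)$ is closed under finite extensions and limits, the tail $\Fil^{\ge d+1}\RG_\dR(\mstack X/R)$ — the limit over $N$ of the finitely filtered quotients $\Fil^{\ge d+1}/\Fil^{\ge N}$, whose graded pieces $\gr^{d+1},\dots,\gr^{N-1}$ all lie in $D^{\ge d+1}(R)$ — lies in $D^{\ge d+1}(R)$. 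The fiber sequence $\Fil^{\ge d+1}\RG_\dR(\mstack X/R)\to \RG_\dR(\mstack X/R)\to \RG_\dR(\mstack X/R)/\Fil^{\ge d+1}$ and its long exact cohomology sequence then exhibit $H^n\RG_\dR(\mstack X/R)$, for $n\le d$, as a submodule of $H^n$ of the quotient; since $R$ is Noetherian, it is enough to prove $\tau^{\le d}\big(\RG_\dR(\mstack X/R)/\Fil^{\ge d+1}\big)\in\Coh(R)$.

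Now $\RG_\dR(\mstack X/R)/\Fil^{\ge d+1}$ is finitely filtered with graded pieces $\gr^0,\dots,\gr^d$. By $d$-Hodge-properness and \Cref{rem:reformulation of d-Hodge-properness}, $\tau^{\le d-j}\RG(\mstack X,\wedge^j\mbb L_{\mstack X/R})\in\Coh(R)$, so $\tau^{\le d}\gr^j=\big(\tau^{\le d-j}\RG(\mstack X,\wedge^j\mbb L_{\mstack X/R})\big)[-j]\in\Coh(R)$ for $0\le j\le d$. I would then descend on $j$ along the fiber sequences $\Fil^{\ge j+1}/\Fil^{\ge d+1}\to \Fil^{\ge j}/\Fil^{\ge d+1}\to \gr^j$, using that for a fiber sequence $A\to B\to C$ and $n\le d$ the long exact cohomology sequence exhibits $H^nB$ as an extension of a submodule of $H^nC$ by a quotient of $H^nA$, both of which stay finitely generated over the Noetherian ring $R$; hence $\tau^{\le d}A,\tau^{\le d}C\in\Coh(R)$ forces $\tau^{\le d}B\in\Coh(R)$. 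Starting from $\tau^{\le d}(\Fil^{\ge d+1}/\Fil^{\ge d+1})=0$, this yields $\tau^{\le d}\big(\RG_\dR(\mstack X/R)/\Fil^{\ge d+1}\big)\in\Coh(R)$, hence $\tau^{\le d}\RG_\dR(\mstack X/R)\in\Coh(R)$, as desired.

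There is essentially no obstacle here — which is why the paper bills the statement as immediate. The only point worth stating carefully is the left $t$-exactness of $\RG(\mstack X,-)$ on quasi-coherent sheaves of a qcqs Artin stack, since this is what reduces the infinite Hodge filtration to the finitely many graded pieces that govern degrees $\le d$; everything else is the routine compatibility of the truncation functors $\tau^{\le d}$ with finite filtrations over a Noetherian base, and in the final write-up the two inductive steps above can be compressed into a single sentence.
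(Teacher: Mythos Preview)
Your proof is correct and takes essentially the same approach as the paper: the paper invokes the convergent Hodge-to-de Rham spectral sequence $E_1^{i,j}=H^j(\mstack X,\wedge^i\mbb L_{\mstack X/R})\Rightarrow H^{i+j}_\dR(\mstack X/R)$ from \cite[Corollary 1.1.6(2)]{KubrakPrikhodko_HdR}, while you unwind the underlying Hodge filtration by hand. The only implicit point you use without stating it is that $\wedge^j\mbb L_{\mstack X/R}\in D^{\ge 0}(\QCoh(\mstack X))$ for a smooth classical Artin stack (so that left $t$-exactness of $\RG(\mstack X,-)$ actually yields $\gr^j\in D^{\ge j}(R)$), but this is standard and exactly what underlies the convergence cited in the paper.
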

\begin{proof}
	By \cite[Corollary 1.1.6(2)]{KubrakPrikhodko_HdR} one has a convergent (Hodge-to-de Rham) spectral sequence $$
	E_1^{i,j}=H^j(\mstack X,\wedge^i\mbb L_{\mstack X/R})\Rightarrow H^{i+j}_\dR(\mstack X/R),
	$$
	from which we see that if $\mstack X$ is $d$-Hodge-proper then $H^i_\dR(\mstack X/R)$ is finitely generated over $R$ for $i\le d$.
\end{proof}
As we will see, the following simple lemma can be used to control the "degree of coherence" of complexes under certain operations:

\begin{lem}\label{lem:useful lemma}
	Let $\fcat C$ and $\fcat D$ be stable $\infty$-categories endowed with $t$-structures. Let $F\colon \fcat C \ra \fcat D$ be a functor which is left $t$-exact up to a shift by $s$.\footnote{More precisely, by this we mean that $F[-s]$ is left $t$-exact.} Then for any $d\in \mbb Z$ and any $M\in \fcat C$ one has a natural equivalence
	$$
	\tau^{\le d-s} F(\tau^{\le d}M) \ism \tau^{\le d-s} F(M).
	$$
\end{lem}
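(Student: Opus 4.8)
The plan is to use the fiber sequence relating $\tau^{\le d}M$ and $M$ together with the (shifted) left $t$-exactness of $F$ to show that the cofiber of $F(\tau^{\le d}M) \to F(M)$ is sufficiently connective that it dies under $\tau^{\le d-s}$. First I would consider the canonical fiber sequence
$$
\tau^{\le d}M \lra M \lra \tau^{\ge d+1}M
$$
in $\fcat C$. Applying the exact functor $F$ yields a fiber sequence
$$
F(\tau^{\le d}M) \lra F(M) \lra F(\tau^{\ge d+1}M)
$$
in $\fcat D$. Now the key input is the hypothesis that $F[-s]$ is left $t$-exact, i.e.\ $F$ sends $\fcat C_{\ge 0}$ into $\fcat D_{\ge -s}$, hence $\fcat C_{\ge n}$ into $\fcat D_{\ge n-s}$ for all $n$. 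Since $\tau^{\ge d+1}M \in \fcat C_{\ge d+1}$, we get $F(\tau^{\ge d+1}M) \in \fcat D_{\ge d+1-s}$; in particular $\pi_i F(\tau^{\ge d+1}M) = 0$ for $i < d+1-s$, equivalently for $i \le d-s$.

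Next I would feed this into the long exact sequence (equivalently, apply the truncation functor $\tau^{\le d-s}$, which is exact on the level of fiber sequences up to the usual boundary term). From the fiber sequence above one extracts, for each $i \le d-s$, an exact sequence
$$
\pi_{i+1} F(\tau^{\ge d+1}M) \lra \pi_i F(\tau^{\le d}M) \lra \pi_i F(M) \lra \pi_i F(\tau^{\ge d+1}M).
$$
For $i \le d-s$ we have $i \le d-s$ so $\pi_i F(\tau^{\ge d+1}M) = 0$, and for $i+1 \le d-s$ likewise $\pi_{i+1}F(\tau^{\ge d+1}M)=0$; at the top degree $i = d-s$ the term $\pi_{i+1}F(\tau^{\ge d+1}M) = \pi_{d+1-s}F(\tau^{\ge d+1}M)$ need not vanish, but it maps onto $\pi_{d-s}F(\tau^{\le d}M)$ only through a connecting map whose source already surjects correctly — more cleanly, the map $F(\tau^{\le d}M)\to F(M)$ has cofiber in $\fcat D_{\ge d+1-s}\subseteq \fcat D_{\ge d-s+1}$, so it is an isomorphism on $\pi_i$ for $i \le d-s-1$ and a surjection on $\pi_{d-s}$; but the fiber of the same map is $F(\tau^{\ge d+1}M)[-1]\in \fcat D_{\ge d+2-s}$, which is $(\ge d-s+1)$-connective, so the map is also injective on $\pi_{d-s}$. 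Hence $F(\tau^{\le d}M)\to F(M)$ is an isomorphism on all homotopy groups in degrees $\le d-s$, which is exactly the statement that $\tau^{\le d-s}F(\tau^{\le d}M)\to \tau^{\le d-s}F(M)$ is an equivalence.

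The argument is essentially formal once the fiber sequence is written down; the only point requiring a little care is the behavior at the boundary degree $i = d-s$, where one must use \emph{both} the connectivity estimate on the cofiber of $F(\tau^{\le d}M)\to F(M)$ (which handles surjectivity) and the connectivity estimate on its fiber, namely $F(\tau^{\ge d+1}M)[-1]$ lying in $\fcat D_{\ge d+2-s}$ (which handles injectivity). This is the part I would expect to be the main, if modest, obstacle: making sure the shift bookkeeping is consistent so that both estimates are strict enough to pin down $\pi_{d-s}$ exactly. All maps are manifestly natural in $M$, since they are obtained by applying $F$ to the natural truncation triangle, so the resulting equivalence is natural as claimed.
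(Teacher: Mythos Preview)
Your argument is correct and is exactly the standard one: apply $F$ to the truncation triangle $\tau^{\le d}M \to M \to \tau^{\ge d+1}M$, use that $F(\tau^{\ge d+1}M)\in \fcat D^{\ge d+1-s}$, and read off from the long exact sequence that the induced map on $H^i$ (your $\pi_i$) is an isomorphism for $i\le d-s$. The paper's own proof is simply ``Omitted'', so there is nothing to compare; your write-up is precisely what a spelled-out proof would look like. The only cosmetic point is that you mix homological notation ($\pi_i$, $\fcat C_{\ge n}$) with the paper's cohomological conventions ($H^i$, $\fcat C^{\ge n}$, $\tau^{\le d}$); translating everything into the latter would make the bookkeeping at the boundary degree $i=d-s$ read more cleanly, but the mathematics is already right.
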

\begin{proof}
	Omitted.
\end{proof}

\begin{prop}\label{prop:general properties of d-Hodge-properness}
	Let $\mstack X$ be a smooth stack over $R$ that is $d$-Hodge-proper (resp. $d$-de Rham-proper).
	\begin{enumerate}
		\item Let $R\ra R'$ be a faithfully flat map of Noetherian rings. Then $\mstack X$ is $d$-Hodge-proper (resp. $d$-de Rham-proper) if and only if the base change $\mstack X_{R'}\coloneqq \mstack X \times_{\Spec R} \Spec R'$ is.
		\item Let $R\ra R'$ be a map of Noetherian rings that has Tor-amplitude $[-s,0]$. Then the base change $\mstack X_{R'}$ is $(d-s)$-Hodge proper (resp. $(d-s)$-de Rham-proper).
	\end{enumerate}
\end{prop}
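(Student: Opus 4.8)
The strategy is to deduce both parts from a single base change statement for coherent and de Rham cohomology of smooth qcqs stacks, combined with elementary $t$-structure bookkeeping of the kind isolated in \Cref{lem:useful lemma}. The base change statement I would first record is: for a smooth qcqs Artin stack $\mstack X$ over $R$ and \emph{any} map of Noetherian rings $R\to R'$ there are natural equivalences
$$
\RG\bigl(\mstack X_{R'},\wedge^j\mbb L_{\mstack X_{R'}/R'}\bigr)\simeq \RG\bigl(\mstack X,\wedge^j\mbb L_{\mstack X/R}\bigr)\otimes_R R',\qquad \RG_\dR(\mstack X_{R'}/R')\simeq \RG_\dR(\mstack X/R)\otimes_R R',
$$
with $\otimes_R R'$ the derived tensor product. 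To see this I would choose a smooth affine hypercover $X_\bullet\to\mstack X$ (so $X_{\bullet,R'}\to\mstack X_{R'}$ is one for the base change, as each $\mc O(X_p)$ is $R$-flat); then, using that cotangent complexes commute with base change, both sides are totalizations of cosimplicial objects whose $p$-th term is the de Rham complex $\Omega^\bullet_{\mc O(X_p)/R}$, resp.\ a finite complex of vector bundles representing $\wedge^j\mbb L_{\mstack X/R}|_{X_p}$ — in either case a complex of flat (indeed projective) $R$-modules, of cohomological amplitude bounded uniformly in $p$ because $\mbb L_{\mstack X/R}$ is perfect. Hence the total complex consists of flat $R$-modules (in each total degree only finitely many simplicial degrees contribute), so $-\otimes_R R'$ is computed underived there and commutes with the totalization, and running the same computation over $R'$ identifies the two sides. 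For faithfully flat $R\to R'$ this is plain flat base change and could simply be quoted from \cite{KubrakPrikhodko_HdR}; the discussion above is only needed for the non-flat case in part (2).

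For part (1), assume $R\to R'$ is faithfully flat, so $-\otimes_R R'$ is exact and the base change identities give $H^i(\mstack X_{R'},\wedge^j\mbb L_{\mstack X_{R'}/R'})\cong H^i(\mstack X,\wedge^j\mbb L_{\mstack X/R})\otimes_R R'$ and $H^i_\dR(\mstack X_{R'}/R')\cong H^i_\dR(\mstack X/R)\otimes_R R'$. If $\mstack X$ is $d$-Hodge-proper (resp.\ $d$-de Rham-proper), the right-hand sides are finitely generated over $R'$ in the relevant range of $(i,j)$ (resp.\ $i$), since base change preserves finite generation; conversely, if they are finitely generated over $R'$ then the original groups are finitely generated over $R$ by faithfully flat descent of finite generation — given an $R$-module $M$ with $M\otimes_R R'$ finitely generated over $R'$, choose finitely many elements of $M$ whose images generate, let $N\subseteq M$ be the submodule they span, and deduce $(M/N)\otimes_R R'=0$, hence $M=N$ (see \cite{StacksProject}). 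This proves the "if and only if" in both variants.

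For part (2), $R\to R'$ has Tor-amplitude $[-s,0]$, so $F\coloneqq -\otimes_R R'\colon\DMod{R}\to\DMod{R'}$ carries $\DMod{R}^{\ge n}$ into $\DMod{R'}^{\ge n-s}$ and preserves $\Coh$ (both rings Noetherian). The $d$-Hodge-properness of $\mstack X$ reads $\tau^{\le d-j}\RG(\mstack X,\wedge^j\mbb L_{\mstack X/R})\in\Coh(R)$ for all $j\ge 0$ (\Cref{rem:reformulation of d-Hodge-properness}). Combining the base change equivalence with \Cref{lem:useful lemma} applied to $F$ — whose conclusion holds verbatim for a functor with the above $t$-exactness property, the $\tau^{>d}$-part lying in degrees $\ge d+1$ and hence landing after $F$ in degrees $\ge d+1-s>d-s$ — one gets
$$
\tau^{\le d-j-s}\RG\bigl(\mstack X_{R'},\wedge^j\mbb L_{\mstack X_{R'}/R'}\bigr)\simeq \tau^{\le d-j-s}\Bigl(\bigl(\tau^{\le d-j}\RG(\mstack X,\wedge^j\mbb L_{\mstack X/R})\bigr)\otimes_R R'\Bigr)\in\Coh(R'),
$$
so $H^i(\mstack X_{R'},\wedge^j\mbb L_{\mstack X_{R'}/R'})$ is finitely generated whenever $i+j\le d-s$; that is, $\mstack X_{R'}$ is $(d-s)$-Hodge-proper. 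The de Rham case is identical, replacing $\wedge^j\mbb L_{\mstack X/R}$ by $\mc O$ and invoking \Cref{rem:reformulation of d-de Rham-properness}.

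I expect the only genuine obstacle to be the base change equivalence along a non-flat $R\to R'$ needed in part (2): one must know that $-\otimes_R R'$ commutes with the cosimplicial totalization computing $\RG(\mstack X,-)$, which is not automatic for an infinite limit. It goes through here precisely because $\wedge^j\mbb L_{\mstack X/R}$ (and likewise the de Rham complex of the hypercover) is perfect, so in each fixed cohomological degree the totalization reduces to a finite direct sum of flat $R$-modules; everything else is formal or standard commutative algebra.
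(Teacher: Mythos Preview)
Your proposal is correct and follows essentially the same route as the paper: establish the base change equivalences
\[
\RG(\mstack X,\wedge^j\mbb L_{\mstack X/R})\otimes_R R'\simeq \RG(\mstack X_{R'},\wedge^j\mbb L_{\mstack X_{R'}/R'}),\qquad \RG_\dR(\mstack X/R)\otimes_R R'\simeq \RG_\dR(\mstack X_{R'}/R'),
\]
then invoke faithfully flat descent of finite generation for part~(1) and \Cref{lem:useful lemma} applied to $-\otimes_R R'$ for part~(2). The paper simply quotes these base change statements from \cite{KubrakPrikhodko_HdR} and \cite{KubrakPrikhodko_pHodge}, while you sketch a direct proof via totalization over a smooth affine hypercover; otherwise the arguments are identical.

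One minor imprecision in your sketch: for the de Rham case, the complex $\Omega^\bullet_{X_p/R}$ has amplitude $[0,\dim X_p]$, which is \emph{not} bounded uniformly in $p$ (your justification ``because $\mbb L_{\mstack X/R}$ is perfect'' applies only to the Hodge case). Fortunately your conclusion ``in each total degree only finitely many simplicial degrees contribute'' needs only a uniform \emph{lower} bound, and $\Omega^\bullet_{X_p/R}$ is in degrees $\ge 0$, so combined with $p\ge 0$ only $p\le n$ contribute in total degree $n$. With this correction the argument goes through.
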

\begin{proof}
Consider the projection $q\colon \mstack X_{R'} \ra \mstack X$. Since $\mstack X$ is smooth (and qcqs) and $R\ra R'$ is of finite Tor-amplitude we can apply base change to get an equivalence
$$
\RG(\mstack X, \wedge^i\mbb L_{\mstack X/R})\otimes_R R'\ism \RG(\mstack X_{R'}, q^*(\wedge^i\mbb L_{\mstack X/R})),
$$
By base change for cotangent complex (and its exterior powers, see \cite[Corollary A.2.48]{KubrakPrikhodko_pHodge}) we also have $q^*(\wedge^i\mbb L_{\mstack X/R})\simeq \wedge^i\mbb L_{\mstack X_{R'}/R'}$. Similarly, for de Rham cohomology we have
$$
\RG_\dR(\mstack X/R)\otimes_R R'\ism \RG_\dR(\mstack X_{R'}/R').
$$ In point 1, by flatness of $R'$, $\tau^{\le d-i}\RG(\mstack X_{R'},\wedge^i\mbb L_{\mstack X_{R'}/R'})\simeq \RG(\mstack X, \wedge^i\mbb L_{\mstack X/R})\otimes_R R'$ for any $i\ge 0$ and we are done by \Cref{lem:descent for Coh} below. The proof for de Rham cohomology is analogous. For point 2, \Cref{lem:useful lemma}  (applied to the tensor product $-\otimes_R R'$ and $M=\RG(\mstack X, \wedge^i\mbb L_{\mstack X/R})$) gives us an equivalence
$$
\tau^{\le d-s-i}\left(\tau^{\le d-i}\RG(\mstack X, \wedge^i\mbb L_{\mstack X/R})\otimes_R R'\right)\ism  \tau^{\le d-s-i}\RG(\mstack X_{R'},\wedge^i\mbb L_{\mstack X_{R'}/R'})
$$
for any $i\ge 0$ and, similarly, also
$$
\tau^{\le d-s}(\tau^{\le d}(\RG_\dR(\mstack X/R))\otimes_R R')\ism \tau^{\le d-s}\RG_\dR(\mstack X_{R'}/R').
$$
Since $\tau^{\le d-i}\RG(\mstack X, \wedge^i\mbb L_{\mstack X/R})\in \Coh(R)$ for any $i\ge 0$ by $d$-Hodge-properness of $\mstack X$ we have $\tau^{\le d-i}\RG(\mstack X, \wedge^i\mbb L_{\mstack X/R})\otimes_R R'\in \Coh(R')$ (see e.g. the argument in \cite[Lemma 1.1.6]{KubrakPrikhodko_pHodge}), which then also holds for the $(d-s-i)$-th truncation. This shows that $\mstack X_{R'}$ is $(d-s)$-Hodge-proper. The rest of the proof for the de Rham-proper condition is similar.
\end{proof}

\begin{lem}\label{lem:descent for Coh}
	Let $R\ra R'$ be a faithfully flat map. Then a complex $M\in \DMod{R}$ lies in $\Coh(R)$ is and only if $M\otimes_RR'$ lies in $\Coh(R')$. 
\end{lem}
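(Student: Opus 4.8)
The plan is to reduce the statement to (faithful) flatness of $R\to R'$ applied degreewise on cohomology. Recall that $\Coh(R)\subset\DMod{R}$ consists of the cohomologically bounded complexes all of whose cohomology modules are finitely generated over $R$ (equivalently, coherent, since $R$ is Noetherian). Since $R'$ is flat over $R$, for any $M\in\DMod{R}$ there is a natural identification $H^i(M\otimes_R R')\simeq H^i(M)\otimes_R R'$ for all $i$.

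The "only if" direction is then immediate: if $M\in\Coh(R)$, each $H^i(M)$ is finitely generated over $R$, so each $H^i(M)\otimes_R R'$ is finitely generated over $R'$, and the cohomological bounds are obviously preserved; hence $M\otimes_R R'\in\Coh(R')$. For the converse, assume $M\otimes_R R'\in\Coh(R')$. Using again $H^i(M)\otimes_R R'\simeq H^i(M\otimes_R R')$ together with faithful flatness (an $R$-module killed by $-\otimes_R R'$ vanishes), the vanishing of $H^i(M\otimes_R R')$ outside a finite range forces the vanishing of $H^i(M)$ in the same range, so $M$ is cohomologically bounded. It remains to check that each $H^i(M)$ is finitely generated over $R$, knowing that $H^i(M)\otimes_R R'$ is finitely generated over $R'$.

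This last point is the only non-formal input, namely that finite generation of a module descends along a faithfully flat ring map. To prove it I would write $N:=H^i(M)$ as the filtered colimit $N=\colim_\alpha N_\alpha$ of its finitely generated $R$-submodules, so that $N\otimes_R R'=\colim_\alpha(N_\alpha\otimes_R R')$; since $N\otimes_R R'$ is finitely generated over $R'$, its finitely many generators lie in the image of $N_{\alpha_0}\otimes_R R'$ for some index $\alpha_0$, so $N_{\alpha_0}\otimes_R R'\to N\otimes_R R'$ is surjective, and its cokernel $C$ satisfies $C\otimes_R R'=0$, whence $C=0$ by faithful flatness; thus $N=N_{\alpha_0}$ is finitely generated. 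Applying this in every degree shows $M\in\Coh(R)$. I do not anticipate a genuine obstacle here; the only thing worth stressing is that faithful flatness (not mere flatness) is essential in this last step — e.g. $\bigoplus_p\mathbb Z/p$ over $\mathbb Z$ becomes zero, hence trivially finitely generated, after $-\otimes_{\mathbb Z}\mathbb Q$.
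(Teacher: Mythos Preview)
Your proof is correct and follows essentially the same approach as the paper: use flatness to get $H^i(M\otimes_R R')\simeq H^i(M)\otimes_R R'$, deduce boundedness from faithful flatness, and then argue that finite generation of a module descends along a faithfully flat map. The only difference is that the paper dispatches this last step by citing \cite[Tag 03C4(2)]{StacksProject}, whereas you spell out the standard filtered-colimit argument yourself.
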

\begin{proof}
	By flatness we have $H^i(M\otimes_RR')\simeq H^i(M)\otimes_R R'$. In particular, $M$ is bounded if and only if $M\otimes_RR'$ is. Finally, by \cite[Tag 03C4(2)]{StacksProject}, $H^i(M)\otimes_R R'$ is finitely generated over $R'$ if and only if $H^{i}(M)$ is.
\end{proof}

\section{$\mbb Q_p$-\'etale cohomology, de Rham cohomology over $B_\dR$ and log period sheaves}

\subsection{De Rham cohomology over $B_\dR$}
In this section we show that the "de Rham cohomology over $B_\dR$" of the Raynaud generic fiber of a $d$-Hodge proper stack agrees with the algebraic de Rham cohomology up to degree $d-1$. We describe in detail the notion of $(p,t)$-completed tensor product $-\widehat\otimes_{\mc O_K} B_\dR$ and show that it agrees with the one in \cite{DiaoKaiWenLiuZhu_LogarithmicRH} in the cases of our interest (see \Cref{rem:product agrees with one in the analytic setting}).

 Recall the Fontaine's map $\theta\colon \Ainf \surj \mathcal O_{C}$. Given a finite extension $K$ of $\mbb Q_p$ we can consider the tensor product $A_{\mathrm{inf}, K} \coloneqq \mathcal O_K \otimes_{W(k)} \Ainf$. Let $e\coloneqq [K:K_0]$ be the ramification index. Let also $B_{\dR,K}^+\coloneqq \left(A_{\mathrm{inf}, K}[\frac{1}{p}]\right)^\wedge_{\ker \theta}$ be the corresponding analogue of $B_\dR^+$. Note that the natural map $\mc O_K\otimes_{W(k)}B_\dR^+ \ra B_{\dR,K}^+$ is an equivalence. Indeed, both sides are $t$-adically complete $B_\dR^+$-modules and the map becomes identity modulo $(t)\simeq \ker \theta\subset B_\dR$. 
 
Recall that the embedding $\ol K \inj C\simeq B_\dR^+/(t)$ lifts canonically to a map $\ol K \dashrightarrow B_\dR^+$ since $B_\dR^+$ is a strictly Henselian local ring. One then has a canonical map $B_{\dR,K}^+\simeq \mc O_K\otimes_{W(k)}B_\dR^+\ra B_\dR^+$ which is the tensor product of the identity map $B_\dR^+\xra{\id}B_\dR^+$ and the restriction of $\ol K\ra B_\dR^+$ to $\mc O_K$.

\begin{rem}\label{rem:formular for BdRK}
	Note that $B_{\dR,K}^+$ is in fact isomorphic to the product of $e$ copies of $B_\dR^+$. Indeed, since $B_\dR^+$ is a $\ol K$-algebra and $\mc O_K\otimes_{W(k)}\ol K\simeq \ol K^{\oplus e}$, one has isomorphisms
	$$
	B_{\dR,K}^+\ism \mc O_K\otimes_{W(k)}B_\dR^+\ism (\ol K)^{\oplus e}\otimes_{\ol K} B_\dR^+ \ism (B_\dR^+)^{\oplus e}.
	$$
	The map $B_{\dR,K}^+\ra B_{\dR}^+$ in these terms is just the projection on one of the components. Note that this map is flat.
\end{rem}
The following construction plays a key role for this paper. Below, we also consider the composition $B_{\dR,K}^+\ra B_{\dR}^+\ra B_\dR$ (recall that $B_\dR\coloneqq B_\dR^+[\frac{1}{t}]$). 

 \begin{construction}
 	Let $M$ be a complex of $\mathcal O_K$-modules. We define the \emdef{$(p,t)$-completed tensor products $M\cotimes_{\mc O_K} B_\dR^+$ and $M\cotimes_{\mc O_K} B_\dR$} as follows:
 	\begin{gather*}
M\cotimes_{\mc O_K} B_\dR^+ \coloneqq  \left((M\otimes_{W(k)} \Ainf )_p^\wedge[\tfrac{1}{p}]\right)_{\ker \theta}^\wedge\otimes_{B_{\dR,K}^+}B_\dR^+,\\
 M\cotimes_{\mc O_K} B_\dR \coloneqq  \left((M\otimes_{W(k)} \Ainf )_p^\wedge[\tfrac{1}{p}]\right)_{\ker \theta}^\wedge\otimes_{B_{\dR,K}^+}B_\dR.
 	\end{gather*}

 \end{construction}
\begin{rem}\label{rem_cotimes_BdR_OK}
	By construction, $\mc O_K\widehat\otimes_{\mc O_K}B_\dR\simeq B_{\dR,K}^+\otimes_{B_{\dR,K}^+}B_\dR\simeq B_\dR$. Also, for any $M$ we have $M\cotimes_{\mc O_K} B_\dR\simeq (M\cotimes_{\mc O_K} B_\dR^+)[\frac{1}{t}]$.
\end{rem}
The following lemma will be useful later:
\begin{lem}\label{lem: tensor product is t-exact up to a shift}
	The $(p,t)$-completed tensor product functor $-\cotimes_{\mc O_K} B_\dR\colon \DMod{\mc O_K} \ra \DMod{B_\dR}$ is left $t$-exact up to a shift by $1$.
\end{lem}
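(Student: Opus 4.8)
The plan is to factor the functor $-\cotimes_{\mc O_K}B_\dR$ into the operations out of which it is built,
\[
M\;\mapsto\; M\otimes_{W(k)}\Ainf\;\mapsto\;(M\otimes_{W(k)}\Ainf)^\wedge_p\;\mapsto\;(M\otimes_{W(k)}\Ainf)^\wedge_p\bigl[\tfrac1p\bigr]\;\mapsto\;\Bigl((M\otimes_{W(k)}\Ainf)^\wedge_p[\tfrac1p]\Bigr)^\wedge_{\ker\theta}\otimes_{B^+_{\dR,K}}B_\dR ,
\]
and to follow coconnectivity through each. Three of the steps are $t$-exact: $-\otimes_{W(k)}\Ainf$, because $\Ainf$ is $p$-torsion-free and $p$-complete, hence flat over the DVR $W(k)$; inverting $p$; and $-\otimes_{B^+_{\dR,K}}B_\dR$, because by \Cref{rem:formular for BdRK} the structure map $B^+_{\dR,K}\xra{\sim}(B^+_\dR)^{\oplus e}\to B^+_\dR\to B_\dR$ is a projection onto a direct factor followed by a localization, hence flat. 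The only delicate steps are the two derived completions. Derived completion along a principal ideal generated by a regular element is left $t$-exact up to a shift by $1$: writing it as $\lim_n(-)\otimes^{\mathbb L}_R R/\mf a^n$ with $R/\mf a^n$ of projective dimension $\le 1$, each term lands in $D^{\ge -1}$ on coconnective input, and $\lim_{\mathbb N^{\op}}$ preserves lower cohomological bounds. Applied twice this bound would cost two degrees, and the whole content of the statement is that the $\ker\theta$-completion costs nothing beyond the $p$-completion; this is where I expect the only real work to lie.

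First I would treat the $p$-completion. Write $X\coloneqq M\otimes_{W(k)}\Ainf$, which lies in $D^{\ge 0}$ when $M$ does, so that $X^\wedge_p\in D^{\ge -1}$. The extra input is that $X^\wedge_p\otimes^{\mathbb L}_{\Ainf}\mc O_C$ also lies in $D^{\ge -1}$; equivalently, letting $\xi$ generate $\ker\theta\subseteq\Ainf$, the group $H^{-1}(X^\wedge_p)$ is killed by no power of $\xi$. Indeed, derived $p$-completion commutes with $-\otimes^{\mathbb L}_{\Ainf}\mc O_C$ (tensoring with the perfect complex $\mc O_C\simeq\Ainf/\xi$ commutes with the limit defining the completion), so base change along $W(k)\to\Ainf\to\mc O_C$ gives
\[
X^\wedge_p\otimes^{\mathbb L}_{\Ainf}\mc O_C\;\simeq\;(X\otimes^{\mathbb L}_{\Ainf}\mc O_C)^\wedge_p\;\simeq\;(M\otimes_{W(k)}\mc O_C)^\wedge_p,
\]
and $\mc O_C$ being $W(k)$-flat, $M\otimes_{W(k)}\mc O_C\in D^{\ge 0}$, so this completion lies in $D^{\ge -1}$. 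Inverting $p$ is $t$-exact and commutes with $-\otimes^{\mathbb L}\mc O_C$, so $Y\coloneqq X^\wedge_p[\tfrac1p]$ still satisfies $Y\in D^{\ge -1}$ and $Y\otimes^{\mathbb L}\mc O_C\in D^{\ge -1}$, i.e.\ $H^{-1}(Y)$ is $\xi$-torsion-free.

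It remains to run the $\ker\theta$-completion on $Y$. By \Cref{rem:formular for BdRK} (which in particular identifies $(A_{\mathrm{inf},K}[\tfrac1p])^\wedge_{(\xi)}$ with $B^+_{\dR,K}\simeq(B^+_\dR)^{\oplus e}$) together with standard properties of derived completions — the residual $\ker\theta$-completion of a $(\xi)$-complete $B^+_{\dR,K}$-module is the exact projection onto the distinguished factor — the composite $(-)^\wedge_{\ker\theta}\otimes_{B^+_{\dR,K}}B_\dR$ coincides with $(-)^\wedge_{(\xi)}$ followed by an exact base change. So it is enough to see that $Y^\wedge_{(\xi)}\in D^{\ge -1}$. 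But $Y^\wedge_{(\xi)}=\lim_n Y\otimes^{\mathbb L}_R R/\xi^n$ with $\xi$ regular, and since $\xi$ acts injectively on $H^{-1}(Y)$ so does every power, whence $H^{-2}(Y\otimes^{\mathbb L}_R R/\xi^n)\cong H^{-1}(Y)[\xi^n]=0$; thus each term lies in $D^{\ge -1}$, hence so does the limit. Combining this with the three $t$-exact steps shows $M\cotimes_{\mc O_K}B_\dR\in D^{\ge -1}$ for all $M\in D^{\ge 0}(\mc O_K)$, which is exactly the assertion that $(-\cotimes_{\mc O_K}B_\dR)[-1]$ is left $t$-exact. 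The whole argument rests on the single structural input that $\Ainf/\xi\simeq\mc O_C$ is flat over $W(k)$: this both manufactures the $\xi$-torsion-freeness that survives the $p$-completion and blocks a second loss of a degree at the $\ker\theta$-completion.
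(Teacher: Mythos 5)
Your proof is correct and follows essentially the same route as the paper: factor the functor into the flat base change to $\Ainf$, the $p$-completion (costing one degree), the $t$-exact localization, the $\xi$-adic completion as a limit of $Y\otimes^{\mathbb L}R/\xi^n$, and the flat projection $B^+_{\dR,K}\to B_\dR$, with the key input in both cases being that modulo $\xi$ everything reduces to $(M\otimes_{W(k)}\mc O_C)^\wedge_p[\tfrac1p]\in D^{\ge -1}$. The only cosmetic difference is that the paper passes from $k=1$ to general $k$ by induction on the fiber sequences relating $Y/\xi^{k-1}$, $Y/\xi^k$ and $Y/\xi$, whereas you extract the equivalent statement that $H^{-1}(Y)$ is $\xi$-torsion-free.
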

\begin{proof}
	Note that the functor $M\mapsto (M\widehat\otimes_{W(k)}\Ainf)[\frac{1}{p}]/\xi^k$ is left $t$-exact up to a shift by 1. Indeed, for $k=1$ we have $M\mapsto (M\widehat\otimes_{W(k)}\mc O_C)[\frac{1}{p}]$, which is the composition of tensor product $-\otimes_{W(k)}\mc O_C$ (which is $t$-exact since $\mc O_C$ is $p$-torsion free), derived $p$-adic completion (which is left $t$-exact up to a shift by 1) and  localization (which is $t$-exact). To get the statement for a general $k$ one can argue by induction using the fiber sequence 
	$$
(M\widehat\otimes_{W(k)}\Ainf)[\tfrac{1}{p}]/\xi^{k-1} \tto 	(M\widehat\otimes_{W(k)}\Ainf)[\tfrac{1}{p}]/\xi^k \tto (M\widehat\otimes_{W(k)}\Ainf)[\tfrac{1}{p}]/\xi.
	$$
	$M\cotimes_{\mc O_K} B_\dR$ then is obtained by first taking limit of $(M\widehat\otimes_{W(k)}\Ainf)[\tfrac{1}{p}]/\xi^k$ over $k$ 
and then the tensor product $-\otimes_{B_{\dR,K}^+}B_\dR$ (which is $t$-exact by \Cref{rem:formular for BdRK}). Since limits are left $t$-exact this gives the claim. 
\end{proof}

 \begin{rem} The map $\theta$ and the natural embedding $\mathcal O_K \inj \mathcal O_{C}$ agree on $W(k)$ and produce a natural morphism
 $$\theta_K\colon A_{\mathrm{inf}, K} \xymatrix{\ar@{->>}[r] &} \mathcal O_{C}.$$
One can show that there is also another formula for $M\widehat\otimes_{\mc O_K} B_\dR^+$ in terms of $\ker \theta_K$-adic completion
$$
M\cotimes_{\mc O_K} B^+_\dR \coloneqq  \left((M\otimes_{W(k)} \Ainf )_p^\wedge[\tfrac{1}{p}]\right)_{\ker \theta_K}.
$$
\end{rem}
Now we can define "Hodge" and "de Rham" cohomology of the Raynaud generic fiber "over $B_\dR$".
\begin{construction}\label{constr:Hodge and de Rham cohomology over BdR}
Let $U$ be a smooth affine scheme over $\mathcal O_K$. We define
$$R\Gamma(\widehat U_K, \Omega^i\cotimes_{\mc O_K} B_\dR) \coloneqq \Omega^i_U\cotimes_{\mc O_K} B_\dR \quad\text{and}\quad R\Gamma_\dR(\widehat U_K /B_\dR) \coloneqq \Omega^\bullet_{U, \dR} \cotimes_{\mc O_K} B_\dR.$$
We call $R\Gamma_\dR(\widehat U_K /B_\dR)$ \textit{the de Rham cohomology of $U_K$ over $B_\dR$}.
We extend this definition to all smooth Artin $\mathcal O_K$-stacks via the right Kan extension. That is, for a smooth Artin stack $\mc O_K$-stack $\mstack X$ we have 
\begin{gather*}
R\Gamma(\widehat{\mstack X}_K, \Omega^i\cotimes_{\mathcal O_K} B_\dR) \simeq \lim_{U \in \mathrm{Aff}^{\mathrm{sm}, \op}_{/\mstack X}} R\Gamma(\widehat U_K, \Omega^i\cotimes_{\mathcal O_K} B_\dR),\\
R\Gamma_\dR(\widehat{\mstack X}_K / B_\dR) \simeq \lim_{U \in \mathrm{Aff}^{\mathrm{sm}, \op}_{/\mstack X}} R\Gamma_\dR(\widehat U_K / B_\dR).
\end{gather*}
One also has an obivous version with $B_\dR^+$ instead of $B_\dR$.
\end{construction}
\begin{rem}
Note that for a smooth affine scheme $U$ the complex $R\Gamma_\dR(\widehat U_K / B_\dR)$ admits a finite Hodge filtration with associated graded pieces $R\Gamma(\widehat U_K, \Omega^i\cotimes_{\mathcal O_K} B_\dR)$. Passing to the limit we find a similar complete Hodge filtration on $R\Gamma_\dR(\widehat{\mstack X}_K / B_\dR)$ for any smooth Artin $\mathcal O_K$-stack $\mstack X$.
\end{rem}
\begin{rem}\label{rem:product agrees with one in the analytic setting}
	We point out that the definitions in \Cref{constr:Hodge and de Rham cohomology over BdR} agree with the ones in \cite[Section 3.1]{liu2017rigidity} and \cite[Definition 3.1.1]{DiaoKaiWenLiuZhu_LogarithmicRH} in the case of Raynaud generic fiber of a smooth scheme. Indeed, let $X=\Spa(R,R^+)$ be a smooth affinoid adic space over $\Spa(K,\mc O_K).$ The complex $\RG(X_\an, \Omega^i_X\widehat\otimes B_\dR^+)$ (in the notations of \cite[Section 3.1]{liu2017rigidity}) is by definition given by  the limit over $n$ of the $p$-completed tensor products $\Omega^i_X\widehat{\otimes}_K B_\dR^+/t^n$. Let $X=\widehat{U}_K$ be the Raynaud generic fiber of a smooth affine $\mc O_K$-scheme $U\coloneqq \Spec A$: in this case $(R,R^+)\simeq (A^\wedge_p[\frac{1}{p}],A^\wedge_p)$. We will construct the natural isomorphisms
	$$
	\Omega^i_U\widehat\otimes_{\mc O_K}B_\dR^+\ism \Omega^i_X\widehat{\otimes}B_\dR^+;
	$$ the analogous isomorphisms $\Omega^i_U\widehat\otimes_{\mc O_K}B_\dR \xra{\sim}\Omega^i_X\widehat{\otimes}B_\dR$ and $\Omega^i_{U,\dR}\widehat\otimes_{\mc O_K}B_\dR\xra{\sim} \Omega^i_{X,\dR}\widehat{\otimes}B_\dR$ will then follow by inverting $t$ and considering the Hodge filtration correspondingly. Note that by smoothness, $\Omega^i_U$ is a perfect $A$-module, and since $\Omega^i_X\simeq \Omega^i_U\otimes_A R$, it is enough to consider the case $i=0$; namely, of the structure sheaf. This reduces to the existence of a natural equivalence
	\begin{equation}\label{eq:comparing with other tensor}
	\left((A\otimes_{W(k)} \Ainf)^\wedge_p[\tfrac{1}{p}]/\xi^n\right)\otimes_{B_{\dR,K}^+/t^n} B_\dR^+/t^n \ism R\widehat{\otimes}_K B_\dR^+/t^n
	\end{equation}
	for any $n\ge 0$.
	For this recall how the completion in the tensor product $R\widehat{\otimes}_K (B_\dR^+/t^n)$ can be defined: namely, the submodule $R^+\otimes_{W(k)}(\Ainf/\xi^n)\subset R{\otimes}_K (B_\dR^+/t^n)$ defines a $p$-adic lattice and $R\widehat{\otimes}_K (B_\dR^+/t^n)$ is the completion of $R{\otimes}_K (B_\dR^+/t^n)$ with respect to the corresponding topology. Consider another tensor product $R\widehat{\otimes}_{K_0} (B_\dR^+/t^n)$ with the $p$-adic completion defined by $R^+ {\otimes}_{W(k)}(\Ainf/\xi^n) \subset R {\otimes}_{K_0} (B_\dR^+/t^n)$; one has a formula 
	$$
	R\widehat{\otimes}_{K_0} (B_\dR^+/t^n)\simeq (R^+ {\otimes}_{W(k)}(\Ainf/\xi^n))^\wedge_p[\tfrac{1}{p}].
	$$
	Note that since $R^+\simeq A^\wedge_p$ we have an equivalence $$ (A {\otimes}_{W(k)}(\Ainf/\xi^n))^\wedge_p\ism (R^+ {\otimes}_{W(k)}(\Ainf/\xi^n))^\wedge_p.$$ Also, since $\Ainf/\xi^n$ is a perfect $\Ainf$-module, one can commute it through the completion: $$(A\otimes_{W(k)} \Ainf)^\wedge_p[\tfrac{1}{p}]/\xi^n\ism A {\otimes}_{W(k)}(\Ainf/\xi^n))^\wedge_p[\tfrac{1}{p}].$$
	 Finally, note that $R\widehat{\otimes}_{K_0} (B_\dR^+/t^n)$ is a  $K\otimes_{K_0}(B_\dR^+/t^n)\simeq B_{\dR,K}^+/t^n$-algebra and, moreover, that  $$R\widehat{\otimes}_K B_\dR^+/t^n\simeq R\widehat{\otimes}_{K_0} (B_\dR^+/t^n)\otimes_{B_{\dR,K}^+/t^n}(B_\dR^+/t^n).$$ Putting the last three isomorphisms together gives the desired isomorphism \Cref{eq:comparing with other tensor}.
\end{rem}	
We will need the following basic result about $B_\dR$-cohomology.
\begin{prop}\label{prop:key proposition}
Let $\mstack X$ be a smooth Artin stack over $\mc O_K$. Then
\begin{enumerate}[label=(\arabic*)]
\item The natural map
$$R\Gamma(\mstack X, \wedge^i \mathbb L_{\mstack X/\mathcal O_K}) \cotimes_{\mathcal O_K} B_\dR \tto R\Gamma(\widehat{\mstack X}_K, \Omega^i\cotimes_{\mathcal O_K} B_\dR)$$
is an equivalence.
\item So is the natural map $\RG_\dR(\mstack X/\mc O_K)\cotimes_{\mathcal O_K} B_\dR \tto R\Gamma_\dR(\widehat {\mstack X}_K/B_\dR)$.
\item If additionally $\mstack X$ is $d$-de Rham-proper over $\mc O_K$, then the natural map
$$
\RG_\dR(\mstack X_K/K)\otimes_K B_\dR \tto \RG_\dR(\widehat {\mstack X}_K/B_\dR)
$$
induces an isomorphism $H^i_\dR(X_K/K)\otimes_K B_\dR\simeq H^i_\dR(\widehat {\mstack X}_K/B_\dR)$ for $i\le d-1$ and an embedding $H^d_\dR(X_K/K)\otimes_K B_\dR\hookrightarrow H^d_\dR(\widehat {\mstack X}_K/B_\dR)$
\end{enumerate}

\begin{proof}
For the first assertion note that by construction both sides coincide on smooth affine schemes. Hence to prove the claim it is enough to show that both sides satisfy smooth descent. For the left hand side this follows from the flat descent for cotangent complex (see e.g. \cite[Proposition 1.1.5]{KubrakPrikhodko_HdR}) and \Cref{lem_cotimes_BdR_1} below. For the right hand side, note that by the same lemma the functor on smooth affine schemes that sends $U\mapsto \Omega^i_U\cotimes_{\mathcal O_K} B_\dR$ also satisfies smooth descent. It is then formal that the right Kan extension also satisfies smooth descent. 
The proof for the second part is completely analogous. 

Finally, part (3) follows from (2) and applying \Cref{lem_cotimes_BdR_2} below to $M=\RG_\dR(\mstack X/\mc O_K)$.
\end{proof}
\end{prop}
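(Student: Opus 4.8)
The plan is to obtain parts (1) and (2) by descent from smooth affine schemes, and then to extract (3) from (2) by a coherence estimate for completed tensor products.

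For (1) and (2): by construction the targets $\RG(\widehat{\mstack X}_K,\Omega^i\cotimes_{\mathcal O_K}B_\dR)$ and $\RG_\dR(\widehat{\mstack X}_K/B_\dR)$ are right Kan extensions, from the category of smooth affine $U\to\mstack X$, of the functors $U\mapsto\Omega^i_U\cotimes_{\mathcal O_K}B_\dR$ and $U\mapsto\Omega^\bullet_{U,\dR}\cotimes_{\mathcal O_K}B_\dR$ respectively. On the source side, $\RG(\mstack X,\wedge^i\mathbb L_{\mstack X/\mathcal O_K})$ and $\RG_\dR(\mstack X/\mathcal O_K)$ already satisfy smooth (even fppf) descent --- for the exterior powers of the cotangent complex this is standard, and de Rham cohomology reduces to that case via the Hodge filtration. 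So I would reduce everything to the statement that $-\cotimes_{\mathcal O_K}B_\dR$ sends a functor on smooth affines satisfying descent to another such one, and take this as the key input (Lemma~\ref{lem_cotimes_BdR_1} below). Granting it, all four complexes are right Kan extensions from smooth affines, and on a smooth affine $U=\Spec A$ the maps in (1) and (2) are equivalences because $\wedge^i\mathbb L_{U/\mathcal O_K}\simeq\Omega^i_{A/\mathcal O_K}$ sits in degree $0$ and $\RG_\dR(U/\mathcal O_K)\simeq\Omega^\bullet_{A/\mathcal O_K}$; passing to the limit concludes. (Alternatively (2) can be argued on Hodge-graded pieces from (1), using exactness of $-\cotimes_{\mathcal O_K}B_\dR$ and compatibility of the Hodge filtrations on the two sides.)

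For (3): put $M\coloneqq\RG_\dR(\mstack X/\mathcal O_K)$. Flat base change along $\mathcal O_K\to K$ identifies $\RG_\dR(\mstack X_K/K)\simeq M[\tfrac1p]$, so part (2) rewrites the map in question as the natural comparison $M[\tfrac1p]\otimes_K B_\dR\to M\cotimes_{\mathcal O_K}B_\dR$. By hypothesis $\tau^{\le d}M\in\Coh(\mathcal O_K)$ (Remark~\ref{rem:reformulation of d-de Rham-properness}), and I would finish by invoking the algebraic fact (Lemma~\ref{lem_cotimes_BdR_2} below) that for any $M$ with $\tau^{\le d}M$ coherent this comparison is an isomorphism on $H^i$ for $i\le d-1$ and injective on $H^d$. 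The mechanism there is to split $\tau^{\le d}M\to M\to\tau^{\ge d+1}M$: on the perfect complex $\tau^{\le d}M$ completed and honest tensor products with $B_\dR$ agree on the nose, while $\tau^{\ge d+1}M\cotimes_{\mathcal O_K}B_\dR$ is concentrated in degrees $\ge d$ because $-\cotimes_{\mathcal O_K}B_\dR$ is left $t$-exact only up to a shift by $1$ (Lemma~\ref{lem: tensor product is t-exact up to a shift}) --- which is precisely the source of the one-degree loss in the statement. I expect the real difficulty to lie in Lemma~\ref{lem_cotimes_BdR_1}: completed tensor products do not commute with arbitrary limits, so preservation of smooth descent has to be checked by hand against the totalizations coming from smooth hypercovers.
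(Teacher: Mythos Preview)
Your proposal is correct and follows essentially the same route as the paper: reduce (1) and (2) to smooth affines via descent and Lemma~\ref{lem_cotimes_BdR_1}, then deduce (3) from (2) via Lemma~\ref{lem_cotimes_BdR_2} applied to $M=\RG_\dR(\mstack X/\mathcal O_K)$. Your anticipated difficulty in Lemma~\ref{lem_cotimes_BdR_1} is milder than you expect: since $-\cotimes_{\mathcal O_K}B_\dR$ is left $t$-exact up to a shift (Lemma~\ref{lem: tensor product is t-exact up to a shift}), it automatically commutes with totalizations of uniformly bounded below cosimplicial diagrams by a general principle (the paper cites \cite[Corollary C.6]{KubrakPrikhodko_pHodge}).
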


\begin{rem}
	If $\mstack X$ is Hodge-proper over $\mc O_K$, then it is $d$-Hodge and, consequently, $d$-de Rham proper for any $d\ge 0$. Thus, from \Cref{prop:key proposition}(3) we get that the map 
	$$
	\RG_\dR(\mstack X_K/K)\otimes_K B_\dR \tto \RG_\dR(\widehat {\mstack X}_K/B_\dR)
	$$
	is a quasi-isomorphism.
\end{rem}
\begin{lem}\label{lem_cotimes_BdR_1}
Let $M^\bullet$ be a co-simplicial diagram of uniformly bounded below complexes of $\mathcal O_K$-modules. Then the natural map
\begin{gather*}
\Tot(M^\bullet) \cotimes_{\mathcal O_K} B_\dR \tto \Tot(M^\bullet\cotimes_{\mathcal O_K} B_\dR)
\end{gather*}
is an equivalences.

\begin{proof}
This follows from \cite[Corollary C.6]{KubrakPrikhodko_pHodge}, since the functor $-\cotimes_{\mathcal O_K} B_\dR$ is left $t$-exact up to a shift (\Cref{lem: tensor product is t-exact up to a shift}).
\end{proof}
\end{lem}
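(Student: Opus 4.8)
The plan is to reduce the statement to the general principle that a functor which is left $t$-exact up to a bounded shift commutes with totalizations of uniformly bounded-below cosimplicial diagrams. This is exactly \cite[Corollary C.6]{KubrakPrikhodko_pHodge}, applied to the functor $-\cotimes_{\mathcal O_K} B_\dR\colon \DMod{\mathcal O_K}\to\DMod{B_\dR}$, which is left $t$-exact up to a shift by $1$ by \Cref{lem: tensor product is t-exact up to a shift}. Let me indicate why this principle holds.

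Write $\Tot_n(M^\bullet)$ for the $n$-th partial totalization of $M^\bullet$ (the limit over the truncation $\Delta_{\le n}$), so that $\Tot(M^\bullet)\simeq\lim_n\Tot_n(M^\bullet)$. Each $\Tot_n(M^\bullet)$ is computed by a finite limit and is therefore preserved by the exact functor $-\cotimes_{\mathcal O_K} B_\dR$, yielding a compatible family of equivalences $\Tot_n(M^\bullet)\cotimes_{\mathcal O_K} B_\dR\simeq\Tot_n(M^\bullet\cotimes_{\mathcal O_K} B_\dR)$. So it suffices to commute $-\cotimes_{\mathcal O_K} B_\dR$ past the limit of the tower $\bigl(\Tot_n(M^\bullet)\bigr)_n$. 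The fiber of the transition map $\Tot_n(M^\bullet)\to\Tot_{n-1}(M^\bullet)$ is $N^n(M^\bullet)[-n]$, with $N^n(M^\bullet)$ the $n$-th conormalized term; since the $M^k$ are concentrated in cohomological degrees $\ge a$ for some fixed $a$, this fiber sits in degrees $\ge a+n$. Hence the connectivity of the fibers of the tower tends to $\infty$, and consequently, for every integer $m$, the map $\lim_n\Tot_n(M^\bullet)\to\Tot_N(M^\bullet)$ is an isomorphism on $H^{\le m}$ once $N$ is large enough.

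Applying $-\cotimes_{\mathcal O_K} B_\dR$ and using \Cref{lem: tensor product is t-exact up to a shift}, the fiber of $\Tot_n(M^\bullet)\cotimes_{\mathcal O_K} B_\dR\to\Tot_{n-1}(M^\bullet)\cotimes_{\mathcal O_K} B_\dR$ lies in degrees $\ge a+n-1$, so the image tower again has fibers of unbounded connectivity. The same reasoning then gives, for every $m$ and $N\gg 0$, that both $\lim_n\bigl(\Tot_n(M^\bullet)\cotimes_{\mathcal O_K} B_\dR\bigr)\to\Tot_N(M^\bullet)\cotimes_{\mathcal O_K} B_\dR$ and $\bigl(\lim_n\Tot_n(M^\bullet)\bigr)\cotimes_{\mathcal O_K} B_\dR\to\Tot_N(M^\bullet)\cotimes_{\mathcal O_K} B_\dR$ are isomorphisms on $H^{\le m}$ (for the latter, apply left $t$-exactness of limits and \Cref{lem: tensor product is t-exact up to a shift} to the fiber). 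Combining these with the finite-stage equivalences from the previous paragraph shows that $\Tot(M^\bullet)\cotimes_{\mathcal O_K} B_\dR\to\Tot(M^\bullet\cotimes_{\mathcal O_K} B_\dR)$ is an isomorphism on $H^{\le m}$ for all $m$, hence an equivalence.

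The one point that requires care is the last step's bookkeeping: a functor that is only left $t$-exact up to a shift loses a cohomological degree at each use, so one must check the connectivity bounds on the fibers still grow without bound after applying $-\cotimes_{\mathcal O_K} B_\dR$; they do, precisely because the shift in \Cref{lem: tensor product is t-exact up to a shift} is by the fixed amount $1$, independent of $n$. This is also exactly where the hypothesis enters: without a uniform lower bound on the terms $M^k$, the tower $\bigl(\Tot_n(M^\bullet)\bigr)_n$ need not be pro-constant in each cohomological degree, and the conclusion can genuinely fail.
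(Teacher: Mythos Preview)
Your proof is correct and follows exactly the paper's approach: invoke \cite[Corollary C.6]{KubrakPrikhodko_pHodge} together with the left $t$-exactness up to a shift established in \Cref{lem: tensor product is t-exact up to a shift}. The only difference is that you also unpack the proof of that corollary via the partial-totalization tower, which is helpful but not required.
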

\begin{lem}\label{lem_cotimes_BdR_2}
Let $M \in \DMod{\mc O_K}$ be such that $\tau^{\le d} M\in \Coh(\mathcal O_K)$. Then the natural map
$$M\otimes_{\mathcal O_K} B_\dR \tto M\cotimes_{\mathcal O_K} B_\dR$$
induces an isomorphism $H^i(M)\otimes_{\mc O_K}B_\dR\simeq H^i(M\cotimes_{\mathcal O_K} B_\dR)$ for $i\le d-1$ and an embedding $H^d(M)\otimes_{\mc O_K}B_\dR\hookrightarrow H^d(M\cotimes_{\mathcal O_K} B_\dR)$.

\begin{proof}
The map in question is an equivalence for $M = \mathcal O_K$ (\Cref{rem_cotimes_BdR_OK}), hence it is also so for all perfect (or equivalently, coherent) $\mathcal O_K$-modules. This applies in particular to $\tau^{\le d}(M)$. Recall that $-\cotimes_{\mc O_K}B_\dR$ is left $t$-exact up to a shift by 1, while $-\otimes_{\mc O_K}B_\dR$ is just left $t$-exact since $B_\dR$ is $p$-torsion free. From \Cref{lem:useful lemma} we get a commutative square 
$$
\xymatrix{\tau^{\le d-1}(\tau^{\le d}(M)\otimes_{\mc O_K}B_\dR)\ar[r]^\sim\ar[d]^\wr & \tau^{\le d-1}(\tau^{\le d}(M)\cotimes_{\mc O_K}B_\dR)\ar[d]^\wr\\
	\tau^{\le d-1}(M\otimes_{\mc O_K}B_\dR)\ar[r] & \tau^{\le d-1}(M\cotimes_{\mc O_K}B_\dR),
}
$$
which shows that the low horizontal map is an equivalence. By passing to cohomology, this gives the statement for $i\le d-1$. 

Note also that for any $i$ (from left $t$-exactness of $-\cotimes_{\mc O_K}B_\dR$ up to a shift by 1) we have a short exact sequence
$$
0 \ra H^0(H^i(M)\cotimes_{\mc O_K} B_\dR) \ra H^i(M\cotimes_{\mc O_K} B_\dR) \ra H^{-1}( H^{i+1}(M)\cotimes_{\mc O_K} B_\dR) \ra 0.
$$ 
By the assumption, for $i=d$ the module $H^d(M)$ is still finitely generated, so $H^d(M)\cotimes_{\mc O_K} B_\dR\simeq H^d(M)\otimes_{\mc O_K} B_\dR$, and the short exact sequence gives that $H^d(M)\otimes_{\mc O_K}B_\dR\hookrightarrow H^d(M\cotimes_{\mathcal O_K} B_\dR)$.
\end{proof}
\end{lem}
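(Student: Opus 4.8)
The plan is to bootstrap from the perfect case — where the comparison map is an equivalence — using the (mild) failures of $t$-exactness of the two functors involved, with the single-degree loss coming from the shift in \Cref{lem: tensor product is t-exact up to a shift}.

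First I would collect the relevant inputs. Since $B_\dR$ is a module over $K = \mc O_K[\tfrac 1p]$ and $K$ is a localization of $\mc O_K$, the functor $-\otimes_{\mc O_K}B_\dR$ is exact; in particular $H^i(M\otimes_{\mc O_K}B_\dR)\simeq H^i(M)\otimes_{\mc O_K}B_\dR$ for all $i$, and $M\otimes_{\mc O_K}B_\dR$ is concentrated in degrees $\le d$ whenever $M$ is. The functor $-\cotimes_{\mc O_K}B_\dR$ is likewise exact (it is a composite of an ordinary tensor product, derived completions and a localization), and by \Cref{lem: tensor product is t-exact up to a shift} it is left $t$-exact up to a shift by $1$, so it carries complexes concentrated in degrees $\ge n$ to complexes concentrated in degrees $\ge n-1$. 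Finally, by \Cref{rem_cotimes_BdR_OK} the natural map $\mc O_K\otimes_{\mc O_K}B_\dR \to \mc O_K\cotimes_{\mc O_K}B_\dR$ is an equivalence; since both functors are exact and the perfect $\mc O_K$-complexes are generated from $\mc O_K$ under shifts, finite colimits and retracts, the natural transformation of the statement is an equivalence on all perfect $\mc O_K$-complexes — in particular on $\tau^{\le d}M$, which is coherent by hypothesis and hence perfect, $\mc O_K$ being a discrete valuation ring.

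Next I would apply $-\cotimes_{\mc O_K}B_\dR$ to the canonical triangle $\tau^{\le d}M \to M \to \tau^{\ge d+1}M$. As $\tau^{\ge d+1}M$ is concentrated in degrees $\ge d+1$, the term $\tau^{\ge d+1}M\cotimes_{\mc O_K}B_\dR$ is concentrated in degrees $\ge d$, so the associated long exact cohomology sequence yields isomorphisms $H^i(\tau^{\le d}M\cotimes_{\mc O_K}B_\dR)\xrightarrow{\sim} H^i(M\cotimes_{\mc O_K}B_\dR)$ for $i\le d-1$ and an injection for $i=d$. On the other hand, since $\tau^{\le d}M$ is perfect the comparison map identifies $\tau^{\le d}M\cotimes_{\mc O_K}B_\dR$ with $\tau^{\le d}M\otimes_{\mc O_K}B_\dR$, which is concentrated in degrees $\le d$ with $H^i\simeq H^i(\tau^{\le d}M)\otimes_{\mc O_K}B_\dR = H^i(M)\otimes_{\mc O_K}B_\dR$ for $i\le d$. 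Splicing these two computations gives the desired isomorphisms $H^i(M)\otimes_{\mc O_K}B_\dR \simeq H^i(M\cotimes_{\mc O_K}B_\dR)$ for $i\le d-1$ and the embedding $H^d(M)\otimes_{\mc O_K}B_\dR \hookrightarrow H^d(M\cotimes_{\mc O_K}B_\dR)$; that these identifications are realized by the map in the statement follows from naturality of the comparison map applied to $\tau^{\le d}M\to M$ (an isomorphism on $H^{\le d}$), whose induced square has an equivalence along the left edge and an isomorphism on $H^{\le d}$ along the top edge.

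Given \Cref{lem: tensor product is t-exact up to a shift}, the remaining argument is essentially formal; the only point requiring care — and hence the "main obstacle" in spirit — is the bookkeeping of the single shift, which is precisely what forces the comparison to degrade from an isomorphism to an embedding in the top degree $d$. An equivalent and slightly more hands-on treatment of the range $i\le d-1$ runs the commutative square of truncations $\tau^{\le d-1}(\tau^{\le d}M\otimes_{\mc O_K}B_\dR)\to\tau^{\le d-1}(\tau^{\le d}M\cotimes_{\mc O_K}B_\dR)$ over its analogue for $M$, invoking \Cref{lem:useful lemma} for the two vertical equivalences; the degree-$d$ embedding can then be extracted from the short exact sequence $0\to H^0(H^d(M)\cotimes_{\mc O_K}B_\dR)\to H^d(M\cotimes_{\mc O_K}B_\dR)\to H^{-1}(H^{d+1}(M)\cotimes_{\mc O_K}B_\dR)\to 0$ coming from left $t$-exactness up to a shift, together with the fact that $H^d(M)$ is finitely generated.
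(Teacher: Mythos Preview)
Your proof is correct and follows essentially the same strategy as the paper's: reduce to the perfect case via \Cref{rem_cotimes_BdR_OK}, then control the tail $\tau^{\ge d+1}M$ using the single-degree shift from \Cref{lem: tensor product is t-exact up to a shift}. Your packaging via the long exact sequence of the triangle $\tau^{\le d}M\to M\to\tau^{\ge d+1}M$ is slightly cleaner than the paper's split into a commutative square (\Cref{lem:useful lemma}) for $i\le d-1$ plus a separate short exact sequence for $i=d$, but as you yourself note at the end, the two treatments are equivalent.
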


\subsection{Log period sheaves and $B_\dR$-comparison map}
Let $X$ be a smooth adic space over $\Spa(K,\mc O_K)$. Let $i\colon D \ra X$ be a normal crossings divisor (see \cite[Example 2.3.17]{log_adic} for the definition). The divisor $D$ endows $X$ with a natural structure of a (smooth) log adic space (see \cite[Example 2.1.2]{DiaoKaiWenLiuZhu_LogarithmicRH}). We will denote by $X_{D,\ket}$, $X_{D,\proket}$ the associated Kummer \'etale (see \cite[Section 4.1]{log_adic}) and pro-Kummer \'etale (see \cite[Definition 5.1.2]{log_adic}) sites correspondingly.

\begin{ex}
	When $D=\emptyset$ is empty, $X_{D,\ket}$ and $X_{D,\proket}$ are identified with the \'etale and pro-\'etale sites of $X$. 
\end{ex}

\begin{rem}
	Let $U\coloneqq X\setminus D$ be the complement to $D$ and let  $j\colon U \ra X$ be the natural embedding. Pull-back of $D$ to $U$ is empty, $U_{\emptyset,\ket}\simeq U_\et$, and thus one has a natural (derived) push-forward functor $j_{*,\ket}\colon \Shv(U_\et,\mbb Z/n)\ra \Shv(X_{D,\ket},\mbb Z/n)$ between the derived categories of sheaves of $\mbb Z/n$-modules (for any $n\ge 0$). By \cite[Lemma 4.6.5]{log_adic} for the constant sheaf $\mbb Z/n\in \Shv(U_\et,\mbb Z/n)$ one has an equivalence $j_{*,\ket} \mbb Z/n \xra{\sim} \mbb Z/n$; this then induces an equivalence
	\begin{equation*}
	\RG(U_\et,\mbb Z/n) \ism  \RG(X_{D,\ket},\mbb Z/n)
	\end{equation*}
	for the global sections. Taking $n=p^k$ and passing to the limit over $k$ one also gets an equivalence
	\begin{equation}\label{eq:kummer-etale vs etale}
	\RG(U_\et,\mbb Z_p) \ism \RG(X_{D,\ket},\mbb Z_p).
	\end{equation}
\end{rem}


In \cite{DiaoKaiWenLiuZhu_LogarithmicRH} the period sheaves $\mbb B_\dR^+$, $\mbb B_\dR$, $\mc O\mbb B_{\dR,\log}^+$, $\mc O\mbb B_{\dR,\log}$ on $X_{D,\proket}$ were introduced. While the definition of $\mbb B_\dR^+$, $\mbb B_\dR$ is essentially the same as in \cite{scholze2013adic} (see \cite[Definition 2.2.3]{DiaoKaiWenLiuZhu_LogarithmicRH}), the definition of $\mc O\mbb B_{\dR,\log}^+$, $\mc O\mbb B_{\dR,\log}$ is more involved and takes into account the log structure in a more significant way (see \cite[Definition 2.2.10]{DiaoKaiWenLiuZhu_LogarithmicRH}).  Let also $\Omega^{1}_{X,D,\log}$ be the sheaf of log differential 1-forms on $X$ (see \cite[Definition 3.2.25]{log_adic}). Similarly, one defines $\Omega^{j}_{X,D,\log}\coloneqq \wedge^j_{\mc O_X}\Omega^{1}_{X,D,\log}$ for all $j\ge 0$ (see \cite[Definition 3.2.29]{log_adic}).

We denote by $\mu\colon X_{D,\proket} \ra X_\an$ the natural map of sites. We also consider the map $\mu'\colon (X_C)_{D,\proket}\simeq (X_{D,\proket})_{/X_C} \ra X_\an$ where $X_C\coloneqq X\times_K C$ is the base change of $X$ to $C\coloneqq \widehat{\ol K}$. For brevity we will continue to denote the pull-backs $\mu^*\Omega^{j}_{X,D,\log}$ and $\mu'^*\Omega^{j}_{X,D,\log}$ by $\Omega^{j}_{X,D,\log}$.

The sheaf $\mc O \mbb B_{\dR,\log}^+$ on $X_{D,\proket}$ has a natural log connection $\nabla
\colon \mc O \mbb B_{\dR}^+ \ra \mc O \mbb B_{\dR}^+\otimes_{\mc O_X} \Omega^1_{X,D,\log}$ (see \cite[2.2.15]{DiaoKaiWenLiuZhu_LogarithmicRH}) which then extends to a log connection $\nabla
\colon \mc O \mbb B_{\dR,\log} \ra \mc O \mbb B_{\dR,\log}\otimes_{\mc O_X} \Omega^1_{X,D,\log}$ on $\mc O \mbb B_{\dR,\log}$ \cite[2.2.17]{DiaoKaiWenLiuZhu_LogarithmicRH}. We then have:
\begin{prop*}[The Poincar\'e lemma, {\cite[Corollary 2.4.2]{DiaoKaiWenLiuZhu_LogarithmicRH}}]
 The complex $$
0\ra \mbb B_\dR \ra \mc O \mbb B_{\dR,\log} \xra{\nabla} \mc O \mbb B_{\dR,\log}\otimes_{\mc O_X} \Omega^1_{X,D,\log} \xra{\nabla} \mc O \mbb B_{\dR} \otimes_{\mc O_X} \Omega^2_{X,D,\log} \xra{\nabla} \ldots
$$
of sheaves on $X_{D,\proket}$ is acyclic.
\end{prop*}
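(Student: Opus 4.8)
The plan is to prove the Poincar\'e lemma by a local computation on the pro-Kummer-\'etale site, reducing it to the classical de Rham resolution of a polynomial algebra. Acyclicity of a complex of abelian sheaves on $X_{D,\proket}$ can be checked on a basis of the site, so it suffices to establish exactness after restriction to a cofinal system of log affinoid perfectoid objects. By the definition of a normal crossings divisor (see \cite{log_adic}), \'etale-locally on $X$ one may choose a toric chart, i.e.\ an \'etale map to $\Spa\big(K\langle T_1^{\pm1},\dots,T_m^{\pm1},T_{m+1},\dots,T_d\rangle,\dots\big)$ under which $D$ is the preimage of the coordinate divisor $\{T_1\cdots T_r=0\}$ for some $r\le m$. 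Pulling back along the pro-Kummer-\'etale cover obtained by adjoining all $p$-power roots $T_i^{1/p^n}$ of the coordinates produces a cofinal system of log affinoid perfectoid $\widetilde X_\infty$ over which the log structure is trivialized, and I would prove the complex is exact after restriction to such $\widetilde X_\infty$.

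Over $\widetilde X_\infty$ I would invoke the explicit description of $\mc O\mbb B_{\dR,\log}^+$ from \cite{DiaoKaiWenLiuZhu_LogarithmicRH} (which in the non-logarithmic directions is that of \cite{scholze2013adic}): there are elements $V_i=\log\big(T_i/[T_i^\flat]\big)$ attached to the logarithmic coordinates $i\le r$ and $U_\ell = t_\ell-[t_\ell^\flat]$ attached to the remaining smooth coordinates, such that $\mc O\mbb B_{\dR,\log}^+|_{\widetilde X_\infty}$ is the $\xi$-adic completion of the polynomial ring $\mbb B_{\dR}^+|_{\widetilde X_\infty}[V_1,\dots,V_r,U_{r+1},\dots,U_d]$, while $\Omega^1_{X,D,\log}$ is free on $d\log T_i$ and $dT_\ell$, and the log connection $\nabla$ is the ``standard'' one: $\nabla V_i=d\log T_i$, $\nabla U_\ell=-dT_\ell/[T_\ell^\flat]$, and $\nabla$ annihilates $\mbb B_\dR^+$. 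Under this identification the complex in the statement, restricted to $\widetilde X_\infty$, becomes the $\xi$-adically completed algebraic de Rham complex $\Omega^\bullet_{B[V,U]/B}$ of a polynomial algebra in $d$ variables over $B:=\mbb B_\dR^+|_{\widetilde X_\infty}$, with $t$ inverted.

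It then remains to recall that in characteristic $0$ the de Rham complex $\Omega^\bullet_{B[V_1,\dots,V_d]/B}$ is a resolution of $B$, witnessed by the standard $B$-linear contracting homotopy (integrate in one variable at a time and assemble the usual homotopy operator of the Koszul--de Rham complex), and to check that this survives completion: the homotopy is $B$-linear and reduces modulo each $\xi^n$ to a finite polynomial computation, hence is $\xi$-adically continuous and extends to the completed complex; since the terms are $\xi$-torsion free and the inverse system $\{\,\cdot/\xi^n\,\}$ is Mittag--Leffler, acyclicity passes to the limit. Inverting $t$ is exact, so one obtains acyclicity for $\mc O\mbb B_{\dR,\log}$ and $\mbb B_\dR$ as claimed.

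The main obstacle is the local structure theory rather than the final homological step: one must carefully produce the toric/log charts, identify $\mc O\mbb B_{\dR,\log}^+$ over the pro-Kummer-\'etale perfectoid cover as a completed polynomial algebra with exactly the stated generators --- in particular handling the logarithmic variables $\log(T_i/[T_i^\flat])$ correctly and checking that they are topologically algebraically independent over $\mbb B_\dR^+$ --- and verify that the contracting homotopy is compatible with all the completions involved (the $\xi$-adic one together with the perfectoid completion entering $B$). This is exactly the content of \cite[Corollary 2.4.2]{DiaoKaiWenLiuZhu_LogarithmicRH}, which is what the proof will cite.
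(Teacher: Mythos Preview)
The paper does not give its own proof of this statement: it is stated as an unnumbered Proposition with the citation \cite[Corollary 2.4.2]{DiaoKaiWenLiuZhu_LogarithmicRH} and used as a black box. So there is nothing to compare your proposal against in this paper; your sketch is essentially the argument of the cited reference (local reduction via toric charts, explicit description of $\mc O\mbb B_{\dR,\log}^+$ over a log affinoid perfectoid cover as a completed polynomial algebra over $\mbb B_\dR^+$, and the algebraic Poincar\'e lemma with contracting homotopy compatible with $\xi$-adic completion).

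A couple of small slips in your write-up, which do not affect the strategy: your chart has $T_1,\dots,T_m$ invertible and $T_{m+1},\dots,T_d$ not, but you then take the divisor to be $\{T_1\cdots T_r=0\}$ with $r\le m$, which is impossible since units cannot vanish --- the logarithmic coordinates should be among the non-invertible ones. Likewise, for the non-logarithmic generators $U_\ell=T_\ell-[T_\ell^\flat]$ one has $\nabla U_\ell = dT_\ell$, not $-dT_\ell/[T_\ell^\flat]$. With the indexing and formulas straightened out, your outline matches the proof in \cite{DiaoKaiWenLiuZhu_LogarithmicRH}.
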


 This gives a quasi-isomorphism $\mbb B_\dR\ism DR_{\log}(\mc O \mbb B_{\dR,\log},\nabla)$ in the derived categories of sheaves on $X_{D,\proket}$ and $(X_C)_{D,\proket}$ (here $DR_{\log}$ denotes the log de Rham complex). 
 
 One can also explicitely describe the pushforwards with respect to the map $\mu'\colon (X_C)_{D,\proket} \ra X_\an$ of the individual terms of the de Rham complex $DR_{\log}(\mc O \mbb B_{\dR,\log},\nabla)$. Namely, recall the sheaf $\mc O_X\widehat\otimes B_\dR$ (\cite[Definition 3.1.1]{DiaoKaiWenLiuZhu_LogarithmicRH}, see also the discussion in \Cref{rem:product agrees with one in the analytic setting}) and consider $\Omega^j_{X,D,\log}\widehat\otimes B_\dR\coloneqq \Omega^j_{X,D,\log}\otimes_{\mc O_X} \mc O_X\widehat\otimes B_\dR$ (these sheaves are called $\Omega^j_{X,\log}\widehat\otimes B_\dR$ in \cite[Definition 3.1.6]{DiaoKaiWenLiuZhu_LogarithmicRH}).  Let $d_{ B_\dR}\colon \mc O_X\widehat\otimes B_\dR \ra \Omega^1_{X,D,\log}\widehat\otimes B_\dR$ be the map induced by the natural log connection $d\coloneqq \mc O_X \ra \Omega^1_{X,D,\log}$ on $\mc O_X$ and let 
 $$
 \Omega^\bullet_{X,D,\log \dR}\widehat\otimes B_\dR\coloneqq \mc O_X\widehat\otimes B_\dR \ra \Omega^1_{X,D,\log}\widehat\otimes B_\dR \ra \Omega^2_{X,D,\log}\widehat\otimes B_\dR \ra \ldots
 $$ be the corresponding log de Rham complex.

  Returning to the previous discussion, one has an equivalence 
 $$
\mc O_X\widehat\otimes B_\dR \ism R\mu'_*(\mc O \mbb B_{\dR,\log}) 
$$
(see \cite[Lemma 3.3.2]{DiaoKaiWenLiuZhu_LogarithmicRH}), which, by projection formula, also gives $\Omega^j_{X,D,\log}\widehat\otimes B_\dR \xra{\sim} R\mu'_*(\mc O \mbb B_{\dR,\log}\otimes \Omega^j_{X,D,\log})$ for any $j\ge 0$. These isomorphisms then ultimately lead to equivalences
\begin{equation}\label{eq:pushfwrd as log de Rham}
\Omega^\bullet_{X,D,\log \dR}\widehat\otimes B_\dR \ism R\mu'_*\left(DR_{\log}(\mc O \mbb B_{\dR,\log},\nabla)\right)\xymatrix{&\ar[l]_\sim} R\mu'_*\left(\mbb B_\dR\right).
\end{equation}
 
\begin{rem}
	It is noted in \cite[Remark 2.2.11]{DiaoKaiWenLiuZhu_LogarithmicRH} that when the log-structure is trivial the construction of the log-period ring $\mc O\mbb B_{\dR,\log}$ still differs from the one of $\mc O\mbb B_{\dR}$ in \cite[Section 6]{scholze2013adic} by a certain completion procedure. However, as noted in \emph{loc.cit} all features essential for the applications (namely the Poincar\'e lemma and the description of the pushforward to the analytic site) stay valid, thus in fact for most of the arguments one can freely choose between any of the two versions.
\end{rem}
\begin{rem}\label{rem:log BdR complex for the empty divisor}
	If $D=\emptyset$ then the complex $\Omega^\bullet_{X,D,\log \dR}\widehat\otimes B_\dR$ on $X$ agrees with $\Omega_{X,\dR}^\bullet\cotimes B_\dR$ (see \cite[Section 3.1]{liu2017rigidity} or \Cref{rem:product agrees with one in the analytic setting} to recall the definition). By \Cref{rem:product agrees with one in the analytic setting}, when $X=\widehat{U}_K$ is the Raynaud generic fiber of a smooth affine $\mc O_K$-scheme $U$, this complex agrees with the "de Rham cohomology over $B_\dR$" $\RG_\dR(\widehat{U}_K/B_\dR)$ that we have defined in \Cref{constr:Hodge and de Rham cohomology over BdR}.
\end{rem}

We can now pass to the construction of the $B_\dR$-comparison map.
\begin{construction}\label{constr:map theta}
Let $X$, $i\colon D\hookrightarrow X$ be as in the beginning of this section and let $U\coloneqq X \setminus D$. Let $\RG_\et(U_C,\mbb Q_p)\coloneqq \RG_\et(U_C,\mbb Z_p)[\frac{1}{p}]$ and $\RG((X_C)_{D,\ket},\mbb Q_p)\coloneqq \RG((X_C)_{D,\ket},\mbb Z_p)[\frac{1}{p}]$. By \Cref{eq:kummer-etale vs etale} we have an equivalence 
$$
\RG_\et(U_C,\mbb Q_p)\ism \RG((X_C)_{D,\ket},\mbb Q_p).
$$
Having this (with a further identification $\RG((X_C)_{D,\ket},\mbb Z_p)\simeq \RG((X_C)_{D,\proket},\mbb Z_p)$), the map $\mbb Z_p \ra \mbb B_\dR$  of sheaves on $X_{D,\proket}$ induces a natural map
$$
\RG_\et(U_C,\mbb Q_p)\otimes_{\mbb Q_p}B_\dR \xymatrix{\ar[r]&}\RG(X_{D,\proket},\mbb B_\dR).
$$
Composing further with the equivalence $\RG(X_{D,\proket},\mbb B_\dR) \ism \RG(X_\an, \Omega^\bullet_{X,D,\log \dR}\widehat\otimes B_\dR)$ coming from \Cref{eq:pushfwrd as log de Rham} we get a natural map 
$$
\Theta_{X,D}\colon \RG_\et(U_C,\mbb Q_p)\otimes_{\mbb Q_p}B_\dR \xymatrix{\ar[r]&} \RG(X_\an, \Omega^\bullet_{X,D,\log \dR}\widehat\otimes B_\dR)
$$
which we will call the \textit{$B_\dR$-comparison map}.
\end{construction}

\begin{ex}\label{ex: log-BdR vs usual dR}
	By \cite[Theorem 3.2.3(3)]{DiaoKaiWenLiuZhu_LogarithmicRH} if $X$ is proper the map $\Theta_{X,D}$ is an equivalence for any normal crossings divisor $D$. Moreover, by \cite[Lemma 3.6.2]{DiaoKaiWenLiuZhu_LogarithmicRH} one can make an identification
	$$
	\RG_{\log \dR}(X/K)\otimes_{K}B_\dR \ism \RG(X, \Omega^\bullet_{X,D,\log \dR}\widehat\otimes B_\dR).
	$$
	In the case of the analytification  $X^\an, D^\an$ of a smooth proper $K$-scheme $X$ with a normal crossings divisor $D$, one can make further natural identifications 
	$$
	\RG_{\log \dR}(X^\an/K)\xymatrix{\ar[r]^{\mr{GAGA}}_\sim&} \RG_{\log \dR}(X/K) \ism \RG_\dR(U/K),
	$$
	where the first equivalence is induced by GAGA and the second one by the restiction of the algebraic log de Rham complex (as a complex of sheaves) to $U\coloneqq X\setminus D$.
\end{ex}

\section{Proof of $\mbb Q_p$-local acyclicity for Hodge-proper stacks}
\subsection{The key commutative square}
Let $U$ be a scheme over $\mc O_K$. Let $U_{K}\coloneqq U\times_{\mc O_K} K$ and $\widehat U_K$ be the algebraic and Raynaud generic fibers of $U$. We have a natural map $\psi_U\colon\widehat U_K\ra U_K^\an$, where $U_K^\an$ is the analytification of $U_K$ (for a more detailed discussion of this situation see \cite[Section 3.3]{KubrakPrikhodko_pHodge}).

Below we will also need to consider the auxiliary data of a compactification $X$ of $U_K$ by a simple normal crossings divisor $D$. To clarify, by this we mean a smooth proper $K$-scheme $X$ with a simple normal crossings divisor $D \hookrightarrow X$ and an isomorphism $\epsilon\colon U_K\xra{\sim }X\setminus D$. Let $\Comp(U_K)_{\nc}$ be the category of such compactifications $(X,D,\epsilon)$. An important property of $\Comp(U_K)_{\nc}$ is that it is cofiltered:
\begin{lem}\label{lem:category of compactifications is filtered}
Let $S$ be a smooth finite type scheme over a characteristic $0$ field. Then the category $\Comp(S)_{\nc}$ of compactifications of $S$ by a simple normal crossings divisor is co-filtered.

\begin{proof}
	First of all $\Comp(S)_{\nc}$ is non-empty. Indeed, by Nagata's compactification there exists a reduced proper scheme $X'$ containing $S$ as a dense open subscheme. By Hironaka's resolution of singularities one then has a birational map $X\surj X'$ which is trivial on $S$ and such that the complement $X\setminus S$ is a simple normal crossing divisor.

	Let $X_1,X_2$ be a pair of compactifications of $S$ by normal crossings divisors. Let $Z^\prime$ be a closure of $S$ in $X_1 \times X_2$. Resolving the singularities as above we find a  $Z\surj Z'$ which is trivial over $S$ and such that the complement $Z\setminus S$ is a simple normal crossings divisor. By construction such $Z$ maps both to $X_1$ and $X_2$ (using projections $X_1 \leftarrow Z'\rightarrow X_2$).
	
	Let now $f,g\colon X_1 \rightrightarrows X_2$ be a pair of parallel arrows in $\Comp(S)_{\nc}$. Consider the closure $Z^\prime$ of the diagonal embedding $S \inj X_1\times_{X_2} X_1$. Let $Z\surj Z^\prime$ be a resolution of singularities as above. By construction there is a map $e\colon Z \to Y$ such that $f\circ e = g\circ e$.
\end{proof}
\end{lem}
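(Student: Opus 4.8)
The plan is to verify the three conditions defining a cofiltered category, each of which reduces to a single geometric input that I will call $(\star)$: over a field of characteristic $0$, for any reduced finite type scheme $Y$ containing $S$ as a dense open subscheme there exists a proper birational morphism $\pi\colon Z\to Y$ that is an isomorphism over $S$, with $Z$ smooth and $Z\setminus S=\pi^{-1}(Y\setminus S)$ a simple normal crossings divisor. This is the log resolution of the pair $(Y,Y\setminus S)$ (Hironaka); if $Y$ is proper then so is $Z$, and $(Z,Z\setminus S)$ is then an object of $\Comp(S)_\nc$ with structure map the open immersion $S\hookrightarrow Z$.

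Nonemptiness is immediate: Nagata's compactification embeds $S$ as an open subscheme of a proper scheme, and replacing the latter by the reduced closure $\overline S$ of $S$ and applying $(\star)$ to $Y=\overline S$ produces an object of $\Comp(S)_\nc$.

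For common lower bounds, given $(X_1,D_1,\epsilon_1)$ and $(X_2,D_2,\epsilon_2)$ I would take $Z'$ to be the reduced closure of the image of $(\epsilon_1,\epsilon_2)\colon S\to X_1\times X_2$; it is proper, contains $S$ as a dense open, and — since the graph of $\epsilon_2\circ\epsilon_1^{-1}\colon X_1\setminus D_1\to X_2$ is closed in $(X_1\setminus D_1)\times X_2$, graphs into separated schemes being closed — each projection $Z'\to X_i$ is an isomorphism over $X_i\setminus D_i$, with preimage exactly $S$. Applying $(\star)$ gives $Z\in\Comp(S)_\nc$ with the two projections $Z\to Z'\to X_i$ now morphisms in $\Comp(S)_\nc$. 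For a parallel pair $f,g\colon (X_1,D_1,\epsilon_1)\rightrightarrows(X_2,D_2,\epsilon_2)$, compatibility with the $\epsilon_i$ gives $f\circ\epsilon_1=\epsilon_2=g\circ\epsilon_1$, so $\epsilon_1$ lifts to a map $S\to X_1\times_{f,X_2,g}X_1$; let $Z'$ be the reduced closure of its image. As before $Z'$ is proper, contains $S$ densely, and $\mathrm{pr}_1\colon Z'\to X_1$ is an isomorphism over $X_1\setminus D_1$ (on $X_1\setminus D_1$ the equation $f(x)=g(x')$ forces $x'=x$, using that $f$ and $g$ carry $D_1$ into $D_2$). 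Then $(\star)$ yields $Z\in\Comp(S)_\nc$ and a morphism $e\colon Z\to Z'\xrightarrow{\mathrm{pr}_1}X_1$ in $\Comp(S)_\nc$; since $f$ and $g$ agree on $X_1\setminus D_1$ they agree after precomposition with $e$ on the dense open $S\subseteq Z$, hence on all of $Z$ because $Z$ is reduced and $X_2$ is separated.

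The one genuinely non-formal ingredient is $(\star)$: performing resolution of singularities on the chosen proper closures without disturbing $S$ and so that the boundary becomes a simple normal crossings divisor — in particular pure of codimension one, which is where one must blow up the higher-codimension part of $\overline S\setminus S$. This is classical log resolution of pairs in characteristic zero; granting it, everything else is a formal unwinding of the definition of a cofiltered category together with the separatedness of the $X_i$.
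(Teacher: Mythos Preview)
Your proof is correct and follows essentially the same approach as the paper: Nagata plus log resolution for nonemptiness, closure of $S$ in $X_1\times X_2$ plus resolution for common refinements, and closure of the diagonal in $X_1\times_{X_2}X_1$ plus resolution for equalizers. You add more careful justifications (e.g.\ isolating the input $(\star)$, checking that the projections are isomorphisms over $S$, and spelling out the density-plus-separatedness argument for $f\circ e=g\circ e$), but the underlying constructions are identical to those in the paper.
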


\begin{rem}\label{rem:nerve of comp is contraction}
	In particular it follows that the nerve $N(\Comp(S)_{\nc})$ is weakly contractible.
\end{rem}

In the next construction for an affine smooth $\mathcal O_K$-scheme $U$ we construct the following natural commutative square:
\begin{equation}\label{eq:key commutative diagram}
\xymatrix{\RG_\et(U_{C},\mbb Q_p)\otimes_{\mbb Q_p} B_\dR\ar[r]^{\Upsilon_{U,\mbb Q_p}}\ar[d]^\sim& \RG_\et(\widehat U_{C},\mbb Q_p)\otimes_{\mbb Q_p} B_\dR \ar[d]^{\Theta_{\widehat U_K,\emptyset}}\\ 
\RG_\dR(U_K/K)\otimes_K B_\dR \ar[r] & \RG_\dR(\widehat U_K/B_\dR).}
\end{equation}
This will be the key ingredient in the proof of local acyclicity.

\begin{construction}\label{constr:key construction}
	\begin{enumerate}[wide]
		\item Let $S$ be a smooth affine $K$-scheme and let $S^\an$ be its analytification. By associating to $(X,D,\epsilon)\in \Comp(S)_{\nc}$ the pair $(X^\an, D^\an)$ we can further consider a complex $\RG(X^\an, \Omega^\bullet_{X^\an,D^\an,\log \dR}\widehat{\otimes}B_\dR)\in \DMod{B_\dR}$. This extends to a functor 
		$$
		\RG(-^\an, \Omega^\bullet_{-^\an,-^\an,\log \dR}\widehat{\otimes}B_\dR)\colon \left(\Comp(S)_{\nc}\right)^\op \arr \DMod{B_\dR}.
		$$
		Note that since for any $(X,D,\epsilon)\in \Comp(S)_{\nc}$ the scheme $X$ is smooth and proper, so by (the references in) \Cref{ex: log-BdR vs usual dR} there is a sequence of natural identifications 
		
		$$\xymatrix{\RG_\dR(S/K)\otimes_K B_\dR \ar[r]^-\sim & \RG_{\log \dR}(X/K)\otimes_K B_\dR\ar[d]^\sim &  \\&
			\RG_{\log \dR}(X^\an/K)\otimes_K B_\dR\ar[r]^-\sim & \RG(X^\an, \Omega^\bullet_{X^\an,D^\an,\log \dR}\widehat\otimes B_\dR),
			}
		$$
which ultimately identifies $\RG(-^\an, \Omega^\bullet_{-^\an,-^\an,\log \dR}\widehat{\otimes}B_\dR)$ with the constant functor 
$$
\ul{\RG_\dR(S/K)\otimes_K B_\dR}\colon \left(\Comp(S)_{\nc}\right)^\op  \arr \DMod{B_\dR},
$$
that sends any $(X,D,\epsilon)\in \Comp(S)_{\nc}$ to $\RG_\dR(S/K)\otimes_F B_\dR$.
	
		\item Let $U$ be an affine smooth $\mc O_K$-scheme and $U_K$ be the algebraic generic fiber. For any $(X,D, \epsilon)\in \Comp(U_K)_{\nc}$ we have a natural map of pairs\footnote{Consisting of a smooth adic space and a normal crossings divisor.} $(U_K^\an,\emptyset)\ra (X^\an, D^\an)$. Precomposing it with $\psi_U\colon \widehat U_K\ra U_K^\an$ we also get a map $(\widehat U_K,\emptyset)\ra (X^\an, D^\an)$, which further induces a map $$\RG(X^\an, \Omega^\bullet_{X^\an,D^\an,\log}\widehat\otimes B_\dR) \arr \RG(\widehat U_K, \Omega^\bullet_{\widehat U_K,\emptyset ,\log}\widehat\otimes B_\dR)\ism \RG_\dR(\widehat U_K/B_\dR);
		$$
		here for the equivalence on the right see \Cref{rem:log BdR complex for the empty divisor}. Recall also the map $\Theta$ defined in \Cref{constr:map theta}. Applying it to pairs $(X^\an,D^\an)$ and $(\widehat U_K,\emptyset)$ we get a natural commutative square
		$$
		\xymatrix{\RG_\et(U_{C}^\an,\mbb Q_p)\otimes_{\mbb Q_p} B_\dR\ar[r]^{\psi_U^{-1}}\ar[d]_{\Theta_{X,D}}^\sim& \RG_\et(\widehat U_{C},\mbb Q_p)\otimes_{\mbb Q_p} B_\dR \ar[d]^{\Theta_{\widehat U_K,\emptyset}}\\ 
			\RG(X^\an, \Omega^\bullet_{X^\an,D^\an,\log}\widehat\otimes B_\dR) \ar[r] & \RG_\dR(\widehat U_K/B_\dR),}
		$$
		where the left vertical arrow is an equivalence. Passing to colimit over $(X,D)\in \Comp(U_K)_{\nc}$, using the equivalence $\RG(-^\an, \Omega^\bullet_{-^\an,-^\an,\log}\widehat{\otimes}B_\dR)\simeq\ul{\RG_\dR(U_K/K)\otimes_K B_\dR}$ in (1) and the fact that $N(\Comp(U_K)_{\nc})$ is weakly contractible (\Cref{rem:nerve of comp is contraction}) we obtain a natural commutative square 		
	\begin{equation}\label{eq:prekey commutative diagram}
	\xymatrix{\RG_\et(U_{C}^\an,\mbb Q_p)\otimes_{\mbb Q_p} B_\dR\ar[r]^{\psi_U^{-1}}\ar[d]^\sim& \RG_\et(\widehat U_{C},\mbb Q_p)\otimes_{\mbb Q_p} B_\dR \ar[d]^{\Theta_{\widehat U_K,\emptyset}}\\ 
		\RG_\dR(U_K/K)\otimes_K B_\dR \ar[r] & \RG_\dR(\widehat U_K/B_\dR).}
	\end{equation}

\item Finally, by \cite[Theorem 3.8.1]{Huber_adicSpaces} one has a natural equivalence 
$$
\phi_{U}^{-1}\colon \RG_\et(U_{C},\mbb Q_p) \ism  \RG_\et(U_{C}^\an, \mbb Q_p)
$$
(see also \cite[Section 3.3 and Remark 4.1.16]{KubrakPrikhodko_pHodge} to recall the precise setup). The composition $$\psi^{-1}_U\circ \phi^{-1}_U\colon \RG_\et(\widehat U_{C},\mbb Q_p) \tto  \RG_\et(U_{C}, \mbb Q_p)$$ is by definition the comparison map 
$\Upsilon_{U,\mbb Q_p}$ that appears in \Cref{rem:Q_p-acyclic stacks} (see also \cite[Construction 4.1.14]{KubrakPrikhodko_pHodge} for more details). Precomposing left vertical and upper horizontal map in \Cref{eq:prekey commutative diagram} with $\phi_U^{-1}$ we then get the natural commutative square \eqref{eq:key commutative diagram}.

\end{enumerate}
\end{construction}

\begin{rem}\label{rem:about key diagram}
		It is not hard to see from the construction and identification in \Cref{rem:product agrees with one in the analytic setting} that the lower horizontal map  $\RG_\dR(U_K/K)\otimes_K B_\dR \ra \RG_\dR(\widehat U_K/B_\dR)$ in diagrams \Cref{eq:prekey commutative diagram} and \Cref{eq:key commutative diagram} is exactly the map that induces an equivalence in \Cref{prop:key proposition}(2) in the Hodge-proper context. 
\end{rem}

\subsection{The map $\Upsilon_{-,\mbb Q_p}$ for $d$-Hodge-proper stacks}
In this section we establish $\mbb Q_p$-local acyclicity of a Hodge-proper stack over $\mc O_K$. First we extend the square \eqref{eq:key commutative diagram} to smooth Artin $\mathcal O_K$-stacks.
\begin{prop}\label{prop:commutative diagram}
Let $\mstack X$ be a smooth quasi-compact quasi-separated Artin stack over $\mathcal O_K$.	
	 Then there is a commutative square of the form
\begin{equation}\label{eq_main_diagram}
\xymatrix{
R\Gamma_\et(\mstack X_{C}, \mathbb Q_p)\otimes_{\mathbb Q_p} B_\dR \ar[r]^{\Upsilon_{\mstack X,\mbb Q_p}}\ar[d]_\sim & R\Gamma_\et(\widehat{\mstack X}_{C}, \mathbb Q_p)\otimes_{\mathbb Q_p} B_\dR \ar[d] \\
R\Gamma_\dR(\mstack X_K/K)\otimes_{K} B_\dR \ar[r] & R\Gamma_\dR(\widehat{\mstack X}_K/B_\dR),
}\end{equation}
where the left vertical map is an equivalence.

\begin{proof}
Consider the right Kan extension of the commutative square in \Cref{eq:key commutative diagram} along the embedding $(\Aff^\sm_{\mc O_K})^\op \hookrightarrow (\PStk_{\mc O_K})^\op$. We claim that the terms appearing in this Kan extension are the ones in the diagram \Cref{eq_main_diagram}. For the lower right term this holds by definition (\Cref{constr:Hodge and de Rham cohomology over BdR}). For the others note that if a functor $F\colon (\Aff^\sm_{\mc O_K})^\op \ra \DMod{L}^{\ge 0}$ (where $L$ is either $K$ or $\mbb Q_p$) satisfies smooth descent then so does $F\otimes_L B_\dR$: indeed, by \cite[Corollary 3.1.13]{KubrakPrikhodko_pHodge} $-\otimes_L B_\dR$ preserves totalizations of $0$-coconnective objects. The functors $R\Gamma_\et((-)_{C}, \mathbb Q_p)$, $R\Gamma_\et((\widehat{-})_{C}, \mathbb Q_p)$ and $R\Gamma_\dR((-)_K/K)$ satisfy smooth descent, thus picking a hypercover $|U_\bullet|\xra{\sim} \mstack X$ (as in \cite[Theorem 4.7]{Pridham_ArtinHypercovers}) with $U_i$ being smooth affine schemes we reduce to the case of a smooth affine scheme, which is tautological.
\end{proof}
\end{prop}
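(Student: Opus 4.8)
The plan is to obtain the square \eqref{eq_main_diagram} by right Kan extending the square \eqref{eq:key commutative diagram} from smooth affine $\mc O_K$-schemes to all smooth quasi-compact quasi-separated Artin stacks over $\mc O_K$, and then to identify each of the four vertices of the Kan-extended square with the corresponding vertex of \eqref{eq_main_diagram}. The lower-right vertex $R\Gamma_\dR(\widehat{\mstack X}_K/B_\dR)$ is identified with the Kan extension of $U\mapsto R\Gamma_\dR(\widehat U_K/B_\dR)$ essentially by definition (\Cref{constr:Hodge and de Rham cohomology over BdR}), so the content is in the other three vertices.

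For those, the key point I would isolate is a general principle: if a functor $F\colon (\Aff^\sm_{\mc O_K})^\op \to \DMod{L}^{\ge 0}$ (with $L = K$ or $L = \mbb Q_p$) satisfies smooth descent, then so does $F\otimes_L B_\dR$. This holds because $-\otimes_L B_\dR$ preserves totalizations of coconnective cosimplicial objects (cf. \cite[Corollary 3.1.13]{KubrakPrikhodko_pHodge}), so applying it to the limit diagram exhibiting descent for $F$ again yields a limit diagram. First I would invoke this for $F = R\Gamma_\et((-)_{C}, \mbb Q_p)$, $F = R\Gamma_\et((\widehat{-})_{C}, \mbb Q_p)$ and $F = R\Gamma_\dR((-)_K/K)$, each of which is known to satisfy smooth descent; hence the three corresponding functors tensored with $B_\dR$ satisfy smooth descent too. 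Since a descent-satisfying functor on smooth affines agrees with its own right Kan extension when evaluated on any stack admitting a smooth hypercover by affines, it remains only to present $\mstack X$ by such a hypercover $|U_\bullet| \xra{\sim} \mstack X$ with each $U_i$ a smooth affine $\mc O_K$-scheme — which exists by \cite[Theorem 4.7]{Pridham_ArtinHypercovers} — and conclude that the Kan extension evaluated at $\mstack X$ recovers $R\Gamma_\et(\mstack X_C,\mbb Q_p)\otimes_{\mbb Q_p} B_\dR$, $R\Gamma_\et(\widehat{\mstack X}_C,\mbb Q_p)\otimes_{\mbb Q_p} B_\dR$ and $R\Gamma_\dR(\mstack X_K/K)\otimes_K B_\dR$ respectively. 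Commutativity of the square and the horizontal maps are then inherited from the affine-level square \eqref{eq:key commutative diagram} by functoriality of right Kan extension. The claim that the left vertical arrow is an equivalence follows since it is so on each affine $U_i$ (it is $\Upsilon_{U,\mbb Q_p}\otimes B_\dR$ composed with the tensored $\phi_U^{-1}$, both equivalences at the affine level by \Cref{constr:key construction}) and equivalences are detected termwise before taking the limit.

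The main obstacle, and the only genuinely non-formal step, is verifying the coconnectivity hypothesis needed to apply \cite[Corollary 3.1.13]{KubrakPrikhodko_pHodge}: one must know that the relevant cohomology functors land in $\DMod{L}^{\ge 0}$ (uniformly bounded below, up to a fixed shift) on smooth affines, so that $-\otimes_L B_\dR$ does commute with the totalizations occurring in the descent spectral sequence. For \'etale $\mbb Q_p$-cohomology and de Rham cohomology of smooth affine schemes this boundedness is standard, and one may shift to the connective case; the care is just in making sure the bound is uniform along the hypercover (which it is, as each $U_i$ is a smooth affine scheme of bounded cohomological dimension). Everything else is the bookkeeping of right Kan extensions along $(\Aff^\sm_{\mc O_K})^\op \hookrightarrow (\PStk_{\mc O_K})^\op$, which is routine.
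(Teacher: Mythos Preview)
Your proposal is correct and follows essentially the same route as the paper: right Kan extend the affine square \eqref{eq:key commutative diagram}, identify the lower-right vertex by definition, and use that $-\otimes_L B_\dR$ preserves totalizations of coconnective objects to show the other three vertices are computed by the tensored descent functors on a smooth affine hypercover. One small correction: your description of the left vertical arrow as ``$\Upsilon_{U,\mbb Q_p}\otimes B_\dR$ composed with the tensored $\phi_U^{-1}$'' is mislabeled --- $\Upsilon$ is the \emph{top horizontal} map; the left vertical arrow in \eqref{eq:key commutative diagram} comes from the $B_\dR$-comparison map $\Theta_{X^\an,D^\an}$ of \Cref{constr:key construction}, which is an equivalence because each $X$ in $\Comp(U_K)_{\nc}$ is proper (\Cref{ex: log-BdR vs usual dR}). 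Also, your concern about uniform ``bounded cohomological dimension'' along the hypercover is unnecessary: what is needed for \cite[Corollary 3.1.13]{KubrakPrikhodko_pHodge} is only that each term is in $\DMod{L}^{\ge 0}$, which is automatic for these cohomology theories.
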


\begin{prop}\label{prop:Upsilon for d-Hodge-proper}
	Let $\mstack X$ be a smooth $d$-de Rham-proper stack over $\mathcal O_K$. Then the map $\Upsilon_{\mstack X,\mbb Q_p}$ induces an embedding 
	$$
	 \begin{tikzcd}        
	 	H^i_\et(\mstack X_{C}, \mathbb Q_p) \arrow[hook]{r} & H^i_\et(\widehat{\mstack X}_{C}, \mathbb Q_p)  
	 \end{tikzcd} 
	$$
	for $i\le d$.
\end{prop}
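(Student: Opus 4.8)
The plan is to deduce this from the commutative square of \Cref{prop:commutative diagram} by a straightforward diagram chase, using \Cref{prop:key proposition}(3) to control the bottom horizontal arrow, and then descending along the flat inclusion $\mbb Q_p \to B_\dR$ to remove the coefficient extension.

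Concretely, I would argue as follows. In the square \Cref{eq_main_diagram} the left vertical map is an equivalence, and by \Cref{rem:about key diagram} the bottom horizontal map $\RG_\dR(\mstack X_K/K)\otimes_K B_\dR \to \RG_\dR(\widehat{\mstack X}_K/B_\dR)$ is exactly the one analysed in \Cref{prop:key proposition}(3). Since $\mstack X$ is $d$-de Rham-proper, that proposition says this map induces an isomorphism on $H^i$ for $i\le d-1$ and an injection on $H^d$. Composing with the left vertical equivalence, the "down-then-right" composite $\RG_\et(\mstack X_C,\mbb Q_p)\otimes_{\mbb Q_p}B_\dR \to \RG_\dR(\widehat{\mstack X}_K/B_\dR)$ therefore induces an injection on $H^i$ for every $i\le d$. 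By commutativity this composite also factors as "right-then-down", i.e. through $\Upsilon_{\mstack X,\mbb Q_p}\otimes_{\mbb Q_p}B_\dR$ (the base change of $\Upsilon_{\mstack X,\mbb Q_p}$ to $B_\dR$) followed by the right vertical map; since a composite of linear maps can be injective only if the first map is, I conclude that $\Upsilon_{\mstack X,\mbb Q_p}\otimes_{\mbb Q_p}B_\dR$ induces an injection on $H^i$ for $i\le d$.

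It then remains to descend. Because $B_\dR$ is a $\mbb Q_p$-vector space it is flat over $\mbb Q_p$, so for any complex $H^i(-\otimes_{\mbb Q_p}B_\dR)\cong H^i(-)\otimes_{\mbb Q_p}B_\dR$, and the map induced on $H^i$ by $\Upsilon_{\mstack X,\mbb Q_p}\otimes_{\mbb Q_p}B_\dR$ is $H^i(\Upsilon_{\mstack X,\mbb Q_p})\otimes_{\mbb Q_p}B_\dR$, with kernel $\ker\!\bigl(H^i(\Upsilon_{\mstack X,\mbb Q_p})\bigr)\otimes_{\mbb Q_p}B_\dR$. This kernel vanishes by the previous paragraph, and since $\mbb Q_p\to B_\dR$ is a split monomorphism of $\mbb Q_p$-vector spaces this forces $\ker H^i(\Upsilon_{\mstack X,\mbb Q_p})=0$, i.e. the claimed embedding $H^i_\et(\mstack X_C,\mbb Q_p)\hookrightarrow H^i_\et(\widehat{\mstack X}_C,\mbb Q_p)$ for $i\le d$.

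Since all the substantive input is already packaged into \Cref{prop:commutative diagram} and \Cref{prop:key proposition}, I do not expect a genuine obstacle at this stage; the only two points worth stating with care are the identification of the bottom edge of the square with the map of \Cref{prop:key proposition}(3) (which is \Cref{rem:about key diagram}) and the exactness of $-\otimes_{\mbb Q_p}B_\dR$ on cohomology used to pass from $B_\dR$- back to $\mbb Q_p$-coefficients.
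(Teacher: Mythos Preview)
Your argument is correct and is exactly the same as the paper's: both use the commutative square \eqref{eq_main_diagram}, the fact that its left vertical arrow is an equivalence, and \Cref{prop:key proposition}(3) for the bottom arrow to conclude that the top horizontal arrow (which is $\Upsilon_{\mstack X,\mbb Q_p}\otimes_{\mbb Q_p}B_\dR$) is injective on $H^i$ for $i\le d$. You spell out more carefully than the paper both the identification via \Cref{rem:about key diagram} and the flat descent from $B_\dR$- to $\mbb Q_p$-coefficients, but these are precisely the steps the paper's two-sentence proof implicitly uses.
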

\begin{proof}
	From the commutative diagram (\ref{eq_main_diagram}) and \Cref{prop:key proposition}(3) it follows that in the $d$-de Rham-proper case the composition of left vertical and low horizontal maps in (\ref{eq_main_diagram}) is injective on cohomology in the range $i\le d$. Thus so is the vertical map; the claim of the proposition follows.
\end{proof}

\begin{ex}
	Recall that any $d$-Hodge-proper stack is $d$-de Rham proper, so Proposition \ref{prop:Upsilon for d-Hodge-proper} holds for them as well. However, there are examples (see \Cref{ssec:d-de Rham vs d-Hodge}) of $d$-de Rham proper stacks that are not $d$-Hodge-proper, which motivates stating \Cref{prop:Upsilon for d-Hodge-proper} in that generality. 
\end{ex}

We get the following result for Hodge-proper stacks as an immediate consequence.
\begin{thm}\label{thm:main theorem}
Let $\mstack X$ be a smooth Hodge-proper stack over $\mathcal O_K$. Then the natural map
$$\Upsilon_{\mstack X,\mbb Q_p}\colon R\Gamma_\et(\mstack X_{C}, \mathbb Q_p) \tto R\Gamma_\et(\widehat{\mstack X}_{C}, \mathbb Q_p)$$
is an equivalence.

\begin{proof}
Moreover, since the left vertical arrow in \Cref{eq_main_diagram} is an isomorphism we have $\dim_{\mbb Q_p} H^i_\et(\mstack X_{C}, \mathbb Q_p)=\dim_K H^i_\dR(\mstack X_K/K)$. Moreover, since $\mstack X$ is Hodge-proper, we have $\dim_K H^i_\dR(\mstack X_K/K)=\dim_{\mbb Q_p} H^i_\et(\widehat{\mstack X}_{C}, \mathbb Q_p)$ by \cite[Proposition 4.3.14]{KubrakPrikhodko_pHodge}. Thus: $\dim H^i_\et(\mstack X_{C}, \mathbb Q_p)=\dim H^i_\et(\widehat{\mstack X}_{C}, \mathbb Q_p)$  for any $i\ge 0$. On the other hand a Hodge-proper stack is $d$-de Rham-proper for any $d$, and thus by \Cref{prop:Upsilon for d-Hodge-proper} $\Upsilon_{\mstack X,\mbb Q_p}$ is injective on cohomology. So $\Upsilon_{\mstack X,\mbb Q_p}$ induces an isomorphism on cohomology, and thus is a quasi-isomorphism.
\end{proof}
\end{thm}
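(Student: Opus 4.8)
The strategy is to combine the commutative square of \Cref{prop:commutative diagram} with the injectivity statement of \Cref{prop:Upsilon for d-Hodge-proper} and a dimension count, reducing the claim to the elementary fact that an injective map between finite-dimensional vector spaces of equal dimension is an isomorphism. Throughout we work degree by degree on cohomology.

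First I would extract a dimension equality from the square \eqref{eq_main_diagram}. Since the left vertical arrow $R\Gamma_\et(\mstack X_{C}, \mathbb Q_p)\otimes_{\mathbb Q_p} B_\dR \to R\Gamma_\dR(\mstack X_K/K)\otimes_{K} B_\dR$ is an equivalence and $B_\dR$ is faithfully flat over both $\mbb Q_p$ and $K$, passing to cohomology and counting ranks over $B_\dR$ gives $\dim_{\mbb Q_p} H^i_\et(\mstack X_{C}, \mathbb Q_p)=\dim_K H^i_\dR(\mstack X_K/K)$ for every $i$ (in particular both sides are finite, since $\mstack X$ is Hodge-proper, hence $d$-de Rham-proper for all $d$). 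Next, because $\mstack X$ is Hodge-proper over $\mc O_K$, I invoke \cite[Proposition 4.3.14]{KubrakPrikhodko_pHodge} to get $\dim_K H^i_\dR(\mstack X_K/K)=\dim_{\mbb Q_p} H^i_\et(\widehat{\mstack X}_{C}, \mathbb Q_p)$. Chaining the two equalities yields $\dim_{\mbb Q_p} H^i_\et(\mstack X_{C}, \mathbb Q_p)=\dim_{\mbb Q_p} H^i_\et(\widehat{\mstack X}_{C}, \mathbb Q_p)$ for all $i\ge 0$, with both finite.

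Now I would feed in injectivity. A Hodge-proper stack is $d$-de Rham-proper for every $d\ge 0$, so \Cref{prop:Upsilon for d-Hodge-proper} applies with arbitrarily large $d$ and shows that $\Upsilon_{\mstack X,\mbb Q_p}$ induces an injection $H^i_\et(\mstack X_{C}, \mathbb Q_p)\hookrightarrow H^i_\et(\widehat{\mstack X}_{C}, \mathbb Q_p)$ in every cohomological degree. An injective linear map between finite-dimensional $\mbb Q_p$-vector spaces of the same dimension is an isomorphism, so $\Upsilon_{\mstack X,\mbb Q_p}$ is an isomorphism on all cohomology groups, hence a quasi-isomorphism.

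The substantive content of this argument is entirely in the two ingredients being invoked: the construction and functoriality of the square \eqref{eq_main_diagram} culminating in \Cref{prop:commutative diagram} (which ultimately rests on the log-$B_\dR$ Poincaré lemma of Diao--Lan--Liu--Zhu and the weak contractibility of $\Comp(U_K)_{\nc}$), and \Cref{prop:key proposition}(3) feeding into \Cref{prop:Upsilon for d-Hodge-proper}. Given those, there is no real obstacle here; the only point requiring a moment's care is the faithful flatness of $B_\dR$ used to transport dimension equalities between the $B_\dR$-linear and the $\mbb Q_p$- (resp.\ $K$-) linear worlds, together with the finiteness guaranteed by Hodge-properness, which is exactly what makes "injective plus equidimensional implies bijective" legitimate.
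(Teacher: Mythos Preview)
Your proof is correct and follows essentially the same approach as the paper: equality of dimensions via the left vertical equivalence in \eqref{eq_main_diagram} together with \cite[Proposition 4.3.14]{KubrakPrikhodko_pHodge}, combined with the injectivity from \Cref{prop:Upsilon for d-Hodge-proper}. Your version is slightly more explicit about the faithful flatness of $B_\dR$ underlying the dimension transfer, but the argument is otherwise identical.
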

 
\begin{cor}\label{cor:Fontaine's conjecture for Hodge-proper stacks}
	Let $\mstack X$ be a smooth Hodge-proper stack over $\mc O_K$. Then for any $i\ge 0$ the Galois representation given by $H^i({\mstack X}_{C},\mbb Q_p)$ is crystalline and $D_\crys(H^i({\mstack X}_{C},\mbb Q_p))=H^i_\crys(\mstack X_k/W(k))[\frac{1}{p}]$.
\end{cor}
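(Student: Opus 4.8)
The plan is to deduce the statement from \Cref{thm:main theorem} together with the $p$-adic Hodge theory of the Raynaud generic fiber already developed in \cite{KubrakPrikhodko_pHodge}. The first observation is that the comparison map $\Upsilon_{\mstack X,\mbb Q_p}$ is built functorially out of base-change and analytification maps of $\mbb Q_p$-\'etale cohomology --- it is $\psi_{\mstack X}^{-1}\circ\phi_{\mstack X}^{-1}$, see \Cref{constr:key construction}(3) and \cite[Construction 4.1.14]{KubrakPrikhodko_pHodge} --- and hence is automatically $G_K$-equivariant for the action on $C=\widehat{\ol K}$. Therefore \Cref{thm:main theorem} upgrades to a $G_K$-equivariant isomorphism $H^i({\mstack X}_{C},\mbb Q_p)\simeq H^i_\et(\widehat{\mstack X}_{C},\mbb Q_p)$ for every $i\ge 0$.

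Next I would invoke the crystalline comparison for the Raynaud generic fiber of a smooth Hodge-proper stack from \cite{KubrakPrikhodko_pHodge}: Hodge-properness over $\mc O_K$ makes the integral $p$-adic cohomology of $\widehat{\mstack X}$ (prismatic, equivalently $\Ainf$-cohomology) coherent, and this is precisely the input that lets one run the comparison machinery. It produces a $(G_K,\phi)$-equivariant (Frobenius-equivariant) isomorphism
$$H^i_\et(\widehat{\mstack X}_{C},\mbb Q_p)\otimes_{\mbb Q_p}B_\crys\simeq H^i_\crys(\mstack X_k/W(k))[\tfrac{1}{p}]\otimes_{K_0}B_\crys,$$
together with the compatible $B_\dR$- and Hodge--Tate comparisons; in particular $H^i_\et(\widehat{\mstack X}_{C},\mbb Q_p)$ is crystalline and $D_\crys\bigl(H^i_\et(\widehat{\mstack X}_{C},\mbb Q_p)\bigr)\simeq H^i_\crys(\mstack X_k/W(k))[\tfrac{1}{p}]$. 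Alternatively, since a Hodge-proper stack is $d$-de Rham-proper for every $d$, one can also read this off the more general $(d+1)$-de Rham-proper comparison established later in the paper.

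Finally I would combine the two steps: being crystalline and the formation of $D_\crys$ depend only on the underlying $G_K$-representation, so the isomorphism of the first step transports both the crystalline structure and the identification of $D_\crys$ from $\widehat{\mstack X}_C$ to $\mstack X_C$, which gives the corollary (and, unwinding the comparisons, the $B_\crys$-, $B_\dR$- and Hodge--Tate isomorphisms one expects for $\mstack X_C$). The hard part is not conceptual but bookkeeping: one must make sure the crystalline comparison cited from \cite{KubrakPrikhodko_pHodge} is genuinely formulated for the stack-level Raynaud fiber $\widehat{\mstack X}$ (obtained from smooth affines by right Kan extension), is compatible with the Galois action, Frobenius and the Hodge filtration, and descends to each cohomology group $H^i$ separately --- and to confirm that its coherence hypothesis is exactly what Hodge-properness of $\mstack X$ over $\mc O_K$ provides.
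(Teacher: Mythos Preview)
Your proposal is correct and follows essentially the same route as the paper: apply \Cref{thm:main theorem} to get a $G_K$-equivariant isomorphism $H^i(\mstack X_C,\mbb Q_p)\simeq H^i(\widehat{\mstack X}_C,\mbb Q_p)$, and then invoke the crystalline comparison for the Raynaud generic fiber from \cite[Theorem 4.3.25]{KubrakPrikhodko_pHodge}. The paper's proof is just the two-line version of what you wrote; your extra remarks about bookkeeping and the alternative via the $(d+1)$-de Rham-proper comparison are accurate but not needed here.
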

\begin{proof}
	From \Cref{thm:main theorem} for any $i\ge 0$ we get an isomorphism $H^i({\mstack X}_{C},\mbb Q_p)\simeq H^i(\widehat{\mstack X}_{C},\mbb Q_p)$, which is automatically $G_K$-equivariant. By \cite[Theorem 4.3.25]{KubrakPrikhodko_pHodge}, $H^i(\widehat{\mstack X}_{C},\mbb Q_p)$ is crystalline and $D_\crys(H^i(\widehat{\mstack X}_{C},\mbb Q_p))=H^i_\crys(\mstack X_k/W(k))[\frac{1}{p}]$.
\end{proof}

\begin{rem}\label{rem:rational p-adic Hodge theory for Hodge proper stacks}
	More generally, using \cite[Proposition 4.3.35 and Theorem 4.3.38]{KubrakPrikhodko_pHodge}, from \Cref{thm:main theorem} it follows that for a Hodge-proper stack over $\mc O_K$ one has all the comparisons that one usually has in $p$-adic Hodge theory. Namely, for any $n\ge 0$
	one has a $(\phi,G_K)$-equivariant isomorphism 
	$$
	H^n_\et({\mstack X}_{{C}}, \mathbb Q_p)\otimes_{\mbb Q_p} B_\crys \simeq H^n_{\crys}({\mstack X}_k/W(k))[\tfrac{1}{p}]\otimes_{W(k)[\frac{1}{p}]} B_\crys,
	$$
	a filtered $G_K$-equivariant isomorphism 
	$$
	H^n_\et({\mstack X}_{{C}}, \mathbb Q_p)\otimes_{\mbb Q_p} B_\dR \xymatrix{\ar[r]^\sim &} H^n_\dR(\mstack X/K)\otimes_K B_\dR
	$$
	and the Hodge-Tate decomposition 
	$$
	H^n_{\et}({\mstack X}_{{C}},\mbb Q_p)\otimes_{\mbb Q_p}{C} \simeq \bigoplus_{i+j=n} H^j({\mstack X}_{K},\wedge^i\mbb L_{\mstack \mstack X/K})\otimes_{K}{C}(-i),
	$$
	where ${C}(-i)$ denotes the $-i$-th Tate twist. 
\end{rem}

\begin{rem}
	Note that from the proof of \Cref{thm:main theorem} it follows that all arrows in  \eqref{eq_main_diagram} except the right vertical one are in fact equivalences. As a result, it is also true for the right vertical one. Using the fact that all maps in the diagram are in fact filtered and $G_K$-equivariant, one can obtain from this another proof of the de Rham comparison for $H^n_\et(\widehat{\mstack X}_{C},\mbb Q_p)$ (for the previous one using prismatic cohomology see \cite[Proposition 4.3.35]{KubrakPrikhodko_pHodge}). However, it is not clear to us how to see from the same diagram that this representation is in fact crystalline.
\end{rem}

\begin{rem}
 In fact one could produce another proof of \Cref{thm:main theorem} by indentifying the right vertical map in \Cref{eq_main_diagram} with the comparison map from \cite{KubrakPrikhodko_pHodge} which was constructed via prismatic cohomology. By \cite[Theorem 4.3.25]{KubrakPrikhodko_pHodge}, in Hodge-proper case the latter map is an equivalence. Consequently, for Hodge-proper $\mstack X$ all arrows in \Cref{eq_main_diagram} except the top horizontal one are equivalences and thus so is the latter.
\end{rem}

\subsection{Example: the complement to a complete intersection in $\mbb P^n$.} \label{ssec:d-de Rham vs d-Hodge}	Here we would to illustrate a difference between $d$-de Rham properness and $d$-Hodge-properness on a particular example. 

Namely, let  $Z\hookrightarrow \mbb P^n_{\mc O_K}$ be a complete intersection given by $f_1=f_2=\ldots= f_d=0$, where each $f_i$ is a homogeneous polynomial of degree $d_i$. We will also assume that $Z$ is flat over $\mc O_K$. 
Let $\mstack X\coloneqq \mbb P^n_{\mc O_K}\!\backslash Z$. By \Cref{lem:example of d-Hodge-proper}, $\mstack X$ is $(d-2)$-Hodge-proper over $\mc O_K$. We claim that

\begin{itemize}
	\item $\mstack X$ is not $(d-1)$-Hodge-proper,
	\item $\mstack X$ is still $(d-1)$-de Rham proper.
\end{itemize}

Denote by $j\colon \mstack X \ra \mbb P^n_{\mc O_K}$ the natural embedding. Also, let $\mc I_Z\subset \mc O_{\mbb P^n}$ be the sheaf of ideals defining $Z$.  It is generated by the images of maps $\alpha_{i}\colon \mc O_{\mbb P^n}(-d_i)\ra \mc O_{\mbb P^n}$ given by $f_i\in H^0(\mbb P^n, \mc O_{\mbb P^n}(d_i))$. Since $Z$ is a complete intersection, the sheaf $\mc O_Z\coloneqq \mc O_{\mbb P^n}/\mc I_Z$ has a free Koszul-type resolution, namely 
$$
{\mr{Kos}_{\mbb P^n}^*}(\alpha_1,\ldots,\alpha_d)\coloneqq \otimes_{i=1}^d (\mc O_{\mbb P^n}(-d_i)\xra{\alpha_{i}} \mc O_{\mbb P^n})\simeq \mc O_Z,
$$ where $\mc O_{\mbb P^n}(-d_i)\xra{\alpha_{i}} \mc O_{\mbb P^n}$ is considered as a two-term complex concentrated in cohomological degrees 0 and -1. For a $d$-tuple $\ul s\coloneqq (s_1,\ldots,s_d)\in \mbb N^d$ denote by $Z^{[\ul s]}\subset \mbb P^n_{\mc O_K}$ the subscheme defined by $f_1^{s_1}=f_2^{s_2}=\ldots=f_d^{s_d}=0$. The subscheme $Z^{[\ul s]}$ is still a complete intersection, and we have 
$$
{\mr{Kos}}_{\mbb P^n}^*(\alpha_1^{s_1},\ldots,\alpha_d^{s_d})\simeq \mc O_{Z^{[\ul s]}},
$$
where $\alpha_{i}^{s_i}\colon \mc O_{\mbb P^n}(-s_id_i)\ra \mc O_{\mbb P^n}$ is the map induced by $f_i^{s_i}$.
Sheaves $\mc O_{Z^{[\ul s]}}$, as well as the Koszul complexes\footnote{Here, the map ${\mr{Kos}}_{\mbb P^n}^*(\alpha_1^{s_1},\ldots,\alpha_i^{s_i},\ldots, \alpha_d^{s_d})\ra {\mr{Kos}}_{\mbb P^n}^*(\alpha_1^{s_1},\ldots,\alpha_i^{s_i-1},\ldots, \alpha_d^{s_d})$ is induced by the commutative square
$$
\xymatrix{\mc O_{\mbb P^n}(-s_id_i)\ar[r]^(.6){\alpha_i^{s_i}}\ar[d]_{\alpha_i}& \mc O_{\mbb P^n}\ar[d]^{\id}\\
\mc O_{\mbb P^n}(-(s_i-1)d_i)\ar[r]^(.65){\alpha_i^{s_i-1}}& \mc O_{\mbb P^n}}
$$
with left vertical arrow induced by $f_i\in H^0(\mbb P^n,\mc O_{\mbb P^n}(-d_i)),$ and identity on the other components of the tensor product.} ${\mr{Kos}}_{\mbb P^n}^*(\alpha_1^{s_1},\ldots,\alpha_i^{s_i},\ldots, \alpha_d^{s_d})$ naturally form an inverse system (via the termwise partial ordering on $\mbb N^d$). Also note that one has the following formular for the dual complex: 
$$
{\mr{Kos}}_{\mbb P^n}^*(\alpha_1^{s_1},\ldots, \alpha_d^{s_d})^{\vee}\simeq \mr{Kos}_{\mbb P^n}^*(\alpha_1^{s_1},\ldots, \alpha_d^{s_d})(s_1d_1+
\ldots +s_dd_d)[-d]
$$
(this follows from the isomorphism of complexes $(\mc O_{\mbb P^n}(-s_id_i)\ra \mc O_{\mbb P^n})^\vee \simeq (\mc O_{\mbb P^n}(-s_id_i)\ra \mc O_{\mbb P^n})(s_id_i)[-1]$).
	
One has a fiber sequence 
$$
\ul{\RG}_Z(\mc O_{\mbb P^n})\ra \mc O_{\mbb P^n} \ra Rj_*\mc O_{\mstack X}, 
$$
where $\ul{\RG}_Z(\mc O_{\mbb P^n})$ is the (sheaf of) local cohomology. We get that $H^i(\mstack X,\mc O_{\mstack X})$ is finitely-generated if and only if $H^{i-1}(\mbb P^n,\ul{\RG}_Z(\mc O_{\mbb P^n}))$ is. Thus to show that $\mstack X$ is not $(d-1)$-Hodge-proper it is enough to show that $H^d(\mbb P^n,\ul{\RG}_Z(\mc O_{\mbb P^n}))$ is not finitely generated.

The local cohomology $\ul{\RG}_Z(\mc O_{\mbb P^n})$ is given explicitly by 
$$\ul{\RG}_Z(\mc O_{\mbb P^n})\simeq \colim_{\ul{s}\in \mbb N^d}\mr{RHom}_{\mc O_{\mbb P^n}}(\mc O_{Z^{[\ul s]}},\mc O_{\mbb P^n})
$$ (see e.g. \cite[Tag 0956]{StacksProject}). Naturally, one can use Koszul resolutions to compute $\mr{RHom}$: one sees that
$$
\mr{RHom}_{\mc O_{\mbb P^n}}(\mc O_{Z^{[\ul s]}},\mc O_{\mbb P^n})\simeq 
{\mr{Kos}}_{\mbb P^n}^*(\alpha_1^{s_1},\ldots, \alpha_d^{s_d})^{\vee}\simeq \mc O_{Z^{[\ul s]}}(s_1d_1+\ldots +s_dd_d)[-d].
$$
This way 
$$
\ul{\RG}_Z(\mc O_{\mbb P^n})\simeq \colim_{\ul{s}\in \mbb N^d} \mc O_{Z^{[\ul s]}}(s_1d_1+\ldots +s_dd_d)[-d].
$$
In particular, $\tau^{<d} \ul{\RG}_Z(\mc O_{\mbb P^n})\simeq 0$

Note that $\mc O_{Z^{[\ul s]}}$ has a finite filtration\footnote{The subsheaf corresponding to a given $\ul t$ is defined by the ideal spanned by $f_1^{\ell_1}\cdot \ldots\cdot f_d^{\ell_d}$ with $\ul t\le \ul \ell\le s$.} by a poset $P_{\ul s}^<\coloneqq \{\ul t\in \mbb N^d_{\ge 0}\ | \ \ul t< \ul s\}$ with the associated graded given by $\oplus_{\ul t\in P_{\ul s}} \mc O_Z(-t_1d_1-\ldots -t_dd_d)$. For the twist $\mc O_{Z^{[\ul s]}}(s_1d_1+\ldots +s_dd_d)$ this gives a filtration with the associated graded $\oplus_{\ul t\in P_{\ul s}} \mc O_Z((t_1+1)d_1+\ldots +(t_d+1)d_d)$. Moreover, for $\ul s'>\ul s$ the corresponding map on the associated graded 
$$
\oplus_{\ul t\in P_{\ul s}} \mc O_Z((t_1+1)d_1+\ldots +(t_d+1)d_d) \arr \oplus_{\ul t\in P_{\ul s'}} \mc O_Z((t_1+1)d_1+\ldots +(t_d+1)d_d)
$$
is the embedding induced by $P_{\ul s}\hookrightarrow P_{\ul s'}$. This way one sees that for the cokernel $K_{\ul s,\ul s'}$ of the above map has an associated graded given by direct sum of line bundles $\mc O_Z(\ell)$ with $\ell>\min_i(s_id_i)$; moreover, as $s'\ra \infty$ the number of these line bundles grows to $\infty$ as well. Thus, for a fixed $\ul s\gg 0$ and $\ul s'\ra \infty$ we have $\rk_{\mc O_K}H^0(\mbb P^n, K_{\ul s,\ul s'})\ra \infty $. Using the exact sequence
$$
0\ra H^0(\mc O_{Z^{[\ul s]}}(\textstyle \sum_i s_id_i))\ra H^0(\mc O_{Z^{[\ul s']}}(\textstyle \sum_i s_i'd_i))\ra H^0(K_{\ul s,\ul s'})\ra H^1(\mc O_{Z^{[\ul s]}}(\textstyle \sum_i s_id_i))
$$
one sees that 
$$
H^d(\ul\RG_Z(\mc O_{\mbb P^n}))\simeq H^0(\colim_{\ul s'\in \mbb N^d}\mc O_{Z^{[\ul s']}}(\textstyle \sum_i s_i'd_i)))\simeq \colim_{\ul s'\in \mbb N^d} H^0(\mc O_{Z^{[\ul s']}}(\textstyle \sum_i s_i'd_i)))
$$
has infinite rank over $\mc O_K$, and so is not finitely generated.

We now consider the de Rham cohomology of $\mstack X$. We need to show that $H^{d-1}_\dR(\mc X/\mc O_K)$ is finitely generated over $\mc O_K$. For any $k\ge 0$ by projection formula we have 
\begin{equation*}
	Rj_*\Omega^k_{\mstack X}\simeq Rj_*j^*\Omega^k_{\mbb P^n}\simeq Rj_*\mc O_{\mstack X}\otimes_{\mc O_{\mbb P^n}} \Omega^k_{\mbb P^n}.
\end{equation*}
Since $\tau^{<d}\ul{\RG}_Z(\mc O_{\mbb P^n})=0$ we have that $\tau^{<d-1}Rj_*\mc O_{\mstack X}\simeq \mc O_{\mbb P^n}$ and, consequently $\tau^{<d-1}Rj_*\Omega^k_{\mstack X}\simeq \Omega^k_{\mbb P^n}$. In particular, for $i>0$ and $j<d-1$ we get that $H^{i,j}(\mstack X/\mc O_K)\simeq H^{i,j}(\mbb P^n/\mc O_K)$. This way, on the first page of the Hodge-to-de Rham spectral sequence there is a single Hodge cohomology of total degree $d-1$ that is infinitely-generated, namely $H^{0,d-1}(\mstack X/\mc O_K)\coloneqq H^{d-1}(\mstack X,\mc O_{\mstack X})$. Let $E_\infty^{0,d-1}\subset H^{0,d-1}(\mstack X/\mc O_K)$ be the corresponding term on the infinite page; it would be enough to show that $E_\infty^{0,d-1}$ is finitely generated. However, we identified $H^{0,d-1}(\mstack X/\mc O_K)$ with $H^d(\ul\RG_Z(\mc O_{\mbb P^n}))\simeq \colim_{\ul s'\in \mbb N^d} H^0(\mc O_{Z^{[\ul s']}}(\textstyle \sum_i s_i'd_i))$, and so it is $p$-torsion free since we assumed $Z$ to be flat over $\mc O_K$. Consequently so is  $E_\infty^{0,d-1}$ and thus it will be enough to show that it is finitely generated after inverting $p$. Note that the $p$-localization of the Hodge-to-de Rham spectral sequence or $\mstack X$ exactly gives the one for $\mstack X_K$. Since over a field of characteristic 0 via excision we have $H^i_\dR(\mstack X_K/K)\simeq H^{i}_\dR(\mbb P_K^n/K)$ for $i<2d$, we get that $H^{d-1}_\dR(\mstack X_K/K)$, and $E_\infty^{0,d-1}[\frac{1}{p}]$ in particular, are finite-dimensional. 

\begin{rem}\label{rem:complement to a locally complete intersection}
	In fact, in the proof of $(d-1)$-de Rham properness we never used that the ambient scheme is $\mbb P^n$, only that it is smooth and proper. Indeed, given a codimension $d$ locally complete intersection $Z$ in an $\mc O_K$-scheme $\ol X$ we have $\tau^{< d}\ul \RG_Z(\mc O_{\ol X})\simeq 0$. Moreover, if $Z$ is flat over $\mc O_K$ then so is the $p$-th local cohomology sheaf $\mc H^p_Z(\mc O_{\ol X})$. Then, if $\ol X$ is smooth and proper we already know that $\mstack X\coloneqq \ol X\backslash Z$ is $(d-2)$-Hodge-proper, so it remains to show that $H^{d-1}_\dR(\mstack X/\mc O_K)$ is finitely generated. Here, using the same argument as above, we can reduce to $H^{d-1}_\dR(\mstack X_K/K)$, which is finitely-generated since $H^{i}_\dR(\mstack X_K/K)\simeq H^{i}_\dR(\ol X_K/K)$ for $i<2d$.
\end{rem}

\begin{rem}
	One can also see that the bound on de Rham-properness is sharp; namely in general (e.g. when $Z$ is a point) $\mstack X$ as above will not be $d$-de Rham proper.
\end{rem}
\section{A variant of $p$-adic Hodge theory for $d$-Hodge-proper stacks}\label{sec:p-adic Hodge-theory for d-Hodge proper stacks}

\subsection{Coherence properties of various cohomology}\label{ssec:complete modules miscellany}
Let $R$ be a (classical) Noetherian ring, and let $I=(x_1,\ldots, x_n)\subset R$ be an ideal generated by a regular sequence (so that $R/I\simeq \otimes_{i=1}^n \cofib(R\xra{\cdot x_i}R)$). Assume also that $R$ is $I$-adically complete (in the classical sense). 

\begin{lem}\label{lem:useful_lemma_2}
	Let $M\in\DMod{R}$ be derived $I$-complete and assume that $\tau^{\le d}(M\otimes_R R/I)\in \Coh(R/I)$. Then $\tau^{\le d}M\in \Coh(R)$.
\end{lem}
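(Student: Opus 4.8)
The plan is to prove this by induction on the number $n$ of generators of $I$, reducing to the case of a principal ideal, after which a topological Nakayama argument finishes. Throughout I would use that, $R$ being Noetherian and $I$-adically complete, every finitely generated $R$-module is $I$-adically complete, hence derived $I$-complete, hence derived $f$-complete for each $f\in I$. For the inductive step, write $I=(x_1,\dots,x_n)$ and put $\bar R:=R/(x_n)$, $\bar I:=(\bar x_1,\dots,\bar x_{n-1})\subset\bar R$. Then $\bar R$ is Noetherian and $\bar I$-adically complete (a module-finite algebra over an $I$-adically complete Noetherian ring is complete for the extended ideal), $\bar x_1,\dots,\bar x_{n-1}$ is again a regular sequence in the sense used here, and $x_n$ is a non-zero-divisor (the requisite permanence of Koszul-regular sequences under permutation and under quotienting by one member is standard). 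Hence $M_1:=M\otimes_R R/(x_n)\in\DMod{\bar R}$ is equivalent to $\cofib(M\xrightarrow{x_n}M)$; since $T(-,x_i)$ is exact and $T(M,x_i)=0$ it is derived $\bar I$-complete, $M_1\otimes_{\bar R}\bar R/\bar I\simeq M\otimes_R R/I$, so $\tau^{\le d}(M_1\otimes_{\bar R}\bar R/\bar I)\in\Coh(\bar R/\bar I)$, and by the inductive hypothesis $\tau^{\le d}M_1\in\Coh(\bar R)\subseteq\Coh(R)$. It therefore remains to prove the principal-ideal statement: if $M$ is derived $x$-complete and $\tau^{\le d}\cofib(M\xrightarrow{x}M)\in\Coh(R)$, then $\tau^{\le d}M\in\Coh(R)$; note this also covers the base case $n=1$, since then $M\otimes_R R/I\simeq\cofib(M\xrightarrow{x_1}M)$.

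First I would establish that $M$ is bounded below. Using the derived-completion formula $M\simeq R\lim_k\cofib(M\xrightarrow{x^k}M)$ and, from the factorization $x^k=x\cdot x^{k-1}$, the octahedral fiber sequences $\cofib(M\xrightarrow{x^{k-1}}M)\to\cofib(M\xrightarrow{x^k}M)\to\cofib(M\xrightarrow{x}M)$, an induction on $k$ shows that $\tau^{\le d}\cofib(M\xrightarrow{x^k}M)\in\Coh(R)$ and that all these complexes lie uniformly in cohomological degrees $\ge c$. Since $R\lim$ is left $t$-exact, $H^i(M)=0$ for $i<c$; in particular each $H^i(M)$ is derived $x$-complete (a standard fact for bounded-below derived $x$-complete complexes). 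Moreover, the long exact sequence of $\cofib(M\xrightarrow{x}M)$ gives, for every $i\le d$, a short exact sequence $0\to H^i(M)/x\to H^i(\cofib(M\xrightarrow{x}M))\to H^{i+1}(M)[x]\to 0$, so $H^i(M)/x$ is a submodule of a finitely generated $R$-module and hence finitely generated.

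The remaining, and main, step is to show that a derived $x$-complete $R$-module $N$ with $N/xN$ finitely generated is itself finitely generated. Lifting generators of $N/xN$ to $n_1,\dots,n_r\in N$ and setting $N_0:=\sum_i Rn_i$, one has $N=N_0+xN$, and $N_0$, being finitely generated, is derived $x$-complete, so $P:=N/N_0$ is derived $x$-complete with $xP=P$. Any such $P$ vanishes: since $P$ is derived $x$-complete, $P\simeq R\lim_k\cofib(P\xrightarrow{x^k}P)$, and from $x^kP=P$ the long exact sequence gives $\cofib(P\xrightarrow{x^k}P)\simeq\ker(x^k\colon P\to P)[1]$, with transition maps the multiplication maps $\ker(x^{k+1})\xrightarrow{\cdot x}\ker(x^k)$, which are surjective (again because $xP=P$). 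Hence this tower has vanishing $R^1\lim$, so $R\lim_k\cofib(P\xrightarrow{x^k}P)$ is concentrated in cohomological degree $-1$; being equal to $P$, which lives in degree $0$, it must vanish. So $N=N_0$ is finitely generated. Applying this with $N=H^i(M)$ for each $i\le d$, and recalling that $H^i(M)=0$ for $i<c$, we conclude $\tau^{\le d}M\in\Coh(R)$.

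The hardest part will be this last, topological Nakayama step, and inside it the vanishing of an $x$-divisible derived $x$-complete module: ordinary Nakayama is unavailable here, and the argument genuinely needs both the surjectivity of the transition maps in the tower $\{\ker(x^k)\}_k$ and the vanishing of $R^1\lim$ for surjective towers. A secondary point that must be handled with some care is the reduction step, which rests on the permanence of (Koszul-)regular sequences and of $I$-adic completeness under passage to the quotient $R/(x_n)$ and to module-finite algebras.
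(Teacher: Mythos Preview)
Your proof is correct and follows essentially the same line as the paper's: reduce by induction on the number of generators to a principal ideal $(x)$, observe via universal coefficients that $H^d(M)/x$ embeds in $H^d(M\otimes_R R/(x))$, and finish with a derived Nakayama argument. The only difference is organizational: the paper works with the whole truncation $\tau^{\le d}M$ at once---noting it is derived $x$-complete, showing $(\tau^{\le d}M)\otimes_R R/(x)\in\Coh(R)$ via \Cref{lem:useful lemma} in degrees $\le d-1$ and the universal-coefficients embedding in degree $d$, and then citing \cite[Proposition~1.1.7]{KubrakPrikhodko_pHodge} for the Nakayama step---whereas you argue cohomology-by-cohomology and supply that step (an $x$-divisible derived $x$-complete module vanishes) by hand, which makes your version more self-contained at the cost of some length.
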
	
\begin{proof}
	By induction on the number of generators we can assume that $I$ is principal and generated by a non-zero divisor $x\in R$. Note that $\tau^{\le d}M$ is also derived $I$-complete; thus, e.g. by \cite[Proposition 1.1.7]{KubrakPrikhodko_pHodge}, it is enough to show that $\tau^{\le d}M\otimes_R R/I\in \Coh(R)$. Since $\tau^{\le d-1}(M\otimes_R R/I)\simeq \tau^{\le d-1}(\tau^{\le d}M\otimes_R R/I)$, which is coherent, this reduces to showing that $H^d(\tau^{\le d}M\otimes_R R/I)\simeq H^d(M)/I$ is a finitely generated $R/I$-module. But by the universal coefficients formula $H^d(M)/I$ embeds into $H^d(M\otimes_R R/I)$ which is finitely generated by the assumption.
\end{proof}

Here comes the first application.

\begin{cor}\label{cor:finiteness of crystalline}
	Let $\mstack X$ be a $d$-de Rham proper stack over a perfect field $k$ of characteristic $p$. Then 
	$$
	\tau^{\le d}(\RG_\crys(\mstack X/W(k)))\in \Coh(W(k)).
	$$
\end{cor}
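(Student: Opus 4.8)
The plan is to deduce this directly from \Cref{lem:useful_lemma_2}, applied with $R = W(k)$ and $I = (p)$. Since $k$ is perfect, $W(k)$ is a complete discrete valuation ring, hence Noetherian; it is $p$-adically complete; and $p$ is a nonzerodivisor. So the hypotheses on the pair $(R,I)$ are met, and it suffices to verify two things: first, that $M \coloneqq \RG_\crys(\mstack X/W(k))$ is derived $p$-complete; second, that $\tau^{\le d}\bigl(M \otimes_{W(k)} k\bigr) \in \Coh(k)$.

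For the first point, recall that $\RG_\crys(-/W(k))$ is defined on smooth affine $k$-schemes and extended to smooth qcqs Artin stacks by right Kan extension from smooth affines (as in \cite{KubrakPrikhodko_pHodge}); in particular $M \simeq \lim_{U} \RG_\crys(U/W(k))$, the limit being taken over a smooth affine atlas of $\mstack X$ (equivalently, over a smooth affine hypercover). For a smooth affine $k$-scheme $U$ one has $\RG_\crys(U/W(k)) \simeq \lim_n \RG_\crys(U/W_n(k))$, a limit of $p$-power-torsion complexes, hence derived $p$-complete; and derived $p$-complete objects of $\DMod{W(k)}$ are stable under limits. Thus $M$ is derived $p$-complete.

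For the second point, write $M \otimes_{W(k)} k \simeq M/p \coloneqq \cofib(M \xrightarrow{p} M)$. The functor $N \mapsto N/p$ is, up to a shift, the fiber of multiplication by $p$, hence a finite limit, so it commutes with the limit defining $M$: $M/p \simeq \lim_U \bigl(\RG_\crys(U/W(k))/p\bigr)$. For a smooth affine $k$-scheme $U$, base change for crystalline cohomology along $W(k) \surj k$ together with the crystalline--de Rham comparison in characteristic $p$ gives a natural equivalence
\[
\RG_\crys(U/W(k))/p \;\simeq\; \RG_\crys(U/k) \;\simeq\; \RG_\dR(U/k).
\]
Passing to the limit over the atlas identifies $M/p \simeq \RG_\dR(\mstack X/k)$. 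Since $\mstack X$ is $d$-de Rham-proper over $k$, \Cref{rem:reformulation of d-de Rham-properness} gives $\tau^{\le d}\RG_\dR(\mstack X/k) \in \Coh(k)$, which is exactly what we need. Combining the two points, \Cref{lem:useful_lemma_2} yields $\tau^{\le d}M \in \Coh(W(k))$.

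The only steps requiring genuine care are the commutation of $(-)/p$ with the Kan-extension limit and the naturality in $U$ of the identification $\RG_\crys(U/W(k))/p \simeq \RG_\dR(U/k)$ needed to globalize it over the atlas; but the former is formal (since $(-)/p$ preserves all limits), and the latter is standard once one recalls how $\RG_\crys$ of a stack is assembled from its values on smooth affines. So I do not anticipate a real obstacle here; this corollary is really just a packaging of \Cref{lem:useful_lemma_2} via the mod-$p$ crystalline--de Rham comparison.
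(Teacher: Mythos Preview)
Your proposal is correct and follows essentially the same approach as the paper: apply \Cref{lem:useful_lemma_2} with $R=W(k)$, $I=(p)$, using that $\RG_\crys(\mstack X/W(k))$ is derived $p$-complete and that $\RG_\crys(\mstack X/W(k))\otimes_{W(k)} k\simeq \RG_\dR(\mstack X/k)$. The paper simply cites the latter identification directly from \cite[Propositions 2.3.20 and 2.5.7]{KubrakPrikhodko_pHodge} rather than unpacking it through the Kan extension as you do, but the argument is the same.
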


\begin{proof}
	By de Rham and crystalline comparisons \cite[Propositions 2.3.20 and 2.5.7]{KubrakPrikhodko_pHodge}, we have $$\RG_\crys(\mstack X/W(k))\otimes_{W(k)} k\simeq \RG_\dR(\mstack X/k).$$ The rest then follows from \Cref{lem:useful_lemma_2}, since $\RG_\crys(\mstack X/W(k))$ is derived $p$-complete.
\end{proof}

\begin{rem}\label{rem:condition on crystalline cohomology}
	We note that $M\coloneqq \tau^{\le d+1}\RG_\crys(\mstack X/W(k))[d+1]$ also satisfies the following finiteness condition:
	$$
	\tau^{<0}([M/p])\in \Coh(k).
	$$
\end{rem}

More generally, given a stack $\mstack X$ over $\mc O_K$ one can establish similar properties for the prismatic cohomology (over the Breuil-Kisin prism). Here, let $K$ be a discretely valued complete field extension of $\mbb Q_p$ with a perfect residue field $k$. For a choice of uniformizer $\pi\in \mc O_K$ we have the corresponding Breuil-Kisin prism $(\mf S, E(u))=(W(k)[[u]], E(u))$ with $\mf S/E(u) =\mc O_K$. Recall (\cite[Definition 2.2.1]{KubrakPrikhodko_pHodge}) that for a prestack $\mstack X$ over $\mc O_K$ we have the prismatic cohomology complex $\RG_\Prism(\mstack X/\mf S)\in \DMod{\mf S}$, as well as its twisted version $\RG_{\Prism^{(1)}}(\mstack X/\mf S)\in \DMod{\mf S}$, which, in the case of the Breuil-Kisin prism is also identified with the "Frobenius twist" $\phi_{\mf S}^*\RG_\Prism(\mstack X/\mf S)$. The complexes $\RG_\Prism(\mstack X/\mf S)$, $\RG_{\Prism^{(1)}}(\mstack X/\mf S)$ are derived $(p,u)$-adically complete and one has a natural "Frobenius map"
$$
\phi_\Prism\colon \RG_{\Prism^{(1)}}(\mstack X/\mf S) \tto \RG_{\Prism}(\mstack X/\mf S),
$$ 
which becomes an equivalence after inverting $E(u)$ in the case $\mstack X$ is a smooth Artin stack (\cite[Remark 2.2.16]{KubrakPrikhodko_pHodge}).

\begin{cor}\label{cor:BK prismatic cohomology d-coherent}
	Let $\mstack X$ be a smooth Artin stack over $\mc O_K$. If the reduction $\mstack X_k$ is $d$-de Rham proper over $k$, then both
	$$
	\tau^{\le d}\RG_{\Prism}(\mstack X/\mf S)\in \Coh(\mf S) \quad \text{ and } \quad \tau^{\le d}\RG_{\Prism\fr}(\mstack X/\mf S)\in \Coh(\mf S).
	$$ 
\end{cor}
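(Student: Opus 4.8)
The plan is to reduce modulo $u$, identify the resulting complex with the crystalline cohomology of the special fiber, invoke \Cref{cor:finiteness of crystalline}, and then descend coherence back up to $\mf S$ via \Cref{lem:useful_lemma_2}; the twisted statement will then follow by flat base change along the Frobenius of $\mf S$.

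First I would record the map of prisms $(\mf S, (E(u))) \to (W(k),(p))$ sending $u\mapsto 0$. This is a $\delta$-ring map (both $\delta_{\mf S}(u)$ and $\delta_{W(k)}(0)$ vanish, and $u^p\mapsto 0$ is $\phi$-compatible), and since $E$ is Eisenstein it carries $E(u)$ to $E(0)$, which is $p$ times a unit, so $(E(u))$ lands in $(p)$; the induced map of quotients $\mc O_K = \mf S/E(u)\to W(k)/p = k$ is the residue map, hence the corresponding base change of $\mstack X$ is precisely $\mstack X_k$. Base change for prismatic cohomology along this map, together with the crystalline comparison (\cite[Propositions 2.3.20 and 2.5.7]{KubrakPrikhodko_pHodge}), then gives
\[
\RG_\Prism(\mstack X/\mf S)\widehat\otimes_{\mf S}W(k)\;\simeq\;\RG_\Prism(\mstack X_k/W(k))\;\simeq\;\RG_\crys(\mstack X_k/W(k)),
\]
and likewise for $\RG_{\Prism^{(1)}}$. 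I would also note that here $\widehat\otimes_{\mf S}W(k)$ agrees with the plain derived tensor product $\otimes_{\mf S}W(k)$: since $W(k)=\mf S/u$ is a perfect $\mf S$-complex and $\RG_\Prism(\mstack X/\mf S)$ is derived $(p,u)$-complete, in particular derived $u$-complete, the complex $\RG_\Prism(\mstack X/\mf S)\otimes_{\mf S}\mf S/u$ is the cofiber of a map of derived $u$-complete objects, hence already derived $u$-complete.

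Since $\mstack X_k$ is $d$-de Rham-proper over $k$ by hypothesis, \Cref{cor:finiteness of crystalline} yields $\tau^{\le d}\RG_\crys(\mstack X_k/W(k))\in\Coh(W(k))$, that is, $\tau^{\le d}\bigl(\RG_\Prism(\mstack X/\mf S)\otimes_{\mf S}\mf S/u\bigr)\in\Coh(\mf S/u)$. I then apply \Cref{lem:useful_lemma_2} with $R=\mf S$ and the principal ideal $I=(u)$: the ring $\mf S = W(k)[[u]]$ is Noetherian and $u$-adically complete, $u$ is a non-zero-divisor, and $\RG_\Prism(\mstack X/\mf S)$ is derived $u$-complete, so the lemma gives $\tau^{\le d}\RG_\Prism(\mstack X/\mf S)\in\Coh(\mf S)$. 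For the twisted complex one may use the identification $\RG_{\Prism^{(1)}}(\mstack X/\mf S)\simeq\phi_{\mf S}^*\RG_\Prism(\mstack X/\mf S)$ for the Breuil--Kisin prism: the Frobenius $\phi_{\mf S}\colon \mf S\to\mf S$ is finite faithfully flat ($\mf S$ is free over $\phi_{\mf S}(\mf S)=W(k)[[u^p]]$ on $1,u,\dots,u^{p-1}$, using that $\sigma$ is an automorphism of $W(k)$), hence $\phi_{\mf S}^*$ is $t$-exact, commutes with truncations, and preserves $\Coh(\mf S)$, whence $\tau^{\le d}\RG_{\Prism^{(1)}}(\mstack X/\mf S)\simeq\phi_{\mf S}^*\bigl(\tau^{\le d}\RG_\Prism(\mstack X/\mf S)\bigr)\in\Coh(\mf S)$ (alternatively one repeats the mod-$u$ argument verbatim for $\RG_{\Prism^{(1)}}$).

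The only real obstacle I anticipate is bookkeeping around the comparison step: making sure that $u\mapsto 0$ is genuinely a map of prisms and that prismatic base change applies in the stacky setting, so that reduction modulo $u$ — rather than modulo $E(u)$ — of Breuil--Kisin prismatic cohomology returns the crystalline cohomology of $\mstack X_k$; and then verifying that the completed tensor product coincides with the underived one so that the hypotheses of \Cref{lem:useful_lemma_2} hold on the nose. Everything else is formal.
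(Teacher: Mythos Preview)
Your argument is correct and essentially the same as the paper's: reduce mod $u$, identify with $\RG_\crys(\mstack X_k/W(k))$, apply \Cref{cor:finiteness of crystalline}, and lift via \Cref{lem:useful_lemma_2}; the paper simply reverses the order, proving the twisted statement first (citing \cite[Remark 2.5.10]{KubrakPrikhodko_pHodge} directly for $\RG_{\Prism^{(1)}}(\mstack X/\mf S)\otimes_{\mf S}\mf S/u\simeq\RG_\crys(\mstack X_k/W(k))$) and then deducing the untwisted one via \Cref{lem:descent for Coh}. One small point: the crystalline comparison you cite (Proposition 2.5.7) is stated for $\RG_{\Prism^{(1)}}$, so your identification $\RG_\Prism(\mstack X_k/W(k))\simeq\RG_\crys(\mstack X_k/W(k))$ implicitly uses that $\phi_{W(k)}$ is an automorphism of $W(k)$.
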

\begin{proof}
	The map $\phi_{\mf S}\colon \mf S \ra \mf S$ is faithfully flat, so by \Cref{lem:descent for Coh} it is enough to show that $\tau^{\le d}\RG_{\Prism\fr}(\mstack X/\mf S)\in \Coh(\mf S)$. The claim then follows from crystalline comparison $\RG_{\Prism\fr}(\mstack X/\mf S)\otimes_{\mf S}\mf S/u\simeq \RG_\crys(\mstack X_k/W(k))$ (\cite[Remark 2.5.10]{KubrakPrikhodko_pHodge}) and \Cref{cor:finiteness of crystalline} above.
\end{proof}

\begin{rem}\label{rem:prismatic cohomology is a BK-module up to degree d}
	By \Cref{cor:BK prismatic cohomology d-coherent}, we get that if the reduction $\mstack X_k$ is $d$-de Rham proper, the $\mf S$-modules $H^i_\Prism(\mstack X/\mf S)$ are finitely generated if $i\le d$. The isomorphisms
	$$
	\phi_\Prism[\tfrac{1}{E}]\colon \phi_{\mf S}^*H^i_\Prism(\mstack X/\mf S)[\tfrac{1}{E}] \ism H^i_\Prism(\mstack X/\mf S)[\tfrac{1}{E}]
	$$ 
	then endow each of them with a Breuil-Kisin module structure. Just given the existence of this structure, we get that $H^i_\Prism(\mstack X/\mf S)[\frac{1}{p}]$ is a finite free $\mf S[\frac{1}{p}]$-module for $i\le d$ (see e.g. \cite[Proposition 4.3]{BMS1}). In fact, we will see that (under some torsion-free assumptions) the corresponding lattice in a crystalline Galois representation is described by \'etale cohomology of the Raynaud generic fiber (see \Cref{rem:lattice corresponding to prismatic cohomology}). 
\end{rem}

\begin{rem}\label{rem:condition on the cohomology}
	The $d$-de Rham properness of $\mstack X_k$ also implies some important properties of prismatic cohomology in degrees higher than $d$, that will turn out to be rather crucial later. Namely, even though $M\coloneqq \tau^{\ge d+1}\RG_{\Prism\fr}(\mstack X/\mf S)[d+1]$ is not an object of $\Coh(\mf S)$, it is still derived $(p,u)$-complete and satisfies the following finiteness condition: namely, $\tau^{<0}[M/(p,u)]\in \Coh(k)$. Indeed, let $C\coloneqq \RG_{\Prism\fr}(\mstack X/\mf S)[d+1]$; we have $M\simeq \tau^{\ge 0}(C)$. Note that by our assumptions on $\mstack X_k$ and de Rham comparison we have $\tau^{<0}([C/(p,u)])\in \Coh(k)$. We have a fiber sequence 
	$
	\tau^{<0}C \ra C \ra M 
	$
	which gives a fiber sequence 
	$$
	[\tau^{<0}C/(p,u)]\ra [C/(p,u)] \ra [M/(p,u)]
	$$
	We have that $\tau^{<0}C$ is coherent by \Cref{cor:BK prismatic cohomology d-coherent}, and so it follows that $\tau^{<0}([M/(p,u)])\in \Coh(k)$.  
	
	 Consequently, applying $[-/(p,u)]$ to the fiber sequence $H^0(M)[0] \ra M \ra \tau^{> 0}M$ and considering the corresponding long exact sequence of cohomology we get that $H^{-2}([H^0(M)/(p,u)])\simeq H^{-2}([M/(p,u)])$ while there is also an embedding $H^{-1}([H^0(M)/(p,u)])\hookrightarrow H^{-1}([M/(p,u)])$, and so both are finite-dimensional vector spaces over $k$. Recalling the definition of $M$ we get from this that the $(p,u)$-torsion $H^{d+1}_{\Prism\fr}(\mstack X/\mf S)[p][u]$ is a finite dimensional $k$-vector space.
\end{rem}

We record a part of the discussion in \Cref{rem:condition on the cohomology} as \Cref{lem:condition on the cohomology} below for a future reference.
\begin{lem}\label{lem:condition on the cohomology}
	Let $\mstack X$ be a smooth Artin stack over $\mc O_K$ such that the reduction $\mstack X_k$ is $d$-de Rham proper over $k$. Then $M\coloneqq \tau^{\ge d+1}\RG_{\Prism\fr}(\mstack X/\mf S)[d+1]$ satifies 
	$$
	\tau^{<0}([M/(p,u)])\in \Coh(k).
	$$
\end{lem}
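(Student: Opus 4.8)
The plan is to deduce the statement from a single truncation fiber sequence together with the two comparison results already used in \Cref{cor:finiteness of crystalline} and \Cref{cor:BK prismatic cohomology d-coherent}; in fact this is exactly the argument carried out in the discussion preceding \Cref{rem:condition on the cohomology}, which I will now organize into a proof. Write $C\coloneqq \RG_{\Prism\fr}(\mstack X/\mf S)[d+1]$, so that $M\simeq \tau^{\ge 0}C$, and consider the fiber sequence $\tau^{<0}C\to C\to M$. Applying the derived reduction $[-/(p,u)]=(-)\otimes_{\mf S}\mf S/(p,u)$ (an exact functor, $\mf S/(p,u)\simeq k$) yields a fiber sequence $[\tau^{<0}C/(p,u)]\to [C/(p,u)]\to [M/(p,u)]$ in $\DMod{k}$.

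The next step is to identify the relevant truncations of the two left-hand terms. By \Cref{cor:BK prismatic cohomology d-coherent} the truncation $\tau^{\le d}\RG_{\Prism\fr}(\mstack X/\mf S)$ lies in $\Coh(\mf S)$, and since $\tau^{<0}C\simeq (\tau^{\le d}\RG_{\Prism\fr}(\mstack X/\mf S))[d+1]$ this gives $\tau^{<0}C\in\Coh(\mf S)$; because $\mf S\to k$ has finite Tor-amplitude (the ideal $(p,u)$ is generated by a regular sequence of length $2$), the reduction functor preserves coherence, so $[\tau^{<0}C/(p,u)]\in\Coh(k)$, and in particular all its cohomology groups are finite-dimensional. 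On the other side, combining the crystalline comparison $\RG_{\Prism\fr}(\mstack X/\mf S)\otimes_{\mf S}\mf S/u\simeq \RG_\crys(\mstack X_k/W(k))$ with the de Rham comparison $\RG_\crys(\mstack X_k/W(k))\otimes_{W(k)}k\simeq \RG_\dR(\mstack X_k/k)$ identifies $[C/(p,u)]$ with $\RG_\dR(\mstack X_k/k)[d+1]$; by $d$-de Rham properness of $\mstack X_k$ we have $\tau^{\le d}\RG_\dR(\mstack X_k/k)\in\Coh(k)$, i.e. $\tau^{<0}([C/(p,u)])\in\Coh(k)$, so $H^i([C/(p,u)])$ is finite-dimensional for every $i<0$.

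Finally I would run the long exact cohomology sequence of the fiber sequence $[\tau^{<0}C/(p,u)]\to [C/(p,u)]\to [M/(p,u)]$. For $i<0$ the group $H^i([M/(p,u)])$ is squeezed between $H^i([C/(p,u)])$ and $H^{i+1}([\tau^{<0}C/(p,u)])$, both of which are finite-dimensional over $k$ by the previous paragraph; hence $H^i([M/(p,u)])$ is finite-dimensional for all $i<0$. Since $M$ is coconnective (concentrated in degrees $\ge 0$) and $k$ has Tor-amplitude $[-2,0]$ over $\mf S$, the complex $[M/(p,u)]$ is concentrated in degrees $\ge -2$, so $\tau^{<0}([M/(p,u)])$ has cohomology only in degrees $-2$ and $-1$, both finite-dimensional; therefore $\tau^{<0}([M/(p,u)])\in\Coh(k)$, as claimed.

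There is essentially no real obstacle here — the whole argument is bookkeeping — but the one point that genuinely needs attention, and which is the very phenomenon the rest of the paper is built to handle, is that $[M/(p,u)]$ is \emph{not} coconnective: reducing the coconnective object $M$ modulo the two-generator ideal $(p,u)$ can create cohomology in the two extra negative degrees $-2$ and $-1$, and it is precisely these two finite-dimensional groups (equivalently, the finiteness of the $(p,u)$-torsion $H^{d+1}_{\Prism\fr}(\mstack X/\mf S)[p][u]$) that the statement records.
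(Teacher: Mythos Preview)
Your proof is correct and follows exactly the same approach as the paper: reduce modulo $(p,u)$ the truncation fiber sequence $\tau^{<0}C\to C\to M$, identify $[C/(p,u)]$ with $\RG_\dR(\mstack X_k/k)[d+1]$ via the crystalline and de Rham comparisons, use \Cref{cor:BK prismatic cohomology d-coherent} to get coherence of $[\tau^{<0}C/(p,u)]$, and conclude from the long exact sequence. Your final paragraph, noting that only the degrees $-2$ and $-1$ are at issue and linking this to the finiteness of $H^{d+1}_{\Prism\fr}(\mstack X/\mf S)[p][u]$, matches the paper's \Cref{rem:condition on the cohomology} verbatim in spirit.
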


Let $R$ be a $p$-bounded $p$-complete ring. The following notation will be convenient:
\begin{defn}
	We define the \textit{$p$-completed Hodge cohomology} 
	$\RG_\Hdg(\widehat{\mstack X}/R)$ of an Artin $R$-stack $\mstack X$ as the derived $p$-completion $\RG_\Hdg({\mstack X}/R)^\wedge_p$. We also denote $$H^{j,i}(\widehat{\mstack X}/R)\coloneqq H^i(\RG(\mstack X,\wedge^j\mbb L_{\mstack X/R})^\wedge_p).$$
	Similarly, if $\mstack X$ is smooth over $R$ we define the \textit{$p$-completed de Rham cohomology}
	$$\RG_\dR(\widehat{\mstack X}/R)\coloneqq \RG_\dR(\mstack X/R)^\wedge_p.$$
\end{defn}

Naturally, we have coherence results for $\RG_\Hdg(\widehat{\mstack X}/R)$ and $\RG_\dR(\widehat{\mstack X}/R)$ implied by $d$-Hodge and $d$-de Rham-properness of the \textit{mod $p$ reduction} $\mstack X_{R/p}$ of $\mstack X$.

\begin{cor} \label{cor:p-completed Hodge and de Rham d-coherent}Let $R$ be a Noetherian $p$-torsion free $p$-complete ring and let $\mstack X$ be a smooth qcqs Artin stack over $R$. 
	\begin{enumerate}[label=(\arabic*)]
		\item If the reduction $\mstack X_{R/p}$ is $d$-Hodge-proper over $R/p$ we have 
		$$\tau^{\le d}\RG_\Hdg(\widehat{\mstack X}/R)\in \Coh(R).$$
		Equivalently, $H^{j,i}(\widehat{\mstack X}/R)$ is a finitely generated $R$-module for any $i+j\le d $.
		\item If the reduction $\mstack X_{R/p}$ is $d$-de Rham-proper over $R/p$ we have 
		$$\tau^{\le d}\RG_\dR(\widehat{\mstack X}/R)\in \Coh(R).$$
	\end{enumerate}
\end{cor}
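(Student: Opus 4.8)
The plan is to deduce both parts from \Cref{lem:useful_lemma_2} applied to the principal ideal $I=(p)\subset R$. This is legitimate: $R$ is Noetherian and $p$-torsion free, so $p$ is a non-zero divisor, and a Noetherian derived $p$-complete ring is classically $p$-adically complete. Set $M\coloneqq\RG_\Hdg(\widehat{\mstack X}/R)$ in case~(1) and $M\coloneqq\RG_\dR(\widehat{\mstack X}/R)$ in case~(2); in both cases $M$ is derived $p$-complete by construction, being defined as a derived $p$-completion. By \Cref{lem:useful_lemma_2} it therefore suffices to prove $\tau^{\le d}(M\otimes_R R/p)\in\Coh(R/p)$.

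The key identification is $M\otimes_R R/p\simeq \RG_\Hdg(\mstack X_{R/p}/(R/p))$ (resp. $\simeq\RG_\dR(\mstack X_{R/p}/(R/p))$). Indeed, writing $M=N^\wedge_p$ with $N=\RG_\Hdg(\mstack X/R)$ (resp. $N=\RG_\dR(\mstack X/R)$), the canonical map $N\to N^\wedge_p$ is an equivalence after $-\otimes_R R/p$, so $M\otimes_R R/p\simeq N\otimes_R R/p$. Since $R\to R/p$ has Tor-amplitude $[-1,0]$, base change for the cotangent complex and its exterior powers (\cite[Corollary A.2.48]{KubrakPrikhodko_pHodge}, exactly as in the proof of \Cref{prop:general properties of d-Hodge-properness}) gives $\RG(\mstack X,\wedge^j\mbb L_{\mstack X/R})\otimes_R R/p\simeq\RG(\mstack X_{R/p},\wedge^j\mbb L_{\mstack X_{R/p}/(R/p)})$ for all $j$, and likewise $\RG_\dR(\mstack X/R)\otimes_R R/p\simeq\RG_\dR(\mstack X_{R/p}/(R/p))$; summing over the Hodge weights (which commutes with $-\otimes_R R/p$) yields the claim. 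Note that this step is where we sidestep the failure of $t$-exactness of $(-)^\wedge_p$: all the finiteness input enters only after reducing modulo $p$, where the tensor product is harmless, and the interplay between $\tau^{\le d}$ and $p$-completion is entirely absorbed into \Cref{lem:useful_lemma_2}.

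It remains to apply the hypotheses. In case~(2), $\tau^{\le d}\RG_\dR(\mstack X_{R/p}/(R/p))\in\Coh(R/p)$ is precisely the definition of $d$-de Rham-properness of $\mstack X_{R/p}$ (\Cref{rem:reformulation of d-de Rham-properness}). In case~(1), the $j$-th graded piece $\RG(\mstack X_{R/p},\wedge^j\mbb L_{\mstack X_{R/p}/(R/p)})[-j]$ of the Hodge filtration on $\RG_\Hdg(\mstack X_{R/p}/(R/p))$ is concentrated in cohomological degrees $\ge j$ (as $\wedge^j\mbb L_{\mstack X_{R/p}/(R/p)}$ sits in non-negative degrees and global sections is left $t$-exact), so only the weights $j\le d$ contribute below degree $d$ and $\tau^{\le d}$ of the sum is the finite direct sum $\bigoplus_{j\le d}\big(\tau^{\le d-j}\RG(\mstack X_{R/p},\wedge^j\mbb L_{\mstack X_{R/p}/(R/p)})\big)[-j]$; each summand is coherent by $d$-Hodge-properness of $\mstack X_{R/p}$ (\Cref{rem:reformulation of d-Hodge-properness}), hence so is the whole. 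Feeding this back through \Cref{lem:useful_lemma_2} gives $\tau^{\le d}M\in\Coh(R)$, and the reformulation in terms of finite generation of the $H^{j,i}(\widehat{\mstack X}/R)$ follows from the Hodge grading together with the same degree bound. The only point requiring any care beyond formal manipulation is this last boundedness bookkeeping, ensuring that $\tau^{\le d}\RG_\Hdg(\widehat{\mstack X}/R)$ is an honest bounded (hence coherent) complex rather than merely a complex with finitely generated cohomology in each degree.
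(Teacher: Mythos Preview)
Your proof is correct and follows essentially the same approach as the paper: identify $M\otimes_R R/p$ with the Hodge (resp.\ de Rham) cohomology of the reduction via base change, then invoke \Cref{lem:useful_lemma_2}. The paper's argument is more terse and leaves implicit the bookkeeping you spell out in case~(1) (that only weights $j\le d$ contribute to $\tau^{\le d}\RG_\Hdg$), but the logic is the same.
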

\begin{proof}
	Since $p$ is a non-zero divisor in $R$, we have $R/p\simeq \cofib(R\xra{\cdot p}R)$. Thus, for any $M\in \DMod{R}$ and the definition of derived $p$ completion we have $M^\wedge_p\otimes_R R/p\simeq M\otimes_R R/p$. Also, $R/p$ is of Tor-amplitude $[-1,0]$ as an $R$-module. This, together with base change for Hodge and de Rham cohomology (\cite[Proposition 1.1.8]{KubrakPrikhodko_HdR}), leads to equivalences 
	$$
	\RG_\Hdg(\widehat{\mstack X}/R)\otimes_R R/p \ism \RG_\Hdg({\mstack X}_{R/p}/(R/p)) \quad  \quad \RG_\dR(\widehat{\mstack X}/R)\otimes_R R/p  \ism \RG_\dR({\mstack X}_{R/p}/(R/p)).
	$$ 
	We then are done by \Cref{lem:useful_lemma_2}.
\end{proof}

We also record a statement analogous to \Cref{cor:BK prismatic cohomology d-coherent} but which applies to more general base prisms. We refer to \cite[Definition 2.2.1]{KubrakPrikhodko_pHodge} for the definitions of prismatic cohomology $\RG_\Prism(\mstack X/A)\in \DMod{A}$, as well as its twisted version $\RG_{\Prism^{(1)}}(\mstack X/A)\in \DMod{A}$. 

\begin{cor}\label{prop:prismatic cohomology for Hodge-proper of degree d}
	Let $(A,I)$ be a transversal\footnote{Recall that by definition this means that $A/I$ is $p$-torsion free.} prism such that the underlying ring $A$ is Noetherian. Let $\mstack X$ be a qcqs smooth Artin stack over $A/I$ such that the reduction $\mstack X_{A/(I,p)}$ is $d$-de Rham-proper over $A/(I,p)$. Then 
	$$
	\tau^{\le d}\RG_{\Prism\fr}(\mstack X/A)\in \Coh(A).
	$$
	If the reduction $\mstack X_{A/(I,p)}$ is $d$-Hodge-proper over $A/(I,p)$, then 
	$$
	\tau^{\le d}\RG_{\Prism}(\mstack X/A)\in \Coh(A).
	$$
\end{cor}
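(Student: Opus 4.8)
The plan is to deduce both statements from \Cref{lem:useful_lemma_2} applied with $R=A$ and the ideal $(p,I)\subset A$, in parallel with the proof of \Cref{cor:BK prismatic cohomology d-coherent}. First I would record the hypotheses of that lemma in the present setting: a generator $d$ of $I$ is a non-zero divisor on $A$ (the ideal of a prism is invertible), while $p$ is a non-zero divisor on $A/I$ by transversality, so $d,p$ form a regular sequence and $A/(p,I)\simeq \cofib(A\xra{\cdot d}A)\otimes_A\cofib(A\xra{\cdot p}A)$; moreover $A$ is derived $(p,I)$-complete by the definition of a prism, hence classically $(p,I)$-adically complete since it is Noetherian; and both $\RG_\Prism(\mstack X/A)$ and $\RG_{\Prism\fr}(\mstack X/A)=\phi_A^*\RG_\Prism(\mstack X/A)$ are derived $(p,I)$-complete. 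Thus in each case it is enough to show that the derived reduction modulo $(p,I)$ has $\tau^{\le d}$ lying in $\Coh(A/(p,I))$. Note that, unlike in the Breuil--Kisin case, Frobenius on $A$ need not be flat, so the two assertions must be handled separately.

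For $\RG_{\Prism\fr}(\mstack X/A)$ I would use the de Rham comparison $\RG_{\Prism\fr}(\mstack X/A)\otimes_A^{\mathbb L}A/I\simeq \RG_\dR(\mstack X_{A/I}/(A/I))$ (the stacky form of \cite[Proposition 2.3.20]{KubrakPrikhodko_pHodge}), followed by base change for de Rham cohomology along the finite Tor-amplitude map $A/I\to A/(p,I)$ (\cite[Proposition 1.1.8]{KubrakPrikhodko_HdR}), which identifies $\RG_{\Prism\fr}(\mstack X/A)\otimes_A^{\mathbb L}A/(p,I)$ with $\RG_\dR(\mstack X_{A/(p,I)}/(A/(p,I)))$. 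Since $\mstack X_{A/(p,I)}$ is $d$-de Rham-proper, $\tau^{\le d}$ of the latter lies in $\Coh(A/(p,I))$, and \Cref{lem:useful_lemma_2} then yields $\tau^{\le d}\RG_{\Prism\fr}(\mstack X/A)\in\Coh(A)$.

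For $\RG_\Prism(\mstack X/A)$ the reduction modulo $I$ is the Hodge--Tate cohomology, which carries the conjugate filtration with $i$-th graded piece $\RG(\mstack X,\wedge^i\mbb L_{\mstack X/(A/I)})\{-i\}[-i]$ for the invertible Breuil--Kisin twist $\{-i\}$. Base-changing this increasing exhaustive filtration along $A/I\to A/(p,I)$ and again using base change for the cotangent complex and Hodge cohomology, I would obtain a filtration on $\RG_\Prism(\mstack X/A)\otimes_A^{\mathbb L}A/(p,I)$ whose $i$-th graded piece is $\RG(\mstack X_{A/(p,I)},\wedge^i\mbb L_{\mstack X_{A/(p,I)}/(A/(p,I))})[-i]$ twisted by an invertible module. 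Because $\wedge^i\mbb L_{\mstack X_{A/(p,I)}/(A/(p,I))}$ lies in non-negative cohomological degrees ($\mstack X_{A/(p,I)}$ being smooth) and $\RG$ is left $t$-exact, the $i$-th graded piece lies in degrees $\ge i$, so only the pieces with $i\le d$ affect $\tau^{\le d}$; and for those, $\tau^{\le d}$ of the $i$-th piece equals $(\tau^{\le d-i}\RG(\mstack X_{A/(p,I)},\wedge^i\mbb L))[-i]$ up to the invertible twist, which is coherent by $d$-Hodge-properness of $\mstack X_{A/(p,I)}$ together with \Cref{rem:reformulation of d-Hodge-properness}. As $\tau^{\le d}$ commutes with filtered colimits, $\tau^{\le d}$ of $\RG_\Prism(\mstack X/A)\otimes_A^{\mathbb L}A/(p,I)$ is a finite extension of these coherent modules, hence coherent, and \Cref{lem:useful_lemma_2} finishes the argument.

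The step that will require the most care is the control of the Hodge--Tate reduction via the conjugate filtration: one must simultaneously know that $\RG(\mstack X_{A/(p,I)},\wedge^i\mbb L)$ sits in non-negative cohomological degrees — so that the a priori infinite filtration stabilizes below degree $d$ after $d+1$ steps — and that all of its truncations $\tau^{\le d-i}$ remain coherent, which is precisely the reformulation of $d$-Hodge-properness. The remaining ingredients (transversality yielding the regular sequence, derived versus classical completeness over a Noetherian ring, and the de Rham and Hodge--Tate comparisons together with base change) are routine.
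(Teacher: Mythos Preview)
Your proof is correct and follows essentially the same route as the paper: both reduce to coherence of $\tau^{\le d}$ of the derived reduction and invoke \Cref{lem:useful_lemma_2}, using the de Rham comparison for $\RG_{\Prism\fr}$ and the conjugate filtration on the Hodge--Tate complex for $\RG_\Prism$. The only cosmetic differences are that the paper reduces modulo $I$ first and then appeals to \Cref{cor:p-completed Hodge and de Rham d-coherent} (which internally reduces mod $p$) rather than reducing modulo $(p,I)$ in one shot, and phrases your filtration argument as a spectral sequence; note also that the de Rham comparison actually gives $p$-completed de Rham over $A/I$ (and likewise the conjugate graded pieces are $p$-completed), but since you immediately reduce further modulo $p$ this omission is harmless.
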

\begin{proof}
	 Note that we have isomorphisms
	 $$\RG_{\Prism\fr}(\mstack X/A)\otimes_A A/I \ism \RG_{L\dR^\wedge_p}(\mstack X/(A/I)) \ism \RG_\dR(\widehat{\mstack X}/(A/I))^\wedge_p.$$ Here the first arrow is given by the de Rham comparison (\cite[Proposition 2.3.20]{KubrakPrikhodko_pHodge}), while the second equivalence holds for any smooth Artin $\mstack X$ (see \cite[Proposition 2.3.18]{KubrakPrikhodko_pHodge}).  Since $(A,I)$ is transversal we can apply \Cref{cor:p-completed Hodge and de Rham d-coherent}(2), getting that $\tau^{\le d}\RG_\dR(\widehat{\mstack X}/(A/I))$. The first part of the proposition then follows from \Cref{lem:useful_lemma_2}.

For the second part recall that the Hodge-Tate complex $\RG_{\Prism/I}(\mstack X/A)\coloneqq \RG_{\Prism}(\mstack X/A)\otimes_A^{\mbb L} A/I$ has a natural conjugate filtration with the associated graded $\gr_i = \RG(\mstack X, \wedge^i\mbb L_{\mstack X/(A/I)})^\wedge_p\{-i\}[-i]$ (see \cite[Section 2.4]{KubrakPrikhodko_pHodge}). Since $\mstack X$ is smooth this filtration is exhaustive \cite[Proposition 2.4.3]{KubrakPrikhodko_pHodge}) and induces a convergent spectral sequence $E^{i,j}=H^{j,i}(\widehat{\mstack X}/(A/I))\{-i\}\Rightarrow H^{i+j}_{\Prism/I}(\mstack X/A)$ (see \cite[Section 4.3.3]{KubrakPrikhodko_pHodge}). Recall that the twist $\{-i\}$ here is just tensoring with the invertible $A/I$-module $((I/I^2)^\vee)^{\otimes i}$ and so doesn't change coherence. It is then enough to show that $H^{j,i}(\widehat{\mstack X}/(A/I))$ is finitely generated for $i+j\le d$, which follows from \Cref{cor:p-completed Hodge and de Rham d-coherent}(1).
\end{proof}

\subsection{Some integral $p$-adic Hodge theory in $d$-de Rham-proper setting}\label{ssec:integral p-adic Hodge theory}
In this section we adapt some results of \cite[Sections 2 and 4]{KubrakPrikhodko_pHodge} about prismatic cohomology of Hodge-proper stacks to the setting of stacks that are only $d$-Hodge (or rather $d$-de Rham) proper.

In this section we will mostly assume that the base prism $(A,I)=(\mf S,E(u))$ is a Breuil-Kisin prism.
The goal is to establish a truncated version of integral $p$-adic Hodge theory in the setting of $d$-de Rham-proper stacks over $\mc O_K$. The main reference to some relevant background material is \cite[Section 4.3]{KubrakPrikhodko_pHodge}.

Let $\pi\in \mc O_K$ be the uniformizer to which the chosen Breuil-Kisin prism $(\mf S,E(u))$ corresponds (so $\pi$ is the image of $u$ under the identification $\mf S/E(u)\xra{\sim} \mc O_K$). A  compatible choice of $p^n$-roots of $\pi$ in $\mc O_C$ gives an element $\pi^\flat\coloneqq (\pi,\pi^{1/p}, \pi^{1/p^2},\ldots)\in \mc O_C^\flat$ in the tilt $\mc O_C^\flat\coloneqq \lim_{x\mapsto x^p}\mc O_C/p$. This then defines a homomorphism $s_{\pi^\flat}\colon \mf S\ra \Ainf\coloneqq W(\mc O_{C}^\flat)$ by sending $u \mapsto [\pi^\flat]$. We can also compose it further with the natural map $\Ainf\ra W(C^\flat)$ induced by $\mc O_C^\flat\ra C^\flat\coloneqq \mc O_C^\flat[\frac{1}{\pi^\flat}]$. Recall that by the \'etale comparison (\cite[Corollary 4.3.3]{KubrakPrikhodko_pHodge}) and base change we have an equivalence 
\begin{equation}\label{eq:etale comparison}
R\Gamma_{\et}(\widehat{\mstack X}_{{C}}, \mathbb Z_p) \ism \left(R\Gamma_{\Prism^{(1)}}(\mstack X/ \mf S)\widehat{\otimes}_{\mf S} W(C^\flat)\right)^{\phi_{\Prism} = 1}
\end{equation}
for any smooth qcqs Artin $\mc O_K$-stack $\mstack X$.

\begin{rem}
	We use the twisted version of the \'etale comparison here for a future convenience. The difference only affects
	the natural map 
	$$
	R\Gamma_{\et}(\widehat{\mstack X}_{{C}}, \mathbb Z_p)\widehat\otimes_{\mbb Z_p}W(C^\flat) \arr R\Gamma_{\Prism^{(1)}}(\mstack X/ \mf S)\widehat{\otimes}_{\mf S} W(C^\flat),
	$$
	namely it becomes twisted by Frobenius on $W(C^\flat)$ (see \cite[Remark 4.3.7]{KubrakPrikhodko_pHodge}; to apply the remark, note that by base change \cite[Proposition 2.2.13]{KubrakPrikhodko_pHodge} $R\Gamma_{\Prism^{(1)}}(\mstack X/ \mf S)\widehat{\otimes}_{\mf S} W(C^\flat)\simeq R\Gamma_{\Prism^{(1)}}(\mstack X_{\mc O_C}/ \Ainf)\widehat{\otimes}_{\Ainf} W(C^\flat)$).
\end{rem}

\begin{prop} \label{prop:around etale comparison} Let $\mstack X$ be a smooth qcqs Artin stack over $\mc O_K$ such that the reduction $\mstack X_k$ is $d$-de Rham-proper. Then
	 \begin{enumerate}[label=(\arabic*)]
		\item There is a natural equivalence $$\tau^{\le d}\RG_{\Prism\fr}(\mstack X/\mf S)\otimes_{\mf S} W(C^\flat)\ism \tau^{\le d}(\RG_{\Prism\fr}(\mstack X/\mf S)\cotimes_{\mf S}W(C^\flat)).$$
		\item The natural map 
		$$
		\tau^{\le d}R\Gamma_{\et}(\widehat{\mstack X}_{{C}}, \mathbb Z_p)\otimes_{\mbb Z_p} W(C^\flat) \arr \tau^{\le d}(R\Gamma_{\Prism\fr}(\mstack X/ \mf S)\widehat{\otimes}_{\mf S} W(C^\flat))
		$$ 
		is also an equivalence. 
		\item Consequently, one has isomorphisms 
		$$
		H^i_\et(\widehat{\mstack X}_C,\mbb Z_p)\otimes_{\mbb Z_p}W(C^\flat) \ism H^i_{\Prism\fr}(\mstack X/\mf S)\otimes_{\mf S}W(C^\flat)
		$$
		for all $i\le d$.
	\end{enumerate}
\end{prop}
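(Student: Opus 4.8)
The plan is to split off the coherent part $\tau^{\le d}\RG_{\Prism\fr}(\mstack X/\mf S)$ of prismatic cohomology, where the completed base change along $\mf S\to W(C^\flat)$ can be computed naively, and to control the non-coherent tail using the finiteness built into \Cref{lem:condition on the cohomology} together with the solid-module technology of \Cref{sec:appendix about complete sums}. Parts (2) and (3) will then be formal consequences of (1) via the \'etale comparison \eqref{eq:etale comparison}.

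Write $P\coloneqq\RG_{\Prism\fr}(\mstack X/\mf S)$. By \Cref{cor:BK prismatic cohomology d-coherent} the truncation $\tau^{\le d}P$ is coherent, hence perfect (as $\mf S$ is regular), and it sits in cohomological degrees $\le d$; consequently $(\tau^{\le d}P)\cotimes_{\mf S}W(C^\flat)$ agrees with the uncompleted base change $(\tau^{\le d}P)\otimes_{\mf S}W(C^\flat)$, which by the $\mathrm{Tor}$-spectral sequence stays in degrees $\le d$. Applying the exact functor $-\cotimes_{\mf S}W(C^\flat)$ to the fibre sequence $\tau^{\le d}P\to P\to\tau^{\ge d+1}P$, part (1) therefore reduces to the claim that $(\tau^{\ge d+1}P)\cotimes_{\mf S}W(C^\flat)$ is concentrated in degrees $\ge d+1$: for then $\tau^{\le d}$ does not see the tail, and the fibre sequence yields $(\tau^{\le d}P)\otimes_{\mf S}W(C^\flat)\ism\tau^{\le d}\bigl(P\cotimes_{\mf S}W(C^\flat)\bigr)$.

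To prove this claim — the main obstacle — I would first record that $W(C^\flat)$ is flat over $\mf S$: the map $s_{\pi^\flat}$ factors as $\mf S\to\mc E\to W(C^\flat)$, where $\mc E\coloneqq(\mf S[1/u])^\wedge_p$ is a complete discrete valuation ring; $\mc E$ is flat over $\mf S$ by the local flatness criterion (its mod-$p$ reduction $k((u))$ is flat over $\mf S/p=k[[u]]$), and $W(C^\flat)$ is flat over the DVR $\mc E$ since it is $p$-torsion free. Hence $(\tau^{\ge d+1}P)\otimes_{\mf S}W(C^\flat)$ lives in degrees $\ge d+1$; and since $u$ is a unit in $W(C^\flat)$ the completion $\cotimes_{\mf S}W(C^\flat)$ here is just derived $p$-completion, which is left $t$-exact up to a shift by $1$, so the only possible obstruction is that its $H^d$ — which equals the Tate module $T_p\!\bigl(H^{d+1}_{\Prism\fr}(\mstack X/\mf S)\otimes_{\mf S}W(C^\flat)\bigr)$ — be non-zero. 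Here the hypothesis that $\mstack X_k$ is $d$-de Rham-proper enters through \Cref{lem:condition on the cohomology}: the $(p,u)$-torsion of $H^{d+1}_{\Prism\fr}(\mstack X/\mf S)$ is a finite-dimensional $k$-vector space (see \Cref{rem:condition on the cohomology}), and $P$ is derived $(p,u)$-complete. The clean way to turn this finiteness into the vanishing of the Tate module — the route advertised in the introduction — is to pass to solid $\mf S$-modules, where derived $(p,u)$-completion behaves well on prodiscrete complete objects: \Cref{lem_discretness_of_cond_completion} shows that the solid derived $(p,u)$-completion of $\tau^{\ge d+1}P$ remains concentrated in degrees $\ge d+1$, and one then compares this solid completion with its classical counterpart, again using the finiteness above. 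Carrying out the comparison between the solid and classical worlds is the delicate part of the argument and is precisely what \Cref{sec:appendix about complete sums} is devoted to.

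Granting (1), parts (2) and (3) follow. For (2), recall the \'etale comparison \eqref{eq:etale comparison}: $\RG_\et(\widehat{\mstack X}_C,\mbb Z_p)\simeq\bigl(P\cotimes_{\mf S}W(C^\flat)\bigr)^{\phi_\Prism=1}$. By (1) the complex $N\coloneqq\tau^{\le d}\bigl(P\cotimes_{\mf S}W(C^\flat)\bigr)\simeq(\tau^{\le d}P)\otimes_{\mf S}W(C^\flat)$ is perfect over $W(C^\flat)$ and carries a Frobenius-semilinear automorphism inherited from $\phi_\Prism$ (which becomes invertible over $W(C^\flat)$ because $E(u)$ maps to a unit there); that is, $N$ is an \'etale $\phi$-module over the Witt vectors of the algebraically closed perfectoid field $C^\flat$. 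The Artin--Schreier--Witt sequence then gives $N^{\phi=1}\otimes_{\mbb Z_p}W(C^\flat)\ism N$ (cf.\ \cite[Section~4.3]{KubrakPrikhodko_pHodge}), and unwinding the truncations — keeping track of the usual off-by-one between $\mathrm{fib}$ of a truncated map and the truncation of $\mathrm{fib}$, which only affects degree $d+1$ — yields that the $W(C^\flat)$-linearization of the \'etale comparison is an equivalence after $\tau^{\le d}$; this is (2). Finally, (3) is the cohomological shadow: since $W(C^\flat)$ is flat over $\mbb Z_p$ and (as above) over $\mf S$, tensoring with it commutes with $H^i$, and for $i\le d$ the group $H^i$ of a complex agrees with $H^i$ of its $\tau^{\le d}$-truncation; combining this with (1) and (2) produces the isomorphisms $H^i_\et(\widehat{\mstack X}_C,\mbb Z_p)\otimes_{\mbb Z_p}W(C^\flat)\ism H^i_{\Prism\fr}(\mstack X/\mf S)\otimes_{\mf S}W(C^\flat)$ for $i\le d$.
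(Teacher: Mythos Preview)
Your overall architecture matches the paper's exactly: reduce (1) to showing that $(\tau^{\ge d+1}P)\cotimes_{\mf S}W(C^\flat)$ stays in degrees $\ge d+1$, then deduce (2) via Artin--Schreier--Witt on the coherent $\phi$-module $(\tau^{\le d}P)\otimes_{\mf S}W(C^\flat)$ (the paper cites \cite[Lemma 8.5]{Bhatt_LiteBMS} for this step), and read off (3) on cohomology. Your flatness argument for $W(C^\flat)$ over $\mf S$ via $\mc E$ is fine.

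The one genuine mismatch is the tool you invoke to control the tail. You propose going through \Cref{lem_discretness_of_cond_completion} and then ``comparing solid with classical''. The paper does \emph{not} use solid modules here: it applies \Cref{cor:completed tensor with W over S} directly to the module $H^{d+1}_{\Prism\fr}(\mstack X/\mf S)$ (which satisfies its hypotheses by \Cref{rem:condition on the cohomology}), and that corollary is proved by a purely classical torsion analysis via \Cref{prop:p-completed inversion of u is sometimes t-exact} and \Cref{lem:p-completed product abelian module}. More to the point, the paper's own \Cref{ex:counterexample} and the remark following it show that your proposed route cannot work as stated: knowing $M^\bt_{(p,u)}\in D^{\ge 0}(\Solid)$ is \emph{not} sufficient to conclude that $M\cotimes_{\mf S}\mf S\langle \tfrac{1}{u}\rangle$ (hence $M\cotimes_{\mf S}W(C^\flat)$) lies in degrees $\ge 0$, because $-\sotimes_{\mf S^\bt}\mf S\langle\tfrac{1}{u}\rangle^\bt$ is not $t$-exact on the image of $(-)^\bt$. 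So the passage ``solid $\Rightarrow$ classical'' fails for exactly this functor, and the finiteness from \Cref{lem:condition on the cohomology} must be fed directly into the classical \Cref{cor:completed tensor with W over S}. The condensed machinery in \Cref{sec:appendix about complete sums} is genuinely used later (for the $B_\mmax$-comparison in \Cref{ssec: crystalline comparison}), but not for this proposition.
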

\begin{proof} 1. First let us show that the natural map $\tau^{\le d} \RG_{\Prism\fr}(\mstack X/\mf S) \ra \RG_{\Prism\fr}(\mstack X/\mf S)$ induces an equivalence
	$$
	\tau^{\le d} \RG_{\Prism\fr}(\mstack X/\mf S)\cotimes_{\mf S}W(C^\flat) \ism \tau^{\le d} (\RG_{\Prism\fr}(\mstack X/\mf S)\cotimes_{\mf S}W(C^\flat)).
	$$ 
	Denote $M\coloneqq \RG_{\Prism\fr}(\mstack X/\mf S)$. Note that the $p$-completed tensor product functor $-\cotimes_{\mf S}W(C^\flat)$ is $t$-exact up to a right shift by 1 since $W(C^\flat)$ is flat over $\mf S$. We have a fiber sequence 
	$$
	\tau^{\le d} M \cotimes_{\mf S}W(C^\flat)\tto M\cotimes_{\mf S}W(C^\flat) \tto \tau^{>d}M\cotimes_{\mf S}W(C^\flat)
	$$
	and so to get the claim it is just enough to show that $H^d(\tau^{>d}M\cotimes_{\mf S}W(C^\flat))$ is 0. Note that 
	$$H^d(\tau^{>d}M\cotimes_{\mf S}W(C^\flat))\simeq H^d(H^{d+1}(M)[-d-1]\cotimes_{\mf S}W(C^\flat))\simeq H^{-1}(H^{d+1}(M)\cotimes_{\mf S}W(C^\flat)).$$
	By \Cref{rem:condition on the cohomology} $H^{d+1}(M)$ satisfies the conditions of \Cref{cor:completed tensor with W over S}, and so the latter group is 0. Since $\tau^{\le d} \RG_{\Prism\fr}(\mstack X/\mf S)\in \Coh(\mf S)$ by \Cref{cor:BK prismatic cohomology d-coherent}, the tensor product $\tau^{\le d} \RG_{\Prism\fr}(\mstack X/\mf S)\otimes_{\mf S}W(C^\flat)$ is already $p$-complete. We get a natural equivalence 
	$$
	\tau^{\le d}\RG_{\Prism\fr}(\mstack X/\mf S)\otimes_{\mf S} W(C^\flat)\ism \tau^{\le d}(\RG_{\Prism\fr}(\mstack X/\mf S)\cotimes_{\mf S}W(C^\flat)),
	$$
as desired.
	
	2. We keep the notation  $M\coloneqq \RG_{\Prism\fr}(\mstack X/\mf S)$. We have a fiber sequence 
	$$
	(\tau^{\le d} M\cotimes_{\mf S}W(C^\flat))^{\phi_\Prism=1} \ra (M\cotimes_{\mf S}W(C^\flat))^{\phi_\Prism=1} \ra (\tau^{>d} M\cotimes_{\mf S}W(C^\flat))^{\phi_\Prism=1}.
	$$
	As we saw in the proof of 1, $\tau^{>d} M\cotimes_{\mf S}W(C^\flat)\simeq \tau^{>d}(M\cotimes_{\mf S}W(C^\flat))$ is $d$-coconnected, and thus so is $(\tau^{>d} M\cotimes_{\mf S}W(C^\flat))^{\phi_\Prism=1}$. Moreover, $\tau^{\le d} M\cotimes_{\mf S}W(C^\flat)\simeq \tau^{\le d} M\otimes_{\mf S}W(C^\flat)$ is coherent and so $(\tau^{\le d} M\cotimes_{\mf S}W(C^\flat))^{\phi_\Prism=1}$ lies in\footnote{Indeed, we need to show that $\phi_\Prism -1$ is surjective on $H^d(M)\otimes_{\mf S}W(C^\flat)$. This is enough to show modulo $p$, where $H^d(M)\otimes_{\mf S}C^\flat$ is a finite-dimensional vector space over $C^\flat$ and the statement is standard (e.g. see \cite[Expose III, Lemma 3.3]{chambert1998cohomologie}).} $\DMod{\mf S}^{\le d}$. Together, this shows that
	$$
	\tau^{\le d}(M\otimes_{\mf S}W(C^\flat)^{\phi_\Prism=1})\ism(\tau^{\le d} M\otimes_{\mf S}W(C^\flat))^{\phi_\Prism=1}.
	$$ 
	Moreover, by \cite[Lemma 8.5]{Bhatt_LiteBMS} (again crucially using that $\tau^{\le d} M\cotimes_{\mf S}W(C^\flat)$ is coherent) we get that the natural map 
	$$
	\tau^{\le d}(M\otimes_{\mf S}W(C^\flat)^{\phi_\Prism=1})\otimes_{\mbb Z_p}W(C_p^\flat)	\tto \tau^{\le d} M\cotimes_{\mf S}W(C^\flat)$$ is an equivalence. It remains to note that by \Cref{eq:etale comparison} there is a natural equivalence
	$$
	\tau^{\le d}\RG_\et(\widehat{\mstack X}_C,\mbb Z_p)\ism \tau^{\le d}(M\cotimes_{\mf S}W(C^\flat)^{\phi_\Prism=1}).
	$$
	
	3. From 1 and 2 we get an equivalence
	$$
	\tau^{\le d}\RG_\et(\widehat{\mstack X}_C,\mbb Z_p)\otimes_{\mbb Z_p}W(C^\flat) \ism \tau^{\le d} \RG_{\Prism\fr}(\mstack X/\mf S)\otimes_{\mf S}W(C^\flat).
	$$
	The statement of 3 then follows by passing to cohomology.

\end{proof}

We will also need the following lemma which controls the (classical) base change of prismatic cohomology to $\Ainf$.
\begin{lem}\label{lem:comparison of prismatic with Ainf}
	Let $\mstack X$ be a stack over $\mc O_K$ such that its reduction $\mstack X_k$ is $d$-de Rham proper. Then we have a natural isomorphism 
	$$
	\tau^{\le d}\RG_{\Prism\fr}(\mstack X/\mf S)\otimes_{\mf S}\Ainf\simeq  \tau^{\le d}\RG_{\Prism\fr}(\mstack X_{\mc O_C}/\Ainf).
	$$ 
\end{lem}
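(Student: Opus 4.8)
\textit{Plan of the proof.} Write $M := \RG_{\Prism\fr}(\mstack X/\mf S)\in \DMod{\mf S}$, a derived $(p,u)$-complete object. Via $s_{\pi^\flat}$ the ring $\Ainf$ becomes a classically $(p,u)$-adically complete, faithfully flat $\mf S$-algebra (flatness as in \cite{BMS1}), and the $(p,\ker\theta)$-adic topology on $\Ainf$ agrees with the $(p,u)$-adic one; hence base change for prismatic cohomology (\cite[Proposition 2.2.13]{KubrakPrikhodko_pHodge}) yields a natural equivalence $\RG_{\Prism\fr}(\mstack X_{\mc O_C}/\Ainf)\simeq M\cotimes_{\mf S}\Ainf$, where $\cotimes_{\mf S}\Ainf$ denotes the derived $(p,u)$-completed tensor product. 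So it suffices to show that the natural map $\tau^{\le d}M\otimes_{\mf S}\Ainf \to \tau^{\le d}(M\cotimes_{\mf S}\Ainf)$ is an equivalence. By \Cref{cor:BK prismatic cohomology d-coherent} one has $\tau^{\le d}M\in \Coh(\mf S)$; as $\mf S$ is regular this is a perfect $\mf S$-complex, so $\tau^{\le d}M\otimes_{\mf S}\Ainf$ is a perfect $\Ainf$-complex (in particular $(p,u)$-complete, so that $\tau^{\le d}M\otimes_{\mf S}\Ainf\simeq \tau^{\le d}M\cotimes_{\mf S}\Ainf$), and it sits in degrees $\le d$ by flatness. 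Applying $-\cotimes_{\mf S}\Ainf$ to the fiber sequence $\tau^{\le d}M\to M\to \tau^{>d}M$, the claim reduces to the vanishing
\begin{equation}\label{eq:Ainf-vanishing-goal}
(\tau^{>d}M)\cotimes_{\mf S}\Ainf\in \DMod{\Ainf}^{\ge d+1}.
\end{equation}

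The hard part is \eqref{eq:Ainf-vanishing-goal}, and this is exactly where $d$-de Rham properness of $\mstack X_k$ is used. By flatness $(\tau^{>d}M)\otimes_{\mf S}\Ainf$ is concentrated in degrees $\ge d+1$ with $H^{d+1}\simeq H^{d+1}(M)\otimes_{\mf S}\Ainf$; the difficulty is that derived $(p,u)$-completion is a two-generator completion and hence only $t$-exact up to a shift by $2$, so a priori $(\tau^{>d}M)\cotimes_{\mf S}\Ainf$ could acquire cohomology in degrees $d-1$ and $d$ (coming, after filtering $\tau^{>d}M$ by its cohomology modules, only from $H^{d+1}(M)$ and $H^{d+2}(M)$ tensored with $\Ainf$, via Tate-module and $\lim^1$ terms). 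To kill these I would invoke the finiteness input of \Cref{lem:condition on the cohomology}: the complex $M' := \tau^{\ge d+1}M[d+1]$ satisfies $\tau^{<0}([M'/(p,u)])\in \Coh(k)$, i.e.\ the relevant $(p,u)$-power torsion of $H^{d+1}(M)$ (and of $H^{d+2}(M)$) is finite over $k$, so the offending Tate-module and $\lim^1$ contributions to the completion in degrees $\le d$ vanish. Concretely, this is precisely the kind of statement established in \Cref{sec:appendix about complete sums} — the $\Ainf$-analogue of \Cref{cor:completed tensor with W over S}, which plays the same role for $W(C^\flat)$ in the proof of \Cref{prop:around etale comparison}(1) — and I would run the argument along the same lines, with $W(C^\flat)$ replaced by $\Ainf$.

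Granting \eqref{eq:Ainf-vanishing-goal}, the fiber sequence gives $\tau^{\le d}M\otimes_{\mf S}\Ainf\ism \tau^{\le d}(M\cotimes_{\mf S}\Ainf)\simeq \tau^{\le d}\RG_{\Prism\fr}(\mstack X_{\mc O_C}/\Ainf)$, and passing to cohomology yields $H^i_{\Prism\fr}(\mstack X/\mf S)\otimes_{\mf S}\Ainf\simeq H^i_{\Prism\fr}(\mstack X_{\mc O_C}/\Ainf)$ for all $i\le d$. I expect the only genuinely non-routine point to be the boundary-degree vanishing \eqref{eq:Ainf-vanishing-goal}; the remainder is formal manipulation with base change, flatness of $\Ainf$ over $\mf S$, and coherence of $\tau^{\le d}M$.
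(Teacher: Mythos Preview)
Your overall structure matches the paper's: reduce via base change to showing $\tau^{\le d}M\otimes_{\mf S}\Ainf\ism\tau^{\le d}(M\cotimes_{\mf S}\Ainf)$, use coherence of $\tau^{\le d}M$ to handle the left side, and isolate the boundary vanishing \eqref{eq:Ainf-vanishing-goal} as the only nontrivial point, with \Cref{lem:condition on the cohomology} as the finiteness input. That much is exactly right.

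The imprecision is in how you propose to establish \eqref{eq:Ainf-vanishing-goal}. You invoke ``the $\Ainf$-analogue of \Cref{cor:completed tensor with W over S}'' and say you would ``run the argument along the same lines, with $W(C^\flat)$ replaced by $\Ainf$''. But the $W(C^\flat)$ argument in the appendix is specifically tailored to the factorization $\mf S\to\mf S\langle\tfrac{1}{u}\rangle\to W(C^\flat)$, and the delicate step (\Cref{prop:p-completed inversion of u is sometimes t-exact}) is about $p$-completing after inverting $u$; none of this structure is present for $\Ainf$, where $u=[\pi^\flat]$ is not invertible. The paper's proof instead rests on a different, simpler observation: $\Ainf$ is $(p,u)$-\emph{completely free} over $\mf S$ (lift a $k$-basis of $\mc O_C^\flat/\pi^\flat$ to $\Ainf$ and check modulo $(p,u)$). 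Hence $-\cotimes_{\mf S}\Ainf$ on derived $(p,u)$-complete objects is a $(p,u)$-completed direct sum, and the relevant appendix result is \Cref{prop:completed direct sum preserves t-structure} (or equivalently \Cref{prop:pu-completed direct sum}), applied to $M':=\tau^{\ge d+1}M[d+1]$ with the finiteness $\tau^{<0}([M'/(p,u)])\in\Coh(k)$ from \Cref{lem:condition on the cohomology}. So the correct pointer in \Cref{sec:appendix about complete sums} is the completed-direct-sum proposition, not the $W(C^\flat)$ corollary; once you make that substitution your argument is complete and coincides with the paper's.
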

\begin{proof}
	By base change for prismatic cohomology (\cite[Proposition 2.2.17]{KubrakPrikhodko_pHodge}) we have $$
	\RG_{\Prism\fr}(\mstack X_{\mc O_C}/\Ainf)\simeq \RG_{\Prism\fr}(\mstack X/\mf S)\cotimes_{\mf S}\Ainf.
$$ 
Note that $\Ainf$ is a $(p,u)$-completely free module over $\mf S$. Indeed, picking a basis $\{x_s\}_{s\in S}$ of $\mc O_C^\flat/\pi^\flat$ over $k$ and lifts $\tilde{x}_i\in \Ainf$ (under the projection $\Ainf\ra \mc O_C^\flat/\pi^\flat$) we get a map 
$$
\widehat{\bigoplus}_{s\in S}\mf S \tto \Ainf
$$
which is an isomorphism modulo $(p,u)$, and thus is itself an isomorphism by Nakayama lemma. Moreover, by \Cref{rem:condition on the cohomology} we have that $M\coloneqq \tau^{\ge d+1}\RG_{\Prism\fr}(\mstack X/\mf S)[d+1]$ satisfies the condition of \Cref{prop:completed direct sum preserves t-structure}, and thus $M\cotimes_\mf S\Ainf\in D(\Mod_{\Ainf})^{\ge d+1}$. Since the completed tensor product is right $t$-exact we get from this that 
$$
\tau^{\le d} \RG_{\Prism\fr}(\mstack X_{\mc O_C}/\Ainf)\simeq \tau^{\le d} (\RG_{\Prism\fr}(\mstack X/\mf S)\cotimes_{\mf S}\Ainf) \simeq \tau^{\le d} (\RG_{\Prism\fr}(\mstack X/\mf S))\cotimes_{\mf S}\Ainf.
$$
By the assumption on $\mstack X_k$ we have $\tau^{\le d} (\RG_{\Prism\fr}(\mstack X/\mf S))\in \Coh(\mf S)$, and so the classical tensor product $\tau^{\le d} (\RG_{\Prism\fr}(\mstack X/\mf S))\otimes_{\mf S}\Ainf$ is already derived $(p,u)$-complete. This way we get an isomorphism 
$$
\tau^{\le d} (\RG_{\Prism\fr}(\mstack X/\mf S))\otimes_{\mf S}\Ainf\ism \tau^{\le d} \RG_{\Prism\fr}(\mstack X_{\mc O_C}/\Ainf),
$$
as desired.
\end{proof}
\begin{cor}\label{cor:comparison of etale with Ainf}
	Let $\mstack X$ be a stack over $\mc O_K$ such that its reduction $\mstack X_k$ is $d$-de Rham proper. Then for any $0\le i\le d$ we have a $G_K$-equivariant isomorphism
	$$
	H^i_\et(\widehat{\mstack X}_C,\mbb Z_p)\otimes_{\mbb Z_p}\Ainf[\tfrac{1}{\mu}]\ism H^i_{\Prism\fr}(\mstack X_{\mc O_C}/\Ainf)\otimes_\Ainf \Ainf[\tfrac{1}{\mu}]
	$$
	that extends to a $(G_K,\phi)$-equivariant isomorphism 
	$$
	H^i_\et(\widehat{\mstack X}_C,\mbb Z_p)\otimes_{\mbb Z_p}B_\crys \ism H^i_{\Prism\fr}(\mstack X_{\mc O_C}/\Ainf)\otimes_\Ainf B_\crys.
	$$
\end{cor}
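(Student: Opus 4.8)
The plan is to produce the identification first with the flat coefficient ring $\Ainf[\tfrac{1}{\mu}]$ and only afterwards to tensor it up to $B_\crys$. Insisting on this order matters: $-\otimes_{\Ainf}\Ainf[\tfrac{1}{\mu}]$ is a flat localization, hence $t$-exact and compatible with the totalizations that compute the cohomology of a stack via a presentation by affine schemes, whereas $-\otimes B_\crys$ is neither — so one wants a genuine isomorphism of $\Ainf[\tfrac{1}{\mu}]$-modules in hand before any non-flat base change enters.

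First I would establish, $G_K$-equivariantly and compatibly with the twisted Frobenius of the Remark after \eqref{eq:etale comparison}, a comparison $\RG_{\Prism\fr}(\mstack X_{\mc O_C}/\Ainf)\otimes_{\Ainf}\Ainf[\tfrac{1}{\mu}]\simeq\RG_\et(\widehat{\mstack X}_C,\mbb Z_p)\otimes_{\mbb Z_p}\Ainf[\tfrac{1}{\mu}]$. On affine smooth $\mc O_C$-schemes this is the étale (primitive) comparison of \cite{BMS1} after inverting $\mu$; it extends to a smooth qcqs stack $\mstack X$ by descent along a hypercover by affines, exactly as in the proof of \Cref{prop:commutative diagram}, using that both sides are connective on affines and that the $t$-exact functor $-\otimes_{\Ainf}\Ainf[\tfrac{1}{\mu}]$ commutes with totalizations of uniformly bounded-below cosimplicial objects (\cite[Corollary C.6]{KubrakPrikhodko_pHodge}). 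By flatness, taking $H^i$ commutes with $-\otimes_{\Ainf}\Ainf[\tfrac{1}{\mu}]$, which gives $G_K$-equivariant isomorphisms $H^i_\et(\widehat{\mstack X}_C,\mbb Z_p)\otimes_{\mbb Z_p}\Ainf[\tfrac{1}{\mu}]\ism H^i_{\Prism\fr}(\mstack X_{\mc O_C}/\Ainf)\otimes_\Ainf\Ainf[\tfrac{1}{\mu}]$; the restriction $i\le d$ is the range in which, via \Cref{lem:comparison of prismatic with Ainf}, the right-hand side is the object one really wants to describe. Now $\mu$ is a unit in $B_\crys$ (it differs from $t=\log[\epsilon]$ by a unit of $B_\crys^+$), so there is a $G_K$- and $\phi$-equivariant ring map $\Ainf[\tfrac{1}{\mu}]\to B_\crys$; applying $-\otimes_{\Ainf[\tfrac{1}{\mu}]}B_\crys$ to the previous isomorphism and using associativity of $\otimes$ yields the $(G_K,\phi)$-equivariant isomorphism $H^i_\et(\widehat{\mstack X}_C,\mbb Z_p)\otimes_{\mbb Z_p}B_\crys\ism H^i_{\Prism\fr}(\mstack X_{\mc O_C}/\Ainf)\otimes_\Ainf B_\crys$, which by construction is compatible with the isomorphism over $\Ainf[\tfrac{1}{\mu}]$ along the map $\Ainf[\tfrac{1}{\mu}]\to B_\crys$.

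The hard part is the first step: carrying out the stacky descent so that it really produces the $\Ainf[\tfrac{1}{\mu}]$-comparison \emph{before} a non-flat coefficient extension intervenes — i.e.\ controlling how $-\otimes_\Ainf\Ainf[\tfrac{1}{\mu}]$ interacts with the right Kan extension defining the cohomology of $\mstack X$ (this is exactly where connectivity on affines and \cite[Corollary C.6]{KubrakPrikhodko_pHodge} are used). If one prefers to route through the $W(C^\flat)$-comparison already proved in \Cref{prop:around etale comparison}, the difficulty instead becomes descending the coefficients from $W(C^\flat)$ back to $\Ainf[\tfrac{1}{\mu}]$; there one exploits that $\tau^{\le d}\RG_{\Prism\fr}(\mstack X_{\mc O_C}/\Ainf)\simeq\tau^{\le d}\RG_{\Prism\fr}(\mstack X/\mf S)\otimes_{\mf S}\Ainf$ is a perfect $\Ainf$-complex (by \Cref{lem:comparison of prismatic with Ainf}, \Cref{cor:BK prismatic cohomology d-coherent} and regularity of $\mf S$), which puts it squarely within the usual étale comparison over $\Ainf$, with Frobenius-realization over $W(C^\flat)$ the one identified in \Cref{prop:around etale comparison}. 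Once the $\Ainf[\tfrac{1}{\mu}]$-statement is secured, the passage to $B_\crys$ is formal.
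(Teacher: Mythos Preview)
Your primary approach has a genuine gap: there is no local étale comparison over $\Ainf[\tfrac{1}{\mu}]$ for smooth affine $\mc O_C$-schemes. The affine-level comparison in \cite{BMS1} (and its prismatic refinement) is formulated over $W(C^\flat)\simeq(\Ainf[\tfrac{1}{\mu}])^\wedge_p$, not over $\Ainf[\tfrac{1}{\mu}]$ itself: for an affine $U$ neither $\RG_\et(\widehat U_C,\mbb Z_p)\otimes_{\mbb Z_p}\Ainf[\tfrac{1}{\mu}]$ nor $\RG_{\Prism\fr}(U/\Ainf)\otimes_{\Ainf}\Ainf[\tfrac{1}{\mu}]$ is $p$-complete (the first because étale cohomology of an affine is not perfect over $\mbb Z_p$, the second because localization destroys completeness), and the two only agree after $p$-completion. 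So there is no affine input to right-Kan-extend, and your descent argument does not get started. Descending from $W(C^\flat)$ to $\Ainf[\tfrac{1}{\mu}]$ genuinely requires a finiteness input, which is exactly what the $d$-de Rham-properness hypothesis supplies.

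Your alternative route is essentially the paper's argument: combine \Cref{prop:around etale comparison}(3) with \Cref{lem:comparison of prismatic with Ainf} to obtain a $G_K$-equivariant isomorphism over $W(C^\flat)$ on each $H^i$; observe via \Cref{rem:prismatic cohomology is a BK-module up to degree d} that $H^i_{\Prism\fr}(\mstack X_{\mc O_C}/\Ainf)$ is a Breuil--Kisin--Fargues module for $i\le d$; and then invoke \cite[Lemma~4.26]{BMS1}, which says precisely that for a BKF module the $W(C^\flat)$-comparison with its étale realization restricts to $\Ainf[\tfrac{1}{\mu}]$. One further wrinkle you gloss over: $\Ainf[\tfrac{1}{\mu}]$ is \emph{not} $\phi$-stable (since $\phi(\mu)=\mu\cdot\phi(\xi)$ and $\phi(\xi)$ is not a unit in $\Ainf[\tfrac{1}{\mu}]$), so a ``$\phi$-equivariant map $\Ainf[\tfrac{1}{\mu}]\to B_\crys$'' is not well-posed. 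The paper first extends the $G_K$-equivariant $\Ainf[\tfrac{1}{\mu}]$-isomorphism to the minimal $\phi$-stable overring $\Ainf[\tfrac{1}{\mu},\tfrac{1}{\phi(\mu)},\tfrac{1}{\phi^2(\mu)},\ldots]\subset W(C^\flat)$, where it becomes $(G_K,\phi)$-equivariant, and only then base-changes along the $(G_K,\phi)$-equivariant embedding of that ring into $B_\crys$.
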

\begin{proof}
 Recall that the map $\mf S\ra \Ainf$ is flat (\cite[Lemma 4.30]{BMS1}). Thus from \Cref{lem:comparison of prismatic with Ainf} we get that for any $0\le i\le d$ there is an isomorphism $H^i_{\Prism\fr}(\mstack X_{\mc O_C}/\Ainf)\simeq H^i_{\Prism\fr}(\mstack X/\mf S)\otimes_{\mf S}\Ainf$. Applying \Cref{prop:around etale comparison}(3) we then get a $\phi$-equivariant isomorphism 
 \begin{equation}\label{eq:comparison of etale with Ainf}
 H^i_\et(\widehat{\mstack X}_C,\mbb Z_p)\otimes_{\mbb Z_p}W(C^\flat)\ism H^i_{\Prism\fr}(\mstack X_{\mc O_C}/\Ainf)\otimes_\Ainf W(C^\flat), 
\end{equation}
 which is in fact induced by the \'etale comparison map 
 $$
 \RG_\et(\widehat{\mstack X}_C,\mbb Z_p) \tto \RG_{\Prism\fr}(\mstack X_{\mc O_C}/\Ainf)\cotimes_{\Ainf} W(C^\flat)
 $$
 and thus is $G_K$-equivariant.
 
  By \Cref{rem:prismatic cohomology is a BK-module up to degree d}, $H^i_{\Prism}(\mstack X/\mf S)$ is a Breuil-Kisin module in the range $0\le i\le d$ and this way $H^i_{\Prism\fr}(\mstack X_{\mc O_C}/\Ainf)\simeq H^i_{\Prism\fr}(\mstack X/\mf S)\otimes_{\mf S}\Ainf$ is a Breuil-Kisin-Fargues module. Then by \cite[Lemma 4.26]{BMS1} the isomorphism in \Cref{eq:comparison of etale with Ainf} restricts to an isomorphism 
  $$
  H^i_\et(\widehat{\mstack X}_C,\mbb Z_p)\otimes_{\mbb Z_p}\Ainf[\tfrac{1}{\mu}]\ism H^i_{\Prism\fr}(\mstack X_{\mc O_C}/\Ainf)\otimes_\Ainf \Ainf[\tfrac{1}{\mu}],
  $$
  which is also $G_K$-equivariant, since $G_K$-action on $W(C^\flat)$ preserves\footnote{Indeed, $G_K$ acts on $\mu\coloneqq [\varepsilon]-1$ through the cyclotomic character and multiplies it by a unit (e.g. see the proof of \cite[Lemma 6.2.15]{KubrakPrikhodko_pHodge}).} $\Ainf[\tfrac{1}{\mu}]\subset W(C^\flat)$. This then automatically extends to a $(G_K,\phi)$-equivariant isomorphism 
  $$
  H^i_\et(\widehat{\mstack X}_C,\mbb Z_p)\otimes_{\mbb Z_p}\Ainf[\tfrac{1}{\mu},\tfrac{1}{\phi(\mu)}, \tfrac{1}{\phi^2(\mu)},\ldots]\ism H^i_{\Prism\fr}(\mstack X_{\mc O_C}/\Ainf)\otimes_\Ainf \Ainf[\tfrac{1}{\mu},\tfrac{1}{\phi(\mu)}, \tfrac{1}{\phi^2(\mu)},\ldots]
  $$
  where localization $\Ainf[\tfrac{1}{\mu},\tfrac{1}{\phi(\mu)}, \tfrac{1}{\phi^2(\mu)},\ldots]$ is the minimal $\phi$-invariant subring of $W(C^\flat)$ containing $\Ainf[\frac{1}{\mu}]$. The $G_K$-equivariant embedding $\Ainf[\frac{1}{\mu}]\ra B_\crys$ extends to a $(G_K,\phi)$-equivariant embedding 
  $$
  \Ainf[\tfrac{1}{\mu},\tfrac{1}{\phi(\mu)}, \tfrac{1}{\phi^2(\mu)},\ldots] \ra B_\crys.
  $$
  Taking base change under this morphism we arrive at $(G_K,\phi)$-equivariant isomorphisms 
 	$$
 H^i_\et(\widehat{\mstack X}_C,\mbb Z_p)\otimes_{\mbb Z_p}B_\crys \ism H^i_{\Prism\fr}(\mstack X_{\mc O_C}/\Ainf)\otimes_\Ainf B_\crys.
 $$
\end{proof}

We can now prove an analogue of \Cref{thm:main theorem} in the $d$-de Rham proper context.
\begin{thm}\label{thm:even mainer theorem}
	Let $\mstack X$ be a $(d+1)$-de Rham proper stack over $\mc O_K$. Then the map $\Upsilon_{\mstack X,\mbb Q_p}\colon \RG_\et(\mstack X_{C}, \mathbb Q_p)\ra \RG_\et(\widehat{\mstack X}_{C}, \mathbb Q_p)$ induces an isomorphism 
	$$
		H^i_\et(\mstack X_{C}, \mathbb Q_p) \simeq H^i_\et(\widehat{\mstack X}_{C}, \mathbb Q_p)  
	$$ for $i\le d$ and an embedding
	$$      
		H^{d+1}_\et(\mstack X_{C}, \mathbb Q_p) \hookrightarrow H^{d+1}_\et(\widehat{\mstack X}_{C}, \mathbb Q_p)   
	$$
	for $i=d+1$.
\end{thm}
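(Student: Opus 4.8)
The plan is to imitate the proof of \Cref{thm:main theorem}: the embedding statements come for free, and the only real work is matching dimensions. Indeed, a $(d+1)$-de Rham-proper stack is in particular $(d+1)$-de Rham-proper, so \Cref{prop:Upsilon for d-Hodge-proper} applied with $d$ replaced by $d+1$ already shows that $\Upsilon_{\mstack X,\mbb Q_p}$ is injective on $H^i_\et$ for all $i\le d+1$; this is exactly the asserted embedding for $i=d+1$, and for $i\le d$ it gives an embedding $H^i_\et(\mstack X_{C},\mbb Q_p)\hookrightarrow H^i_\et(\widehat{\mstack X}_{C},\mbb Q_p)$. So it remains to prove that for $i\le d$ this embedding is surjective, and I would do this by showing that both sides have the same finite $\mbb Q_p$-dimension.

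The first half of the dimension count is automatic: the left vertical arrow of \Cref{eq_main_diagram} is an equivalence (\Cref{prop:commutative diagram}), so $\dim_{\mbb Q_p}H^i_\et(\mstack X_{C},\mbb Q_p)=\dim_K H^i_\dR(\mstack X_K/K)$, which is finite for $i\le d$ by $(d+1)$-de Rham-properness. For the other side I would pass through prismatic cohomology over the Breuil--Kisin prism $(\mf S,E(u))$. Since $\mstack X$ is smooth over $\mc O_K$ and $\mc O_K\to k$ has Tor-amplitude $[-1,0]$, \Cref{prop:general properties of d-Hodge-properness}(2) shows that the reduction $\mstack X_k$ is $d$-de Rham-proper over $k$; hence \Cref{cor:BK prismatic cohomology d-coherent} and \Cref{rem:prismatic cohomology is a BK-module up to degree d} apply, and for $i\le d$ the module $H^i_{\Prism^{(1)}}(\mstack X/\mf S)[\tfrac1p]$ is finite free over $\mf S[\tfrac1p]$; call its rank $r_i$. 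Feeding this into the truncated $\ell=p$ étale comparison \Cref{prop:around etale comparison}(3), namely $H^i_\et(\widehat{\mstack X}_{C},\mbb Z_p)\otimes_{\mbb Z_p}W(C^\flat)\simeq H^i_{\Prism^{(1)}}(\mstack X/\mf S)\otimes_{\mf S}W(C^\flat)$ for $i\le d$, and inverting $p$ (using flatness of $\mf S\to W(C^\flat)$ and that $W(C^\flat)[\tfrac1p]$ is a nonzero $\mbb Q_p$-algebra), I get $\dim_{\mbb Q_p}H^i_\et(\widehat{\mstack X}_{C},\mbb Q_p)=r_i$ for $i\le d$.

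The last step relates $r_i$ to $\dim_K H^i_\dR(\mstack X_K/K)$. By the de Rham comparison $\RG_{\Prism^{(1)}}(\mstack X/\mf S)\otimes^{\mbb L}_{\mf S}\mf S/E(u)\simeq\RG_\dR(\mstack X/\mc O_K)^\wedge_p$ (\cite[Proposition 2.3.20]{KubrakPrikhodko_pHodge}), together with the fact that $\tau^{\le d+1}\RG_\dR(\mstack X/\mc O_K)$ is coherent — hence derived $p$-complete, so derived $p$-completion only alters cohomology in degrees $\ge d+1$ and $H^i(\RG_\dR(\mstack X/\mc O_K)^\wedge_p)[\tfrac1p]\simeq H^i_\dR(\mstack X_K/K)$ for $i\le d$ — the universal-coefficient sequence for the nonzerodivisor $E(u)$ over $\mf S[\tfrac1p]$ yields $0\to H^i_{\Prism^{(1)}}(\mstack X/\mf S)[\tfrac1p]/E(u)\to H^i_\dR(\mstack X_K/K)\to H^{i+1}_{\Prism^{(1)}}(\mstack X/\mf S)[\tfrac1p][E(u)]\to 0$ for $i\le d$. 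For $i\le d-1$ the right term vanishes (it is finite free), giving $r_i=\dim_K H^i_\dR(\mstack X_K/K)$; for $i=d$ the left map is still injective, giving $r_d\le\dim_K H^d_\dR(\mstack X_K/K)$. Either way $\dim_{\mbb Q_p}H^i_\et(\widehat{\mstack X}_{C},\mbb Q_p)=r_i\le\dim_K H^i_\dR(\mstack X_K/K)=\dim_{\mbb Q_p}H^i_\et(\mstack X_{C},\mbb Q_p)$ for $i\le d$, so the embedding of these finite-dimensional spaces must be an isomorphism.

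I expect the main obstacle to be the cohomological-degree bookkeeping rather than the overall structure. The identification $\dim_{\mbb Q_p}H^i_\et(\widehat{\mstack X}_{C},\mbb Q_p)=r_i$ rests on \Cref{prop:around etale comparison}, whose proof is exactly where one must cope with the non-$t$-exactness of completed tensor products (and the condensed-mathematics input); and the de Rham comparison loses precisely one cohomological degree (genuine equality only up to $d-1$). This last loss is the reason one assumes $(d+1)$-de Rham-properness instead of just $d$: the stronger hypothesis simultaneously supplies the degree-$d$ embedding via \Cref{prop:Upsilon for d-Hodge-proper}, which is what rescues the argument in the boundary degree $i=d$ where only the inequality $r_d\le\dim_K H^d_\dR(\mstack X_K/K)$ is available.
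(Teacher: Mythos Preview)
Your proposal is correct and follows essentially the same route as the paper's own proof: injectivity from \Cref{prop:Upsilon for d-Hodge-proper}, then the dimension count via $\dim_{\mbb Q_p}H^i_\et(\mstack X_C,\mbb Q_p)=\dim_K H^i_\dR(\mstack X_K/K)$ on one side and the chain $\dim_{\mbb Q_p}H^i_\et(\widehat{\mstack X}_C,\mbb Q_p)=\rk_{\mf S[1/p]}H^i_{\Prism^{(1)}}(\mstack X/\mf S)[\tfrac1p]\le \dim_K H^i_\dR(\widehat{\mstack X}_K/K)=\dim_K H^i_\dR(\mstack X_K/K)$ on the other, using \Cref{prop:around etale comparison}, the de Rham comparison, and coherence of $\tau^{\le d+1}\RG_\dR(\mstack X/\mc O_K)$. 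The only cosmetic difference is that the paper phrases the middle step uniformly as an inequality for all $i\le d$, whereas you split off the case $i\le d-1$ (where the $E(u)$-torsion term vanishes and one gets equality directly); both are fine.
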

\begin{proof}
	With \Cref{prop:Upsilon for d-Hodge-proper} in hand, it is enough to show that $\dim H^i_\et(\mstack X_{C}, \mathbb Q_p)\ge  \dim H^i_\et(\widehat{\mstack X}_{C}, \mathbb Q_p) $ for $i\le d$. By \Cref{prop:general properties of d-Hodge-properness}(2) we have that the reduction $\mstack X_k$ is $d$-de Rham proper. By \Cref{rem:prismatic cohomology is a BK-module up to degree d} we have that $H^i_{\Prism\fr}(\mstack X/\mf S)[\frac{1}{p}]$ is a free $\mf S[\frac{1}{p}]$-module for $i\le d$. Inverting $p$ in \Cref{prop:around etale comparison}(3) and comparing the dimensions of both sides over $W(C^\flat)[\frac{1}{p}]$ we get that $\dim_{\mbb Q_p} H^i_\et(\widehat{\mstack X}_{C}, \mathbb Q_p) = \rk_{\mf S[\frac{1}{p}]}H^i_{\Prism\fr}(\mstack X/\mf S)[\frac{1}{p}]$ for $i\le d$.
	
	Denote $H^i_\dR(\widehat{\mstack X}_K/K)\coloneqq H^i_\dR(\widehat{\mstack X}/\mc O_K)[\frac{1}{p}]$. By de Rham comparison \cite{KubrakPrikhodko_pHodge} we have an equivalence $\RG_\dR(\widehat{\mstack X}/\mc O_K)\simeq [\RG_{\Prism\fr}(\mstack X/\mf S)/E(u)]$. This gives a short exact sequence
	$$
	0\tto H^i_{\Prism\fr}(\mstack X/\mf S)[\tfrac{1}{p}]/E\tto H^i_\dR(\widehat{\mstack X}_K/K)\tto (H^{i+1}_{\Prism\fr}(\mstack X/\mf S)[\tfrac{1}{p}])[E]\tto 0
	$$
	for all $i$. In particular, for $i\le d$ we get that 
	$$\dim_K H^i_\dR(\widehat{\mstack X}_K/K)\ge \rk_{\mf S[\tfrac{1}{p}]}H^i_{\Prism\fr}(\mstack X/\mf S)[\tfrac{1}{p}]= \dim_{\mbb Q_p} H^i_\et(\widehat{\mstack X}_{C}, \mathbb Q_p). $$
	On the other hand, we claim that for $i\le d$ we have an isomorphism $H^i_\dR(\widehat{\mstack X}_K/K)\simeq H^i_\dR({\mstack X}_K/K)$. Indeed, derived $p$-completion is $t$-exact up to a right shift by 1 and so 
	$$\tau^{\le d}\RG_\dR(\widehat{\mstack X}/\mc O_K)\simeq \tau^{\le d}((\tau^{\le d+1}\RG_\dR({\mstack X}/\mc O_K))^\wedge_p)$$ by \Cref{lem:useful lemma}. However, by our assumption $\tau^{\le d+1}\RG_\dR({\mstack X}/\mc O_K)$ is coherent and so automatically $p$-complete. It follows that $\tau^{\le d}\RG_\dR(\widehat{\mstack X}/\mc O_K)\simeq \tau^{\le d}\RG_\dR({\mstack X}/\mc O_K)$, and consequently $H^i_\dR(\widehat{\mstack X}_K/K)\simeq H^i_\dR({\mstack X}_K/K)$ for $i\le d$. Finally, from the left vertical isomorphism in \Cref{prop:commutative diagram} we know that $\dim_{\mbb Q_p}H^i_\et(\mstack X_{C}, \mathbb Q_p) =\dim_K H^i_\dR({\mstack X}_K/K)$ for all $i$. We get that 
	$$
	\dim H^i_\et(\mstack X_{C}, \mathbb Q_p)\ge  \dim H^i_\et(\widehat{\mstack X}_{C}, \mathbb Q_p) 
	$$
	for $i\le d$ as desired.
\end{proof}
\begin{rem}\label{rem:E(u)-torsion in d+1-st prismatic cohomology}
Lest us record a fact with a similar flavor to \Cref{rem:condition on the cohomology}. By de Rham comparison for prismatic cohomology we have a short exact sequence 
$$
0\tto H^i_{\Prism\fr}(\mstack X/\mf S)/E(u)\tto H^i_\dR(\widehat{\mstack X}/\mc O_K)\tto H^{i+1}_{\Prism\fr}(\mstack X/\mf S)[E(u)]\tto 0.
$$
If $\mstack X$ is $(d+1)$-de Rham proper over $\mc O_K$, then $\mstack X_{\mc O_K/p}$ is $d$-de Rham proper over $\mc O_K/p$ and so $H^{d}_\dR(\widehat{\mstack X}/\mc O_K)$ is a finitely-generated $\mc O_K$-module. Consequently, we also get some partial information about $H^{d+1}_{\Prism\fr}(\mstack X/\mf S)$: namely, its $E(u)$-torsion $M\coloneqq H^{d+1}_{\Prism\fr}(\mstack X/\mf S)[E(u)]$ is finitely-generated over $\mc O_K$. However, from the proof of \Cref{thm:even mainer theorem} one can see more: namely $M$ dies after we invert $p$:  $M[\frac{1}{p}]=0$. Indeed, $M[\frac{1}{p}]$ is the last term in the short exact sequence
$$
0\tto H^{d}_{\Prism\fr}(\mstack X/\mf S)[\tfrac{1}{p}]/E(u)\tto H^{d}_\dR(\widehat{\mstack X}_K/K)\tto (H^{d+1}_{\Prism\fr}(\mstack X/\mf S)[\tfrac{1}{p}])[E(u)]\tto 0
$$
and we saw that $\dim_K H^{d}_{\Prism\fr}(\mstack X/\mf S)[\tfrac{1}{p}]/E(u)\simeq \dim_{\mbb Q_p} H^d_\et(\widehat{\mstack X}_{C}, \mathbb Q_p)$, as well as the equalities 
$$
\dim_{\mbb Q_p} H^{d}_\et(\widehat{\mstack X}_{C}, \mathbb Q_p)=\dim_{\mbb Q_p} H^{d}_\et({\mstack X}_{C}, \mathbb Q_p)=\dim_K H^{d}_\dR({\mstack X}_K/K)=\dim_K H^{d}_\dR(\widehat{\mstack X}_K/K).
$$
Thus $\dim_K M[\frac{1}{p}]=0$, meaning that $M[\frac{1}{p}]=0$.
\end{rem}

Let us also record the following analogue of \cite[Proposition 4.3.14]{KubrakPrikhodko_pHodge} for $d$-Hodge-proper stacks. Let $K_0\coloneqq W(k)[\frac{1}{p}]$.
\begin{cor}\label{cor: equality of dimensions of cohomology}
	Let $\mstack X$ be a smooth $(d+1)$-de Rham proper stack over $\mc O_K$. Then for any $0\le i\le d$
	$$
	\dim_{K_0} H^i_\crys(\mstack X_k/W(k))[\tfrac{1}{p}]=\dim_K H^i_\dR(\mstack X_K/K) = \dim_{\mbb Q_p} H^i_\et(\widehat{\mstack X}_C,\mbb Q_p)=\dim_{\mbb Q_p} H^i_\et({\mstack X}_C,\mbb Q_p).
	$$
\end{cor}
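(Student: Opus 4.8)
The plan is to obtain all four equalities by combining results proved above, treating the three equality signs in turn. Note first that by \Cref{prop:general properties of d-Hodge-properness}(2), applied to the base change along $\mc O_K\to k$ (which has Tor-amplitude $[-1,0]$), the reduction $\mstack X_k$ is $d$-de Rham-proper over $k$; hence by \Cref{rem:prismatic cohomology is a BK-module up to degree d} the module $H^i_{\Prism\fr}(\mstack X/\mf S)[\tfrac1p]$ is finite free over $\mf S[\tfrac1p]$ for every $i\le d$, of some rank $r_i$. The outer equalities on the étale side are cheap: $\dim_{\mbb Q_p}H^i_\et(\mstack X_C,\mbb Q_p)=\dim_K H^i_\dR(\mstack X_K/K)$ holds in all degrees by the left vertical equivalence of \Cref{prop:commutative diagram} together with faithful flatness of $B_\dR$ over $\mbb Q_p$ and over $K$, while $\dim_{\mbb Q_p}H^i_\et(\mstack X_C,\mbb Q_p)=\dim_{\mbb Q_p}H^i_\et(\widehat{\mstack X}_C,\mbb Q_p)$ for $i\le d$ is precisely \Cref{thm:even mainer theorem}. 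Inverting $p$ in \Cref{prop:around etale comparison}(3) and comparing ranks over $W(C^\flat)[\tfrac1p]$ also identifies $\dim_{\mbb Q_p}H^i_\et(\widehat{\mstack X}_C,\mbb Q_p)=r_i$ for $i\le d$ (as already used in the proof of \Cref{thm:even mainer theorem}). Thus everything reduces to the single assertion $\dim_{K_0}H^i_\crys(\mstack X_k/W(k))[\tfrac1p]=r_i$ for $i\le d$.

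For this I would use the crystalline comparison $\RG_\crys(\mstack X_k/W(k))\simeq\RG_{\Prism\fr}(\mstack X/\mf S)\otimes^{\mbb L}_{\mf S}\mf S/u$ (cf. the proof of \Cref{cor:BK prismatic cohomology d-coherent}). Since $\mf S/u=W(k)$ has Tor-amplitude $[-1,0]$ over $\mf S$, the universal coefficients sequence, after inverting $p$, reads
$$0\tto H^i_{\Prism\fr}(\mstack X/\mf S)[\tfrac1p]/u\tto H^i_\crys(\mstack X_k/W(k))[\tfrac1p]\tto\bigl(H^{i+1}_{\Prism\fr}(\mstack X/\mf S)[\tfrac1p]\bigr)[u]\tto 0.$$
When $i\le d-1$ the right-hand term vanishes because $H^{i+1}_{\Prism\fr}(\mstack X/\mf S)[\tfrac1p]$ is finite free over $\mf S[\tfrac1p]$; as $\mf S[\tfrac1p]/u=K_0$, the left-hand term is $r_i$-dimensional and we are done for such $i$. (The same computation with $E(u)$ in place of $u$ is exactly \Cref{rem:E(u)-torsion in d+1-st prismatic cohomology}, which disposes of the de Rham side in degree $d$: it gives $\bigl(H^{d+1}_{\Prism\fr}(\mstack X/\mf S)[\tfrac1p]\bigr)[E(u)]=0$.)

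The main obstacle is the crystalline equality in the top degree $i=d$: here $H^{d+1}_{\Prism\fr}(\mstack X/\mf S)$ need no longer be coherent, so one must show by hand that $\bigl(H^{d+1}_{\Prism\fr}(\mstack X/\mf S)[\tfrac1p]\bigr)[u]=0$. The plan is as follows. Since $\mstack X_k$ is $d$-de Rham proper, $H^d_\crys(\mstack X_k/W(k))\in\Coh(W(k))$ by \Cref{cor:finiteness of crystalline}, so the $i=d$ exact sequence above (before inverting $p$) exhibits $H^{d+1}_{\Prism\fr}(\mstack X/\mf S)[u]$ as a finitely generated $W(k)$-module, which bounds the successive $u$-torsion layers. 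On the other hand the Frobenius $\phi_\Prism\colon\phi_{\mf S}^{*}\RG_\Prism(\mstack X/\mf S)\to\RG_\Prism(\mstack X/\mf S)$ becomes an isomorphism after inverting $E(u)$, and $E(u)$ acts invertibly on every $u$-power-torsion $\mf S[\tfrac1p]$-module (as $E(0)$ is a unit of $\mf S[\tfrac1p]/u$); hence $\phi_\Prism$ restricts to an isomorphism between the $u$-power-torsion submodules, and since $\phi_{\mf S}$ sends $u\mapsto u^{p}$ this forces the $u$-torsion exponent of $H^{d+1}_\Prism(\mstack X/\mf S)[\tfrac1p]$ to equal $p$ times itself, hence to be $0$ or $\infty$. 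The value $\infty$ is excluded: an unbounded $u$-torsion with the layers bounded as above would produce a nonzero $u$-divisible submodule, which cannot exist because the derived $(p,u)$-completeness of $\RG_\Prism(\mstack X/\mf S)$ passes, through the truncation functors, to $H^{d+1}_\Prism(\mstack X/\mf S)$ itself. Therefore the $u$-torsion vanishes, $\dim_{K_0}H^d_\crys(\mstack X_k/W(k))[\tfrac1p]=r_d$, and all four quantities agree for $i\le d$.
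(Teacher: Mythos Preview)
Your treatment of the three ``easy'' equalities coincides with the paper's: these are exactly the facts already established in the proof of \Cref{thm:even mainer theorem}. The difference is entirely in the crystalline equality $\dim_{K_0}H^i_\crys(\mstack X_k/W(k))[\tfrac1p]=r_i$. The paper does this in one line via the Berthelot--Ogus isomorphism (\Cref{prop:Berthelot-Ogus de Rham case}): restricting to the point gives $R\Gamma_\crys(\mstack X_k/W(k))\otimes_{W(k)}K\simeq R\Gamma_\dR(\widehat{\mstack X}/\mc O_K)\otimes_{\mc O_K}K$, and the proof of \Cref{thm:even mainer theorem} already showed $H^i_\dR(\widehat{\mstack X}_K/K)\simeq H^i_\dR(\mstack X_K/K)$ for $i\le d$. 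So all four numbers agree without ever touching $H^{d+1}_{\Prism\fr}$.

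Your route via the crystalline comparison $\RG_{\Prism\fr}(\mstack X/\mf S)\otimes^{\mbb L}_{\mf S}\mf S/u\simeq\RG_\crys(\mstack X_k/W(k))$ is genuinely different and more hands-on; for $i\le d-1$ it works cleanly. At $i=d$ you reduce to showing $(H^{d+1}_{\Prism\fr}(\mstack X/\mf S)[\tfrac1p])[u]=0$, and here your writeup is confused in a way that matters. The Frobenius argument you give concerns the $u$-torsion exponent of $H^{d+1}_{\Prism}[\tfrac1p]$, but the ``layers bounded as above'' is a statement about $H^{d+1}_{\Prism\fr}$, and the derived $(p,u)$-completeness you invoke at the end is for $H^{d+1}_{\Prism}$ \emph{before} inverting $p$. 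Derived $u$-completeness of $M$ does not in general survive inverting $p$, so the implication ``$u$-divisible submodule in $M[\tfrac1p]$ $\Rightarrow$ contradiction with completeness of $M$'' is not immediate. The argument can be repaired, but you have to run the boundedness step integrally first: since $H^{d+1}_{\Prism\fr}[u]$ is finitely generated over $W(k)$ and $H^{d+1}_{\Prism\fr}$ is derived $u$-complete, the Tate-module argument (as in \Cref{lem:about torsion module with bounded u-torsion}) shows $H^{d+1}_{\Prism\fr}[u^\infty]=H^{d+1}_{\Prism\fr}[u^N]$ for some $N$; only then localize to conclude that $A[u^\infty]$ has bounded exponent, transport this bound to $B[u^\infty]$ via $\phi_\Prism$ (using $A[E]=0$ from \Cref{rem:E(u)-torsion in d+1-st prismatic cohomology} to make that restriction an isomorphism), and finally apply your $n=pn$ trick. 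With this reorganization the argument is correct, though considerably longer than the paper's Berthelot--Ogus shortcut.
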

\begin{proof}
	All equalities except the first one follow from the proof of \Cref{thm:even mainer theorem}. Also, we showed that $H^i_\dR(\widehat{\mstack X}_K/K)\simeq H^i_\dR(\mstack X_K/K)$ for $i\le d$. Thus the first equality follows from the Berthelot-Ogus isomorphism (\Cref{prop:Berthelot-Ogus de Rham case}, which after restricting to the point $*\in (*)_\proet$ gives an isomorphism $R\Gamma_\crys(\mstack X_k / W(k))\otimes_{W(k)} K \simeq R\Gamma_\dR(\widehat{\mstack X}_K/ K)$). 
\end{proof}

\subsection{Fontaine's $C_\crys$-conjecture for $d$-de Rham proper stacks}\label{ssec: crystalline comparison}
In this subsection we prove that for $i\le d$ the $i$-th rational \'etale cohomology of the generic fiber of a $(d+1)$-de Rham proper stack over $\mc O_K$ is crystalline. We deduce it from a compariosn between truncated prismatic and crystalline cohomology for which we will use the simplified period ring $B_\mmax$.
\begin{construction}[Rings $A_\mmax$ and  $B_\mmax$]\label{constr:A_max and B_max}
The period ring $A_\mmax$ is defined as the $p$-completion $\left(\Ainf[\frac{\xi}{p}]\right)^\wedge_p$. It can be equivalently described as the completion of $\Ainf[\frac{\xi}{p}]=\Ainf[\frac{[p^\flat]}{p}]$ given by power series 
$$
A_\mmax\coloneqq \left\{\sum_{i\in \mbb Z} [x_i]\cdot p^i \ | \ x_i\in \mc O_C^\flat, \text{ s.t. } (\mr{val}_\flat(x_i)+ i)\ge 0 \text{ and } (\mr{val}_\flat(x_i)+i) \ra +\infty \text{ as } i\ra -\infty \right\},
$$
where $\mr{val}_\flat$ is the natural valuation on $\mc O_C^\flat$ satisfying $\mr{val}_\flat([p^\flat])=1$. 
We define the period ring $B_\mmax$ as $A_\mmax[\frac{1}{p}]$.
 Since for $x_i\in \mc O_C^\flat$ one has $\mr{val}_\flat(x_i^p)=p\cdot \mr{val}_\flat(x_i)$, Frobenius on $\mc O_C^\flat$ induces a map $\phi\colon A_\mmax\ra A_\mmax$ which is in fact an embedding and has the image 
$$
\phi(A_\mmax)\simeq \left\{\sum_{i\in \mbb Z} [x_i]\cdot p^i \text{ with } (\mr{val}_\flat(x_i)+ pi)\ge 0 \text{ and } (\mr{val}_\flat(x_i)+pi) \ra +\infty \text{ as } i\ra -\infty \right\}.
$$
We also have natural $G_K$-actions on $A_\mmax$ and $B_\mmax$ induced by the one on $\mc O_C^\flat$. Note that it commutes with $\phi$.

Recall (e.g. see \cite[Section 3.2.2]{Caruso}) that any element $x\in A_\crys$ has a (unique) expression as 
$$
x=\sum_{i\in \mbb Z}[x_i]p^i \text{ with } x_i\in \mc O_C^\flat \text{ such that } (\mr{val}_\flat(x_i)- \nu(i))\ge 0 \text { and } (\mr{val}_\flat(x_i)- \nu(i))\ra +\infty \text{ as } i\ra -\infty,
$$
where $\nu(i)=0$ if $i\ge 0$ and if $i\le 0$ one puts $\nu(i)$ to be the minimal integer $n$ such that $\mr{val}_p(n!)+i\ge 0$. It is not hard to see that for $i\le 0$ one has
$$
-ip \ge \nu(i)\ge -i  
$$
and this way comparing the convergence conditions one gets natural $G_K$-equivariant embeddings 
$$
\ldots \subset \phi(A_\crys)\subset\phi(A_\mmax)\subset A_\crys \subset A_\mmax \quad \text{ and } \quad \ldots \subset \phi(B_\crys)\subset\phi(B_\mmax)\subset B_\crys \subset B_\mmax.
$$
Consequently, we have embeddings $\subset \phi(B_\crys)^{G_K}\subset\phi(B_\mmax)^{G_K}\subset B_\crys^{G_K}\subset B_\mmax^{G_K}$ of $G_K$-invariants. One has $B_\crys^{G_K}\simeq K_0$ and $\phi\colon B_\crys \ra B_\crys$ maps it to itself via Frobenius on $K_0$; in particular $\phi(B_\crys)^{G_K}\simeq B_\crys^{G_K}$, and consequently $\phi(B_\mmax)^{G_K}\simeq K_0 \simeq B_\mmax^{G_K}$. More generally, having a finite-dimensional $G_K$-representation $V$ over $\mbb Q_p$ we get maps 
$$
(V\otimes_{\mbb Q_p}\phi(B_\mmax))^{G_K}\subset (V\otimes_{\mbb Q_p}B_\crys)^{G_K} \subset (V\otimes_{\mbb Q_p}B_\mmax)^{G_K}
$$
and, since $\phi$ induces a $G_K$-equivariant isomorphism $B_\mmax\xra{\sim}\phi(B_\mmax)$, we get that $$
\dim_{K_0}(V\otimes_{\mbb Q_p}\phi(B_\mmax))^{G_K}=\dim_{K_0}(V\otimes_{\mbb Q_p}B_\mmax)^{G_K}\  \Rightarrow \  \dim_{K_0}(V\otimes_{\mbb Q_p}B_\crys)^{G_K} = \dim_{K_0}(V\otimes_{\mbb Q_p}B_\mmax)^{G_K}.
$$
In particular, $V$ crystalline if and only if it is $B_\mmax$-admissible: namely, $$\dim_{K_0}(V\otimes_{\mbb Q_p}B_\mmax)^{G_K}=\dim_{\mbb Q_p}V.$$
	
\end{construction}

\begin{construction}[Ring $\mf S_\mmax$] Let $e$ be the ramification index of $\mf S_\mmax$ We define the ring $\mf S_\mmax$ as the $p$-adic completion 
	$$
	\mf S_\mmax\coloneqq\left(\mathfrak S[\tfrac{u^e}{ p}]\right)_p^\wedge.
	$$
	Picking a uniformizer $\pi\in \mc O_K$ with minimal polynomial $E(u)$ we have $E(u)\equiv u^e \mod p\mf S$ and so one also has $\mf S_\mmax \simeq \mf S[\tfrac{E(u)}{p}]^\wedge_p$. Via the choice of $\pi^\flat\in \mc O_C^\flat$ we have a map $\mf S\ra \Ainf$ sending $u$ to $[\pi^\flat]$. Since $\mr{val}_\flat([\pi^\flat]^e)=1$, this map naturally extends to a map 
	$$
	\mf S_\mmax \ra A_\mmax.
	$$

\end{construction}

\begin{rem}
	Note that $u^e\in p\cdot \mf S_\mmax$. Thus $(p,u)$-adic topology on $\mf S_\mmax$ is equivalent to $p$-adic one. Similarly, $(p,[p^\flat])$-adic topology on $A_\mmax$ is equivalent to $p$-adic one.
\end{rem}

With this notations the key result of this section is the following
\begin{thm}\label{crys_comp_Bmax}
Let $\mstack X$ be a smooth $(d+1)$-de Rham proper stack over $\mathcal O_K$. Then there is a $(G_K,\phi)$-equivariant equivalence
$$\left(\tau^{\le d} R\Gamma_{\Prism\fr}(\mstack X_{\mc O_C}/\Ainf)\right)\otimes_{\Ainf} B_\mmax \simeq \left(\tau^{\le d} R\Gamma_\crys(\mstack X_k/ W(k))\right)\otimes_{W(k)} B_\mmax.$$
\end{thm}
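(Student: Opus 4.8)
The plan is to compare truncated prismatic cohomology over $\Ainf$ with truncated crystalline cohomology after base change to $B_\mmax$, by passing through $A_\mmax$ and $\mf S_\mmax$ and using the Breuil-Kisin--Fargues machinery established in the preceding subsections. The starting point is the crystalline comparison for prismatic cohomology over the Breuil-Kisin prism, i.e. the identification $\RG_{\Prism\fr}(\mstack X/\mf S)\cotimes_{\mf S}\mf S/u\simeq \RG_\crys(\mstack X_k/W(k))$, combined with \Cref{lem:comparison of prismatic with Ainf}, which gives $\tau^{\le d}\RG_{\Prism\fr}(\mstack X/\mf S)\otimes_{\mf S}\Ainf\simeq \tau^{\le d}\RG_{\Prism\fr}(\mstack X_{\mc O_C}/\Ainf)$ (note that $\mstack X_k$ is $d$-de Rham proper by \Cref{prop:general properties of d-Hodge-properness}(2)). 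So it suffices to produce a $(G_K,\phi)$-equivariant equivalence
$$
\bigl(\tau^{\le d}\RG_{\Prism\fr}(\mstack X/\mf S)\bigr)\otimes_{\mf S}B_\mmax\simeq \bigl(\tau^{\le d}\RG_\crys(\mstack X_k/W(k))\bigr)\otimes_{W(k)}B_\mmax,
$$
where on the left $B_\mmax$ is an $\mf S$-algebra via $\mf S\to\mf S_\mmax\to A_\mmax\to B_\mmax$ and on the right via $W(k)\hookrightarrow A_\mmax\to B_\mmax$. Since $\tau^{\le d}\RG_{\Prism\fr}(\mstack X/\mf S)$ is coherent by \Cref{cor:BK prismatic cohomology d-coherent}, all the completed tensor products appearing here are just ordinary tensor products, which removes the analytic subtleties.

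Next I would reduce the comparison to a statement over $\mf S_\mmax$. The key algebraic input is that in the ring $\mf S_\mmax$ one has $u^e\in p\cdot\mf S_\mmax$, so $u$ becomes topologically nilpotent and, rationally, the map $\mf S_\mmax[\tfrac1p]\to K_0=W(k)[\tfrac1p]$ killing $u$ admits a section-like splitting after inverting the relevant elements; more precisely, the composite $\mf S\to\mf S_\mmax$ and the reduction $\mf S\to\mf S/u=W(k)$ become compatible after base change to $B_\mmax$ in the sense that $\mf S\otimes_{\mf S,u\mapsto 0}W(k)$ and $\mf S_\mmax$ have the same base change to $B_\mmax$ up to the Frobenius twist. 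Concretely, I want to show that for a coherent $\mf S$-module (or bounded coherent complex) $N$ the two $B_\mmax$-modules $N\otimes_{\mf S}B_\mmax$ (via $\mf S\to\mf S_\mmax\to B_\mmax$) and $(N/u)\otimes_{W(k)}B_\mmax$ are naturally and $(G_K,\phi)$-equivariantly identified; this is where the explicit description of $A_\mmax$ and $\phi(A_\mmax)$ in \Cref{constr:A_max and B_max} is used, together with the fact that $E(u)\equiv u^e$ and $E(u)$ is invertible in $B_\mmax$ (being a unit times $p$). Applying this with $N=\tau^{\le d}\RG_{\Prism\fr}(\mstack X/\mf S)$ and recalling $N/E(u)\simeq \tau^{\le d}\RG_\dR(\widehat{\mstack X}/\mc O_K)$ while $N/u\simeq\tau^{\le d}\RG_\crys(\mstack X_k/W(k))$, and that $E$ and $u$ differ by a $p$-multiple hence become equal after inverting $p$, gives the desired identification.

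The main obstacle I anticipate is the careful bookkeeping of the Frobenius twist and $G_K$-equivariance through the chain $\mf S\to\mf S_\mmax\to A_\mmax$ versus $W(k)\hookrightarrow A_\mmax$: the two ways of viewing $B_\mmax$ as an algebra over the base differ precisely by the Frobenius $\phi$ on $B_\mmax$, and one must check that the natural comparison map is $\phi$-equivariant for the correct twisted actions (this is why the \emph{twisted} prismatic cohomology $\RG_{\Prism\fr}$ is used throughout, cf.\ the remark after \Cref{eq:etale comparison}). A secondary subtlety is that the naive map $\mf S_\mmax\to W(k)\otimes_{\mf S}\mf S_\mmax$ is only an isomorphism after inverting $p$, so one must genuinely work rationally, i.e. with $B_\mmax$ and not $A_\mmax$, and verify that no coherence is lost in degree $d$ — but this is guaranteed by \Cref{cor:BK prismatic cohomology d-coherent}, which is exactly why the hypothesis is $(d+1)$-de Rham properness of $\mstack X$ (giving $d$-de Rham properness of $\mstack X_k$) and the conclusion is stated only in the range $\tau^{\le d}$. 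Once the equivariant identification $N\otimes_{\mf S}B_\mmax\simeq (N/u)\otimes_{W(k)}B_\mmax$ is in place, the theorem follows by substituting the crystalline comparison and \Cref{lem:comparison of prismatic with Ainf}.
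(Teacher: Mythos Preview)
Your proposal has a genuine gap at its central step. You claim that for a coherent $\mf S$-complex $N$ there is a natural $(G_K,\phi)$-equivariant identification
$$N\otimes_{\mf S}B_\mmax \simeq (N/u)\otimes_{W(k)}B_\mmax,$$
but this is not a formal algebraic fact. The two $\mf S$-algebra structures on $B_\mmax$ --- via $u\mapsto [\pi^\flat]$ and via $u\mapsto 0$ --- are genuinely different: $[\pi^\flat]$ is neither zero nor a unit in $B_\mmax$, and the ideals $(u)$ and $(E(u))$ in $\mf S[\tfrac1p]$ cut out distinct points ($u=0$ versus $u=\pi$), so your remark that ``$E$ and $u$ differ by a $p$-multiple hence become equal after inverting $p$'' is simply false. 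For a general coherent $N$ (say $N=\mf S/u$) the two tensor products are not isomorphic. What makes the comparison work is precisely the Frobenius structure on prismatic cohomology, and your references to ``up to the Frobenius twist'' never become a mechanism. A second, smaller error: you assert $N/u\simeq\tau^{\le d}\RG_\crys(\mstack X_k/W(k))$ for $N=\tau^{\le d}\RG_{\Prism\fr}(\mstack X/\mf S)$, but these differ in degree $d$ by the $u$-torsion of $H^{d+1}_{\Prism\fr}$, which you have no control over (\Cref{rem:E(u)-torsion in d+1-st prismatic cohomology} controls $E(u)$-torsion, not $u$-torsion).

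The paper takes a different and essentially unavoidable route. It first passes all the way to $A_\crys$: by prismatic base change along $(\Ainf,\xi)\to(A_\crys,(p))$ and the crystalline comparison one gets $\RG_{\Prism\fr}(\mstack X_{\mc O_C}/\Ainf)\cotimes_{\Ainf}B_\crys\simeq \RG_\crys(\mstack X_{\mc O_C/p}/A_\crys)[\tfrac1p]$, and then the Berthelot--Ogus isomorphism (\Cref{crys_O_C_W_k}, via Frobenius-invariance of rational crystalline cohomology under nil-thickenings) identifies the right-hand side with $\RG_\crys(\mstack X_k/W(k))\otimes_{W(k)}B_\crys$. This is the step that actually uses the Frobenius, and it is not a statement about coherent $\mf S$-modules at all. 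The remaining work --- showing that truncation $\tau^{\le d}$ commutes with $-\otimes B_\mmax$ on both sides --- is where the condensed machinery and \Cref{lem:key lemma tensor with Bmax} enter; your dismissal of these ``analytic subtleties'' overlooks that the comparison is first established for the \emph{untruncated} complexes, and commuting the truncation through requires exactly the control on $\tau^{>d}$ supplied by \Cref{lem:condition on the cohomology} and \Cref{rem:E(u)-torsion in d+1-st prismatic cohomology}.
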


We will prove this theorem in the remaining sections using a bit of condensed mathematics. Before introducing some necessary notation let us deduce from \Cref{crys_comp_Bmax} the crystallinness of the $G_K$-representation given by the \'etale\footnote{Note that from \Cref{thm:even mainer theorem} it follows that in the range $i\le d+1$ it doesn't matter whether we consider the Raynaud or the algebraic generic fiber here.} cohomology $H^i_\et(\widehat{\mstack X}_{\mbb C},\mbb Q_p)$. 

Recall that a finite-dimensional $\mbb Q_p$ representation $V$ is called crystalline if the dimension of $D_\crys(V)\coloneqq (V\otimes_{\mbb Q_p}B_\crys)^{G_K}$ over $K_0$ is equal to the dimension of $V$.

\begin{cor}[Fontaine's $C_\crys$-conjecture for $d$-de Rham proper stacks]\label{cor:fontaine's conjecture for d-de Rham-proper stacks}
Let $\mstack X$ be a smooth $(d+1)$-de Rham proper stack over $\mathcal O_K$. Then $H^i_{\et}(\mstack X_C,\mbb Q_p)\simeq H^i_{\et}(\widehat{\mstack X}_C,\mbb Q_p)$ is a crystalline Galois representation for $i\le d$. Moreover,  $D_\crys(H^i_{\et}(\mstack X_C,\mbb Q_p))\simeq H^i_\crys(\mstack X_k/W(k))[\tfrac{1}{p}]. $
\end{cor}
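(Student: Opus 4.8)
The plan is to show that $V \coloneqq H^i_\et(\mstack X_C,\mbb Q_p)$ is \emph{$B_\mmax$-admissible}, which by \Cref{constr:A_max and B_max} is equivalent to being crystalline, and then to read off $D_\crys(V)$. I would first record that for $i\le d$ the map $\Upsilon_{\mstack X,\mbb Q_p}$ gives a (necessarily $G_K$-equivariant) isomorphism $H^i_\et(\mstack X_C,\mbb Q_p)\simeq H^i_\et(\widehat{\mstack X}_C,\mbb Q_p)$ by \Cref{thm:even mainer theorem}, so the two representations in the statement coincide; and that $\mstack X_k$ is $d$-de Rham-proper by \Cref{prop:general properties of d-Hodge-properness}(2), which places us in the setting of \Cref{cor:BK prismatic cohomology d-coherent}, \Cref{lem:comparison of prismatic with Ainf}, \Cref{cor:comparison of etale with Ainf}, \Cref{crys_comp_Bmax} and \Cref{cor: equality of dimensions of cohomology}.

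Next I would promote \Cref{crys_comp_Bmax} to a statement about individual cohomology groups in the range $i\le d$. For such $i$, $H^i_{\Prism\fr}(\mstack X/\mf S)$ is a Breuil--Kisin module (\Cref{rem:prismatic cohomology is a BK-module up to degree d}), so $H^i_{\Prism\fr}(\mstack X/\mf S)[\tfrac1p]$ is finite free over $\mf S[\tfrac1p]$; since $\tau^{\le d}\RG_{\Prism\fr}(\mstack X/\mf S)$ is a perfect $\mf S$-complex (\Cref{cor:BK prismatic cohomology d-coherent}) whose cohomology becomes finite projective after inverting $p$, it splits over $\mf S[\tfrac1p]$ as the direct sum of its cohomology, so $-\otimes_{\mf S}B_\mmax$ commutes with $H^i$ in this range; combined with \Cref{lem:comparison of prismatic with Ainf} this identifies $H^i\bigl(\tau^{\le d}\RG_{\Prism\fr}(\mstack X_{\mc O_C}/\Ainf)\otimes_{\Ainf}B_\mmax\bigr)$ with $H^i_{\Prism\fr}(\mstack X_{\mc O_C}/\Ainf)\otimes_{\Ainf}B_\mmax$. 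The analogous, easier computation for $\tau^{\le d}\RG_\crys(\mstack X_k/W(k))$ — coherent by \Cref{cor:finiteness of crystalline}, and $W(k)$ a DVR — identifies $H^i$ of the right-hand side of \Cref{crys_comp_Bmax} with $H^i_\crys(\mstack X_k/W(k))[\tfrac1p]\otimes_{K_0}B_\mmax$. Thus \Cref{crys_comp_Bmax} yields, for $i\le d$, a $(G_K,\phi)$-equivariant isomorphism $H^i_{\Prism\fr}(\mstack X_{\mc O_C}/\Ainf)\otimes_{\Ainf}B_\mmax\simeq H^i_\crys(\mstack X_k/W(k))[\tfrac1p]\otimes_{K_0}B_\mmax$.

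I would then combine this with \Cref{cor:comparison of etale with Ainf}: base-changing its isomorphism along $B_\crys\hookrightarrow B_\mmax$ gives $V\otimes_{\mbb Q_p}B_\mmax\simeq H^i_{\Prism\fr}(\mstack X_{\mc O_C}/\Ainf)\otimes_{\Ainf}B_\mmax$, so chaining produces a $(G_K,\phi)$-equivariant isomorphism $V\otimes_{\mbb Q_p}B_\mmax\simeq H^i_\crys(\mstack X_k/W(k))[\tfrac1p]\otimes_{K_0}B_\mmax$. Taking $G_K$-invariants and using $B_\mmax^{G_K}=K_0$ (\Cref{constr:A_max and B_max}) together with the triviality of the $G_K$-action on the crystalline factor gives $(V\otimes_{\mbb Q_p}B_\mmax)^{G_K}\simeq H^i_\crys(\mstack X_k/W(k))[\tfrac1p]$, which has $K_0$-dimension $\dim_{\mbb Q_p}V$ by \Cref{cor: equality of dimensions of cohomology}; hence $V$ is $B_\mmax$-admissible, thus crystalline. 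Finally $D_\crys(V)=(V\otimes_{\mbb Q_p}B_\crys)^{G_K}$ maps $\phi$-equivariantly and injectively into $(V\otimes_{\mbb Q_p}B_\mmax)^{G_K}\simeq H^i_\crys(\mstack X_k/W(k))[\tfrac1p]$ via $B_\crys\hookrightarrow B_\mmax$, and a dimension count (both sides of $K_0$-dimension $\dim_{\mbb Q_p}V$) forces it to be an isomorphism, giving $D_\crys\bigl(H^i_\et(\mstack X_C,\mbb Q_p)\bigr)\simeq H^i_\crys(\mstack X_k/W(k))[\tfrac1p]$ as $\phi$-modules over $K_0$.

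The genuine content is \Cref{crys_comp_Bmax}, which is assumed here; in the present deduction the delicate point is purely homological bookkeeping — that $-\otimes_{\Ainf}B_\mmax$ is compatible with the truncation $\tau^{\le d}$ and computes $H^i$ correctly precisely in the boundary degree $i=d$, which is exactly where one uses that $H^d_{\Prism\fr}(\mstack X/\mf S)$ is a genuine finitely generated Breuil--Kisin module rather than just part of a coherent truncation — and that the comparison maps (along $\mf S\to\Ainf$, along $B_\crys\hookrightarrow B_\mmax$, and through $W(C^\flat)$) can be composed as maps of $(G_K,\phi)$-modules so that the chain above is legitimate.
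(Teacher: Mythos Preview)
Your proposal is correct and follows essentially the same route as the paper: reduce to $B_\mmax$-admissibility, pass from the derived statement of \Cref{crys_comp_Bmax} to individual cohomology groups using that $H^i_{\Prism\fr}(\mstack X/\mf S)[\tfrac1p]$ (hence $H^i_{\Prism\fr}(\mstack X_{\mc O_C}/\Ainf)[\tfrac1p]$) is free for $i\le d$, chain with \Cref{cor:comparison of etale with Ainf} tensored up to $B_\mmax$, and finish by taking $G_K$-invariants and comparing dimensions via \Cref{cor: equality of dimensions of cohomology}. Your justification for commuting $H^i$ with $-\otimes B_\mmax$ (splitting of the truncated complex over $\mf S[\tfrac1p]$) is slightly more explicit than the paper's, but the argument is the same.
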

\begin{proof}
	Recall that for  $0\le i\le d$ we have $H^i_{\Prism\fr}(\mstack X_{\mc O_C}/\Ainf)\simeq H^i_{\Prism\fr}(\mstack X/\mf S)\otimes_{\mf S}\Ainf$ by \Cref{lem:comparison of prismatic with Ainf}, and so by \Cref{rem:prismatic cohomology is a BK-module up to degree d} we have that $H^i_{\Prism\fr}(\mstack X_{\mc O_C}/\Ainf)[\frac{1}{p}]$ are free modules over $\Ainf[\frac{1}{p}]$. Thus applying cohomology to both sides of the isomorphism in \Cref{crys_comp_Bmax} we get $(G_K,\phi)$-equivariant isomorphisms
	$$
	H^i_{\Prism\fr}(\mstack X_{\mc O_C}/\Ainf)\otimes_\Ainf B_\mmax\simeq H^i_\crys(\mstack X_k/W(k))\otimes_{W(k)}B_\crys.
	$$
	Recall that by \Cref{cor:comparison of etale with Ainf} we have a $(G_K,\phi)$-equivariant isomorphisms
	$$
	H^i_{\et}(\widehat{\mstack X}_C,\mbb Q_p)\otimes_{\mbb Q_p}B_\crys\ism H^i_{\Prism\fr}(\mstack X_{\mc O_C}/\Ainf)\otimes_\Ainf B_\crys.
	$$
	Tensoring it further by $B_\mmax$ we then get $(G_K,\phi)$-equivariant isomorphisms
		$$
	H^i_{\et}(\widehat{\mstack X}_C,\mbb Q_p)\otimes_{\mbb Q_p}B_\mmax\ism H^i_{\Prism\fr}(\mstack X_{\mc O_C}/\Ainf)\otimes_\Ainf B_\mmax\ism H^i_\crys(\mstack X_k/W(k))\otimes_{W(k)}B_\mmax.
	$$
	Put $V\coloneqq H^i_{\et}(\widehat{\mstack X}_C,\mbb Q_p)$. By the discussion in the end of \Cref{constr:A_max and B_max} we then have 
	$$
	D_\crys(V)\simeq (V\otimes_{\mbb Q_p}B_\mmax)^{G_K}\simeq (H^i_\crys(\mstack X_k/W(k))\otimes_{W(k)}B_\mmax)^{G_K}\simeq H^i_\crys(\mstack X_k/W(k)\otimes_{W(k)}K_0,
	$$
	since the $G_K$-action on $H^i_\crys(\mstack X_k/W(k))$ is trivial and $B_\mmax^{G_K}\simeq K_0$. Since $H^i_\crys(\mstack X_k/W(k)\otimes_{W(k)}K_0\simeq H^i_\crys(\mstack X_k/W(k)[\frac{1}{p}]$, from \Cref{cor: equality of dimensions of cohomology} we get 
	$$
	\dim_{\mbb Q_p}V=\dim_{K_0}H^i_\crys(\mstack X_k/W(k)[\tfrac{1}{p}]= \dim_{K_0} D_\crys(V),
	$$
	and so $V$ is crystalline. 
\end{proof}
\begin{rem}
	By general theory of admissible rings of Fontaine, from \Cref{cor:Fontaine's conjecture for Hodge-proper stacks} we get that for any $i\le d$ there is a natural $(G_K,\phi)$-equivariant isomorphism 
	$$
	H^i_\et(\mstack X_{C}, \mathbb Q_p)\otimes_{\mbb Q_p}B_\crys \ism H^i_\crys(\mstack X_k/W(k))\otimes_{W(k)} B_\crys.
	$$ 
\end{rem}
\begin{rem}
	By a method similar to one in \cite[Section 4.3.3]{KubrakPrikhodko_pHodge} it should be possible to show that for $i\le d$ the filtered $K$-vector space $D_\dR(H^i_{\et}(\mstack X_C,\mbb Q_p))$ is given by $H^i_\dR(\mstack X_K/K)$ together with Hodge filtration. This then would formally imply the Hodge-Tate decomposition in the same range of degrees.
\end{rem}
\begin{rem}\label{rem:lattice corresponding to prismatic cohomology} Let $\mstack X$ be a smooth $(d+1)$-de Rham-proper stack over $\mc O_K$ and let $$H^i_\et(\widehat{\mstack X}_C,\mbb Z_p)_\free\coloneqq H^i_\et(\widehat{\mstack X}_C,\mbb Z_p)/H^i_\et(\widehat{\mstack X}_C,\mbb Z_p)_{\mr{tors}} \subset H^i_\et(\widehat{\mstack X}_C,\mbb Q_p).$$ By \Cref{cor:fontaine's conjecture for d-de Rham-proper stacks}, for $i\le d$ this is a lattice in a crystalline $G_K$-representation.  Following the same argument as in \cite[Remark 4.3.27]{KubrakPrikhodko_pHodge} one can identify the corresponding Breuil-Kisin module $\BK(H^i_\et(\widehat{\mstack X}_C,\mbb Z_p)_\free)$ with the free Breuil-Kisin module $H^i_{\Prism}(\mstack X/\mf S)_\free$ associated to $H^i_{\Prism}(\mstack X/\mf S)$ (see \cite[Proposition 4.3]{BMS1} for the definition). If $H^i_{\Prism}(\mstack X/\mf S)$ is free over $\mf S$, then by \Cref{prop:around etale comparison} $H^i_\et(\widehat{\mstack X}_C,\mbb Z_p)$ is $p$-torsion free and the statement becomes:
	$$
	\BK(H^i_\et(\widehat{\mstack X}_C,\mbb Z_p))\simeq H^i_{\Prism}(\mstack X/\mf S).
	$$ 
\end{rem}
The following lemma gives a sufficient condition for $H^i_{\Prism}(\mstack X/\mf S)$ to be a free $\mf S$-module.

\begin{lem}
	Let $\mstack X$ be a $(d+1)$-de Rham-proper stack over $\mathcal O_K$. If $i\le d$ and $H^i_\crys(\mstack X_k/W(k))$ is $p$-torsion free, then $H^i_\Prism(\mstack X/\mathfrak S)$ is free as an $\mathfrak S$-module.
	\end{lem}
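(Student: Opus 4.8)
The plan is to reduce everything to the Frobenius-twisted prismatic cohomology $M\coloneqq H^i_{\Prism\fr}(\mstack X/\mf S)$ and to prove that $M$ is free over $\mf S=W(k)[[u]]$, which is a regular local ring of Krull dimension $2$ with maximal ideal $\mathfrak m=(p,u)$; freeness of $H^i_\Prism(\mstack X/\mf S)$ will then follow by faithfully flat descent of projectivity along $\phi_{\mf S}\colon\mf S\to\mf S$, since $M\simeq\phi_{\mf S}^{*}H^i_\Prism(\mstack X/\mf S)$. First I would record the relevant properties of $M$. Since $\mstack X$ is $(d+1)$-de Rham-proper over $\mc O_K$, its reduction $\mstack X_k$ is $d$-de Rham-proper over $k$ by \Cref{prop:general properties of d-Hodge-properness}(2) (the map $\mc O_K\to k$ has $\Tor$-amplitude $[-1,0]$), so $\tau^{\le d}\RG_{\Prism\fr}(\mstack X/\mf S)\in\Coh(\mf S)$ by \Cref{cor:BK prismatic cohomology d-coherent} and hence $M$ is a finitely generated $\mf S$-module for $i\le d$. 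By \Cref{rem:prismatic cohomology is a BK-module up to degree d}, $H^i_\Prism(\mstack X/\mf S)[\tfrac{1}{p}]$, and therefore also $M[\tfrac{1}{p}]$, is finite free over $\mf S[\tfrac{1}{p}]$; in particular $M[\tfrac{1}{p}]$ is torsion-free, so the torsion submodule $M_{\mathrm{tors}}$ coincides with $M[p^{\infty}]$ and, being finitely generated, is killed by $p^N$ for some $N$.

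Next I would extract the input coming from the hypothesis. Using the crystalline comparison $\RG_{\Prism\fr}(\mstack X/\mf S)\otimes_{\mf S}\mf S/u\simeq\RG_\crys(\mstack X_k/W(k))$ together with $\mf S/u\simeq W(k)$, and the fact that $u$ is a non-zero-divisor in $\mf S$, the associated universal-coefficients sequence reads
$$
0\to M/uM\to H^i_\crys(\mstack X_k/W(k))\to H^{i+1}_{\Prism\fr}(\mstack X/\mf S)[u]\to 0 .
$$
Thus $M/uM$ embeds into $H^i_\crys(\mstack X_k/W(k))$, which is $p$-torsion free by assumption, so $M/uM$ is $p$-torsion free as well.

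It now remains to prove the purely algebraic statement: if $M$ is a finitely generated $\mf S$-module with $M[\tfrac{1}{p}]$ free over $\mf S[\tfrac{1}{p}]$ and $M/uM$ $p$-torsion free, then $M$ is free over $\mf S$. The first point is that $M$ is $u$-torsion free: the image of $M_{\mathrm{tors}}/uM_{\mathrm{tors}}$ in $M/uM$ is killed by $p^N$ and lies inside the $p$-torsion-free module $M/uM$, hence is zero, so $M_{\mathrm{tors}}\subseteq uM$; as $M[u]\subseteq M_{\mathrm{tors}}$, every $x\in M[u]$ may be written $x=um$ with $u^2m=0$, whence $m\in M_{\mathrm{tors}}\subseteq uM$ and $x\in u^2M$, and iterating gives $M[u]\subseteq\bigcap_n u^nM=0$ by the Krull intersection theorem. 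Consequently $M_{\mathrm{tors}}\subseteq uM$ with $u$ a non-zero-divisor forces $M_{\mathrm{tors}}=uM_{\mathrm{tors}}$, so $M_{\mathrm{tors}}=0$ by Nakayama. Hence $u$ is a non-zero-divisor on $M$, and $p$ is a non-zero-divisor on $M/uM$ by the second hypothesis, so $(u,p)$ is a regular sequence on $M$ and $\operatorname{depth}_{\mathfrak m}M\ge 2=\dim\mf S$; the Auslander--Buchsbaum formula then gives $\operatorname{pd}_{\mf S}M=0$, so $M$ is projective and hence free over the local ring $\mf S$. Descending along $\phi_{\mf S}$ finishes the proof.

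I expect the main obstacle to be this last commutative-algebra step, and within it the verification that $M$ is $u$-torsion free: this is the point that lets the two "boundary" conditions (freeness of $M[\tfrac{1}{p}]$ and $p$-torsion-freeness of $M/uM$) be combined into a length-two regular sequence on $M$, after which Auslander--Buchsbaum applies; all of its inputs (finite generation of $M$, freeness of $M[\tfrac{1}{p}]$, and the crystalline comparison) are already available from the results recalled above.
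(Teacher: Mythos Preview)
Your proof is correct. The reduction steps (passing to the Frobenius twist $M=H^i_{\Prism\fr}(\mstack X/\mf S)$, extracting finite generation of $M$ from \Cref{cor:BK prismatic cohomology d-coherent}, freeness of $M[\tfrac{1}{p}]$ from the Breuil--Kisin structure, and $p$-torsion-freeness of $M/uM$ from the crystalline comparison) match the paper exactly. The only difference is in the proof of the resulting algebraic lemma. The paper argues by a rank count: since $M/uM\simeq M\otimes_{\mf S}W(k)$ is free over $W(k)$ and $M[\tfrac{1}{p}]$ is free over $\mf S[\tfrac{1}{p}]$, one gets
\[
\dim_k M\otimes_{\mf S}k=\rk_{W(k)}M\otimes_{\mf S}W(k)=\rk_{\mf S[1/p]}M[\tfrac{1}{p}]=\dim_{\Frac\mf S}M\otimes_{\mf S}\Frac\mf S,
\]
so the minimal number of generators equals the generic rank and $M$ is free. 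Your approach instead shows directly that $M$ is $u$-torsion free (via the nice Krull-intersection argument) and hence that $(u,p)$ is a regular sequence on $M$, then invokes Auslander--Buchsbaum. Both arguments prove the same algebraic statement; the paper's is a couple of lines shorter, while yours is more structural and makes explicit that $M$ is torsion-free and of maximal depth, which is informative in its own right.
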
	
\begin{proof}
	In the proof all tensor products are assumed to be non-derived (unless noted otherwise).
	First note that $H^i_\Prism(\mstack X/\mathfrak S)$ is free over $\mf S$ if and only if $H^i_{\Prism^{(1)}}(\mstack X/\mathfrak S)$ is. Let more generally $C$ be a complex of $\mathfrak S$-modules such that $H^i(C)$ is finitely generated over $\mathfrak S$ and $H^i(C)[1/p]$ is free over $\mathfrak S[1/p]$ (e.g. $C=\RG_{\Prism^{(1)}}(\mstack X/\mathfrak S)$). We claim that if $H^i(C\otimes_{\mathfrak S}^{\mbb L} W(k))$ is $p$-torsion free then $H^i(C)$ is free. Indeed, since $H^i(C)\otimes_S W(k)$ is a submodule of $H^i(C\otimes^{\mbb L}_{\mathfrak S} W(k))$ it follows that $H^i(C)\otimes_{\mf S} W(k)$ is $p$-torsion free (equivalently free as a $W(k)$-module) as well. Hence it is enough to prove the following assertion: let $M$ be a finitely generated $\mathfrak S$-module such that
	\begin{itemize}
		\item $M[1/p]$ is free over $\mathfrak S[1/p]$.
		
		\item $M\otimes_{\mathfrak S} W(k)$ is free over $W(k)$.
	\end{itemize}
	Then $M$ is free over $\mathfrak S$. 
	
	But under these assumptions
	\begin{align*}
		\dim_k M\otimes_{\mathfrak S} k & = \rank_{W(k)} M\otimes_{\mathfrak S} W(k) = \rank_{W(k)[1/p]} M\otimes_{\mathfrak S} W(k)[\tfrac{1}{p}] = \\
		& = \rank_{\mathfrak S[1/p]} M\otimes_{\mathfrak S} \mathfrak S[\tfrac 1 p] = \dim_{\Frac \mathfrak S} M\otimes_{\mathfrak S} \Frac \mathfrak S.
	\end{align*}
	and thus $M$ is free by the semicontinuity of stalks.
	
	\end{proof}

\paragraph{Condensed prismatic and crystalline cohomology.}
First, we set up some notation. We denote by $X\mapsto \ul X$ the natural functor $\Top \ra \Cond$. It commutes with products and so sends topological groups/rings to group/ring objects in condensed sets. We consider the full subcategory $\Solid \subset \Cond(\Ab)$ of solid abelian groups; recall that it is closed under all limits and colimits. The restriction of the functor $A\ra \ul A$ to discrete topological groups factors through $\Solid$, inducing a fully faithful embedding $\Ab\ra \Solid$. It then also extends to a fully faithful embedding $D(\Ab)\ra D(\Solid)$ between derived categories which by slight abuse of notation we will still denote by $M\mapsto \ul{M}$. Having an $\mbb E_{\infty}$-ring $A$ in $D(\Solid)$ we will sometimes denote by $D(A)\simeq\Mod_A(D(\Solid))$ the category of $A$-modules. It is endowed with the symmetric monoidal structure given by solid tensor tensor product $-\sotimes_A-$.

For a set $\{f_i\}_{i\in I}$ of elements of $R$ and an object $M\in D(R)$ we denote 
$$
\Kos(M;\{f_i\}_{i\in I})\coloneqq M\otimes_{\mbb Z[x_i]_{i\in I}} \mbb Z
$$
where $M$ is considered as a $\mbb Z[x_i]_{i\in I}$-module via the homomorphism $\mbb Z[x_i]_{i\in I}\ra R$ sending $x_i$ to $f_i$ and all $x_i$'s act on $\mbb Z$ by 0. Note that $\Kos(M;\{f_i\}_{i\in I})$ is naturally a module over the derived ring $\Kos(R;\{f_i\}_{i\in I})$. In the case $M\in \Mod_R$, the complex $\Kos(M;\{f_i\}_{i\in I})$ can be explicitly computed by the "Koszul complex"
$$
\ldots \tto \oplus_{i<j} M\tto \oplus_i M \xymatrix{\ar[r]^{(f_i\cdot )_{i\in I}}&} M
$$
where we fix some auxiliary total ordering on $I$ (see \cite[Tag 0621]{StacksProject}). Recall that given a finitely generated ideal $J=(x_1,\ldots, x_s)\subset R$ one defines a functor $M\mapsto M^\wedge_J$ as
\begin{equation}\label{eq:derived completion}
M^\wedge_J\coloneqq \prolim[n] {\Kos(M;x_1^n,\ldots,x_s^n)}.
\end{equation}
The functor $(-)^\wedge_J$ is a left adjoint to the embedding $D(\Mod_R)_{J-\comp}\ra D(\Mod_R)$ (see \cite[Tag 091N]{StacksProject}) and so computes the "derived $J$-completion". In particular, it doesn't depend on the choice of generators $(x_1,\ldots,x_s)$.

\begin{construction}\label{constr:blacktriangle}\begin{enumerate}
		\item 	Let $R$ be a classical ring with a finitely generated ideal $J=(x_1,\ldots,x_s)\subset R$ (with a fixed set of generators). We define $$
		R^\blacktriangle_J\coloneqq \prolim[n] \ul{\Kos(R;x_1^n,\ldots,x_s^n)}\in D(\Solid),
		$$ to be the derived $J$-completion of the discrete ring $\ul R$. The solid group $R^\blacktriangle_J$ has the natural structure of a derived solid ring. In the case $(x_1,\ldots,x_s)$ is a regular sequence, $\Kos(R;x_1^n,\ldots,x_s^n)\simeq R/(x_1^n,\ldots,x_s^n)$ and $R^\blacktriangle_J\simeq \ul {R^\wedge_J}\in \Solid$ where $R^\wedge_J$ is considered as a topological ring via $J$-adic topology.
		\item Let $M\in D(\Mod_R)$ be a complex. We define 
		$$
		M^\blacktriangle_J\coloneqq \prolim[n]\ul{\Kos(M;x_1^n,\ldots,x_s^n)}
		$$ 
		where $\ul{\Kos(M;x_1^n,\ldots,x_s^n)}$ is the image of $\Kos(M;x_1^n,\ldots,x_s^n)\in D(\Mod_R)$ under the natural functor $D(\Ab)\ra D(\Solid)$. The functor $M\ra M^\blacktriangle_J$ defines a functor $$(-)^\blacktriangle_J\colon D(\Mod_R)\ra D(R^\blacktriangle_J).$$
		By construction it factors through the subcategory $D(R^\blacktriangle_J)_{J-\comp}\subset D(R^\blacktriangle_J)$ of derived $J$-complete solid $R^\blacktriangle_J$-modules.  
		\item Let $(R,J)$ be as in part 1 and let $A\simeq {R[\frac{1}{r_s}]}_{s\in S}$ be a localization of $R$ with respect to a set of elements $\{r_s\}_{s\in S}$. Then we define
		$$
		A^\blacktriangle_J\coloneqq {R^\blacktriangle_J[\tfrac{1}{r_s}]}_{s\in S}.
		$$
	\item When it is clear from the context what $J$ is, we will sometimes omit it to lighten the notation. We will occasionally call the operation $(-)^\bt\coloneqq (-)^\bt_J$ \textit{solid $J$-completion}.
	\end{enumerate}
\begin{rem}\label{rem:solid completion on the point}
	We note that since evaluation on the singleton $*\in (*)_\proet$ commutes will all limits and colimits, one has isomorphisms $R^\bt_J(*)\simeq R^\wedge_J$ and $M^\bt_J(*)\simeq M^\wedge_J$ where $(-)^\wedge_J$ is the derived $J$-completion. Similarly, $A^\blacktriangle_J(*)\simeq {R^\wedge_J[\tfrac{1}{r_s}]}_{s\in S}$.
\end{rem}

\begin{rem}
	Note that the solid module $M^\blacktriangle_J$ only depends on the pro-system $\{[M/(x_1^n,\ldots,x_s^n)]\}_n$ and not $M$ itself. Consequently\footnote{Since prosystems $\{[M/(x_1^n,\ldots,x_s^n)]\}_n$ and $\{[M^\wedge_J/(x_1^n,\ldots,x_s^n)]\}_n$ are naturally equivalent.}, the functor $(-)^\bt_J\colon D(\Mod_{R})\ra D(R^\bt_J)_{J-\comp}$ factors through the usual derived completion $(-)^\wedge_{J}\colon D(\Mod_{R}) \ra D(\Mod_{R})_{J-\comp}$.
\end{rem}

\begin{lem}\label{lem:solid completion is right t-exact}
	The functor $(-)^\bt_J\colon D(\Mod_R)\ra D(R^\blacktriangle_J)_{J-\comp}$ is right $t$-exact. If $J=(x_1,\ldots,x_s)\subset R$ is generated by $s$ elements it is also left $t$-exact up to a shift by $s$.
\end{lem}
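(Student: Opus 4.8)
The plan is to transfer both statements to the underlying complexes in $D(\Solid)$ and then reduce them to two elementary facts about the Koszul complexes occurring in the formula $M^\bt_J\simeq\lim_n\ul{\Kos(M;x_1^n,\ldots,x_s^n)}$: that each $\Kos(M;x_1^n,\ldots,x_s^n)$ is cohomologically concentrated in degrees $[-s,0]$ relative to $M$, and that for connective $M$ the tower $\{\Kos(M;x_1^n,\ldots,x_s^n)\}_n$ has surjective transition maps on top cohomology. First I would observe that $R^\bt_J$ is a connective $\mathbb{E}_\infty$-ring object of $D(\Solid)$, so the $t$-structure on $D(R^\bt_J)_{J-\comp}$ is the one for which the forgetful functor to $D(\Solid)$ is conservative and $t$-exact; in particular it detects both the connectivity and the coconnectivity bound, so it is enough to prove the asserted bounds for the underlying solid complex of $M^\bt_J$. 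Throughout I would use that $\ul{(-)}\colon D(\Ab)\to D(\Solid)$ is $t$-exact, being the derived functor of the exact embedding of discrete abelian groups into $\Solid$ \cite{ClausenScholze_Condensed1}.

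For a fixed $n$, the complex $\Kos(M;x_1^n,\ldots,x_s^n)$ is $M$ tensored over $R$ with the explicit Koszul complex of $R$ on $x_1^n,\ldots,x_s^n$, which is a complex of finite free $R$-modules placed in cohomological degrees $[-s,0]$. Hence if $M\in\DMod{R}^{\ge 0}$ then $\Kos(M;x_1^n,\ldots,x_s^n)\in\DMod{R}^{\ge -s}$, so $\ul{\Kos(M;x_1^n,\ldots,x_s^n)}\in D(\Solid)^{\ge -s}$; since limits are left $t$-exact, $M^\bt_J\in D(\Solid)^{\ge -s}$, which is exactly the left $t$-exactness up to a shift by $s$. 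Dually, if $M\in\DMod{R}^{\le 0}$ then each $\ul{\Kos(M;x_1^n,\ldots,x_s^n)}\in D(\Solid)^{\le 0}$, so it remains only to see that the inverse limit over $n$ of these connective objects stays connective.

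For this last point — the one genuinely non-formal step — write $Z_n\coloneqq\ul{\Kos(M;x_1^n,\ldots,x_s^n)}$ and use the Milnor exact triangle $\lim_n Z_n\simeq\fib\bigl(\prod_n Z_n\to\prod_n Z_n\bigr)$, which holds in $D(\Solid)$ because $\Solid$ is reflective in $\Cond(\Ab)$ and countable products there are exact. Taking cohomology gives, for each $j$, a short exact sequence $0\to R^1\lim_n H^{j-1}(Z_n)\to H^j(\lim_n Z_n)\to \lim_n H^j(Z_n)\to 0$. For $j\ge 1$ the right-hand term vanishes since $Z_n\in D(\Solid)^{\le 0}$, and for $j\ge 2$ so does the left-hand term; for $j=1$ the left-hand term is $R^1\lim_n H^0(Z_n)$. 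Now $H^0(Z_n)\cong\ul{H^0(M)/(x_1^n,\ldots,x_s^n)H^0(M)}$ — it is the top cohomology of a tensor product of connective complexes, carried through the exact functor $\ul{(-)}$ — and the transition maps are the evident surjections onto $\ul{H^0(M)/(x_1^n,\ldots,x_s^n)H^0(M)}$ from $\ul{H^0(M)/(x_1^{n+1},\ldots,x_s^{n+1})H^0(M)}$; thus the tower is Mittag--Leffler and $R^1\lim_n H^0(Z_n)=0$. Hence $H^j(M^\bt_J)=0$ for all $j\ge 1$, i.e.\ $M^\bt_J\in D(\Solid)^{\le 0}$, which is right $t$-exactness. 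I expect the main obstacle to be precisely this inverse-limit step: the rest is formal bookkeeping of (co)connectivity, whereas the vanishing of $R^1\lim$ genuinely uses both the availability of the Milnor sequence in $D(\Solid)$ (exactness of countable products in $\Solid$) and the Mittag--Leffler property of the Koszul tower.
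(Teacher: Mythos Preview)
Your argument is correct and follows essentially the same approach as the paper: both establish the left $t$-exactness up to a shift by $s$ from the amplitude of the Koszul complex together with left $t$-exactness of limits, and both obtain right $t$-exactness via the Milnor sequence and a Mittag--Leffler argument on the surjective tower $\{H^0(M)/(x_1^n,\ldots,x_s^n)\}_n$. The only organizational difference is that the paper factors the completion as an iterated single-element completion $(((-)^\wedge_{x_1})^\wedge_{x_2}\ldots)^\wedge_{x_s}$ and runs the Mittag--Leffler step once per generator, whereas you work directly with the multi-variable Koszul tower; the content is the same.
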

\begin{proof}
	Indeed, $(-)^\bt_J$ is the composition of fully faithful embedding $\ul{(-)}\colon D(\Mod_R)\ra D(\ul(R))$ (where $R$ is endowed with discrete topology) which is $t$-exact and the derived $J$-adic completion in $D(\Mod_R)$ which is right $t$-exact. Indeed, $M^\wedge_J\simeq (((M^\wedge_{x_1})^\wedge_{x_2})\ldots)^{\wedge}_{x_s}$. Now, having $M\in \Mod_{\ul R}(\Solid)\subset D(\ul(R)$ and $x\in R$ the complex $[M/x]$ has two cohomology modules: $H^0([M/x])=M/x$ and $H^1([M/x])=M[x]$ ($x$-torsion). Since in the derived limit $\lim_{n\in \mbb N}$ (applied to objects in the heart) we only have two terms $\lim^0$ and $\lim^1$, we get the only non-zero term in $H^{>0}(M^\wedge_x)$ could be given by $\lim^1_nM/x^n$, which is zero by Mittag-Leffler since maps $M/x^n\ra M/x^{n-1}$ are surjective.
	
	For the second statement note that the functor $M\mapsto \ul{[M/(x_1^n,\ldots,x_s^n)]}$ is left $t$-exact up to a shift by $s$, while the homotopy limit $\prolim[n]$ is left $t$-exact.
\end{proof}
\begin{warn}
We warn the reader that the functor	$(-)^\bt_J$ doesn't need to be left $t$-exact even if we restrict to the subcategory of derived $J$-complete modules $D(\Mod_R)_{J-\comp} \subset D(\Mod_R)$. Nevertheless, it still satisfies some "left $t$-exactness" properties if we restrict to objects satisfying special finiteness conditions (see \Cref{lem_discretness_of_cond_completion}).
\end{warn}	
 
\end{construction}
We now give a variant of the prismatic and crystalline cohomology in this setting.
\begin{defn}[Solid $(p,I)$-completed prismatic cohomology]
Let $(A, I)$ be a bounded prism. Then we can consider the corresponding solid ring $A^\blacktriangle\coloneqq A^\blacktriangle_{(p,I)}$. We define functors
$$R\Gamma_{\Prism}^\blacktriangle(- / A), R\Gamma_{\Prism^{(1)}}^\blacktriangle(- / A) \colon \PStk_{A/I}^\op \tto \Mod_{A^\blacktriangle}(D(\Solid))$$
as the application of \Cref{constr:blacktriangle}(2) to functors 
$$R\Gamma_{\Prism}(- / A), R\Gamma_{\Prism^{(1)}}(- / A) \colon \PStk_{A/I}^\op \tto \Mod_{A}(D(\Ab))$$
(see \cite[Definition 2.2.1]{KubrakPrikhodko_pHodge}). Namely,
$$R\Gamma_{\Prism}^\blacktriangle(\mstack X/ A) \coloneqq R\Gamma_{\Prism}(\mstack X/ A)^\blacktriangle_{(p,I)}, \qquad R\Gamma_{\Prism^{(1)}}^\blacktriangle(\mstack X/ A) \coloneqq R\Gamma_{\Prism\fr}(\mstack X/ A)^\blacktriangle_{(p,I)}.$$
\end{defn}

\begin{rem}
	Since $A$ is bounded, the derived $(p,I)$-completion of $A$ is identified with the classical one. So in this case $A^\blacktriangle$ in fact lands in $\Solid\subset D(\Solid)$, and is the solid ring $\ul A$ represented by $A$ with $(p,I)$-adic topology. 
\end{rem}

\begin{defn}[Solid PD-completed crystalline cohomology]\label{def_cond_crys}
Let $S \inj T$ be a PD-thickening, where $T=\colim T_n$ is a formal scheme with $T_n$ being the $n$-th PD-neighborhood of $S$. Let $A \coloneqq \mc O(T)\coloneqq \lim_n \mc O(T_n)$ be the global functions on $T$ and $A^\blacktriangle\coloneqq \lim_n^0 \ul{\mc O(T_n)}\in  \Solid$ the corresponding completion in solid rings, where $\mc O(T_n)$ is considered as discrete toplogical group.  We define the functor
$$R\Gamma_\crys^\blacktriangle(-/T) \colon \PStk_{S}^{\op} \tto \Mod_{A^\blacktriangle}(D(\Solid))$$
as the limit in $D(\Solid)$
$$R\Gamma_\crys^\bt(-/T) \coloneqq  \prolim[n] \ul{R\Gamma_\crys(-/T_n)},$$
where $R\Gamma_\crys(-/T_n)$ is the right Kan extension of the crystalline cohomology functor from smooth $S$-schemes to prestacks. 
\end{defn}
\begin{rem}
In the following we will only consider a special case when $T=\Spf A$ is a $p$-adic formal scheme. In this case there is a natural equivalence $R\Gamma_\crys^\bt(\mstack X/T) \simeq (R\Gamma_\crys(\mstack X/T))_p^\bt\in D(\Mod_{A^\bt_p}(\Solid))$.
\end{rem}

With this definition one can formally deduce the base change and the comparison between condensation of the prismatic and crystalline cohomology from the "discrete" version:
\begin{prop}[Base change]\label{prop:case change for prismatic cohomology} Let $(A,I) \to (B, IB)$ be a morphism of bounded prisms such that the $(p,IB)$-completed tensor product functor $- \cotimes_A B$ is left $t$-exact up to a shift. Also assume that $\mstack X$ is quasi-compact quasi-separated and syntomic Artin stack. Then the natural maps
$$
R\Gamma_{\Prism}^\bt(\mstack X/A)\sotimes_{A^\bt} B^\bt\tto R\Gamma_{\Prism}^\bt(\mstack X_{B/IB}/B) \qquad \ R\Gamma_{\Prism^{(1)}}^\bt(\mstack X/A)\sotimes_{A^\bt} B^\bt \tto R\Gamma_{\Prism^{(1)}}^\bt(\mstack X_{B/IB}/B)
$$
are equivalences.

\begin{proof}
By \Cref{sotimes_vs_cotimes}, the solid tensor product $R\Gamma_{\Prism}^\bt(\mstack X/A)\sotimes_{A^\bt} B^\bt $ is naturally isomorphic to the solid $(p,I)$-completion $(R\Gamma_{\Prism}(\mstack X/A)\cotimes_A B)^\bt_{(p,I)}$, where the tensor product inside the brackets is also $(p,I)$-completed. By  \cite[Proposition 2.2.17]{KubrakPrikhodko_pHodge}, $R\Gamma_{\Prism}(\mstack X/A)\cotimes_A B\simeq R\Gamma_{\Prism}(\mstack X_{B/IB}/B)$ and we get the comparison. The proof for twisted cohomology is completely analogous, using the same reference.
\end{proof}
\end{prop}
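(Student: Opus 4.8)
The plan is to bootstrap from the non-condensed base-change theorem for prismatic cohomology by way of the compatibility between the solid tensor product $\sotimes$ and the $(p,I)$-completed tensor product $\cotimes$, so that essentially no new geometry enters — all the condensed-theoretic work is packaged into one linear-algebra input.

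First I would isolate that input: for a bounded prism $(A,I)$, a map of bounded prisms $(A,I)\to(B,IB)$ for which $-\cotimes_A B$ is left $t$-exact up to a shift, and an arbitrary $M\in D(\Mod_A)$, there is a natural equivalence
$$ M^\bt_{(p,I)}\sotimes_{A^\bt}B^\bt \;\simeq\; \big(M\cotimes_A B\big)^\bt_{(p,IB)} $$
of solid $B^\bt$-modules (this is \Cref{sotimes_vs_cotimes}). It is convenient here that $A$ and $B$ are bounded, so that their derived $(p,I)$- and $(p,IB)$-completions coincide with the classical ones and $A^\bt,B^\bt\in\Solid$ are honest solid rings, for which $\sotimes$ is computed by the usual analytic-ring machinery; and the $t$-exactness-up-to-a-shift hypothesis is precisely what lets one commute the sequential inverse limit defining $(-)^\bt$ past the derived tensor product (compare the role of $t$-exactness in \Cref{lem_cotimes_BdR_1}). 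Applying this with $M=R\Gamma_{\Prism}(\mstack X/A)$ yields
$$ R\Gamma_{\Prism}^\bt(\mstack X/A)\sotimes_{A^\bt}B^\bt \;\simeq\; \big(R\Gamma_{\Prism}(\mstack X/A)\cotimes_A B\big)^\bt_{(p,IB)}. $$

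Second I would invoke the discrete base-change theorem for prismatic cohomology of syntomic stacks, \cite[Proposition 2.2.17]{KubrakPrikhodko_pHodge}: since $\mstack X$ is quasi-compact, quasi-separated and syntomic, the natural map $R\Gamma_{\Prism}(\mstack X/A)\cotimes_A B\to R\Gamma_{\Prism}(\mstack X_{B/IB}/B)$ is an equivalence. Substituting this into the previous display and using the definition $R\Gamma_{\Prism}^\bt(\mstack X_{B/IB}/B)=R\Gamma_{\Prism}(\mstack X_{B/IB}/B)^\bt_{(p,IB)}$ gives the asserted equivalence
$$ R\Gamma_{\Prism}^\bt(\mstack X/A)\sotimes_{A^\bt}B^\bt \;\simeq\; R\Gamma_{\Prism}^\bt(\mstack X_{B/IB}/B), $$
and unwinding the two identifications shows this is the natural base-change arrow. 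The Frobenius-twisted statement is proved verbatim with $R\Gamma_{\Prism}$ replaced by $R\Gamma_{\Prism\fr}$ throughout, using the twisted half of \cite[Proposition 2.2.17]{KubrakPrikhodko_pHodge}.

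The only genuinely nontrivial ingredient — and hence the main obstacle — is the compatibility lemma \Cref{sotimes_vs_cotimes}, a statement purely about solid linear algebra: that $(-)^\bt_{(p,I)}$ carries $-\cotimes_A B$ to $-\sotimes_{A^\bt}B^\bt$. The delicate point there, and the reason the $t$-exactness-up-to-a-shift hypothesis is needed, is that one must interchange a derived $(p,I)$-completion with a derived tensor product, which is only legitimate once the tensor product has bounded cohomological amplitude so that the relevant $\lim^1$-phenomena are controlled. Granting \Cref{sotimes_vs_cotimes}, everything else above is a formal unwinding of the definitions of $(-)^\bt$, $\sotimes$, $\cotimes$ and of solid prismatic cohomology.
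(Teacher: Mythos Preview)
Your proposal is correct and follows essentially the same approach as the paper's own proof: apply \Cref{sotimes_vs_cotimes} to identify the solid tensor product with the solid $(p,I)$-completion of the $(p,I)$-completed tensor product, then invoke the discrete base change \cite[Proposition 2.2.17]{KubrakPrikhodko_pHodge}, and repeat for the twisted version. Your additional commentary on why the $t$-exactness-up-to-a-shift hypothesis is needed in \Cref{sotimes_vs_cotimes} is accurate and helpful, but the logical skeleton is identical to the paper's.
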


\begin{prop}[Crystalline comparison]\label{prop:crystalline comparison}
	 If $\mstack X$ is smooth and $I = (p)$ then there is a natural equivalence
	$$R\Gamma_{\Prism^{(1)}}^\bt(\mstack X/A)\simeq R\Gamma_\crys^\bt(\mstack X/A).$$
\end{prop}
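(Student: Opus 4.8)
The plan is to deduce the proposition from the already-established \emph{discrete} crystalline comparison for prismatic cohomology by applying solid $(p)$-completion; once the definitions are unwound the argument is purely formal bookkeeping of completion functors. First I would identify the two sides. Since $I=(p)$ we have by definition $R\Gamma_{\Prism^{(1)}}^\bt(\mstack X/A)=R\Gamma_{\Prism^{(1)}}(\mstack X/A)^\bt_{(p)}$, and since $T=\Spf A$ is a $p$-adic formal scheme the remark following \Cref{def_cond_crys} gives $R\Gamma_\crys^\bt(\mstack X/A)\simeq R\Gamma_\crys(\mstack X/\Spf A)^\bt_p$, where $R\Gamma_\crys(\mstack X/\Spf A):=\prolim[n]R\Gamma_\crys(\mstack X/T_n)$ is the ordinary crystalline cohomology of $\mstack X_{A/p}$ relative to the $p$-adic base $\Spf A$ (endowed with the canonical divided powers on $(p)$, which exist since $p^n/n!\in\mathbb Z_p\subset A$ for all $n$). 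So it is enough to produce a natural equivalence $R\Gamma_{\Prism^{(1)}}(\mstack X/A)^\bt_{(p)}\simeq R\Gamma_\crys(\mstack X/\Spf A)^\bt_p$.

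By the remark in \Cref{constr:blacktriangle}, the functor $(-)^\bt_{(p)}$ factors through the ordinary derived $(p)$-completion $(-)^\wedge_{(p)}$ on $D(\Mod_A)$; hence it suffices to give a natural equivalence $R\Gamma_{\Prism^{(1)}}(\mstack X/A)^\wedge_{(p)}\simeq R\Gamma_\crys(\mstack X/\Spf A)^\wedge_{(p)}$ in $D(\Mod_A)$ and then apply $(-)^\bt_{(p)}$ functorially. This is the crystalline comparison for prismatic cohomology over the crystalline prism $(A,(p))$: on smooth affine $A/p$-schemes it is \cite[Proposition 2.5.7]{KubrakPrikhodko_pHodge} (the instance $(A,I)=(\mf S,E(u))\to(W(k),(p))$ being \cite[Remark 2.5.10]{KubrakPrikhodko_pHodge}), and since both $R\Gamma_{\Prism^{(1)}}(-/A)$ and $R\Gamma_\crys(-/\Spf A)$ are right Kan extended from smooth affine schemes — the former by \cite[Definition 2.2.1]{KubrakPrikhodko_pHodge}, the latter because each $R\Gamma_\crys(-/T_n)$ is (\Cref{def_cond_crys}) and right Kan extension commutes with the limit $\prolim[n]$ — it propagates to every smooth Artin $A/p$-stack $\mstack X$, just as such comparisons are used in the proofs of \Cref{cor:finiteness of crystalline} and \Cref{cor:BK prismatic cohomology d-coherent}. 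Finally, both sides of this equivalence are already derived $(p)$-complete — the prismatic side because prismatic cohomology over a bounded prism is $(p,I)=(p)$-complete, the crystalline side as the inverse limit of the $R\Gamma_\crys(\mstack X/T_n)$, whose transition system is $(p)$-adic — so $(-)^\wedge_{(p)}$, and therefore $(-)^\bt_{(p)}$, leave it unchanged. Applying $(-)^\bt_{(p)}$ then yields the asserted natural equivalence (which is moreover compatible with the Frobenius endomorphisms, since the discrete crystalline comparison is).

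The main point requiring care is not any computation but aligning the three avatars of ``$p$-completion'' that appear — derived $(p)$-completion in $D(\Mod_A)$, the defining inverse limit over the PD-neighborhoods $T_n$, and solid $(p)$-completion in $D(\Solid)$ — and making sure the discrete crystalline comparison may be regarded as a natural transformation of $\PStk_{A/p}^\op$-indexed functors \emph{before} passing to the condensed setting; these are precisely the points settled by the structural observations in \Cref{constr:blacktriangle}, by the remark after \Cref{def_cond_crys}, and by the right-Kan-extension formalism recalled above. Everything else is formal.
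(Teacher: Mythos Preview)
Your proposal is correct and follows exactly the paper's approach: the paper's proof is the one-liner ``this follows from the crystalline comparison $R\Gamma_{\Prism^{(1)}}(\mstack X/A)\simeq R\Gamma_\crys(\mstack X/A)^\wedge_p$ (\cite[Proposition 2.5.7]{KubrakPrikhodko_pHodge}) by applying $(-)^\bt$,'' and you have simply unpacked the bookkeeping behind that sentence.
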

\begin{proof}
	This follows from the crystalline comparison $R\Gamma_{\Prism^{(1)}}(\mstack X/A)\simeq R\Gamma_\crys(\mstack X/A)^\wedge_p$ (\cite[Proposition 2.5.7]{KubrakPrikhodko_pHodge}) by applying $(-)^\bt$.
\end{proof}

\begin{rem}
One can also "topologize" the \'etale cohomology of a stack and (formally) upgrade the \'etale comparison of prismatic and \'etale cohomology to condensed setting, but we won't need and so don't discuss this here.
\end{rem}

\begin{notation}
Below, we will consider instances of \Cref{constr:blacktriangle} in the following situations:
\begin{itemize}
	\item $W(k)$,  $\mf S_\mmax$, $A_\crys$, $A_\mmax$ with $J=(p)$, and the corresponding $p$-localizations $K_0$,  $\mf S_\mmax[\frac{1}{p}]$, $B_\crys$, $B_\mmax$. We will denote the corresponding solid rings simply by $W(k)^\bt$,  $\mf S_\mmax^\bt$, $A_\crys^\bt$, $A_\mmax^\bt$ and $K_0^\bt$, $\mf S_\mmax^\bt[\frac{1}{p}]$, $B_\crys^\bt$, $B_\mmax^\bt$.
	\item $\mf S$, $\mf S\langle T\rangle\coloneqq \mf S[T]^\wedge_{(p,u)}$, $\Ainf$ with $J=(p,u)$ and $(p,[\pi^\flat])$ correspondingly. Similarly, we denote the corresponding solid rings by $\mf S^\bt$, $\mf S\langle T\rangle^\bt$, $A_\innf^\bt$.
\end{itemize}
\end{notation}
Below, we will also need the following remarks:
\begin{rem}[$-\sotimes_{\mf S^\bt}\mf S\langle T\rangle^\bt$ as completed direct sum]\label{rem:solid tensor product with Tate algebra}
	Note that by construction, the underlying $\mf S^\bt$-module of $\mf S\langle T\rangle^\bt$ is given by the $(p,u)$-completed direct sum $\widehat\oplus_{i\in \mbb N} \mf S^\bt\cdot T^i$. So, for any derived $(p,u)$-complete $\mf S^\bt$-module $M\in D(\mf S^\bt)_{(p,u)-\comp}$ one has 
	$$
	M\sotimes_{\mf S^\bt}\mf S\langle T\rangle^\bt\simeq \widehat\bigoplus_{i\in \mbb N} M \cdot T^i
	$$
	as $\mf S^\bt$-modules. Indeed, there is a natural map $\oplus_{i\in \mbb N} M \cdot T^i\ra M\sotimes_{\mf S^\bt}\mf S\langle T\rangle^\bt$, then by \Cref{sotimes_vs_cotimes} the right hand side is $(p,u)$-complete and so the map factors through the completion $\widehat\bigoplus_{i\in \mbb N} M \cdot T^i$. Moreover it is an isomorphism modulo $(p,I)$  since tensor product commutes with direct sums.
\end{rem}

\begin{rem}\label{rem:ses for S_max}
	 Also, one has a strict exact sequence 
$$
0\tto \mf S\langle T\rangle \xymatrix{\ar[r]^{\cdot (pT-u^e)}&}  \mf S\langle T\rangle \tto \mf S_\mmax \tto 0
$$ 
of topological $\mf S$-modules where the topology is $(p,u)$-adic on $\mf S\langle T\rangle$ and $\mf S$, $p$-adic on $\mf S_\mmax$ and the map on the left is given by multiplication by $pT-u^e$. This then gives a short exact sequence 
$$
0\tto \mf S\langle T\rangle^\bt \xymatrix{\ar[r]^{\cdot (pT-u^e)}&}  \mf S\langle T\rangle^\bt \tto \mf S_\mmax^\bt \tto 0
$$
of solid $\mf S^\bt$-modules. Indeed, terms in the sequence are represented by the corresponding topological groups. Then the first arrow is an embedding since the functor $V\ra \ul{V}$ is left exact and to see that the second arrow is a surjection one can use \cite[Lemma 3.1]{Griga}.
\end{rem}

\paragraph{Proof of the comparison.}

Having introduced the necessary notations we can now formulate a more precise form of \Cref{crys_comp_Bmax} which we actually prove:
\begin{prop}\label{crys_comp_Bmax_solid}
Let $\mstack X$ be a smooth $(d+1)$-de Rham proper stack over $\mathcal O_K$. Then there is a $(G_K,\phi)$-equivariant equivalence
$$\left(\tau^{\le d} R\Gamma_{\Prism\fr}^\bt(\mstack X_{\mc O_C}/\Ainf)\right)\sotimes_{A_\innf^\bt} B_\mmax^\bt \simeq \left(\tau^{\le d} R\Gamma_\crys^\bt(\mstack X_k/ W(k))\right)\sotimes_{W(k)^\bt} B_\mmax^\bt.$$

\begin{proof}
Recall that $B_\crys^\bt\coloneqq A_\crys^\bt[\frac{1}{p}]$ as solid ring. By base change for the map of prisms $(\Ainf,\xi)\ra (A_\crys,p)$ and crystalline comparison (Propositions \ref{prop:case change for prismatic cohomology} and \ref{prop:crystalline comparison}), after inverting $p$ we get a $(G_K,\phi)$-equivariant equivalence
$$
R\Gamma_{\Prism\fr}^\bt(\mstack X_{\mc O_C}/\Ainf)\sotimes_{A_\innf^\bt} B_\crys^\bt\simeq R\Gamma_\crys^\bt(\mstack X_{\mathcal O_{C}/p} / A_\crys)[\tfrac 1 p].
$$
Combining this with the Berthelot-Ogus isomorphism (\Cref{crys_O_C_W_k}) we obtain a $(G_K,\phi)$-equivariant equivalence
\begin{equation*}
R\Gamma_{\Prism\fr}^\bt(\mstack X_{\mc O_C}/\Ainf)\sotimes_{A_\innf^\bt} B_\crys^\bt \simeq  R\Gamma_\crys^\bt(\mstack X_k/ W(k)) \sotimes_{W(k)^\bt} B_\crys^\bt.
\end{equation*}
Finally, by tensoring it further with $B_\mmax^\bt$ over $B_\crys^\bt$ we obtain a $(G_K,\phi)$-equivariant equivalence
\begin{equation}
R\Gamma_{\Prism\fr}^\bt(\mstack X_{\mc O_C}/\Ainf)\sotimes_{A_\innf^\bt} B_\mmax^\bt \simeq R\Gamma_\crys^\bt(\mstack X_k/ W(k)) \sotimes_{W(k)^\bt} B_\mmax^\bt.
\end{equation}
Thus to deduce the assertion it would be enough to show that the natural maps
\begin{gather*}
\left(\tau^{\le d} R\Gamma_\crys^\bt(\mstack X_k/ W(k))\right)\sotimes_{W(k)^\bt} B_\mmax^\bt \tto \tau^{\le d} \left(R\Gamma_\crys^\bt(\mstack X_k/ W(k)) \sotimes_{W(k)^\bt} B_\mmax^\bt\right),\\
\left(\tau^{\le d} R\Gamma_{\Prism\fr}^\bt(\mstack X_{\mc O_C}/\Ainf)\right)\sotimes_{A_\innf^\bt} B_\mmax^\bt \tto \tau^{\le d} \left(R\Gamma_{\Prism\fr}^\bt(\mstack X_{\mc O_C}/\Ainf)\sotimes_{A_\innf^\bt} B_\mmax^\bt\right)
\end{gather*}
are equivalences.

To see that the first arrow is an equivalence note that $A_\mmax$ is $p$-completely free as a $W(k)$-module. Indeed, pick a $k$-basis ${\{x_s\}}_{s\in S}$ of $A_\mmax/p$, lifting it to $A_\mmax$ we get a map $\oplus_{s\in S} W(k)\cdot x_s\ra A_\mmax$ which becomes an isomorphism after $p$-completion. Consequently, $A_\mmax^\bt\simeq \widehat \oplus_{s\in S}W(k)^\bt$ as a $W(k)^\bt$-module. Then (by a similar argument to \Cref{rem:solid tensor product with Tate algebra}) the tensor product functor $-\sotimes_{W(k)^\bt} A_\mmax^\bt$ is given by $p$-completed direct sum when restricted to $p$-complete derived category $D(W(k)^\bt)_{p-\comp}$. Thus it is $t$-exact when restricted further to the essential image of the functor $(-)^\bt\coloneqq (-)^\bt_p\colon D(\Mod_{W(k)})\ra D(W(k)^\bt)_{p-\comp}$ by \Cref{prop_complited_direct_sums_t_exact_in_Solid}.  Since $B_\mmax^\bt\coloneqq A_\mmax^\bt[\frac{1}{p}]$ is a filtered colimit of free $A_\mmax^\bt$-modules, it is flat over $A_\mmax^\bt$, hence the composite functor $-\sotimes_{W(k)^\bt} B_\mmax^\bt=(-\sotimes_{W(k)^\bt} A_\mmax^\bt)\otimes_{A_\mmax^\bt} B_\mmax^\bt$ is $t$-exact on the essential image of $(-)^\bt$. Consequently, first arrow is an equivalence.

The second arrow requires a more involved analysis. Noting that $\Ainf$ is a $(p,u)$-completely free $\mf S$-module (again, by picking a basis of $\Ainf/(p,u)$ over $k\simeq \mf S/(p,u)$), similarly to the discussion above we get that the tensor product $-\otimes_{\mf S^\bt}A_{\innf}^\bt$ is $t$-exact when restricted to the essential image of solid $(p,u)$-completion functor $(-)^\bt\coloneqq (-)^\bt_{(p,u)}\colon D(\Mod_{\mf S})\ra D(\mf S^\bt)_{(p,u)-\comp}$. Thus, factorizing $\mf S^\bt\ra B_\mmax^\bt$ as $\mf S^\bt \ra A_\innf^\bt \ra B_\mmax^\bt$, using the above $t$-exactness and prismatic base change for $(\mf S,E(u))\ra (\Ainf,\xi)$ we can identify the second arrow with 
$$
\left(\tau^{\le d} R\Gamma_{\Prism\fr}^\bt(\mstack X/\mf S)\right)\sotimes_{\mf S^\bt} B_\mmax^\bt \tto \tau^{\le d} \left(R\Gamma_\Prism^\bt(\mstack X/\mf S)\sotimes_{\mf S^\bt} B_\mmax^\bt\right).
$$

Finally, note that $A_\mmax$ is $p$-completely free as a $\mf S_\mmax$-module. Indeed, $\mf S_\mmax/p\simeq k[u,t]/u^e$, where $t \coloneqq \frac{u}{p} \mod p$. Similarly, $A_\mmax/p\simeq \mc O_C^\flat[t]/[\pi^\flat]^e$, and the map $\mf S_\mmax/p\ra A_\mmax/p$ sends $u$ to $\pi^\flat$. A basis of $\mc O_C^\flat/[\pi^\flat]$ over $k$ then gives a basis of $A_\mmax/p$ as an $\mf S_\mmax/p$-module; its lift to $A_\mmax$ then provides a $p$-complete basis over $\mf S_\mmax$. Hence, as above, we deduce that the functor $-\sotimes_{\mf S_\mmax^\bt}\!\!\!\! B_\mmax^\bt$ is $t$-exact when restricted to the essential image of solid $p$-completion functor $(-)^\bt_p\colon D(\Mod_{\mf S_\mmax})\ra D(\mf S_\mmax^\bt)_{p-\comp}$. Thus, in the end it just remains to show that the natural map
$$\left(\tau^{\le d} R\Gamma_\Prism^\bt(\mstack X/\mathfrak S)\right)\sotimes_{\mathfrak S^\bt} \mf S_\mmax^\bt[\tfrac{1}{p}] \tto \tau^{\le d} \left(R\Gamma_\Prism^\bt(\mstack X/\mathfrak S)\sotimes_{\mathfrak S\bt} \mf S^\bt_\mmax [\tfrac{1}{p}]\right)$$
is an equivalence. 

Consider the fiber sequence
$$
\tau^{\le d}R\Gamma_{\Prism\fr}(\mstack X/\mathfrak S)\ra R\Gamma_{\Prism\fr}(\mstack X/\mathfrak S)\ra \tau^{\ge d+1}R\Gamma_{\Prism\fr}(\mstack X/\mathfrak S).
$$
Applying the functor $M\mapsto M^\bt\sotimes_{\mf S^\bt}\mf S^\bt_\mmax[\frac{1}{p}]$, and noting that $(\tau^{\le d}R\Gamma_{\Prism\fr}(\mstack X/\mathfrak S))^\bt\simeq \tau^{\le d}R\Gamma_{\Prism\fr}^\bt(\mstack X/\mathfrak S)$ by \Cref{lem:solid completion is right t-exact} we get that it's enough to show that
$$
\tau^{\le d}(\tau^{\ge d+1}R\Gamma_{\Prism\fr}(\mstack X/\mathfrak S))^\bt\sotimes_{\mf S^\bt}\mf S^\bt_\mmax[\tfrac{1}{p}]\simeq 0
$$
This follows from \Cref{lem:key lemma tensor with Bmax} below applied to $M\coloneqq \tau^{\ge d+1}R\Gamma_{\Prism\fr}(\mstack X/\mathfrak S)[d+1]$. It satisfies the conditions of the lemma by \Cref{lem:condition on the cohomology} and \Cref{rem:E(u)-torsion in d+1-st prismatic cohomology}.
\end{proof}
\end{prop}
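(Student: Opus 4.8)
The strategy I would follow is to first prove the asserted equivalence \emph{without} the truncation $\tau^{\le d}$, and then transfer the truncation across the solid completed tensor products on either side by a careful $t$-exactness analysis. For the untruncated comparison I would combine base change for the morphism of bounded prisms $(\Ainf,\xi)\to(A_\crys,p)$ (\Cref{prop:case change for prismatic cohomology}) with the solid crystalline comparison (\Cref{prop:crystalline comparison}) to obtain, after inverting $p$, a $(G_K,\phi)$-equivariant equivalence $\RG_{\Prism^{(1)}}^\bt(\mstack X_{\mc O_C}/\Ainf)\sotimes_{A_\innf^\bt}B_\crys^\bt\simeq \RG_\crys^\bt(\mstack X_{\mc O_C/p}/A_\crys)[\tfrac1p]$; applying the Berthelot--Ogus isomorphism (\Cref{crys_O_C_W_k}) identifies the right-hand side with $\RG_\crys^\bt(\mstack X_k/W(k))\sotimes_{W(k)^\bt}B_\crys^\bt$, and base change along $B_\crys^\bt\to B_\mmax^\bt$ then yields the untruncated equivalence over $B_\mmax^\bt$. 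The problem thus reduces to showing that the comparison maps $(\tau^{\le d}\RG_\crys^\bt(\mstack X_k/W(k)))\sotimes_{W(k)^\bt}B_\mmax^\bt\to\tau^{\le d}(\RG_\crys^\bt(\mstack X_k/W(k))\sotimes_{W(k)^\bt}B_\mmax^\bt)$ and its prismatic analogue are equivalences, i.e.\ that $-\sotimes B_\mmax^\bt$ commutes with $\tau^{\le d}$ on the relevant objects.

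The crystalline side is comparatively soft: since $A_\mmax$ is $p$-completely free over $W(k)$ (lift a $k$-basis of $A_\mmax/p$), the functor $-\sotimes_{W(k)^\bt}A_\mmax^\bt$ is a $p$-completed direct sum on $p$-complete objects, hence $t$-exact on the essential image of solid $p$-completion by \Cref{prop:completed direct sum preserves t-structure}; as $B_\mmax^\bt=A_\mmax^\bt[\tfrac1p]$ is a filtered colimit of free $A_\mmax^\bt$-modules and hence flat, $-\sotimes_{W(k)^\bt}B_\mmax^\bt$ is likewise $t$-exact there, so it commutes with $\tau^{\le d}$. For the prismatic side I would first use that $\Ainf$ is $(p,u)$-completely free over $\mf S$, together with prismatic base change for $(\mf S,E(u))\to(\Ainf,\xi)$, to replace $A_\innf^\bt$ by $\mf S^\bt$ and reduce to the corresponding statement over $\mf S$; then, exploiting that $A_\mmax$ is $p$-completely free over $\mf S_\mmax$ --- visible from $\mf S_\mmax/p\simeq k[u,t]/u^e$ and $A_\mmax/p\simeq\mc O_C^\flat[t]/[\pi^\flat]^e$ with $u\mapsto\pi^\flat$ --- one further reduces to proving that $(\tau^{\le d}\RG_\Prism^\bt(\mstack X/\mf S))\sotimes_{\mf S^\bt}\mf S_\mmax^\bt[\tfrac1p]\to\tau^{\le d}(\RG_\Prism^\bt(\mstack X/\mf S)\sotimes_{\mf S^\bt}\mf S_\mmax^\bt[\tfrac1p])$ is an equivalence.

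To finish, I would apply $(-)^\bt\sotimes_{\mf S^\bt}\mf S_\mmax^\bt[\tfrac1p]$ to the fiber sequence $\tau^{\le d}\RG_{\Prism^{(1)}}(\mstack X/\mf S)\to \RG_{\Prism^{(1)}}(\mstack X/\mf S)\to\tau^{\ge d+1}\RG_{\Prism^{(1)}}(\mstack X/\mf S)$ and use right $t$-exactness of solid completion (\Cref{lem:solid completion is right t-exact}) to identify $(\tau^{\le d}\RG_{\Prism^{(1)}}(\mstack X/\mf S))^\bt$ with $\tau^{\le d}\RG_{\Prism^{(1)}}^\bt(\mstack X/\mf S)$; the claim then collapses to the vanishing
\[
\tau^{\le d}\bigl((\tau^{\ge d+1}\RG_{\Prism^{(1)}}(\mstack X/\mf S))^\bt\sotimes_{\mf S^\bt}\mf S_\mmax^\bt[\tfrac1p]\bigr)\simeq 0,
\]
which I expect to be the main obstacle. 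This is exactly \Cref{lem:key lemma tensor with Bmax} applied to $M\coloneqq\tau^{\ge d+1}\RG_{\Prism^{(1)}}(\mstack X/\mf S)[d+1]$, whose hypotheses --- finiteness of $\tau^{<0}[M/(p,u)]$ and vanishing of $M[\tfrac1p][E(u)]$ --- hold by \Cref{lem:condition on the cohomology} and \Cref{rem:E(u)-torsion in d+1-st prismatic cohomology}. The whole difficulty is that completed tensor products fail to be $t$-exact precisely in the boundary degrees $d$ and $d-1$; the condensed framework is what makes the argument work, since there $(p,u)$-completed direct sums of prodiscrete complete modules are $t$-exact, and the finiteness of the top two cohomology groups of the mod-$(p,u)$ reduction of the non-coherent tail of prismatic cohomology is the input that lets one shuttle the statement between the condensed and classical settings.
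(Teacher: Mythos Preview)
Your proposal is correct and follows essentially the same route as the paper's proof: establish the untruncated comparison via prismatic base change, crystalline comparison, and Berthelot--Ogus, then verify that $-\sotimes B_\mmax^\bt$ commutes with $\tau^{\le d}$ on each side by the same chain of reductions (completely-free bases, reduction to $\mf S$ and then to $\mf S_\mmax$, and finally \Cref{lem:key lemma tensor with Bmax}). One small point: for the $t$-exactness of the $p$-completed direct sum on the crystalline side you should invoke \Cref{prop_complited_direct_sums_t_exact_in_Solid} (the solid statement) rather than \Cref{prop:completed direct sum preserves t-structure}, since the argument takes place in $D(\Solid)$.
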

\begin{lem}\label{lem:key lemma tensor with Bmax}
Let $M \in D^{\ge 0}(\Mod_{\mathfrak S})_{(p,u)-\comp}$ be a derived $(p,u)$-complete $\mathfrak S$-module and let $M^\bt\coloneqq M_{(p,u)}^{\bt}\in D(\mf S^\bt)$ denote its solid $(p, u)$-completion. Then, if $\tau^{<0}[M/(p,u)]\in \Coh(k)$ and $E(u)$-torsion $(H^0(M)[\frac{1}{p}])[E(u)]=0$, one has
$$\tau^{<0}\left(M^{\bt} \sotimes_{\mathfrak S} \mathfrak S_\mmax[\tfrac{1}{p}]\right) \simeq 0.$$

\begin{proof} First, note that by our assumption on $M$ and  \Cref{lem_discretness_of_cond_completion} we have that $M^\bt\in D(\Solid)^{\ge 0}$, or, in other words, $\tau^{<0}({M^\bt})\simeq 0$. Thus, it remains to show that the same holds after tensoring with $\mf{S}_\mmax^\bt[\tfrac{1}{p}]$. By \Cref{rem:ses for S_max} there is a short exact sequence of condensed $\mf S$-modules
	$$
	0\arr \mf S\langle T\rangle^\bt \xymatrix{\ar[r]^{\cdot (pT-u^e)}&} \mf S\langle T\rangle^\bt  \arr \mf S^\bt_\mmax \arr 0.
	$$ 
By \Cref{rem:solid tensor product with Tate algebra} and \Cref{prop:completed direct sum preserves t-structure} the functor $-\sotimes_{\mathfrak S} \mf S\langle T\rangle$ is $t$-exact on the essential image of $(-)^\bt$. Thus from the above short exact sequence we get that $$
	\tau^{<-1}\left({M^\bt} \sotimes_{\mathfrak S^\bt} \mathfrak S_\mmax^\bt\right) \simeq 0.
	$$
It remains to show that $H^{-1}({M^\bt} \sotimes_{\mathfrak S^\bt} \mathfrak S_\mmax^\bt)[\tfrac{1}{p}]\simeq 0$.

Let $N\coloneqq H^{-1}({M^\bt} \sotimes_{\mathfrak S^\bt} \mathfrak S_\mmax^\bt)$. We claim that $N$ is a finitely presented $\mf S_\mmax^\bt$-module.  Indeed, first note that $N$ is derived $p$-complete by \Cref{sotimes_vs_cotimes}. Thus, by \Cref{lem_fp_condensed} it is enough to check that the cohomology of $[N/p]$ are finitely generated modules over $\mf S_\mmax^\bt/p\simeq \ul{k[u,t]/u^e}$ (here $t$ is the class of $\tfrac{u}{p}$ modulo $p$, and the topology on $k[u,t]/u^e$ is trivial). We have a fiber sequence 
\begin{equation}\label{eq:fiber sequence for H^-1 Smax}
N[1]\tto {M^\bt} \sotimes_{\mathfrak S^\bt} \mathfrak S_\mmax^\bt \tto \tau^{\le 0}({M^\bt} \sotimes_{\mathfrak S^\bt} \mathfrak S_\mmax^\bt)	
\end{equation}
in $\Mod_{\mathfrak S}(D(\Solid))$-modules, which gives a fiber sequence 
$$
[N/p][1]\tto [({M^\bt} \sotimes_{\mathfrak S^\bt} \mathfrak S_\mmax^\bt)/p] \tto [\tau^{\le 0}({M^\bt} \sotimes_{\mathfrak S^\bt} \mathfrak S_\mmax^\bt)/p]
$$
The complex $[N/p]$ has at most two cohomology: $H^0$ and $H^{-1}$. From the fiber sequence above, using that $[\tau^{\le 0}({M^\bt} \sotimes_{\mathfrak S^\bt} \mathfrak S_\mmax^\bt)/p]\in D^{\ge -1}(\Solid)$, we see that 
$$H^{-1}(N/p)\simeq H^{-2}(([{M^\bt} \sotimes_{\mathfrak S^\bt} \mathfrak S_\mmax^\bt)/p]) \quad \text{ and } \quad  H^{0}(N)\hookrightarrow H^{-1}([({M^\bt} \sotimes_{\mathfrak S^\bt} \mathfrak S_\mmax^\bt)/p]).$$
 Note that the map $\mf S\ra \mf S_{\mmax}/p$ is given by the composition of the projection $W(k[[u]])\surj k[[u]]\surj k[u]/u^e$ and the unique $k[u]/u^e$-algebra map $k[u]/u^e\ra k[t,u]/u^e$. Thus we can identify $$[({M^\bt} \sotimes_{\mathfrak S^\bt} \mathfrak S_\mmax^\bt)/p]\simeq \ul{[M/(p,u^e)]\otimes_{{k[u]/u^e}} {k[u,t]/u^e}}$$
 and, consequently,
 $$\tau^{<0}([({M^\bt} \sotimes_{\mathfrak S} \mathfrak S_\mmax)/p])\simeq \ul{\tau^{<0}\left([M/(p,u^e)]\otimes_{{k[u]/u^e}} {k[u,t]/u^e}\right)}.$$ 
Since $\tau^{<0}[M/(p,u)]\in \Coh(k)$ by our assumptions on $M$ and $[M/(p,u^e)]$ has a $k$-step filtration with associated graded pieces given by $[M/(p,u)]$, one sees that $\tau^{<0}[M/(p,u^e)]\in \Coh(k[u]/u^e)$. Since $k[u,t]/u^e$ is flat over $k[u]/u^e$ from this we get that both $H^{-1}(N/p)$ and $H^{0}(N/p)$ are finitely generated $\ul{k[t,u]}$-modules, and so $N$ itself is finitely presented.

By \Cref{lem_fp_condensed}, we then see that $N$ comes as the image of the functor $\Mod_{\mf S_\mmax}^{\fg}\ra\Mod_{\mf S_\mmax^\bt}(\Solid)$ given by $L\mapsto \ul{L}\otimes_{\ul{\mf S_\mmax}} \mf S_\mmax^\bt$ (where here $\mf S_\mmax$ and $L$ are considered with discrete topology). More explicitly, we have $N\simeq \ul{N(*)}\otimes_{\ul{\mf S_\mmax}} \mf S_\mmax^\bt$ where $*\in \Cond$ is the singleton. In particular, we get that to show that $N[\frac{1}{p}]=0$ it is enough to show that the discrete $\mf S_{\mmax}[\frac{1}{p}]$-module $N(*)[\frac{1}{p}]$ is 0. 

First, let us show that $N(*)[\frac{1}{p}]$ is locally free.
Note that $\mf S_{\mmax}[\frac{1}{p}]\simeq K_0\langle\frac{u^e}{p}\rangle$ is a ring of functions on affinoid space of dimension 1 and so is a principal ideal domain. Thus it will be enough to show that the $\mf S_{\mmax}[\frac{1}{p}]$-module $N(*)[\frac{1}{p}]$ is $I$-torsion free for any maximal ideal $I\subset \mf S_{\mmax}[\frac{1}{p}]$. Since $I$ is automatically principal, the $I$-torsion $N(*)[\frac{1}{p}][I]$ is given by $H^{-1}([N(*)[\frac{1}{p}]/I])$ and so it will be enough to show that the latter group is 0. Any maximal ideal of $\mf S_{\mmax}[\frac{1}{p}]$ is given by the kernel of a surjective homomorphism $\mf S_\mmax\surj L$ onto some finite extension $L/K_0$ that sends $u\in \mf S_\mmax$ to some element $x\in L$ such that $x^e/p\in \mc O_L$. In particular, $x$ itself should lie in $\mc O_L$ and so $I$ is generated by the minimal polynomial $f_x\in W(k)[u]$ of $x$ over $W(k)$. Note that this way $f_x$ in fact lies in $\mf S$ and that also $\mf S[\frac{1}{p}]/(f_x)\simeq L$. Thus the functor $-\sotimes_{\mf S^\bt} \mf S_\mmax^\bt[\tfrac{1}{p}]/(f_x)$ can be rewritten as $-\sotimes_{\mf S^\bt}\mf S^\bt[\tfrac{1}{p}]/(f_x)$; this way from the fiber sequence (\ref{eq:fiber sequence for H^-1 Smax}) we get a fiber sequence 
\begin{equation}\label{eq:fiber sequence mod f_x}
[N[\tfrac{1}{p}]/f_x][1]\tto [{M^\bt}[\tfrac{1}{p}] /f_x] \tto [(\tau^{\le 0}{M^\bt} \sotimes_{\mathfrak S^{\bt}} \mathfrak S_\mmax^\bt[\tfrac{1}{p}])/f_x].
\end{equation}
Neither of the last two terms have $H^{-2}$, and so we get that $H^{-1}([N[\tfrac{1}{p}]/I])=0$. Consequently, $$H^{-1}([N(*)[\tfrac{1}{p}]/I])=H^{-1}([N[\tfrac{1}{p}]/I])(*)=0.$$

Thus, $N(*)[\frac{1}{p}]$ is locally free and (since $\Spec \mf S_{\mmax}[\frac{1}{p}]$ is obviously connected) to show that it is 0 it is enough to show that so is its reduction $N(*)[\frac{1}{p}]/I$ modulo any ideal $I\nsubseteq R$ is 0. Note that $\mf S_\mmax/E(u)\simeq \mc O_K$ via the map sending $u$ to $\pi$; since $e$ is the ramification index of $K$ the element $\pi^e/p\in \mc O_K$, and so the map above is well defined. Using the fiber sequence (\ref{eq:fiber sequence mod f_x}) for $f_x=E(u)$ we also get that $N[\frac{1}{p}]/E(u)\simeq H^0([N[\frac{1}{p}]/E(u)])$ embeds into $H^{-1}({M^\bt}[\tfrac{1}{p}] /E(u))$. Thus, since evaluation at $*\in (*)_\proet$ is $t$-exact, to check that $N(*)[\frac{1}{p}]/E(u)$ is 0 it is enough to show that $H^{-1}({M^\bt}[\tfrac{1}{p}] /E(u))(*)$ is. But since the original $M$ was $(p,u)$-complete by \Cref{rem:solid completion on the point} we have 
$$
{M^\bt}[\tfrac{1}{p}] /E(u)(*)\simeq M[\tfrac{1}{p}]/E(u),
$$ and so $H^{-1}$ in question is exactly given by $(H^0(M)[\tfrac{1}{p}])[E(u)]$, which is 0 by our assumptions on $M$.
\end{proof}
\end{lem}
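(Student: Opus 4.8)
The plan is to first show that the solid completion $M^\bt$ is already connective, then to reduce modulo $\mathfrak S_\mmax[\tfrac{1}{p}]$ in two steps leaving at most one surviving cohomology group, and finally to annihilate that group by descending from the condensed world back to the discrete ring $\mathfrak S_\mmax[\tfrac{1}{p}]$. For the first point I would invoke \Cref{lem_discretness_of_cond_completion}: the hypothesis $\tau^{<0}[M/(p,u)]\in\Coh(k)$ is exactly the finiteness condition ensuring that the solid $(p,u)$-completion $M^\bt$ lies in $D(\Solid)^{\ge 0}$, so the only thing to bound is the negative cohomology after $-\sotimes_{\mathfrak S^\bt}\mathfrak S_\mmax^\bt[\tfrac{1}{p}]$. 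Applying $M^\bt\sotimes_{\mathfrak S^\bt}(-)$ to the strict short exact sequence
$$0\to\mathfrak S\langle T\rangle^\bt\xrightarrow{\;\cdot(pT-u^e)\;}\mathfrak S\langle T\rangle^\bt\to\mathfrak S_\mmax^\bt\to 0$$
from \Cref{rem:ses for S_max}, and using that $-\sotimes_{\mathfrak S^\bt}\mathfrak S\langle T\rangle^\bt$ is $t$-exact on the essential image of $(-)^\bt$ (\Cref{rem:solid tensor product with Tate algebra} together with \Cref{prop:completed direct sum preserves t-structure}), I would conclude $\tau^{<-1}(M^\bt\sotimes_{\mathfrak S^\bt}\mathfrak S_\mmax^\bt)\simeq 0$. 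Everything then reduces to showing that $N\coloneqq H^{-1}(M^\bt\sotimes_{\mathfrak S^\bt}\mathfrak S_\mmax^\bt)$ vanishes after inverting $p$.

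The key point is that $N$ is finitely presented over $\mathfrak S_\mmax^\bt$. Since $N$ is derived $p$-complete (\Cref{sotimes_vs_cotimes}), by \Cref{lem_fp_condensed} it suffices to see that the cohomology of $[N/p]$ is finitely generated over $\mathfrak S_\mmax^\bt/p\simeq\ul{k[u,t]/u^e}$. Reducing the fiber sequence $N[1]\to M^\bt\sotimes_{\mathfrak S^\bt}\mathfrak S_\mmax^\bt\to\tau^{\le 0}(M^\bt\sotimes_{\mathfrak S^\bt}\mathfrak S_\mmax^\bt)$ modulo $p$ exhibits $H^{-1}(N/p)$ and $H^0(N)$ inside $\tau^{<0}\big([M/(p,u^e)]\otimes_{k[u]/u^e}k[u,t]/u^e\big)$; the $k$-step filtration of $[M/(p,u^e)]$ with graded pieces $[M/(p,u)]$ shows $\tau^{<0}[M/(p,u^e)]\in\Coh(k[u]/u^e)$, and flatness of $k[u,t]/u^e$ over $k[u]/u^e$ finishes the finiteness check. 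Then \Cref{lem_fp_condensed} gives $N\simeq\ul{N(*)}\otimes_{\ul{\mathfrak S_\mmax}}\mathfrak S_\mmax^\bt$, so it suffices to prove that the discrete $\mathfrak S_\mmax[\tfrac{1}{p}]$-module $N(*)[\tfrac{1}{p}]$ is zero.

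Here $\mathfrak S_\mmax[\tfrac{1}{p}]\simeq K_0\langle\tfrac{u^e}{p}\rangle$ is a principal ideal domain, so I would first check torsion-freeness: for a maximal ideal generated by $f_x\in\mathfrak S$, rewrite $-\sotimes_{\mathfrak S^\bt}\mathfrak S_\mmax^\bt[\tfrac{1}{p}]/(f_x)$ as $-\sotimes_{\mathfrak S^\bt}\mathfrak S^\bt[\tfrac{1}{p}]/(f_x)$ and observe that in the reduced fiber sequence neither outer term has $H^{-2}$, so the $f_x$-torsion of $N(*)[\tfrac{1}{p}]$ vanishes. A locally free module over the connected scheme $\Spec\mathfrak S_\mmax[\tfrac{1}{p}]$ is zero once one reduction is, so I would take the ideal $(E(u))$ with $\mathfrak S_\mmax/E(u)\simeq\mathcal O_K$: the fiber sequence reduced modulo $E(u)$ embeds $N[\tfrac{1}{p}]/E(u)$ into $H^{-1}(M^\bt[\tfrac{1}{p}]/E(u))$, and evaluating at the point $*\in(*)_\proet$---where $M^\bt[\tfrac{1}{p}]/E(u)(*)\simeq M[\tfrac{1}{p}]/E(u)$ since $M$ is $(p,u)$-complete (\Cref{rem:solid completion on the point})---identifies the relevant group with $(H^0(M)[\tfrac{1}{p}])[E(u)]$, which is $0$ by hypothesis.

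I expect the main obstacle to be the finite-presentedness of $N$ in the second paragraph: one has to follow precisely how the single finiteness input $\tau^{<0}[M/(p,u)]\in\Coh(k)$ propagates through the solid completion and the base change to $\mathfrak S_\mmax^\bt$, being careful that operations such as $-\sotimes_{\mathfrak S^\bt}\mathfrak S\langle T\rangle^\bt$ are $t$-exact only on the essential image of $(-)^\bt$ and not on all of $D(\Solid)$. Once $N$ is known to be finitely presented the passage to the discrete world is formal, and exploiting that $\mathfrak S_\mmax[\tfrac{1}{p}]$ is a one-dimensional PID is what makes the concluding reduction modulo $E(u)$ legitimate.
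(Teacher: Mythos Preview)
Your proposal is correct and follows essentially the same route as the paper's proof: establish $M^\bt\in D(\Solid)^{\ge 0}$ via \Cref{lem_discretness_of_cond_completion}, use the resolution of $\mathfrak S_\mmax^\bt$ by $\mathfrak S\langle T\rangle^\bt$ together with $t$-exactness on the essential image of $(-)^\bt$ to reduce to studying $N=H^{-1}$, prove $N$ is finitely presented over $\mathfrak S_\mmax^\bt$ by controlling $[N/p]$ through the filtration on $[M/(p,u^e)]$, and then descend to the discrete PID $\mathfrak S_\mmax[\tfrac{1}{p}]$ where torsion-freeness at all maximal ideals plus vanishing modulo $E(u)$ finishes the argument. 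The emphasis you place on the finite-presentedness step as the delicate point is exactly right.
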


\noindent\textit{Proof of \Cref{crys_comp_Bmax}:} We will obtain the isomorphism in \Cref{crys_comp_Bmax} from the isomorphism in \Cref{crys_comp_Bmax_solid} by evalutiong the latter on the point. By \Cref{rem:condition on crystalline cohomology} we have that $M\coloneqq \tau^{\le d+1}\RG_\crys(\mstack X_k/W(k))[d+1]$ satisfies $\tau^{<0}([M/p])\in\Coh(k)$, and so $(\tau^{\le d+1}\RG_\crys(\mstack X_k/W(k)))_p^\bt\in D^{\ge d+1}(W(k)^\bt)$ by \Cref{lem_discretness_of_cond_completion}. Since $(-)^\bt$ is right $t$-exact, from this we get that $\tau^{\le d}(\RG_\crys^\bt(\mstack X_k/W(k)))\simeq (\tau^{\le d}\RG_\crys(\mstack X_k/W(k)))^\bt$. Thus, by \Cref{sotimes_vs_cotimes} we have 
$$
\tau^{\le d}(\RG_\crys^\bt(\mstack X_k/W(k)))\otimes_{W(k)^\bt}B_\mmax^\bt\ism \left(\tau^{\le d}\RG_\crys(\mstack X_k/W(k))\cotimes_{W(k)}A_\mmax\right)^\bt[\tfrac{1}{p}],
$$
where the tensor product on the right is $p$-completed. Consequently, we have a natural equivalence
$$
\left(\tau^{\le d}(\RG_\crys^\bt(\mstack X_k/W(k)))\otimes_{W(k)^\bt}B_\mmax^\bt\right)(*)\ism \left(\tau^{\le d}\RG_\crys(\mstack X_k/W(k))\cotimes_{W(k)}A_\mmax\right)[\tfrac{1}{p}]
$$
for the value on the point $*\in (*)_\proet$.
 Since by \Cref{cor:finiteness of crystalline} we have $\RG_\crys(\mstack X_k/W(k))\in \Coh(k)$, the tensor product $\tau^{\le d}\RG_\crys(\mstack X_k/W(k))\otimes_{W(k)}A_\mmax$ is already $p$-complete and we get a natural equivalence 
 $$
 \left(\tau^{\le d}(\RG_\crys^\bt(\mstack X_k/W(k)))\otimes_{W(k)^\bt}B_\mmax^\bt\right)(*)\ism \left(\tau^{\le d}\RG_\crys(\mstack X_k/W(k))\right)\otimes_{W(k)}B_\mmax.
 $$

Also, from the proof of \Cref{crys_comp_Bmax_solid} we have an  equivalence 
$$
\left(\tau^{\le d} R\Gamma_{\Prism\fr}^\bt(\mstack X/\mf S)\right)\sotimes_{\mf S^\bt} B_\mmax^\bt \ism \left(\tau^{\le d} R\Gamma_{\Prism\fr}^\bt(\mstack X_{\mc O_C}/\Ainf)\right)\sotimes_{A_\innf^\bt} B_\mmax^\bt 
$$
and $\tau^{\le d} R\Gamma_{\Prism\fr}^\bt(\mstack X/\mf S)\simeq (\tau^{\le d} R\Gamma_{\Prism\fr}(\mstack X/\mf S))^\bt$. Thus, by \Cref{sotimes_vs_cotimes} we can rewrite left hand side as $(\tau^{\le d} R\Gamma_{\Prism\fr}(\mstack X/\mf S)\cotimes_{\mf S}A_\mmax)^\bt[\frac{1}{p}]$, where the tensor product is $(p,u)$-(or, equivalently, just $(p)$, since $u^e\in (p)$ in $A_\mmax$)-completed. By \Cref{cor:BK prismatic cohomology d-coherent} we have $\tau^{\le d} R\Gamma_{\Prism\fr}\in \Coh(\mf S)$ and so the tensor product $\tau^{\le d} R\Gamma_{\Prism\fr}(\mstack X/\mf S)\otimes_{\mf S}A_\mmax$ is already $p$-complete. This way (by \Cref{lem:comparison of prismatic with Ainf}) we can rewrite the left hand side further as $(\tau^{\le d} R\Gamma_{\Prism\fr}(\mstack X_{\mc O_C}/\Ainf)\otimes_{\Ainf}A_\mmax)^\bt[\frac{1}{p}]$. Thus, restricting to the point, we get that the natural map 
$$
\tau^{\le d} R\Gamma_{\Prism\fr}(\mstack X_{\mc O_C}/\Ainf)\otimes_\Ainf B_\mmax \tto \left(\left(\tau^{\le d} R\Gamma_{\Prism\fr}^\bt(\mstack X_{\mc O_C}/\Ainf)\right)\sotimes_{A_\innf^\bt} B_\mmax^\bt \right)(*)
$$
is an equivalence. Finally, this way we get a commutative diagram 
$$
\xymatrix{\left(\left(\tau^{\le d} R\Gamma_{\Prism\fr}^\bt(\mstack X_{\mc O_C}/\Ainf)\right)\sotimes_{A_\innf^\bt} B_\mmax^\bt \right)(*) \ar[r]^\sim& \left(\tau^{\le d}(\RG_\crys^\bt(\mstack X_k/W(k)))\otimes_{W(k)^\bt}B_\mmax^\bt\right)(*)\ar[d]^\wr\\
\left(\tau^{\le d} R\Gamma_{\Prism\fr}(\mstack X_{\mc O_C}/\Ainf)\right)\otimes_\Ainf B_\mmax \ar[u]^\wr& \left(\tau^{\le d}\RG_\crys(\mstack X_k/W(k))\right)\otimes_{W(k)}B_\mmax
}
$$
that provides the desired isomorphism in \Cref{crys_comp_Bmax}.

\section{Applications for schemes}\label{sec:applications for schemes}
Here we record a couple of applications of the above results about \'etale cohomology of $d$-de Rham proper stacks that lead to some new results in $p$-adic Hodge theory for schemes.

Let $X$ be a scheme over $\mc O_K$ and let $Z\hookrightarrow X_k$ be a closed subscheme of the special fiber. Then the geometric Raynaud generic fiber $\widehat{(X/Z)}_C\subset \widehat X_C$ can be viewed as a complement (of a sort) in $\widehat{X}_C$ to an open tubular neighborhood around $Z$ (meaning in particular that on the level of classical points of the corresponding rigid-analytic spaces we throw out the "residue discs" around all points of $Z$).

\begin{thm}\label{cor:applications for schemes}
	Let $X$ be a proper scheme over $\mc O_K$ that is Cohen-Macauley. Let $Z\hookrightarrow X_k$ be a codimension $d$ closed subscheme of the special fiber that contains the singularities of $X$ (meaning that the complement $X\backslash Z$ is smooth over $\mc O_K$). Then 
	\begin{enumerate}
		\item  (Purity) There are natural isomorphisms $H^i_\et(\widehat{(X\backslash Z)}_C,\mbb Q_p)\simeq H^i_\et(\widehat X_C,\mathbb Q_p)$ for $i\le d-2$ and an embedding $H^{d-1}_\et(\widehat X_C,\mathbb Q_p)\hookrightarrow H^{d-1}_\et(\widehat{(X\backslash Z)}_C,\mbb Q_p)$;
		\item (Crystallinity) $H^i_\et(X_C,\mbb Q_p)$ is a crystalline Galois representation for $i\le d-2$.
	\end{enumerate}
\end{thm}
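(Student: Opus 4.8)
The plan is to reduce the entire statement to \Cref{thm:even mainer theorem} and \Cref{cor:fontaine's conjecture for d-de Rham-proper stacks} applied to the smooth open complement $U\coloneqq X\setminus Z$, using the elementary fact that deleting a closed subscheme of the special fibre changes the Raynaud generic fibre but neither the algebraic generic fibre nor the Cohen--Macauley/smoothness type bookkeeping. First I would record that $U$ is a legitimate input for those theorems with the right ``degree of de Rham-properness''. Since $X$ is flat over $\mc O_K$, the special fibre $X_k\hookrightarrow X$ is a Cartier divisor, so the codimension-$d$ subscheme $Z\subset X_k$ has codimension $d+1$ in $X$; as $U$ is smooth over $\mc O_K$ by hypothesis and open in the Noetherian proper scheme $X$, it is a smooth qcqs $\mc O_K$-scheme, and \Cref{lem:example of d-Hodge-proper} (with its ``$d$'' taken to be $d-1$) shows that $U$ is $(d-1)$-Hodge-proper, hence $(d-1)$-de Rham-proper over $\mc O_K$. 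Second, since $Z$ lies in the special fibre we have $Z_K=\emptyset$, so $U_K=X_K$ as $K$-schemes; in particular $U_C\simeq X_C$, whence $H^i_\et(U_C,\mbb Q_p)\simeq H^i_\et(X_C,\mbb Q_p)$ for every $i$.

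For crystallinity (part (2)) I would simply invoke \Cref{cor:fontaine's conjecture for d-de Rham-proper stacks} for the smooth $\big((d-2)+1\big)$-de Rham-proper stack $U$: it gives that $H^i_\et(U_C,\mbb Q_p)$ is a crystalline $G_K$-representation for $i\le d-2$, with $D_\crys\simeq H^i_\crys(U_k/W(k))[\tfrac1p]$. Transporting along $U_C\simeq X_C$ yields that $H^i_\et(X_C,\mbb Q_p)$ is crystalline for $i\le d-2$.

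For purity (part (1)) I would use functoriality of the comparison map $\Upsilon_{-,\mbb Q_p}=\psi^{-1}_{-}\circ\phi^{-1}_{-}$ along the open immersion $U\hookrightarrow X$. Since $\psi$ and Huber's comparison $\phi$ are natural transformations, one obtains a commutative square
\[
\xymatrix{
H^i_\et(X_C,\mbb Q_p)\ar[r]^-{\Upsilon_{X,\mbb Q_p}}\ar@{=}[d] & H^i_\et(\widehat X_C,\mbb Q_p)\ar[d]^-{\res}\\
H^i_\et(U_C,\mbb Q_p)\ar[r]^-{\Upsilon_{U,\mbb Q_p}} & H^i_\et(\widehat U_C,\mbb Q_p),
}
\]
where the left arrow is the identification $U_C=X_C$ and $\res$ is restriction along the open immersion $\widehat U_C\hookrightarrow\widehat X_C$. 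Because $X$ is proper over $\mc O_K$, the map $\psi_X\colon\widehat X_K\xrightarrow{\sim}X_K^{\mathrm{an}}$ is an isomorphism, so $\Upsilon_{X,\mbb Q_p}$ is an equivalence; this is the only place properness of the (possibly singular) $X$ enters, and no smoothness of $X$ is needed. On the other hand, \Cref{thm:even mainer theorem} applied to the $(d-1)$-de Rham-proper stack $U$ says $\Upsilon_{U,\mbb Q_p}$ is an isomorphism on $H^i$ for $i\le d-2$ and injective on $H^{d-1}$. Chasing the square then gives that $\res$ is an isomorphism $H^i_\et(\widehat X_C,\mbb Q_p)\xrightarrow{\sim}H^i_\et(\widehat{(X\setminus Z)}_C,\mbb Q_p)$ for $i\le d-2$ and an embedding $H^{d-1}_\et(\widehat X_C,\mbb Q_p)\hookrightarrow H^{d-1}_\et(\widehat{(X\setminus Z)}_C,\mbb Q_p)$, which is exactly part (1).

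I do not expect a genuine obstacle in this deduction: all the content has been front-loaded into \Cref{thm:even mainer theorem} and \Cref{cor:fontaine's conjecture for d-de Rham-proper stacks}, whose hard input in turn is the $B_\mmax$-comparison of \Cref{crys_comp_Bmax}. The only points requiring any care are the codimension shift between $X_k$ and $X$ (which uses flatness of $X$ over $\mc O_K$, automatic from Cohen--Macauleyness once $X$ dominates $\Spec\mc O_K$) and the identification $\widehat X_C\simeq X_C^{\mathrm{an}}$ for proper $X$, which is what lets one pass from $\widehat X_C$ back to the algebraic fibre $X_C$ in both parts. The appendix is only needed to remove the Cohen--Macauley hypothesis, via resolution of singularities and cohomological descent.
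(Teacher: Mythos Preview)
Your proof is correct and follows essentially the same route as the paper's: reduce to the smooth open $U=X\setminus Z$, observe via \Cref{lem:example of d-Hodge-proper} that $U$ is $(d-1)$-Hodge-proper (hence $(d-1)$-de Rham-proper), use $U_C=X_C$ and properness of $X$ to identify $H^i_\et(\widehat X_C,\mbb Q_p)\simeq H^i_\et(U_C,\mbb Q_p)$, and then invoke \Cref{thm:even mainer theorem} and \Cref{cor:fontaine's conjecture for d-de Rham-proper stacks}. Your commutative square making the ``natural'' map in part (1) explicit as the restriction along $\widehat U_C\hookrightarrow\widehat X_C$ is a nice clarification the paper leaves implicit. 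One minor inaccuracy in your closing remark: the appendix does not remove the Cohen--Macaulay hypothesis; rather, it extends the result to allow singularities of $X$ lying in the \emph{generic} fibre (still assuming a Cohen--Macaulay compactification).
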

\begin{proof}
	By \Cref{lem:example of d-Hodge-proper} $X\backslash Z$ is $(d-1)$-Hodge-proper, and so also $(d-1)$-de Rham proper over $\mc O_K$. Note that $H^i_\et(\widehat X_C,\mathbb Q_p)\simeq H^i_\et(X_C,\mathbb Q_p)$ since $X$ is proper. Also note that $(X\backslash Z)_C\simeq X_C$, since $Z$ lies in the closed fiber, and so $H^i_\et(\widehat X_C,\mathbb Q_p)\simeq H^i_\et((X\backslash Z)_C,\mathbb Q_p)$. This way part (1) is a corollary of \Cref{thm:even mainer theorem} while part (2) is a corollary of \Cref{cor:Fontaine's conjecture for Hodge-proper stacks}, both applied to $X\backslash Z$.
\end{proof}

\begin{rem}
	The part 2 of \Cref{cor:applications for schemes} can be reformulated as follows. Let $X$ be a smooth proper scheme over $K$. Assume that it has an integral model $\ol X$ over $\mc O_K$ that is Cohen-Macauley and such that the singularities of $\ol X$ are of codimension $d$ in the closed fiber.
	Then $H^i_\et(X_C,\mbb Q_p)$ is a crystalline $G_K$-representation for $i\le d-2$. In other words, we see that the existence of an integral Cohen-Macauley model such that singularities are of large codimension forces the \'etale cohomology be crystalline up to some degree depending on this codimension.
\end{rem}

\appendix
\section{Some homological algebra of complete modules}\label{sec:appendix about complete sums}
Here we prove some results on $t$-exactness of completed tensor products in certain situations. They will ultimately lead us to the correct bounds on the cohomological degrees in Sections \ref{ssec:integral p-adic Hodge theory} and \ref{ssec: crystalline comparison}. 

Recall that for a ring $R$ we denote by $\Mod_R$ the \emph{abelian} category of $R$-modules, while by $D(\Mod_R)$ we denote the unbounded derived category of $\Mod_R$ (considered as an $\infty$-category). For a finitely generated ideal $I$ we denote by $D(\Mod_R)^\wedge_I\subset D(\Mod_R)$ the full subcategory of derived $I$-complete modules. 
\begin{lem}\label{lem:p-completed product abelian module}
Let $R$ be a ring and let $x\in R$ be an element. Let $\{M_\alpha\}$ be a set of derived $x$-complete $R$-modules. Then the derived completed direct sum $\widehat \bigoplus_\alpha M_\alpha$ is still concentrated in cohomological degree $0$.

\begin{proof}
We need to show that $H^{-1}\left(\widehat \bigoplus_\alpha M_\alpha\right) \simeq 0$. By Milnor's exacts sequence
$$H^{-1}\left(\widehat \bigoplus_\alpha M_\alpha\right) \simeq \prolim[n] H^{-1}\left(\bigoplus_\alpha [M_\alpha/x^n]\right) \simeq \prolim[n] \bigoplus_\alpha H^{-1} ([M_\alpha/x^n]).$$
Since direct sums embed into direct products and since limits preserve injections we have
$$\prolim[n] \bigoplus_\alpha H^{-1} ([M_\alpha/x^n]) \inj \prolim[n] \prod_\alpha H^{-1} ([M_\alpha/x^n]) \simeq \prod_\alpha \prolim[n] H^{-1}([M_\alpha/x^n]) \simeq 0,$$
where the right hand side vanishes by derived $x$-completeness of $M$.
\end{proof}
\end{lem}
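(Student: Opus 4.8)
The plan is to unwind the definition of the derived completed direct sum and reduce the vanishing of $H^{-1}$ to the derived $x$-completeness of each individual summand, via the Milnor exact sequence. Write $N \coloneqq \bigoplus_\alpha M_\alpha$ for the ordinary direct sum in $\Mod_R$; by definition of the derived $x$-completion (cf.\ \eqref{eq:derived completion}) the derived completed direct sum is $\widehat\bigoplus_\alpha M_\alpha = N^\wedge_x = \prolim[n] \Kos(N; x^n)$, and for the principal ideal $(x)$ the Koszul complex $\Kos(N;x^n) = [N \xrightarrow{x^n} N]$ is concentrated in cohomological degrees $-1$ and $0$, with $H^0(\Kos(N;x^n)) = N/x^n N$ and $H^{-1}(\Kos(N;x^n)) = N[x^n]$, the $x^n$-torsion submodule. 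First I would record that $N^\wedge_x$ itself lies in cohomological degrees $-1$ and $0$: each $\Kos(N;x^n)$ does and $D(\Mod_R)^{\ge -1}$ is closed under limits, while the transition maps of the tower $\{\Kos(N;x^n)\}_n$ act on $H^0$ by the surjections $N/x^{n+1}N \surj N/x^n N$, so the only possibly nonzero term in degree $1$, namely $\varprojlim^1_n N/x^nN$, vanishes by Mittag--Leffler. Hence it remains to prove $H^{-1}(N^\wedge_x) = 0$.

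Next I would compute $H^{-1}$ explicitly. Since $H^{-2}(\Kos(N;x^n)) = 0$, the Milnor ($\lim$--$\lim^1$) sequence for the tower collapses to an isomorphism $H^{-1}(N^\wedge_x) \cong \prolim[n] H^{-1}(\Kos(N;x^n)) = \prolim[n] N[x^n]$, where the transition maps in the tower $\{N[x^n]\}_n$ are those induced on torsion by multiplication by $x$. As passing to $x^n$-torsion is an additive operation and commutes with direct sums, this tower is the direct sum of the towers $\{M_\alpha[x^n]\}_n$, so $H^{-1}(N^\wedge_x) \cong \prolim[n]\bigl(\bigoplus_\alpha M_\alpha[x^n]\bigr)$.

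Finally I would show this limit vanishes. The inclusion of a direct sum into the corresponding product is a monomorphism of $R$-modules, and $\prolim[n]$ is left exact, so there is an injection $\prolim[n]\bigl(\bigoplus_\alpha M_\alpha[x^n]\bigr) \inj \prolim[n]\bigl(\prod_\alpha M_\alpha[x^n]\bigr) \cong \prod_\alpha \prolim[n] M_\alpha[x^n]$. Applying the identical Milnor-sequence computation to a single summand gives $\prolim[n] M_\alpha[x^n] \cong H^{-1}((M_\alpha)^\wedge_x)$, and this is $0$ because $M_\alpha$ is derived $x$-complete, i.e.\ $M_\alpha \areq (M_\alpha)^\wedge_x$ is concentrated in degree $0$. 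Thus the product on the right vanishes, forcing $H^{-1}(N^\wedge_x) = 0$, which completes the argument.

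There is no genuine obstacle here; the two points deserving a line of care are the identification of the transition maps of $\{\Kos(N;x^n)\}_n$ on cohomology with multiplication by $x$ on torsion (so that on the $\bigoplus$-side one really recovers the direct sum of the per-summand towers), and the interchange $\prolim[n]\prod_\alpha \cong \prod_\alpha \prolim[n]$ together with the left exactness of $\prolim[n]$ — both entirely formal.
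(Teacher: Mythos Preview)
Your proof is correct and follows essentially the same route as the paper: Milnor's sequence identifies $H^{-1}$ with $\prolim[n]\bigoplus_\alpha M_\alpha[x^n]$, which then embeds into the product $\prod_\alpha \prolim[n] M_\alpha[x^n] = 0$ by derived $x$-completeness of each $M_\alpha$. You are in fact slightly more careful than the paper in also verifying that $H^{\ge 1}$ vanishes via Mittag--Leffler on $\{N/x^nN\}_n$, a point the paper leaves implicit.
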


\begin{cor}
	The $x$-completed direct sum functor 
	$$
	-^{\widehat\oplus_S}\colon  D(\Mod_{R})^\wedge_x \tto D(\Mod_{R})^\wedge_x
	$$
	is $t$-exact.
\end{cor}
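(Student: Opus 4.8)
The plan is to check right $t$-exactness and left $t$-exactness of $-^{\widehat\oplus_S}$ separately. First recall that $D(\Mod_R)^\wedge_x$ carries the $t$-structure induced from $D(\Mod_R)$: if $M$ is derived $x$-complete then so are $\tau^{\le 0}M$ and $\tau^{\ge 1}M$, since applying the exact functor $T(-,x)$ to the triangle $\tau^{\le 0}M\to M\to\tau^{\ge 1}M$ gives $T(\tau^{\le 0}M,x)\simeq T(\tau^{\ge 1}M,x)[-1]$, and the left side lies in $D(\Mod_R)^{\le 1}$ while the right side lies in $D(\Mod_R)^{\ge 2}$, forcing both to vanish. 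Hence the inclusion $D(\Mod_R)^\wedge_x\hookrightarrow D(\Mod_R)$ is $t$-exact and computes the ordinary $H^i$, so it suffices to show that $-^{\widehat\oplus_S}$ takes a family of derived $x$-complete complexes that are uniformly in $D(\Mod_R)^{\le 0}$ (resp.\ $D(\Mod_R)^{\ge 0}$) to an object of $D(\Mod_R)^{\le 0}$ (resp.\ $D(\Mod_R)^{\ge 0}$). Right $t$-exactness is then formal: $D(\Mod_R)^\wedge_x$ is a reflective localization of the presentable category $D(\Mod_R)$, $\widehat\bigoplus_\alpha M_\alpha$ is by construction the coproduct of the $M_\alpha$ in $D(\Mod_R)^\wedge_x$, and the coconnective part of any $t$-structure is stable under colimits (because $\Hom(\colim_i X_i,Y)\simeq\lim_i\Hom(X_i,Y)$), so a coproduct of objects in $(D(\Mod_R)^\wedge_x)^{\le 0}=D(\Mod_R)^{\le 0}\cap D(\Mod_R)^\wedge_x$ again lies there.

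The content is left $t$-exactness, which amounts to \Cref{lem:p-completed product abelian module} applied one cohomological degree at a time. Given a family $\{M_\alpha\}_{\alpha\in S}$ of derived $x$-complete complexes with each $M_\alpha\in D(\Mod_R)^{\ge0}$, I would use formula \eqref{eq:derived completion} with $J=(x)$, together with the fact that the derived tensor product commutes with direct sums, to write $\widehat\bigoplus_\alpha M_\alpha\simeq\prolim[n]\bigl(\bigoplus_\alpha[M_\alpha/x^n]\bigr)$. Since $R/x^n$ has Tor-amplitude $[-1,0]$, each $[M_\alpha/x^n]$ lies in $D(\Mod_R)^{\ge-1}$, with $H^{-1}([M_\alpha/x^n])\simeq H^0(M_\alpha)[x^n]$ and transition maps given by multiplication by $x$. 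Plugging this into Milnor's exact sequence for the tower $\{\bigoplus_\alpha[M_\alpha/x^n]\}_n$ gives $H^i(\widehat\bigoplus_\alpha M_\alpha)=0$ for $i<-1$ and $H^{-1}(\widehat\bigoplus_\alpha M_\alpha)\simeq\prolim[n]\bigoplus_\alpha H^{-1}([M_\alpha/x^n])$. But this last group is exactly $H^{-1}\bigl(\widehat\bigoplus_\alpha H^0(M_\alpha)\bigr)$, which vanishes by \Cref{lem:p-completed product abelian module} applied to the derived $x$-complete modules $H^0(M_\alpha)$. Therefore $\widehat\bigoplus_\alpha M_\alpha\in D(\Mod_R)^{\ge0}$, which (being derived $x$-complete) lands in $(D(\Mod_R)^\wedge_x)^{\ge0}$, as required.

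The only delicate point is the degree bookkeeping in the second paragraph — tracking cohomological degrees through the derived tensor product, the $\lim^1$-term of Milnor's sequence, and the identification of the $t$-structure on $D(\Mod_R)^\wedge_x$ with the restriction of the one on $D(\Mod_R)$ — but no idea beyond \Cref{lem:p-completed product abelian module} is needed: the corollary really is that lemma, read off in every cohomological degree simultaneously, after the formal observation that the completed direct sum is a coproduct and hence automatically right $t$-exact.
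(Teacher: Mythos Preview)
Your proof is correct and matches the intended approach: the paper states this corollary without proof as an immediate consequence of \Cref{lem:p-completed product abelian module}, and your argument is precisely the natural unpacking—right $t$-exactness is formal since the completed direct sum is a coproduct and $D^{\le 0}$ is closed under colimits in any $t$-structure, while left $t$-exactness reduces via Milnor's sequence to that lemma applied to the derived $x$-complete modules $H^0(M_\alpha)$. One tiny terminological quibble: what you call the ``coconnective part'' $D^{\le 0}$ is usually called the connective part in cohomological indexing, but this does not affect the mathematics.
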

\begin{cor}
 Let $k$ be a perfect field of characteristic $p$. Then the $p$-completed tensor product functor 
 $$
 -\widehat\otimes_{\mbb Z_p}W(k)\colon D(\Mod_{\mbb Z_p})^\wedge_p \tto D(\Mod_{W(k)})^\wedge_p
 $$ 
 is $t$-exact.
\end{cor}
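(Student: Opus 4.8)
The plan is to deduce this from the previous corollary on $p$-completed direct sums; the key input is that $W(k)$ is \emph{$p$-completely free} as a $\mbb Z_p$-module. To establish this I would choose a $k$-basis $\{x_s\}_{s\in S}$ of the residue field $k\simeq W(k)/p$, lift it to a family $\{\widetilde x_s\}_{s\in S}$ in $W(k)$, and note that the resulting $\mbb Z_p$-linear map $\bigoplus_{s\in S}\mbb Z_p\to W(k)$ is an isomorphism modulo $p$, hence an isomorphism after derived $p$-completion by derived Nakayama. Thus $W(k)\simeq\bigl(\bigoplus_{s\in S}\mbb Z_p\bigr)^\wedge_p$.

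Next I would check that for any $M\in D(\Mod_{\mbb Z_p})^\wedge_p$ the $p$-completed tensor product $M\widehat\otimes_{\mbb Z_p}W(k)$ is canonically the $p$-completed direct sum of copies of $M$. Indeed, $(M\otimes_{\mbb Z_p}-)^\wedge_p$ depends on its argument only through the reductions modulo $p^n$, and $\bigoplus_{s\in S}\mbb Z_p$ has the same reductions modulo $p^n$ as its completion $W(k)$; hence $M\widehat\otimes_{\mbb Z_p}W(k)\simeq\bigl(M\otimes_{\mbb Z_p}\bigoplus_{s\in S}\mbb Z_p\bigr)^\wedge_p\simeq\bigl(\bigoplus_{s\in S}M\bigr)^\wedge_p$. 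With this identification, the assertion is immediate from the preceding corollary — the $x$-completed direct sum functor is $t$-exact on $D(\Mod_R)^\wedge_x$ — applied with $R=\mbb Z_p$ and $x=p$.

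I do not expect a serious obstacle here: the substantive content is already contained in \Cref{lem:p-completed product abelian module}, and Steps~1--2 above are bookkeeping. The one point I would spell out carefully is the identification in the second paragraph, i.e.\ that derived $p$-completion of the tensor product commutes past the (possibly infinite) direct sum in the claimed manner; alternatively one can sidestep this by noting that $-\widehat\otimes_{\mbb Z_p}W(k)$ is right $t$-exact (tensoring with the flat module $W(k)$ is exact and derived $p$-completion is right $t$-exact) and then using \Cref{lem:p-completed product abelian module} applied to $H^0$ of a coconnective derived $p$-complete complex to obtain left $t$-exactness.
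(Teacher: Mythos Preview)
Your proposal is correct and follows essentially the same approach as the paper: both establish that $W(k)$ is the $p$-completion of a free $\mbb Z_p$-module (the paper uses Teichm\"uller lifts of a basis of $k$, you use arbitrary lifts and derived Nakayama), identify $-\widehat\otimes_{\mbb Z_p}W(k)$ with a $p$-completed direct sum, and invoke the preceding corollary. The only point the paper makes explicit that you leave implicit is the reduction to the underlying $\mbb Z_p$-module via the conservative $t$-exact forgetful functor $D(\Mod_{W(k)})^\wedge_p\to D(\Mod_{\mbb Z_p})^\wedge_p$; you might add a sentence to that effect.
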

\begin{proof}
	Note that the forgetful functor $\mr{obl}\colon D(\Mod_{W(k)})^\wedge_p \ra  D(\Mod_{\mbb Z_p})^\wedge_p$ is conservative and $t$-exact. Thus it is enough to show the claim for the composition $\mr{obl\circ}(-\widehat\otimes_{\mbb Z_p}W(k))$. Pick a basis $\{v_s\}_{s\in S}$ of $k$ over $\mbb F_p$, where $S$ is some indexing set. Consider a $\mbb Z_p$-submodule $F_A\coloneqq\oplus_{s\in S}\mbb Z_p\cdot [v_s] \subset W(k)$, where $[-]\colon k \dashrightarrow W(k)$ is the Teichm\"uller lift; it is easy to see that $W(k)$ is the derived $p$-completion of $F_S$. Thus, composition of $-\widehat\otimes_{\mbb Z_p}W(k)$ with $\mr{obl}$ can be identified with the $p$-completed direct sum $-^{\widehat\oplus_S}$. We are done by the corollary above.
\end{proof}

We will now consider a slightly more complicated context. Namely, let $\mf S\coloneqq W(k)[[u]]$ for some perfect field $k$ of characteristic $p$. We endow $\mf S$ with the $(p,u)$-adic topology. We would like to have an analogue of \Cref{lem:p-completed product abelian module} where we take $R=\mf S$ and replace the ideal $(x)$ by $(p,u)$. However, unfortunately it seems that the analogous statement just doesn't hold. Nevertheless, it does hold if we make some further assumptions on $M$.

\begin{prop}\label{prop:pu-completed direct sum}
Let $\{M_\alpha\}$ be a set of derived $(p,u)$-complete $\mathfrak S$-modules. Assume that $\tau^{<0}(M_\alpha\otimes^{\mathbb L}_{\mathfrak S}\mf S/(p,u))\in \Coh(k)$ for all $\alpha$. Then the $(p,u)$-completed direct sum $\widehat\bigoplus_\alpha M_\alpha$ is concentrated in degree $0$.

\begin{proof}
We need to show that
$$H^{-1}\left(\prolim{}_{n,m} \bigoplus_S [M_\alpha/(p^n, u^m)]\right) \simeq H^{-2}\left(\prolim{}_{n,m} \bigoplus_\alpha M_\alpha/(p^n, u^m)\right) \simeq 0.$$
By Milnor's exact sequence this is amount to the following vanishings:
\begin{align*}
H^{-1} & \simeq 0 \quad\Leftrightarrow\quad \prolim{}_{n,m}H^{-1}\left(\bigoplus_\alpha[M_\alpha/(p^n, u^m)]\right) \simeq 0 \ \text{ and }\ \prolim{}_{n,m}^1 H^{-2}\left(\bigoplus_\alpha [M_\alpha/(p^n, u^m)]\right) \simeq 0,\\
H^{-2} & \simeq 0 \quad\Leftrightarrow\quad \prolim{}_{n,m} H^{-2}\left(\bigoplus_\alpha [M_\alpha/(p^n, u^m)]\right) \simeq 0.
\end{align*}
Since direct sums embed into direct products and since limits preserve injections we have for $i\in \{-1, -2\}$
\begin{align*}
\prolim{}_{n,m}H^{i}\left(\bigoplus_\alpha[M_\alpha/(p^n, u^m)]\right) & \simeq \prolim{}_{n,m} \bigoplus_\alpha H^{i}\left([M_\alpha/(p^n, u^m)]\right)\inj \\
\inj \prolim{}_{n,m} \prod_\alpha H^{i} \left([M_\alpha/(p^n, u^m)]\right) & \simeq \prod_\alpha \prolim{}_{n,m} H^{i} \left([M_\alpha/(p^n, u^m)]\right) \simeq 0,
\end{align*}
where the right hand side vanishes by the derived $(p,u)$-completeness of $M$. Hence it is left to prove the vanishing of $\prolim{}_{n,m}^1 H^{-2}$. Note that
$$H^{-2}\left(\bigoplus_\alpha [M_\alpha/(p^n, u^m)]\right) \simeq \bigoplus_\alpha M_\alpha[p^n, u^m].$$
But by \Cref{lem_for_compl_dir_sums} below the diagram $\{\bigoplus_\alpha M[p^n, u^m]\}_{n,m}$ is pro-zero (since the composition of $(N+1)$ successive vertical or horizontal maps is given by multiplication with $p^{N+1}$ or $u^{N+1}$, hence vanishes), so its $\lim^1$ is zero.
\end{proof}
\end{prop}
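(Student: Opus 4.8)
The plan is to unwind the definition of the completed direct sum, reduce the claim to the vanishing of a short list of inverse limits and $\prolim{}^1$'s, dispatch all but one of them formally from derived $(p,u)$-completeness of the $M_\alpha$, and then attack the remaining term --- a $\prolim{}^1$ of an \emph{infinite} direct sum --- by a pro-zero argument in which the coherence hypothesis finally gets used.

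First I would observe that $\bigoplus_\alpha(-)$ is $t$-exact and commutes with $-\otimes^{\mathbb L}_{\mathfrak S}\Kos(\mathfrak S;p^n,u^m)$, so that
\[
\widehat\bigoplus_\alpha M_\alpha \;\simeq\; \Bigl(\bigoplus_\alpha M_\alpha\Bigr)^\wedge_{(p,u)} \;\simeq\; \prolim[n,m]\ \bigoplus_\alpha \Kos(M_\alpha;p^n,u^m),
\]
the derived $(p,u)$-completion of a module placed in degree $0$, hence a priori concentrated in cohomological degrees $[-2,0]$. Replacing the poset $\mathbb N^2$ by its cofinal diagonal copy of $\mathbb N$ (so that the Milnor $\prolim$-$\prolim{}^1$ sequences apply cleanly), using that $H^{-2}(\Kos(M_\alpha;p^n,u^n))=M_\alpha[p^n]\cap M_\alpha[u^n]$ and $H^{-3}(\Kos)=0$, and that cohomology commutes with $\bigoplus$, the statement reduces to the three vanishings
\[
\begin{gathered}
\prolim{}_n\ \bigoplus_\alpha H^{-1}\bigl(\Kos(M_\alpha;p^n,u^n)\bigr)=0,\\
\prolim{}_n\ \bigoplus_\alpha\bigl(M_\alpha[p^n]\cap M_\alpha[u^n]\bigr)=0,\\
\prolim{}_n^1\ \bigoplus_\alpha\bigl(M_\alpha[p^n]\cap M_\alpha[u^n]\bigr)=0.
\end{gathered}
\]
For the two honest limits I would use the inclusion $\bigoplus_\alpha\hookrightarrow\prod_\alpha$, the left exactness of $\prolim$ and its commutation with products, to reduce to the corresponding statement for a single $M_\alpha$; and there $\prolim{}_n\Kos(M_\alpha;p^n,u^n)\simeq M_\alpha$ is concentrated in degree $0$ by derived $(p,u)$-completeness, which via Milnor forces $\prolim{}_n H^i(\Kos(M_\alpha;p^n,u^n))=0$ for $i=-1,-2$ and every $\alpha$.

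The hard part will be the last vanishing. Here the embedding into the product is useless, because $\prolim{}^1$ is not left exact; and although $\prolim{}_n^1$ of each individual system $\{M_\alpha[p^n]\cap M_\alpha[u^n]\}_n$ vanishes (again by derived completeness of $M_\alpha$), $\prolim{}^1$ need not commute with an infinite direct sum. My plan is instead to show directly that the double pro-system $\{\bigoplus_\alpha(M_\alpha[p^n]\cap M_\alpha[u^m])\}_{n,m}$ is pro-zero, which makes all of its derived limits vanish. Tracking the transition maps of the Koszul complexes, one checks that a step increasing the $p$-exponent acts on $H^{-2}$ by multiplication by $p$ and a step increasing the $u$-exponent by multiplication by $u$, so the composite of $N+1$ successive horizontal (resp.\ vertical) transitions is multiplication by $p^{N+1}$ (resp.\ $u^{N+1}$). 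The finiteness hypothesis $\tau^{<0}(M_\alpha\otimes^{\mathbb L}_{\mathfrak S}\mathfrak S/(p,u))\in\Coh(k)$ is precisely what makes these composites vanish once $N$ is large in a manner compatible with the direct sum: it bounds the $(p,u)$-power torsion of each $M_\alpha$, and the combinatorics turning this levelwise pro-zeroness into $\prolim{}^1=0$ is isolated in \Cref{lem_for_compl_dir_sums} below. Assembling the three vanishings proves the proposition. I expect the genuine difficulty to be entirely in this last step --- extracting the torsion control from the mod-$(p,u)$ finiteness and verifying that it survives passage to an arbitrary direct sum; the phenomenon that infinite direct sums do not commute with $\prolim{}^1$ is exactly why the evident argument fails and the pro-zero route is forced on us.
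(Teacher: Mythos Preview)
Your proposal is correct and follows essentially the same route as the paper: both reduce via the Milnor exact sequence to the same three vanishings, handle the two honest inverse limits by embedding $\bigoplus_\alpha$ into $\prod_\alpha$ and invoking derived $(p,u)$-completeness of each $M_\alpha$, and then dispatch the remaining $\prolim{}^1$ of the $(p,u)$-torsion terms via the pro-zero argument supplied by \Cref{lem_for_compl_dir_sums}. The only cosmetic difference is that you restrict explicitly to the cofinal diagonal $\mathbb N\subset\mathbb N^2$ before applying Milnor, whereas the paper works directly over $\mathbb N^2$; and you are slightly more explicit than the paper about \emph{why} the embedding-into-products trick fails for $\prolim{}^1$ and the pro-zero route is forced.
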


\begin{defn}\label{defn:Tate module}
	Given a ring $R$ with an element $x\in R$, for a classical module $M\in \Mod_{\mf S}$ we denote by $T_x(M)$ the \textit{$x$-adic Tate module} of $M$: $$
	T_x(M)\coloneqq {\lim_n}^0(\ldots \xra{\cdot x}M[x^n]\xra{\cdot x} M[x^{n-1}]\xra{\cdot x} \ldots \xra{\cdot x}M[x]).
	$$
	Note that $T_x(M)\simeq H^{-1}(M^\wedge_x)$, where $M^\wedge_x$ is the derived $x$-adic completion. In particular, if $M\in \Mod_{\mf S}$ is derived $x$-adically complete, then $T_x(M)=0$.
\end{defn}

\begin{lem}\label{lem_for_compl_dir_sums}
Let $M\in \Mod_{\mf S}$ be a derived $(p,u)$-complete module such that $\tau^{<0}(M\otimes^{\mbb L}_{\mf S}\mf S/(p,u))\in \Coh(k)$. Then $M[p^\infty, u^\infty] = M[p^N, u^N]$ for some $N\ge 0$.

\begin{proof} Note that $H^{-2}(M\otimes^{\mbb L}_{\mf S}\mf S/(p,u))\simeq M[p][u]$ and so by our assumption on $M$, $\dim_k M[p,u] =: d < \infty$. Also, $M\otimes_{\mf S}^{\mbb L}\mf S/p$ is derived $u$-complete, and thus so is $M[p]$. Since $T_u(M[p])\simeq T_u(M[p][u^\infty])$, by \Cref{lem:about torsion module with bounded u-torsion} we get that the $k[[u]]$-module $M[p][u^\infty]$ is of finite length. 
	
Let $M_n\coloneqq M[p^n][u^\infty]$ and $N_n\coloneqq M_n/M_{n-1}$. We have $N_1=M_1=M[p][u^\infty]$. Multiplication by $p$ induces an embedding $N_i\hookrightarrow N_{i-1}$, providing a chain of inclusions $\ldots \hookrightarrow N_i\hookrightarrow N_{i-1}\inj \ldots \inj N_1$. Since $N_1$ is of finite length, this chain stabilizes starting from some $n$. Also, each $M_i$ is of finite length, in particular $M_i= M_n[p^N][u^N]$ for some $N\gg 0$. If $N_n=0$, $M[p^\infty][u^\infty]= M_n$, we are done. If $N_n\neq 0$, take the quotient $M'=M/M_{n-1}$. Module $M_{n-1}$ is coherent, in particular derived $(p,u)$-complete; consequently, so is $M'$. Let $M_i'\coloneqq M'[p^i]$ and $N_i'\coloneqq M'_i/M'_{i-1}$; note that $N_i'\simeq N_{i+n-1}$. Since multiplication by $p$ induces an isomorphism $N_{i+1}\simeq N_{i}$ for $i\ge n$, it induces isomorphisms $N'_{i+1}\simeq N'_{i}$ for all $i\ge 1$ and so map $M'_i\xra{\cdot p} M'_{i-1}$ is surjective for all $i$ (indeed, by the above, multiplication by $p$ is a surjection on the associated graded for the filtration $M'_1\hookrightarrow M'_2\inj\ldots$). Since $N'_1\simeq N_n\neq 0$ we get that the $p$-adic Tate module $T_p(M')\neq 0$, and thus $M'$ is not derived $p$-complete, contradiction.	
\end{proof}
\end{lem}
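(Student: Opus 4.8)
The plan is to use the single finiteness input to pin down the $p$-torsion filtration of $M$ and then run an induction on the exponent of $p$; the two auxiliary tools are the bounded $u$-torsion lemma (\Cref{lem:about torsion module with bounded u-torsion}) and the fact that a derived $p$-complete module has vanishing $p$-adic Tate module (\Cref{defn:Tate module}). First I would extract the finiteness: since $(p,u)$ is a regular sequence in $\mf S$ with $\mf S/(p,u)=k$, the complex $M\otimes^{\mbb L}_{\mf S}\mf S/(p,u)$ is computed by $\Kos(M;p,u)$, which lives in degrees $[-2,0]$ with $H^{-2}=M[p]\cap M[u]=M[p][u]$, so the hypothesis forces $d:=\dim_k M[p][u]<\infty$. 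Next, $M[p]$ is a derived $u$-complete $\mf S/p=k[[u]]$-module: it is $H^{-1}$ of the two-term complex $[M\xrightarrow{\cdot p}M]$, which is derived $u$-complete because $M$ is, and cohomology of a derived $u$-complete object stays derived $u$-complete. Hence $T_u(M[p])=0$, and since $T_u$ depends only on the $u$-power torsion, $T_u(M[p][u^\infty])=0$; applying \Cref{lem:about torsion module with bounded u-torsion} to the $u$-power torsion $k[[u]]$-module $M[p][u^\infty]$, whose $u$-torsion $M[p][u]$ is $d$-dimensional, I conclude that $M[p][u^\infty]$ has finite length over $k[[u]]$.

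Now set $M_n:=M[p^n][u^\infty]$ (so $M_1=M[p][u^\infty]$ is of finite length) and $N_n:=M_n/M_{n-1}$. Multiplication by $p$ carries $M_n$ into $M_{n-1}$ and $M_{n-1}$ into $M_{n-2}$, hence descends to a map $N_n\to N_{n-1}$, and a short check (the class of $x$ vanishes there iff $p^{n-1}x=0$, iff $x\in M_{n-1}$) shows this map is injective. Thus $\cdots\hookrightarrow N_2\hookrightarrow N_1=M_1$ with $M_1$ of finite length, so $\ell(N_n)$ is non-increasing and eventually equal to some constant $c$, with $N_n\xrightarrow{\sim}N_{n-1}$ for $n$ large. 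If $c=0$, then $M_n=M_{n-1}$ for $n\ge n_0$, so $M[p^\infty][u^\infty]=\bigcup_n M_n=M_{n_0}$; and $M_{n_0}$ is filtered by the finite-length modules $N_1,\dots,N_{n_0}$, hence itself of finite length, hence annihilated by $p^{n_0}$ and by some $u^N$, so $M[p^\infty][u^\infty]=M[p^{N'}][u^{N'}]$ with $N':=\max(n_0,N)$, which is the claim.

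It remains to rule out $c>0$, and this is where I expect the real work. The idea is to pass to $M':=M/M_{n_0-1}$ (with $n_0$ the index from which the $N_n$ stabilize): since $M_{n_0-1}$ has finite length it is coherent over $\mf S$, hence derived $(p,u)$-complete, so $M'$ is again derived $(p,u)$-complete, in particular derived $p$-complete. One then checks, using coherence of $M_{n_0-1}$, that the $p$-power-torsion filtration of $M'$ is, up to the shift by $n_0-1$, that of $M$: its successive quotients $M'[p^i]/M'[p^{i-1}]$ are all isomorphic to the fixed nonzero length-$c$ module, with multiplication by $p$ an isomorphism between consecutive graded pieces, and a routine lifting argument on the increasing filtration $\{M'[p^i]\}$ upgrades this to surjectivity of $\cdot p\colon M'[p^i]\to M'[p^{i-1}]$ for all $i$. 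Consequently $T_p(M')=\lim_i(M'[p^i],\cdot p)$ is an inverse limit over $\mbb N$ of a tower of surjections with nonzero first term $M'[p]$, so $T_p(M')\neq 0$; but $T_p(M')=H^{-1}\bigl((M')^\wedge_p\bigr)=0$ by derived $p$-completeness --- a contradiction. Hence $c=0$ and the proof is complete. The main obstacle is precisely the bookkeeping in this last step: identifying the $p$-torsion filtration of the quotient $M'$ and converting the stable nonzero graded piece into an honest nonzero element of the $p$-adic Tate module; everything else is formal manipulation with regular sequences and derived completeness.
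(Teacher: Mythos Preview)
Your proposal is correct and follows essentially the same route as the paper: extract $\dim_k M[p][u]<\infty$ from $H^{-2}$ of the Koszul complex, use derived $u$-completeness of $M[p]$ together with \Cref{lem:about torsion module with bounded u-torsion} to get that $M[p][u^\infty]$ has finite length, introduce the filtration $M_n=M[p^n][u^\infty]$ with graded pieces $N_n$ embedding into $N_{n-1}$, and in the nonstabilizing case pass to $M'=M/M_{n_0-1}$ to produce a nonzero $p$-adic Tate module, contradicting derived $p$-completeness. The only point where both your sketch and the paper are slightly informal is the identification of the $p$-torsion filtration of $M'$ with the shifted filtration $\{M_{i+n_0-1}/M_{n_0-1}\}$ (in general $M'[p^i]$ could contain non-$u$-power-torsion elements), but the argument goes through unchanged since the sub-tower $\{M_{i+n_0-1}/M_{n_0-1}\}\subseteq\{M'[p^i]\}$ already has surjective transition maps and nonzero base, so its limit injects into $T_p(M')$.
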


\begin{lem}\label{lem:about torsion module with bounded u-torsion}
	Let $M$ be a $u^\infty$-torsion $k[[u]]$-module of infinite length such that $\dim_k M[u]< \infty$ for all $n\ge 0$. Then $T_u(M)\neq 0$.
\end{lem}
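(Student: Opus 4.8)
The plan is to reduce the statement to the structure theory of $u^\infty$-torsion modules over the discrete valuation ring $k[[u]]$. The point is that $T_u$ ignores bounded‑torsion summands and sees only the divisible part: for the Pr\"ufer module $E \coloneqq k((u))/k[[u]]$ one has $T_u(E) \simeq k[[u]]$ (under $E[u^n]\cong k[[u]]/u^n$ the transition maps $E[u^{n+1}] \xrightarrow{\cdot u} E[u^n]$ are the canonical projections), whereas $T_u(N) = 0$ whenever $N$ is killed by some $u^{N}$ (a compatible system $(x_n)$ with $ux_{n+1} = x_n$ then satisfies $x_n = u^{N}x_{n+N} = 0$). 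Since $T_u$ commutes with finite direct sums, it therefore suffices to prove that $M$ has a nonzero divisible submodule: a nonzero divisible torsion module over the DVR $k[[u]]$ is injective, hence $\cong E^{\oplus r}$ with $r \geq 1$, and such an injective submodule splits off.

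To produce it I would study the \emph{jump function} $\delta_n \coloneqq \dim_k M[u^n] - \dim_k M[u^{n-1}]$ (with $M[u^0] = 0$). Multiplication by $u$ induces injections $M[u^{n+1}]/M[u^n] \hookrightarrow M[u^n]/M[u^{n-1}]$, so each $\delta_{n+1} \leq \delta_n \leq \dim_k M[u] < \infty$; in particular all the $M[u^n]$ are finite dimensional and $(\delta_n)_{n \geq 1}$ is a non-increasing sequence of non-negative integers, hence eventually equal to some constant $r$. Because $M$ has infinite length, $\sum_{n \geq 1}\delta_n = \infty$, which forces $\delta_n \geq 1$ for every $n$ and in particular $r \geq 1$. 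Choose $N_0$ with $\delta_n = r$ for all $n > N_0$ and set $\bar M \coloneqq M/M[u^{N_0}]$. One computes $\bar M[u^n] = M[u^{N_0+n}]/M[u^{N_0}]$, so $\dim_k \bar M[u^n] = nr$; hence the map $\cdot u \colon \bar M[u^{n+1}] \to \bar M[u^n]$ has kernel $\bar M[u]$ of dimension $r$, so its image has dimension $(n+1)r - r = nr = \dim_k \bar M[u^n]$ and the map is surjective. As $\bar M = \bigcup_n \bar M[u^n]$, this means $u\bar M = \bar M$, i.e. $\bar M$ is divisible.

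Thus $\bar M$ is an injective, $u$-torsion $k[[u]]$-module with $\dim_k \bar M[u] = r$, so $\bar M \cong E^{\oplus r}$; the sequence $0 \to M[u^{N_0}] \to M \to E^{\oplus r} \to 0$ splits by injectivity of $E^{\oplus r}$, giving $M \cong M[u^{N_0}] \oplus E^{\oplus r}$, and therefore $T_u(M) \cong T_u(M[u^{N_0}]) \oplus T_u(E)^{\oplus r} \cong k[[u]]^{\oplus r} \neq 0$. I do not expect a genuine obstacle here: the whole argument is elementary homological algebra over a DVR, and the only place that needs a little care is the bookkeeping with $\delta_n$ — that it is finite and non-increasing, and that infinite length of $M$ is exactly the condition that it never drops to $0$ — together with the verification that the particular truncation $\bar M = M/M[u^{N_0}]$ is precisely divisible; once that is in place the splitting and the computation of $T_u$ are automatic.
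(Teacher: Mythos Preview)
Your overall strategy coincides with the paper's: both pass to the quotient $\bar M = M/M[u^{N_0}]$, show that the surjectivity of $\cdot u\colon \bar M[u^{n+1}]\to \bar M[u^n]$ forces $T_u(\bar M)\neq 0$, and then transfer this back to $M$. The bookkeeping with $\delta_n$ is correct.

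There is, however, a genuine gap in your final step. You claim that the short exact sequence
\[
0 \longrightarrow M[u^{N_0}] \longrightarrow M \longrightarrow \bar M \simeq E^{\oplus r} \longrightarrow 0
\]
splits because $E^{\oplus r}$ is injective. This is false: injectivity of the \emph{quotient} does not split a sequence (that would require $\Ext^1(E^{\oplus r},M[u^{N_0}])=0$, not $\Ext^1(-,E^{\oplus r})=0$). A concrete counterexample in your own setup is $M=k\oplus E$: here $\delta_1=2$, $\delta_n=1$ for $n\ge 2$, so $r=1$ and the minimal $N_0$ is $1$; your sequence becomes $0\to k^2\to k\oplus E\to E\to 0$, which does not split since $\dim_k(k^2\oplus E)[u]=3\neq 2=\dim_k(k\oplus E)[u]$.

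The fix is easy and is exactly what the paper does. From the fiber sequence of derived $u$-completions and the fact that $M[u^{N_0}]$ is finite (hence derived complete with $T_u(M[u^{N_0}])=0$) one gets a left-exact sequence
\[
0 \longrightarrow T_u(M) \longrightarrow T_u(\bar M) \longrightarrow M[u^{N_0}].
\]
You have already shown $T_u(\bar M)\cong k[[u]]^{\oplus r}$ with $r\ge 1$; since any map from $k[[u]]^{\oplus r}$ to the finite module $M[u^{N_0}]$ has kernel of finite index, the kernel $T_u(M)$ is again isomorphic to $k[[u]]^{\oplus r}\neq 0$. (Equivalently: the short exact sequence of towers $0\to M[u^{N_0}]\to M[u^{n+N_0}]\to \bar M[u^n]\to 0$ has pro-zero left term, so $T_u(M)\simeq T_u(\bar M)$ directly.) So the conclusion you want survives; only the appeal to splitting needs to be replaced by this exact-sequence argument.
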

\begin{proof}
The proof is similar to the one in the end of \Cref{lem_for_compl_dir_sums}. Let $M_n\coloneqq M[u^n]$ and $N_n\coloneqq M_n/M_{n-1}$. We have $N_1=M_1=M[u]$ and multiplication by $u$ induces a chain of embeddings $\ldots \inj N_n \inj N_{n-1}\inj \ldots \inj N_1$. Since $\dim_k N_1< \infty$ this chain stabilizes starting from some $n$. If $N_n=0$, then $M=M_n$ is of finite length, contradicting our assumptions on $M$. If $N_n\neq 0$, consider $M'\coloneqq M/M_{n-1}$. We have a fiber sequence $(M_{n-1})^\wedge_u \ra M^\wedge_u \ra (M')^\wedge_u$, which gives a left-exact sequence 
$$
0\tto T_u(M) \tto T_u(M') \tto M_{n-1} 
$$ 
since $M_{n-1}$ is finitely generated, and so is derived $u$-complete. Let $M_i'\coloneqq M'[u^i]$ and $N_i'\coloneqq M'_i/M'_{i-1}$; note that $N_i'\simeq N_{i+n-1}$. We now have that multiplication by $u$ gives isomorphisms $N'_i\xra{\sim}N'_{i-1}$ for all $i$. It follows that the map $M'_i\xra{\cdot u} M'_{i-1}$ is surjective for all $i$, and since $M'_1\simeq N_n$ is non-zero we get $T_u(M')\neq 0$. Moreover, if we put $d\coloneqq \dim_k M'_1$, then one has a non-canonical isomorphism $T_u(M')\simeq k[[u]]^{\oplus d}$ and, since $M_{n-1}$ is finite, this forces $T_u(M)$ to be isomorphic to $ k[[u]]^{\oplus d}$ as well, in particular to be non-zero.
\end{proof}

Consider the $p$-adic completion $\mf S\langle \frac{1}{u}\rangle\coloneqq \mf S[\frac{1}{u}]^{\wedge}_p$. We would also like to have an analogue of \Cref{lem:p-completed product abelian module} for the $p$-completed tensor product functor $-\widehat\otimes_{\mf S}\mf S\langle \frac{1}{u}\rangle$. Similarly to the completed direct sum, this functor is a composition of a filtered colimit of $(p,u)$-complete modules and derived completion. However, the colimit here is more complicated and this reduces the generality in which $-\widehat\otimes_{\mf S}\mf S\langle \frac{1}{u}\rangle$ is $t$-exact, as the examples below show.
\begin{ex}\label{ex:counterexample}
	Consider an $\mf S$-module $N=\mf S[\frac{u}{p}]/\mf S$. Explicitly, it is generated by elements $x_i\coloneqq \frac{u^i}{p^i}$ for $i\ge 0$ and with relations given by $x_0=0$ and $p\cdot x_i=u\cdot x_{i-1}$. It is easy to see that $N$ is $u$-torsion free and that $p^nN=u^nN$. Also, $p^n$-torsion $N[p^n]$ is generated by $x_1, \ldots, x_n$, so it is finitely-generated and in particular derived $u$-complete. It is also not hard to see that it is $u$-torsion free. We claim that the derived $(p,u)$-completion $M\coloneqq N^\wedge_{(p,u)}$ coincides with the classical $p$-completion of $N$ which can be identified with $\mf S\langle\frac{u}{p}\rangle/\mf S$. Indeed, the derived $p$-completion $N^\wedge_p$ maps to the classical $p$-adic completion $\mf S\langle\frac{u}{p}\rangle/\mf S$ with the fiber given by the two-term Milnor complex
	$$
	\prod_n N[p^n] \xymatrix{\ar[r]^{\Id - (\cdot p)}&} \prod_n N[p^n]
	$$
	where $(\cdot p)$ maps $(n_1,n_2,n_3,\ldots)$ to $(pn_2,pn_3,\ldots)$. Both source and target are derived $u$-complete and $u$-torsion free. Moreover $(\cdot p)$ is 0 modulo $u$, and so the induced map $\prod_n N[p^n]/u \ra \prod_n N[p^n]/u$ is simply the identity. By derived Nakayama lemma we get that the above two-term complex is quasi-isomorphic to 0. Finally, we have $M\simeq (N^\wedge_p)^\wedge u$, but $N^\wedge_p\simeq \mf S\langle\frac{u}{p}\rangle/\mf S$ is $u$-torsion free and classically $u$-complete; thus it is already derived $u$-complete as well.
	
	  We now claim that $M\widehat\otimes_{\mf S}\mf S\langle \frac{1}{u}\rangle$ is no longer concentrated in degree 0.
Indeed, $H^{-1}(M\widehat\otimes_{\mf S}\mf S\langle \frac{1}{u}\rangle)$ is given by the $p$-adic Tate module $\lim^0_n (M\widehat\otimes_{\mf S}\mf S\langle \frac{1}{u}\rangle)[p^n]$, which is no longer zero after we have inverted $u$. Indeed, we have $\frac{1}{p^n}=x_n\cdot \frac{1}{u^n}$ and so $(\ldots, x_3/u^3,x_2/u^2,x_1/u)$ gives a non-zero element of $H^{-1}(M\widehat\otimes_{\mf S}\mf S\langle \frac{1}{u}\rangle)$. 
\end{ex}
\begin{rem}
One can also show that the derived solid $(p,u)$-adic completion $ M^\bt_{(p,u)}\in D(\Solid)$ taken in (derived) condensed abelian groups lies in the heart $\Solid\subset D(\Solid)$, but the derived $p$-completed tensor product $M^\bt_{(p,u)}{\sotimes}_{{\mf S}^\bt} \mf S\langle \frac{1}{u}\rangle^\bt$ does not (since by \Cref{ex:counterexample} $H^{-1}\neq 0$ for the value on the point $*$). Thus, it is not true in the condenced setting that the $p$-completed tensor product $-{\sotimes}_{{\mf S}^\bt} \mf S\langle \frac{1}{u}\rangle^\bt$ is $t$-exact as functor on derived $(p,u)$-complete solid ${\mf S}^\bt_{(p,u)}$-modules. In particular, the condition on $M$ that $M^{\bt}\in D(\Solid)^\heartsuit$ in this case is not sufficient.
\end{rem}
Nevertheless, the finiteness assumptions on $M$ that we had in \Cref{prop:pu-completed direct sum} still turn out to be enough:
\begin{prop}\label{prop:p-completed inversion of u is sometimes t-exact}
	Let $M\in \Mod_{\mf S}$ be a derived $(p,u)$-complete module such that $\tau^{<0}(M\otimes^{\mbb L}_{\mf S}\mf S/(p,u))\in \Coh(k)$. Then the $p$-completed tensor product
	$
	M\widehat{\otimes}^{\mbb L}_{\mf S} \mf S\langle \frac{1}{u}\rangle
	$ 
	is concentrated in cohomological degree 0. 
\end{prop}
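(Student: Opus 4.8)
The plan is to reduce the statement to the vanishing of a $p$-adic Tate module and then control that Tate module by the finiteness built into the hypothesis, arguing in the spirit of Lemmas~\ref{lem_for_compl_dir_sums} and~\ref{lem:about torsion module with bounded u-torsion}.

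Since $\mf S[\tfrac1u]$ is flat over $\mf S$ we have $M\otimes^{\mbb L}_{\mf S}\mf S[\tfrac1u]=M[\tfrac1u]$ (a module in degree $0$), hence $M\widehat\otimes^{\mbb L}_{\mf S}\mf S\langle\tfrac1u\rangle\simeq (M[\tfrac1u])^\wedge_p$, the derived $p$-completion of the discrete module $M[\tfrac1u]$; this complex lives in degrees $-1$ and $0$, with $H^{-1}\simeq T_p(M[\tfrac1u])$ in the notation of \Cref{defn:Tate module}. So it suffices to prove $T_p(M[\tfrac1u])=0$, and by flatness again $(M[\tfrac1u])[p^n]=M[p^n][\tfrac1u]$, so $T_p(M[\tfrac1u])=\prolim[n]M[p^n][\tfrac1u]$ with transition maps given by multiplication by $p$. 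First I would reduce to the case $M[p^\infty,u^\infty]=0$: by \Cref{lem_for_compl_dir_sums} the submodule $T\coloneqq M[p^\infty,u^\infty]=M[p^N,u^N]$ has finite length over $\mf S$; being coherent it is derived $(p,u)$-complete, so $M'\coloneqq M/T$ is again derived $(p,u)$-complete, and since $T\otimes^{\mbb L}_{\mf S}\mf S/(p,u)$ is bounded with finite-dimensional cohomology the condition $\tau^{<0}[M'/(p,u)]\in\Coh(k)$ still holds; as $T[\tfrac1u]=0$ we have $(M[\tfrac1u])^\wedge_p\simeq (M'[\tfrac1u])^\wedge_p$, so we may replace $M$ by $M'$.

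After this reduction every $M[p^n]$ is $u$-torsion free, and, being a truncation $\tau^{\le -1}$ of the derived $u$-complete complex $\cofib(M\xrightarrow{\cdot p^n}M)$, it is derived $u$-complete, hence $u$-adically complete and separated; since $M[p]/u$ embeds into $H^{-1}[M/(p,u)]$ it is finite-dimensional over $k$, so $M[p]\cong k[[u]]^r$ for some finite $r$, and inductively, using the injection $M[p^n]/M[p]\hookrightarrow M[p^{n-1}]$ (multiplication by $p$) and Noetherianity of $\mf S$, each $M[p^n]$ is a finitely generated $\mf S$-module. The key claim is then that $M[p^\infty]\coloneqq\bigcup_n M[p^n]$ is of bounded $p$-power torsion, say $p^{n_0}M[p^\infty]=0$, equivalently $M[p^n]=M[p^{n_0}]$ for all $n\ge n_0$. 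Granting this, $\prolim[n]M[p^n][\tfrac1u]=\prolim[n\ge n_0]\bigl(M[p^{n_0}][\tfrac1u],\cdot p\bigr)$, and any element $(y_n)_{n\ge n_0}$ of this limit satisfies $y_{n_0}=p^{n_0}y_{2n_0}\in p^{n_0}M[p^{n_0}][\tfrac1u]=0$, and likewise $y_n=0$ for all $n$; hence $T_p(M[\tfrac1u])=0$, as desired.

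It remains to prove the boundedness claim, which is where the hypothesis enters essentially (and which is exactly what rules out the phenomenon in \Cref{ex:counterexample}). One has $pM\cap M[p^\infty]=pM[p^\infty]$ (an element $px$ is $p$-power torsion iff $x$ is), so $M[p^\infty]\to M/pM$ has kernel $pM[p^\infty]$, giving an embedding $M[p^\infty]/pM[p^\infty]\hookrightarrow M/pM$. By the hypothesis $(M/pM)[u]$, being a quotient of $H^{-1}[M/(p,u)]$, is finite-dimensional, and $M/pM$ (the $H^0$ of the derived $u$-complete complex $\cofib(M\xrightarrow{\cdot p}M)$) is derived $u$-complete, so its $u$-adic Tate module vanishes; hence \Cref{lem:about torsion module with bounded u-torsion} forces $(M/pM)[u^\infty]$ to be of finite length. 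The main obstacle is to use this to conclude that $M[p^\infty]/pM[p^\infty]$ is a finitely generated $k[[u]]$-module; once that is in hand, each $p^nM[p^\infty]/p^{n+1}M[p^\infty]$ is a quotient of it, hence finitely generated, while $\colim_n\bigl(p^nM[p^\infty]/p^{n+1}M[p^\infty]\bigr)\cong M/(pM+\textstyle\bigcup_nM[p^n])=0$; a direct system of finitely generated modules with surjective transitions and zero colimit has vanishing terms from some index on, so $p^mM[p^\infty]=p^{m+1}M[p^\infty]$ for some $m$, making $p^mM[p^\infty]$ a $p$-divisible submodule of the reduced module $M[p^\infty]$ (reduced because $\prolim[n]M[p^n]=T_p(M)=0$), hence zero. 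Carrying out the italicized step — turning finite length of $(M/pM)[u^\infty]$ into finite generation of $M[p^\infty]/pM[p^\infty]$ over $k[[u]]$ — is the delicate part, and requires the same careful bookkeeping with the $u$-adic completeness of the $M[p^n]$ as in the proofs of the two cited lemmas.
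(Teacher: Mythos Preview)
Your overall architecture matches the paper's: reduce to $T_p(M[\tfrac1u])=0$, kill $M[p^\infty,u^\infty]$ via \Cref{lem_for_compl_dir_sums}, and then try to force the $p$-torsion in $M$ to be bounded.  The steps after your acknowledged gap are fine (in particular, your Tate-module argument that a $p$-divisible submodule of $M[p^\infty]$ must vanish because $T_p(M)=0$ is correct).

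The gap, however, is not a matter of ``the same careful bookkeeping'' as in the cited lemmas --- it is the entire content of the proof, and the route you sketch (finite length of $(M/pM)[u^\infty]$ $\Rightarrow$ finite generation of $L\coloneqq M[p^\infty]/pM[p^\infty]$ over $k[[u]]$) is not a direct implication.  The embedding $L\hookrightarrow M/pM$ does not by itself bound $L$, since $M/pM$ need not be finitely generated and $L$ is typically not contained in $(M/pM)[u^\infty]$ (indeed $M[p]\cong k[[u]]^r$ sits inside $L$).  What the paper actually does is argue by contradiction on the common rank $s$ of the graded pieces $N_n=M[p^n]/M[p^{n-1}]$ (after a further reduction so that all $N_n\cong k[[u]]^{\oplus s}$).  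For $s>0$ one studies, for each $n$, the image $e_n\in\End_{k[[u]]}(k[[u]]^{\oplus s})$ of the extension class $[M_{n+1}]\in\Ext^1_{\mf S}(k[[u]]^{\oplus s},M_n)$; one shows each $e_n$ is injective with finite $u$-torsion cokernel, and identifies $\coker(e_n)$ with the successive quotients of the filtration $M_{n-1}/pM_n\hookrightarrow M_n/pM_{n+1}\hookrightarrow\cdots$ on $L$.  Either these inclusions are eventually equalities, which forces the maps $M[p^n]\xrightarrow{\cdot p}M[p^{n-1}]$ to be surjective and hence $T_p(M)\neq 0$, or infinitely many are strict, so $L$ carries an infinite increasing filtration with nonzero finite $u$-torsion graded pieces; then either $L[u]$ is infinite-dimensional (contradicting $(M/pM)[u]$ finite) or $L[u]$ stabilizes and \Cref{lem:about torsion module with bounded u-torsion} gives $T_u(L)\neq 0$ (contradicting $u$-completeness of $M/pM$).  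This extension-class analysis is the genuinely new ingredient your sketch is missing.
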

\begin{proof}Note that if we have a map $M\xra{f} M'$ with $M'$ satisfying the same conditions as $M$, such that the fiber $\fib(f)\in \Coh(\mf S)$ (equivalently, kernel and cokernel of $f$ are finitely generated) then the conclusion of the proposition holds for $M$ if it holds for $M'$. Indeed, by coherence $\fib(f)\widehat{\otimes}^{\mbb L}_{\mf S} \mf S\langle \frac{1}{u}\rangle\simeq \fib(f){\otimes}^{\mbb L}_{\mf S} \mf S\langle \frac{1}{u}\rangle$ and the latter is concentrated in cohomological degrees 0 and 1, since $M$ and $M'$ lie in the heart and $\mf S\langle \frac{1}{u}\rangle$ is flat over $\mf S$. The claim then follows by considering the long exact sequence of cohomology.
	
	Recall that, by \Cref{lem_for_compl_dir_sums} we have $M[p^\infty][u^\infty]=M[p^N][u^N]$ for some $N\ge 0$. Thus, by the above we can replace $M$ by $M/(M[p^N][u^N])$ and assume that $M[p^\infty]$ is $u$-torsion free. 
	
	Let $M_n\coloneqq M[p^n]$. We need to show that $H^{-1}(M\widehat\otimes_{\mf S}\mf S\langle \frac{1}{u}\rangle)$ vanishes, where 
	$$
	H^{-1}(M\widehat\otimes_{\mf S}\mf S\langle \tfrac{1}{u}\rangle) \ism \lim_n (\ldots \xra{\cdot p} M_2\otimes_{\mf S}\mf S[\tfrac{1}{u}]\xra{\cdot p}M_1\otimes_{\mf S}\mf S[\tfrac{1}{u}]).
	$$
Define $N_n\coloneqq M_n/M_{n-1}$; these are $k[[u]]$-modules. By our assumptions on $M$, $N_1=M_1=M[p]$ is finitely generated and $u$-torsion free, and so is isomorphic to $k[[u]]^{\oplus s}$ for some $s$. Multiplication by $p$ induces an embedding $N_n\hookrightarrow N_{n-1}$ for all $n$, ultimately giving a sequence of embeddings $\ldots\hookrightarrow N_n \hookrightarrow N_{n-1} \hookrightarrow \ldots \hookrightarrow N_1$. Since $N_1\simeq k[[u]]^{\oplus s}$, all $N_i$ are necessarily free; starting from some $k\gg 0$ the rank $\rk N_k$ stabilizes. Factoring $M$ by $M_k$ we can assume that all $N_i$ are isomorphic to $k[[u]]^{\oplus s}$ for a given $s$. Note that $s=0$ we are done (because then $p$-torsion in $M$ is in fact bounded).
	
	Our goal will now be to show that if $s\neq 0$ then either $M$ is not derived $p$-complete, or $M/p$ is too big: namely, either the $u$-torsion $(M/p)[u]\simeq \Tor^1_{\mf S}(M,\mf S/(p,u))$ is infinitely generated, or there is a $u$-divisible element in $M/p$ killed by $u$ (and thus $M/p$ is not derived $u$-complete). Note that we have an embedding $M[p^\infty]/pM[p^\infty]\hookrightarrow M/pM$. Moreover, for each $n$ we have an embedding $\iota_{n-1}\colon M_{n-1}/pM_n\hookrightarrow M_n/pM_{n+1}$, and $M[p^\infty]/pM[p^\infty]=\colim_n (M_{n-1}/pM_n)$. If $\iota_{n}$ are isomorphisms starting from some $k$, then replacing $M$ by $M/M_k$ we can assume that the maps $M_n \xra{\cdot p} M_{n-1}$ are surjective for all $n$ and thus the ($p$-adic) Tate module of $M$ is non-zero. It follows that $H^{-1}(M^\wedge_p)\neq 0$ and so $M$ is not derived $p$-complete --- a contradiction. We get that $\iota_k$ is a strict embedding for infinitely many $k$.
	
	Moreover, we claim that each $M_n/pM_{n+1}$ differs from $M_{n-1}/pM_n$ by a (finitely generated) $u$-torsion module. Indeed, we need to understand $(M_n/pM_{n+1})/(M_{n-1}/pM_n)\simeq M_n/(M_{n-1}+pM_{n+1})$. By our assumptions, each $M_{n+1}$ is an extension of $k[[u]]^{\oplus s}$ by $M_{n}$. Using the resolution
	$$
	0\tto \mf S \xymatrix{\ar[r]^{\cdot p}&} \mf S\tto k[[u]] \tto 0,
	$$
	for any $L\in \Mod_{\mf S}$ one can identify $\Ext^1_{\mf S}(k[[u]]^{\oplus s},L)$ with $\Hom_{\mf S}(k[[u]]^{\oplus s}, L/p)$. For each $n$, let $e_n\in \Hom_{\mf S}(k[[u]]^{\oplus s}, k[[u]]^{\oplus s})$ be the image of the class $[M_{n+1}]\in \Ext^1_{\mf S}(k[[u]]^{\oplus s},M_{n})$ under the natural map $\Ext^1_{\mf S}(k[[u]]^{\oplus s},M_{n})\ra \Ext^1_{\mf S}(k[[u]]^{\oplus s},k[[u]]^{\oplus s})$ (where we identify $M_{n}/M_{n-1}\simeq k[[u]]^{\oplus s}$) and the isomorphism $\Ext^1_{\mf S}(k[[u]]^{\oplus s},k[[u]]^{\oplus s})\simeq \Hom_{\mf S}(k[[u]]^{\oplus s}, k[[u]]^{\oplus s})$. Note that the image of $pM_{n+1}$ in $M_n/M_{n-1}$ is exactly the image of $e_n$. In particular, $M_n/(M_{n-1}+pM_{n+1})\simeq \coker(e_n)$. We claim that $e_n$ is an embedding: indeed, if $e_n(x)=0$ for some $x\in k[[u]]^{\oplus s}\simeq M_{n+1}/M_n$, then the extension $[M_{n+1}]\in \Ext^1_{\mf S}(k[[u]]^{\oplus s},M_{n})$ restricted to $k[[u]]\cdot x\hookrightarrow k[[u]]^{\oplus s}$ reduces to $M_{n-1}$ and this way any lift of $x$ to $M_{n+1}$ is in fact killed by $p^n$, and so belongs to $M_n$. We get that $x\in M_{n+1}/M_n$ is actually 0. It remains to note that $e_n$ is an embedding, its cokernel is a necessarily a finite $u$-torsion module.
	
	This way we get that $L\coloneqq M[p^\infty]/pM[p^\infty]$ has an infinite filtration $0\hookrightarrow L_1 \hookrightarrow L_2 \hookrightarrow \ldots \subset L$ such that $L_i/L_{i-1}$ are non-zero finitely-generated $u^{\infty}$-torsion modules. Consider $u$-torsion $L_i[u]$; starting from some $i$, $L_i[u]\simeq L_{i+1}[u]$, otherwise $\dim_k L[u]=\infty $, and we are done (since $L[u]\hookrightarrow M/p[u]$, but by assumption $\Tor^1_{\mf S}(M,k)\simeq M/p[u]$ should be finite-dimensional). But then by \Cref{lem:about torsion module with bounded u-torsion} the $u$-adic Tate module $T_u(L)$ is non-zero. Since $T_u$ is a left-exact functor (in the sense of abelian categories) it follows that $T_u(M/p)$ is non-zero, which is a contradiction, since $M\otimes^{\mbb L}\mf S/p$ and, consequently, $M/p$ should be derived $u$-adically complete.
	
\end{proof}	
We will apply the above proposition in the following context. Namely, let $K/\mbb Q_p$ be a complete discretely valued extension of $\mbb Q_p$ with a perfect residue field $k$. Consider the $p$-completed algebraic closure $C=\widehat{\ol K}$ and its tilt $C^\flat$. Choice of a uniformizer $\pi$ and a collection $\pi^\flat \coloneqq(\ldots, \pi^{1/p^2},\pi^{1/p},\pi)\in \mc O_C^\flat$ of compatible $p^n$-roots of $\pi$ gives a unique $W(k)$-linear map $\mf S \ra W(C^\flat)$ which sends $u$ to the Teichm\"uller lift $[\pi^\flat]$.
\begin{cor}\label{cor:completed tensor with W over S}
	Let $M\in \Mod_{\mf S}$ be a derived $(p,u)$-complete module such that $\tau^{<0}(M\otimes^{\mbb L}_{\mf S}\mf S/(p,u))\in \Coh(k)$. Then the $p$-completed tensor product
	$
	M\widehat\otimes_{\mf S}W(C^\flat) 
	$ is concentrated in cohomological degree 0.
\end{cor}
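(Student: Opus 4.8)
The plan is to factor the structure map $\mf S\to W(C^\flat)$ through the intermediate ring $\mf S\langle\tfrac{1}{u}\rangle\coloneqq\mf S[\tfrac1u]^\wedge_p$ — legitimate because $u=[\pi^\flat]$ is a unit in $W(C^\flat)$ — and to handle the two resulting steps separately. For the first step $\mf S\to\mf S\langle\tfrac1u\rangle$: since $M$ sits in cohomological degree $0$, localization gives $M\widehat\otimes^{\mbb L}_{\mf S}\mf S\langle\tfrac1u\rangle\simeq(M[\tfrac1u])^\wedge_p$, and by \Cref{prop:p-completed inversion of u is sometimes t-exact} this complex is concentrated in degree $0$; this is exactly where the hypothesis $\tau^{<0}(M\otimes^{\mbb L}_{\mf S}\mf S/(p,u))\in\Coh(k)$ is used. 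Set $M'\coloneqq M\widehat\otimes^{\mbb L}_{\mf S}\mf S\langle\tfrac1u\rangle$, a derived $p$-complete $\mf S\langle\tfrac1u\rangle$-module concentrated in degree $0$.

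For the second step $\mf S\langle\tfrac1u\rangle\to W(C^\flat)$, I would first show that $W(C^\flat)$ is a $p$-completely free $\mf S\langle\tfrac1u\rangle$-module. Reducing modulo $p$ turns $\mf S\langle\tfrac1u\rangle$ into the Laurent series field $k((u))$ and $W(C^\flat)$ into the perfect field $C^\flat$, and the structure map becomes the field embedding $k((u))\hookrightarrow C^\flat$, $u\mapsto\pi^\flat$ (note $\pi^\flat$ is a pseudo-uniformizer, so $k((\pi^\flat))\subset C^\flat$). Choosing a $k((u))$-basis of $C^\flat$ and lifting it to $W(C^\flat)$ produces a map $\widehat\bigoplus_{s\in S}\mf S\langle\tfrac1u\rangle\to W(C^\flat)$ that is an isomorphism modulo $p$, hence an isomorphism by derived Nakayama (both sides being derived $p$-complete). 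Therefore $M'\widehat\otimes_{\mf S\langle\frac1u\rangle}W(C^\flat)\simeq\widehat\bigoplus_{s\in S}M'$, a $p$-completed direct sum of copies of $M'$; since $M'$ is derived $p$-complete and concentrated in degree $0$, this is concentrated in degree $0$ by \Cref{lem:p-completed product abelian module}.

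It then remains to identify $M\widehat\otimes_{\mf S}W(C^\flat)$ with $M'\widehat\otimes_{\mf S\langle\frac1u\rangle}W(C^\flat)$. For this I would reduce the natural comparison map $M\otimes_{\mf S}W(C^\flat)\to M'\otimes_{\mf S\langle\frac1u\rangle}W(C^\flat)$ modulo $p$: it becomes $M/p\otimes_{k[[u]]}C^\flat\to (M/p)[\tfrac1u]\otimes_{k((u))}C^\flat$, which is an equivalence since $u=\pi^\flat$ is already invertible in $C^\flat$, so the source also computes $(M/p)[\tfrac1u]\otimes_{k((u))}C^\flat$. Being an equivalence modulo $p$, the map is an equivalence after derived $p$-completion, giving $M\widehat\otimes_{\mf S}W(C^\flat)\simeq M'\widehat\otimes_{\mf S\langle\frac1u\rangle}W(C^\flat)$, which lies in degree $0$ by the previous paragraph. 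This proves the corollary.

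I do not expect a genuine obstacle here: the substantive input — controlling $H^{-1}$ after inverting $u$, under the coherence hypothesis — is already packaged in \Cref{prop:p-completed inversion of u is sometimes t-exact}. The one delicate point is the order of operations. One must perform the ``invert $u$ and $p$-complete'' step \emph{first}, while $M$ still satisfies $\tau^{<0}(M\otimes^{\mbb L}_{\mf S}\mf S/(p,u))\in\Coh(k)$ (so that \Cref{prop:p-completed inversion of u is sometimes t-exact} applies), and only afterwards peel off the $p$-completed direct sum (for which only derived $p$-completeness, not coherence, is needed). Doing it in the other order — first base changing along the $(p,u)$-completely free extension $\mf S\to\Ainf$, then inverting $u$ and $p$-completing — fails, because although $M\widehat\otimes_{\mf S}\Ainf$ is still in degree $0$ by \Cref{prop:pu-completed direct sum}, it no longer satisfies $\tau^{<0}(-\otimes^{\mbb L}_{\mf S}\mf S/(p,u))\in\Coh(k)$ (its mod-$(p,u)$ reduction is an infinite direct sum of finite-dimensional $k$-vector spaces), so \Cref{prop:p-completed inversion of u is sometimes t-exact} no longer applies; compare \Cref{ex:counterexample}.
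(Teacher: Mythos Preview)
Your proof is correct and follows essentially the same approach as the paper: factor $\mf S\to W(C^\flat)$ through $\mf S\langle\tfrac{1}{u}\rangle$, apply \Cref{prop:p-completed inversion of u is sometimes t-exact} for the first step, then observe that $W(C^\flat)$ is $p$-completely free over $\mf S\langle\tfrac{1}{u}\rangle$ (via a lifted $k((u))$-basis of $C^\flat$) and invoke \Cref{lem:p-completed product abelian module} for the second. Your added discussion of the identification step and the remark on the order of operations are welcome elaborations, but the argument is the same.
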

\begin{proof}
Note that the $p$-completed tensor product functor $-\widehat\otimes_{\mf S}W(C^\flat)$ decomposes as the composition of  $-\widehat\otimes_{\mf S}\mf S\langle \frac{1}{u}\rangle$ and $-\widehat\otimes_{\mf S\langle \frac{1}{u}\rangle}W(C^\flat)$. Then by \Cref{prop:p-completed inversion of u is sometimes t-exact} $M\widehat\otimes_{\mf S}\mf S\langle \frac{1}{u}\rangle$ is concentrated in degree 0 (and is derived $p$-complete), while $-\widehat\otimes_{\mf S\langle \frac{1}{u}\rangle}W(C^\flat)$ (considered as a functor to $\mf S\langle \frac{1}{u}\rangle$-modules) can be rewritten as a $p$-completed direct sum (indeed, pick a basis $\{x_s\}$, $s\in S$, of $C^\flat$ over $k((u))\simeq \mf S\langle \frac{1}{u}\rangle/p$; then $W(C^\flat)$ as a $\mf S\langle \frac{1}{u}\rangle$-module is identified with the $p$-adic completion of  $\oplus_{s\in S}\mf S\langle \frac{1}{u}\rangle\cdot [x_s]\subset W(C^\flat)$). Thus we are done by \Cref{lem:p-completed product abelian module}.
\end{proof}

\paragraph{Solid $I$-completions and direct sums}

We also include another proof of (a weaker form of) \Cref{prop:pu-completed direct sum} which uses condensed mathematics and which was suggested to us by Peter Scholze (see \Cref{lem_discretness_of_cond_completion}). Some results below that precede the proof are then also used in the proof of $B_\crys$-comparison in \Cref{ssec: crystalline comparison}. Recall our notations from \Cref{constr:blacktriangle}: for a ring $R$ equipped with a finitely generated ideal $I$ and $M \in D(\Mod_R)$ we denote
$$M^\ccond_I \coloneqq  (\underline M)_I^\wedge \in {D(\Solid)}$$
the derived $I$-completion of $M$ in $D(\Solid)\subset D(\Cond(\Ab))$. $M^\ccond_I$ is naturally an $R^\bt_I$-module and naturally lands in the subcategory $D(R^\bt_I)_{I-\comp}$ of derived $I$-complete solid $R^\bt_I$-modules.

 Till the end of this section we will denote by $\ul{-}\colon  D(\Mod_{R})\ra D(\Solid)$ the natural functor extending the functor $\ul{-}\colon \Mod_{R} \ra \Solid$, associating to an $R$-module $M$ the solid group $\ul M$ represented by $M$ with discrete topology. We will completely ignore any other natural topologies that $M$ by accident can have.

\begin{prop}\label{prop_complited_direct_sums_t_exact_in_Solid}
Let $R$ be a ring equipped with a finitely generated ideal $I$. Consider the full subcategory $(D(\Mod_R))^\bt$ of $D(R^\bt_I)_{I_\comp}$ given by the essential image of the (solid) $I$-completion functor $(-)^\ccond_I\colon D(\Mod_R) \to D(R^\bt_I)_{I_\comp}$. Then for any set $S$ the $I$-completed direct sum functor
\begin{equation}\label{eq_solid_comp_direct_sum}
\widehat \bigoplus_S \colon (D(\Mod_R))^\bt \tto (D(\Mod_R))^\bt
\end{equation}
is $t$-exact.

\begin{proof}
For $M \in D(\Mod_R)$ we have
$$\underline{\Hom}_{D(\Solid)}\left(\prod_S \mathbb Z, M^\ccond_I\right) \simeq \prolim[n] \underline{\Hom}_{D(\Solid)}\left(\prod_S \mathbb Z, [\underline M/I^n]\right) \simeq \prolim[n] \bigoplus_S [\underline M/I^n] \simeq \widehat \bigoplus_S \underline M \simeq \widehat \bigoplus_S M^\ccond_I,$$
where in the second equivalence we have used the fact that for any complex of abelian groups $N$ the natural map
$$\bigoplus_S \underline N \tto \underline{\Hom}_{D(\Solid)}\left(\prod_S \mathbb Z, \underline N\right)$$
is an equivalence. This follows from the fact that by \cite[Corollary 6.1]{ClausenScholze_Condensed1} the product $\prod_S \mathbb Z$ is compact in $D(\Solid)$, hence the functor $\underline{\Hom}_{D(\Solid)}\left(\prod_S \mathbb Z, -\right)$ preserves small colimits, and the basic computation \cite[Proof of Proposition 5.7]{ClausenScholze_Condensed1}
$$\bigoplus_S \mathbb Z \xymatrix{\ar[r]_-\sim &} \underline{\Hom}_{D(\Solid)}\left(\prod_S \mathbb Z, \mathbb Z\right).$$
Since by \cite[Corollary 6.1]{ClausenScholze_Condensed1} the object $\prod_S \mathbb Z$ is projective, we deduce the desired $t$-exactness of the $I$-completed direct sum functor.
\end{proof}
\end{prop}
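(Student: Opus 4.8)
The plan is to identify the $I$-completed direct sum functor with the internal mapping complex out of the object $P_S \coloneqq \prod_S \mathbb{Z} \in \Solid$, and then to read off $t$-exactness from the fact that $P_S$ is a compact projective object of $\Solid$, using the structure theory of solid abelian groups from \cite{ClausenScholze_Condensed1}.

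First I would establish, for every $M \in D(\Mod_R)$, a natural equivalence
\[
\widehat\bigoplus_S M^\bt_I \;\simeq\; \underline{\Hom}_{D(\Solid)}\big(P_S,\, M^\bt_I\big).
\]
Writing $I = (x_1,\ldots,x_s)$, recall $M^\bt_I = \prolim[n]\underline{\Kos(M;x_1^n,\ldots,x_s^n)}$; since $\underline{\Hom}_{D(\Solid)}(P_S,-)$ is right adjoint to $-\otimes P_S$ in $D(\Solid)$ it commutes with limits, so the right-hand side is $\prolim[n]\underline{\Hom}_{D(\Solid)}(P_S,\underline{\Kos(M;x_1^n,\ldots,x_s^n)})$. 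The crucial input is that for a \emph{discrete} complex $N\in D(\Ab)$ the natural comparison map
\[
\bigoplus_S \underline{N} \;\longrightarrow\; \underline{\Hom}_{D(\Solid)}(P_S, \underline{N})
\]
is an equivalence: both sides are colimit-preserving functors of $N$ — the source because $\bigoplus_S(-)$ and $\underline{(-)}$ preserve colimits, the target because $P_S$ is compact in $D(\Solid)$ by \cite[Corollary 6.1]{ClausenScholze_Condensed1}, so $\underline{\Hom}_{D(\Solid)}(P_S,-)$ preserves small colimits — and they agree on $N=\mathbb{Z}$ by the computation $\bigoplus_S\mathbb{Z}\xrightarrow{\sim}\underline{\Hom}_{D(\Solid)}(P_S,\mathbb{Z})$ of \cite[Proof of Proposition 5.7]{ClausenScholze_Condensed1}. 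Plugging in $N=\Kos(M;x_1^n,\ldots,x_s^n)$ and passing to $\prolim[n]$ produces $\widehat\bigoplus_S\underline{M}$, which equals $\widehat\bigoplus_S M^\bt_I$ because the $I$-completed direct sum $(\bigoplus_S -)^\wedge_I$ only depends on the pro-system of reductions modulo powers of $I$ and reduction commutes with direct sums. The same remark gives $\widehat\bigoplus_S M^\bt_I \simeq (\bigoplus_S M)^\bt_I$, so the functor genuinely lands in $(D(\Mod_R))^\bt$.

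Granting this identification, the rest is formal: $P_S$ is projective in $\Solid$ by \cite[Corollary 6.1]{ClausenScholze_Condensed1}, hence $\underline{\Hom}_{D(\Solid)}(P_S,-)$ is $t$-exact on $D(\Solid)$, and since the $t$-structure on $(D(\Mod_R))^\bt$ is the one induced from $D(\Solid)$ it follows that $\widehat\bigoplus_S$ preserves both connective and coconnective objects. I expect the only non-bookkeeping ingredient — the part that really carries the argument — to be the pair of facts that $P_S = \prod_S\mathbb{Z}$ is compact and projective in $\Solid$ with $\underline{\Hom}_{D(\Solid)}(P_S,\mathbb{Z})\simeq\bigoplus_S\mathbb{Z}$; these are exactly the structural results of \cite{ClausenScholze_Condensed1}, so with them at hand I do not anticipate any serious obstacle, the remaining steps being routine manipulations of limits, colimits, and Koszul reductions.
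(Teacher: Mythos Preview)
Your proposal is correct and follows essentially the same approach as the paper: both identify the $I$-completed direct sum with $\underline{\Hom}_{D(\Solid)}(\prod_S\mathbb{Z},-)$ via compactness of $\prod_S\mathbb{Z}$ plus the base computation $\underline{\Hom}_{D(\Solid)}(\prod_S\mathbb{Z},\mathbb{Z})\simeq\bigoplus_S\mathbb{Z}$, and then conclude $t$-exactness from projectivity. Your write-up adds the explicit remark that $\widehat\bigoplus_S M^\bt_I\simeq(\bigoplus_S M)^\bt_I$, ensuring the functor lands in $(D(\Mod_R))^\bt$, which the paper leaves implicit.
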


Under additional finiteness assumptions on $M \in D(\Mod_R)$ we can also deduce the (left) $t$-exactness of the completed direct sum $\widehat \bigoplus_S M$ in $D(\Mod_R)$ (as opposed to $(D(\Mod_R))^\bt\subset D(R^\bt_I)_{I-\comp}$).
\begin{prop}\label{prop:completed direct sum preserves t-structure}
Let $R$ be a Noetherian ring equipped with an ideal $I$ generated by a regular sequence. Let $M \in D^{\ge 0}(\Mod_R)$ be a derived $I$-complete complex of $R$-modules such that $\tau^{<0}([M/I])$ is coherent over $R/I$. Then for any set $S$ the $I$-completed direct sum $\widehat\bigoplus_S M$ is still concentrated in degrees $\ge 0$.

\begin{proof}
Since the evaluation on a point functor preserves limit and colimits we see that the $I$-competed direct sum functor $M \mapsto \widehat \bigoplus_S M$ factors as the composition
\begin{equation}\label{eq_factorization_of_completion}
\xymatrix{D(\Mod_R) \ar[r]^-{(-)^\ccond_I} & D(R^\bt_I)_{I-\comp} \ar[r]^-{\widehat\bigoplus_S} & D(R^\bt_I)_{I-\comp} \ar[r]^-{-(*)} & D(\Mod_R)_{I-\comp}.
}\end{equation}
Since by \Cref{prop_complited_direct_sums_t_exact_in_Solid} the completed direct sum functor \eqref{eq_solid_comp_direct_sum} is $t$-exact we deduce that if $M^\ccond_I \in D^{\ge 0}(\Solid)$ then so is the $I$-completed direct sum $\widehat \bigoplus_S M^\ccond_I$. Moreover, since the evaluation at a point is $t$-exact too, by factorization \eqref{eq_factorization_of_completion} for any $M$ as above the $I$-completed direct sum $\widehat \bigoplus_S M$ is also concentrated in non-negative cohomological degrees. So it is left to prove that if $M \in D^{\ge 0}(\Mod_R)$ is derived $I$-complete complex of $R$-modules such that $\tau^{<0}([M/I])$ is coherent over $R/I$ then $M^\ccond_I$ lies in $D^{\ge 0}(\Solid)$. This is the content of \Cref{lem_discretness_of_cond_completion} below.
\end{proof}
\end{prop}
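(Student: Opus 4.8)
To complete the proof of \Cref{prop:completed direct sum preserves t-structure} I would establish \Cref{lem_discretness_of_cond_completion}: if $R$ is Noetherian, $I=(x_1,\dots,x_s)$ is generated by a regular sequence, and $M\in D^{\ge 0}(\UMod R)$ is derived $I$-complete with $\tau^{<0}([M/I])\in\Coh(R/I)$, then $M^\ccond_I\in D^{\ge 0}(\Solid)$. By \eqref{eq:derived completion}, now applied inside $D(\Solid)$, we have $M^\ccond_I=(\ul M)^\wedge_I\simeq\prolim[n]\ul{\Kos(M;x_1^n,\dots,x_s^n)}$, where $\ul{(-)}\colon\DMod R\to D(\Solid)$ is the (exact) discretization functor. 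Since $\Kos(M;x_1^n,\dots,x_s^n)\simeq M\otimes_R^{\mathbb L}R/(x_1^n,\dots,x_s^n)=:[M/(x_1^n,\dots,x_s^n)]$ and the cofiltered systems of ideals $\{(x_1^n,\dots,x_s^n)\}_n$ and $\{I^n\}_n$ are cofinal in each other, the pro-system $\{\ul{\Kos(M;x_1^n,\dots,x_s^n)}\}_n$ is pro-isomorphic to $\{\ul{[M/I^n]}\}_n$, so it suffices to work with the latter.

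The crucial point is that, for every $m\le-1$, the tower $\{H^m([M/I^n])\}_n$ is \emph{pro-zero} — not merely $\prolim$-acyclic. To see this I would first show that each $H^m([M/I^n])$ is a finitely generated $R$-module for $m\le-1$: since $x_1,\dots,x_s$ is a regular sequence, $\gr_I R\simeq\Sym_{R/I}(I/I^2)$ is a polynomial ring over $R/I$, so every $I^t/I^{t+1}$ is finite free over $R/I$; hence $R/I^n$ has a finite filtration by $R/I$-free modules, and tensoring with $M\otimes_R^{\mathbb L}(-)$ gives $[M/I^n]$ a finite filtration whose graded pieces are finite direct sums of $[M/I]$. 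Passing to cohomology, $H^m([M/I^n])$ is a finite iterated extension of subquotients of finite sums of $H^m([M/I])$, which is finitely generated for $m\le-1$ by hypothesis; Noetherianity then forces $H^m([M/I^n])$ to be finitely generated for all $n$ and all $m\le-1$. A tower of finitely generated modules over a Noetherian ring is automatically Mittag--Leffler, hence pro-isomorphic to a tower with surjective transition maps. On the other hand, derived $I$-completeness of $M$ together with $M\in D^{\ge 0}$ gives, via the Milnor sequence for the identification $M\simeq\prolim[n][M/I^n]$ in $\DMod R$, that $\prolim[n]H^m([M/I^n])=0$ for $m\le-1$. A tower with surjective transition maps and vanishing inverse limit is the zero tower, so $\{H^m([M/I^n])\}_n$ is pro-zero for all $m\le-1$.

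It then remains to plug this in. Being pro-zero, the tower $\{\ul{H^m([M/I^n])}\}_n$ is pro-isomorphic to the zero tower in $\Solid$, so $R^0{\prolim[n]}$ and $R^1{\prolim[n]}$ of it both vanish. The Milnor exact sequence
\[
0\to R^1\!{\prolim[n]}\,\ul{H^{-k-1}([M/I^n])}\to H^{-k}(M^\ccond_I)\to R^0\!{\prolim[n]}\,\ul{H^{-k}([M/I^n])}\to 0,
\]
valid since $\Solid$ has countable products, then shows $H^{-k}(M^\ccond_I)=0$ for every $k\ge1$, as both $-k-1$ and $-k$ are $\le-1$. Combined with the a priori bound $M^\ccond_I\in D^{\ge-s}(\Solid)$ coming from \Cref{lem:solid completion is right t-exact}, this yields $M^\ccond_I\in D^{\ge0}(\Solid)$.

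I expect the middle paragraph to be the only real obstacle: the content is not a formal manipulation of the Milnor sequence but the passage from the \emph{classical} vanishing ``$\prolim=0$'' to a statement robust under $\ul{(-)}$, where $\prolim$ and its first derived functor of discretized towers generally differ from their values in $\DMod R$. The mechanism that makes it go through is pro-zeroness, which is preserved by any functor, and establishing pro-zeroness is precisely where both hypotheses are spent — Noetherianity (to obtain the Mittag--Leffler property gratis and to take subquotients) and regularity of the sequence (so that $\gr_I R$ is polynomial and the coherence of $\tau^{<0}[M/I]$ spreads to all the $[M/I^n]$). The rest — exactness of $\ul{(-)}$, cofinality of the two ideal systems, the Milnor sequence in $D(\Solid)$, and the a priori bound — is routine.
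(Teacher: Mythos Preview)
Your reduction to pro-zeroness of the towers $\{H^m([M/I^n])\}_n$ for $m\le -1$ is a clean idea, and the Milnor-sequence endgame in $D(\Solid)$ is correct. The gap is the sentence ``A tower of finitely generated modules over a Noetherian ring is automatically Mittag--Leffler.'' This is false: take $R=\mathbb Z$ and the tower $\cdots\xrightarrow{\times p}\mathbb Z\xrightarrow{\times p}\mathbb Z$; all terms are finitely generated, the images $p^n\mathbb Z$ never stabilise, and indeed this tower has $\prolim=0$ and $\prolim^1=0$ without being pro-zero. What you actually need is finite \emph{length}, not finite generation: a tower of finite-length modules has DCC on submodules of each term, hence is Mittag--Leffler. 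Your filtration argument only produces $H^m([M/I^n])$ as a finitely generated $R/I^n$-module, and $R/I^n$ is Artinian precisely when $R/I$ is. So your argument goes through verbatim under the extra hypothesis that $R/I$ is Artinian (which does cover the paper's main application $R=\mathfrak S$, $I=(p,u)$), but not for the general Noetherian $R$ in the statement. Even in the $\mathfrak S$-case the paper establishes the relevant boundedness of torsion (\Cref{lem_for_compl_dir_sums}) by a hands-on argument rather than by invoking Mittag--Leffler, which suggests that pro-zeroness is not a free lunch.

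The paper's proof of \Cref{lem_discretness_of_cond_completion} takes a different route that avoids pro-zeroness entirely. It first shows that $\tau^{<0}(M^\ccond_I)$ has coherent reduction mod $I$ (using that subquotients of discrete condensed groups are discrete, \Cref{prop_sub_quot_of_disc}), then appeals to a structural lemma (\Cref{lem_fp_condensed}) saying that an eventually connective $I$-complete solid complex whose reduction has finitely generated cohomology in each degree has all cohomology finitely presented over $R^\ccond_I$; in particular each $H^m(M^\ccond_I)$ is recovered from its value on the point. Since that value is $H^m(M)=0$ for $m<0$, one is done. This argument uses the Noetherian hypothesis only to run a Nakayama-style lifting in $\Solid$, not to force Mittag--Leffler, and it works without any Artinian assumption on $R/I$.
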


Before proving \Cref{lem_discretness_of_cond_completion} we first establish a few auxiliary results useful in their own rights.
\begin{prop}\label{prop_sub_quot_of_disc}
Any subgroup and any quotient group of a discrete condensed abelian group is discrete.

\begin{proof}
Let $M$ be an abelian group and let $\underline M \to Q$ be a surjective map of condensed abelian groups. We claim that $Q$ is also discrete. To see this note that the counit of adjunction gives rise to a commutative square
$$\xymatrix{
\underline M \ar[d]^\sim \ar[r] & \underline{Q(*)} \ar[d]^q \\
\underline M \ar@{->>}[r] & Q. \\
}$$
Since the left vertical and the bottom horizontal arrows are surjective, the right vertical map $q$ is also surjective.

It is left to prove that $q$ is injective. In fact, we claim that for any condensed set $X$ the natural map $q\colon \underline{X(*)} \to X$ is injective. To see this let $S = \prolim S_\alpha$ be a pro-finite set. Unwinding the definitions we see that $q(S)$ is given by the natural map
$$\indlim[\alpha] X(S_\alpha) \tto X(S).$$
But note that for each $\alpha$ the projection $S \to S_\alpha$ admits a section (since the target is a disjoint union of points), hence the induced map $X(S_\alpha) \to X(S)$ is injective. The assertion then follows from the fact that a filtered colimit of monomorphisms of sets is a monomorphism.

Finally, let $N \to \underline M$ be an injection of condensed abelian groups. Then $N \simeq \ker(\underline M \surj \underline M/N)$. By the previous we know that $\underline M/N$ is discrete. Since the condensation functor $L \mapsto \underline L$ is fully faithful and exact we deduce that $N$ is also discrete.
\end{proof}
\end{prop}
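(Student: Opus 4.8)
The plan is to prove the statement for quotients first and then deduce the case of subgroups from it. For quotients the whole point is to show that for an arbitrary condensed set $X$ the counit $q\colon\underline{X(*)}\to X$ of the adjunction between discretization $\underline{(-)}\colon\Ab\to\Cond(\Ab)$ and evaluation at the point is a monomorphism; granting this, a surjection $\underline M\twoheadrightarrow Q$ in $\Cond(\Ab)$ quickly forces $Q$ to be discrete.

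First I would record the quotient argument assuming the monomorphism claim. Given a surjection $f\colon\underline M\twoheadrightarrow Q$, naturality of the counit along $f$ yields a commutative square in which the left vertical map is the counit at the discrete object $\underline M$ — hence the identity, in particular an isomorphism — the bottom map is $f$, and the right vertical map is $q\colon\underline{Q(*)}\to Q$. Thus $f$ factors through $q$; since $f$ is an epimorphism, so is $q$. Being both a monomorphism and an epimorphism, $q$ is an isomorphism, so $Q\simeq\underline{Q(*)}$ is discrete.

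Next I would prove the monomorphism claim. A map of sheaves on the pro-\'etale site of a point is a monomorphism as soon as it is injective on sections over every profinite set, so fix $S=\lim_\alpha S_\alpha$ with the $S_\alpha$ finite. Using that a condensed set $X$ turns finite coproducts into products one gets $X(S_\alpha)=\prod_{s\in S_\alpha}X(*)$, hence $\underline{X(*)}(S)=\colim_\alpha X(S_\alpha)$, and $q(S)$ is the natural map $\colim_\alpha X(S_\alpha)\to X(S)$ induced by the projections $S\to S_\alpha$. Each such projection admits a continuous section (pick a point of $S$ in each fibre over the finite set $S_\alpha$), so each transition map $X(S_\alpha)\to X(S)$ is a split injection; as a filtered colimit of injections of sets is an injection, $q(S)$ is injective, and the claim follows.

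Finally, for the subgroup case, let $N\hookrightarrow\underline M$ in the abelian category $\Cond(\Ab)$ and write $N=\ker\!\big(\underline M\twoheadrightarrow\underline M/N\big)$. By the quotient case $\underline M/N$ is discrete; since $\underline{(-)}$ is fully faithful the map $\underline M\to\underline M/N$ equals $\underline g$ for some homomorphism $g\colon M\to(\underline M/N)(*)$, and since $\underline{(-)}$ is exact, $N=\ker\underline g=\underline{\ker g}$ is discrete. The only genuinely non-formal input is the monomorphism claim for $q$ — concretely, the identification $\underline{X(*)}(S)\simeq\colim_\alpha X(S_\alpha)$ over profinite $S$ together with the observation that split injections remain injective after passing to a filtered colimit; everything else is bookkeeping with the adjunction and with exactness of discretization.
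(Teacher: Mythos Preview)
Your proof is correct and follows essentially the same route as the paper's: first handle quotients via the counit square, reduce to showing that $q\colon\underline{X(*)}\to X$ is a monomorphism for any condensed set $X$ by evaluating on profinite $S=\lim_\alpha S_\alpha$ and using sections of $S\to S_\alpha$, then deduce the subgroup case from the quotient case via exactness and full faithfulness of $\underline{(-)}$. The only cosmetic difference is that you spell out the identification $\underline{X(*)}(S)\simeq\colim_\alpha X(S_\alpha)$ a bit more explicitly.
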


\begin{lem}\label{lem_fp_condensed}
Let $R$ be a Noetherian ring equipped with an ideal $I$ generated by a regular sequence. Let $L \in D^{<\infty}(\Mod_R(\Solid))$ be an eventually connective $I$-complete complex of $R$-modules such that all cohomology of $[L/I]$ are finitely generated over $\underline{R/I}$. Then all cohomology of $L$ are finitely presentable $R^\ccond_I$-modules. In particular, for each $m\in \mathbb Z$ the natural map
$$\underline{H^m(L(*))} \sotimes_{\underline{R_I^\wedge}} R^\ccond_I \tto H^m(L)$$
is an isomorphism.

\begin{proof}
By shifting if necessary we can assume without loss of generality that $H^{>0}(L) \simeq 0$. By induction it is enough to prove that $H^0(L)$ is finitely presentable. Before proceed further note that by Mittag--Leffler (which applies in the setting of condensed abelian groups e.g.~by \cite[Proposition 4.2.8, Proposition 3.2.3(1), and Proposition 3.1.9]{BS_proetale}) $R^\ccond_I$ is concentrated in degree $0$. Let now $\alpha\colon (R^\ccond_I)^{\oplus m} \to L$ be a map inducing a surjection $(R/I)^{\oplus m} \surj H^0([L/I])$. We claim that $\alpha$ induces surjection on $H^0$. To see this it is enough to prove that $H^0(C) \simeq 0$, where $C$ denotes the cofiber of $\alpha$. By $I$-completeness of $C$ and Milnor's exact sequence we have
$$\xymatrix{0 \ar[r] & \prolim[n]{}^1 H^{-1}([C/I^n]) \ar[r] & H^0(C) \ar[r] & \prolim[n] H^0([C/I^n]) \ar[r] & 0.}$$
By induction on $n$ all $H^0([C/I^n]) \simeq 0$. Moreover, from $H^0([C/I]) \simeq 0$ it follows that for all $n$ the map $H^{-1}([C/I^n]) \to H^{-1}([C/I^{n-1}])$ is surjective. By Mittag--Leffler it follows that the $\lim^1$-term vanishes as well. This shows that $H^0(L)$ is finitely generated $R^\ccond_I$-module. To prove that $H^0(L)$ is finitely presented it is left to show that the kernel of $H^0(\alpha)\colon (R^\ccond_I)^{\oplus m} \surj H^0(L)$ is also finitely generated. But $H^0(\alpha)$ is covered by $H^{-1}(C)$, which is finitely generated by the previous argument (applied to $L^\prime = C[-1]$). Hence so is the kernel of $H^0(\alpha)$, and hence $H^0(L)$ is finitely presented.

The last assertion follows from the fact that the functor
$$\Mod_{R^\ccond_I}(\Solid)^\fg \tto \Mod_{R_I^\wedge}^\fg, \qquad N \mapsto N(*)$$
right adjoint to $\underline - \otimes_{\underline{R_I^\wedge}} R^\ccond_I$ is fully faithful. This reduces to the fact that the evaluation at a point functor preserves finite colimits and that
$$R_I^\wedge \simeq \End_{\Mod_{R_I^\wedge}}(R_I^\wedge) \xymatrix{\ar[r]^-\sim &} \End_{\Mod_{R^\ccond_I}(\Solid)}(R^\ccond_I).\qedhere$$
\end{proof}
\end{lem}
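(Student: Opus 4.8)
The plan is to reduce the whole statement to the assertion that the top nonvanishing cohomology group of $L$ is finitely presented over $R^\ccond_I$: once this is known for all $L$ as in the lemma, the lower $H^m(L)$ follow by a downward induction, replacing $L$ by the intelligent truncations $\tau^{\le m}L$, which are again $I$-complete and bounded and — using that the higher $H^j(L)$ have by then been shown finitely generated — still have finitely generated cohomology modulo $I$. So after shifting I may assume $L\in D^{\le 0}$ with $H^0(L)\neq 0$; here $H^0([L/I])\simeq H^0(L)/I$, which is finitely generated by hypothesis. I would also record at the outset that $R^\ccond_I=(\underline R)^\wedge_I$ is concentrated in degree $0$, its potential negative cohomology being a $\lim^1$ of the tower $\{R/I^n\}$ with surjective transition maps, hence zero by Mittag--Leffler (which is available in $D(\Solid)$, cf.\ \cite{BS_proetale}).

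For finite generation of $H^0(L)$ I would lift a finite generating set of $H^0([L/I])$ to a map $\alpha\colon (R^\ccond_I)^{\oplus m}\to L$ that is surjective on $H^0$ modulo $I$, and put $C\coloneqq\cofib(\alpha)$, so that $H^0([C/I])=0$. Since $I$ is generated by a regular sequence, each graded piece $I^{n-1}/I^n$ is a finite free $R/I$-module, so $C\otimes^{\mathbb L}_R(I^{n-1}/I^n)$ is a finite direct sum of copies of $[C/I]$; running the filtration of $R/I^n$ by powers of $I$, one deduces by induction on $n$ that $H^0([C/I^n])=0$ and that the transition maps $H^{-1}([C/I^n])\twoheadrightarrow H^{-1}([C/I^{n-1}])$ are surjective. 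The Milnor exact sequence then computes $H^0(C)$ from $\lim_n H^0([C/I^n])=0$ and $\lim^1_n H^{-1}([C/I^n])=0$ (Mittag--Leffler again), so $H^0(C)=0$, $H^0(\alpha)$ is surjective, and $H^0(L)$ is a finitely generated $R^\ccond_I$-module.

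To pass from finite generation to finite presentation, the long exact sequence of $(R^\ccond_I)^{\oplus m}\to L\to C$ exhibits $\ker H^0(\alpha)$ as a quotient of $H^{-1}(C)=H^0(C[-1])$. But $C\in D^{\le -1}$, so $C[-1]$ is again an $I$-complete bounded complex with top degree $0$, and $[C[-1]/I]\simeq\fib\big([L/I]\to(R/I)^{\oplus m}\big)[-1]$ still has finitely generated cohomology; hence the previous paragraph applies to $C[-1]$ and shows $H^0(C[-1])$ is finitely generated, so $\ker H^0(\alpha)$ is too, and $H^0(L)$ is finitely presented. Finally, the displayed point-evaluation isomorphism follows from the fact that $N\mapsto N(*)$ is $R^\wedge_I$-linear, $t$-exact, commutes with finite colimits, is right adjoint to $\underline{(-)}\sotimes_{\underline{R^\wedge_I}}R^\ccond_I$, and is fully faithful on finitely generated modules because $\End_{R^\ccond_I}(R^\ccond_I)\simeq R^\wedge_I$; the counit $\underline{N(*)}\sotimes_{\underline{R^\wedge_I}}R^\ccond_I\to N$ is therefore an equivalence for finitely presented $N$, and one applies this with $N=H^m(L)$.

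The step I expect to be the main obstacle is the vanishing $H^0(C)=0$ — and its reuse for $C[-1]$: it requires simultaneously controlling the tower $\{H^0([C/I^n])\}_n$ and the $\lim^1$ of $\{H^{-1}([C/I^n])\}_n$, and it is exactly here that the regular-sequence hypothesis on $I$ is used in an essential way, through the freeness of the graded pieces $I^{n-1}/I^n$ over $R/I$, to make the relevant transition maps surjective so that Mittag--Leffler applies.
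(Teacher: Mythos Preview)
Your proof is correct and follows essentially the same route as the paper's: reduce to the top cohomology by shifting and induction, lift generators mod $I$ to a map $\alpha$ from a finite free $R^\ccond_I$-module, show $H^0(\cofib\alpha)=0$ via the Milnor sequence and Mittag--Leffler (using the regular-sequence hypothesis for the induction on $n$), then bootstrap to finite presentation by applying the same argument to $C[-1]$, and finally deduce the displayed isomorphism from full faithfulness of $N\mapsto N(*)$ on finitely presented modules. One small slip: your formula $[C[-1]/I]\simeq\fib([L/I]\to(R/I)^{\oplus m})[-1]$ has the arrow reversed and an extraneous shift --- it should read $\fib\bigl((R/I)^{\oplus m}\to[L/I]\bigr)$ --- but this does not affect the conclusion that its cohomology is finitely generated.
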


\begin{lem}\label{lem_discretness_of_cond_completion}
Let $R$ be a Noetherian ring equipped with an ideal $I$ generated by a regular sequence. Let $M \in D^{\ge 0}(\Mod_R)$ be a derived $I$-complete complex of $R$-modules such that $\tau^{<0}([M/I])$ is coherent over $R/I$. Then $M^\ccond_I$ lies in $D^{\ge 0}(\Solid)$.

\begin{proof}
First we claim that $[\tau^{<0}(M^\ccond_I)/I]$ is coherent over $R/I$. Indeed, from the fiber sequence
$$\xymatrix{[\tau^{<0}(M^\ccond_I)/I] \ar[r] & [M^\ccond_I/I] \simeq [M/I] \ar[r] & [\tau^{\ge 0}(M^\ccond_I)/I]}$$
for each $m<0$ we obtain the $5$-term exact sequence
$$\xymatrix{
H^{m-1}([M/I]) \ar[r]^-{f_{m-1}} & H^{m-1}([\tau^{\ge 0}(M^\ccond_I)/I]) \ar[r] & H^{m}([\tau^{<0}(M^\ccond_I)/I]) \ar[r] & H^m([M/I]) \ar[r]^-{f_m} & H^m([\tau^{\ge 0}(M^\ccond_I)/I]).
}$$
It follows that $H^{m}([\tau^{<0}(M^\ccond_I)/I])$ is an extension of the $\ker(f_m)$ by $\Im(f_{m-1})$. Since by assumption both $H^{m-1}([M/I])$ and $H^m([M/I])$ are discrete finitely generated $R/I$-modules so are $\Im(f_{m-1})$ and $\ker(f_m)$ (here we use the fact that a subgroup and a quotient of a discrete condensed abelian group is discrete, see \Cref{prop_sub_quot_of_disc}) and hence also so is $H^{m}([\tau^{<0}(M^\ccond_I)/I])$.

By applying \Cref{lem_fp_condensed} to $L = \tau^{<0}(M^\ccond_I)$ we deduce that for each $m<0$ the natural map
$$\underline{H^m(M^\ccond_I)(*)} \otimes_{\underline{R_I^\wedge}} R^\ccond_I \tto H^m(M^\ccond_I)$$
is an isomorphism. But $H^m(M^\ccond_I)(*) \simeq H^m(M^\ccond_I(*)) \simeq H^m(M) \simeq 0$, hence $\tau^{<0}(M^\ccond_I) \simeq 0$.
\end{proof}
\end{lem}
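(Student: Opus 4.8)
The plan is to prove the equivalent statement $\tau^{<0}(M^\ccond_I) \simeq 0$ directly. The starting point is the compatibility $M^\ccond_I(*) \simeq M^\wedge_I \simeq M$: evaluation at the point $* \in (*)_\proet$ commutes with all limits and colimits, so it takes the solid derived $I$-completion of $\underline M$ to the ordinary derived $I$-completion of $M$, which is $M$ itself by hypothesis. In particular $M^\ccond_I(*)$ is connective, so any negative cohomology of $M^\ccond_I$ is purely ``condensed'' in nature. I would also record that $[M^\ccond_I/I]$ agrees with $[M/I]$ (viewed as a discrete condensed complex), since reduction modulo $I$ annihilates the effect of the $I$-adic completion, and that $\tau^{<0}[M/I]$ is coherent over $R/I$ by assumption.

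Next I would control the reduction $[\tau^{<0}(M^\ccond_I)/I]$. Applying $[-/I]$ to the fiber sequence $\tau^{<0}(M^\ccond_I) \to M^\ccond_I \to \tau^{\ge 0}(M^\ccond_I)$ and running the long exact cohomology sequence exhibits, for each $m < 0$, the group $H^m([\tau^{<0}(M^\ccond_I)/I])$ as an extension of a subobject of $H^m([M/I])$ by a quotient of $H^{m-1}([M/I])$. Both $H^m([M/I])$ and $H^{m-1}([M/I])$ are discrete and finitely generated over $R/I$ by the coherence hypothesis on $\tau^{<0}[M/I]$, and by \Cref{prop_sub_quot_of_disc} subobjects and quotients of discrete condensed abelian groups are again discrete; so $H^m([\tau^{<0}(M^\ccond_I)/I])$ is discrete and finitely generated over $\underline{R/I}$ for every $m < 0$ (and it vanishes for $m \ge 0$).

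With this in hand I would apply \Cref{lem_fp_condensed} to $L \coloneqq \tau^{<0}(M^\ccond_I)$: it is bounded above, derived $I$-complete (being a truncation of a derived $I$-complete solid complex, whose cohomology modules are themselves derived $I$-complete), and all cohomology of $[L/I]$ is finitely generated over $\underline{R/I}$ by the previous paragraph. The lemma then yields, for every $m$, an isomorphism $\underline{H^m(L)(*)} \otimes_{\underline{R^\wedge_I}} R^\ccond_I \xrightarrow{\sim} H^m(L)$. But $H^m(L)(*) \simeq H^m(L(*)) \simeq H^m(\tau^{<0}(M)) \simeq 0$ for all $m < 0$, because $-(*)$ is $t$-exact and $M$ is connective. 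Hence $H^m(L) \simeq 0$ for all $m$, so $L \simeq 0$ and $M^\ccond_I \in D^{\ge 0}(\Solid)$.

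The real content — and the step I expect to be the main obstacle — is the rigidity input \Cref{lem_fp_condensed}: it says that a bounded-above, derived $I$-complete solid $R^\ccond_I$-module whose mod-$I$ reduction has finitely generated cohomology is reconstructed from its value at a point via $\underline{(-)} \otimes_{\underline{R^\wedge_I}} R^\ccond_I$, so that its vanishing is detected after evaluation at $*$. The coherence hypothesis on $\tau^{<0}[M/I]$ is there precisely to place $L = \tau^{<0}(M^\ccond_I)$ in the range of that lemma, and the only delicate bookkeeping is the descent of finite generation along sub/quotients via \Cref{prop_sub_quot_of_disc}; once rigidity is available, the conclusion is forced by connectivity of $M$.
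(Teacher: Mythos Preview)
Your proposal is essentially identical to the paper's proof: you use the same fiber sequence $\tau^{<0}(M^\ccond_I) \to M^\ccond_I \to \tau^{\ge 0}(M^\ccond_I)$ reduced modulo $I$, the same long exact sequence analysis together with \Cref{prop_sub_quot_of_disc} to control $H^m([\tau^{<0}(M^\ccond_I)/I])$, and then apply \Cref{lem_fp_condensed} exactly as the paper does to conclude from $H^m(M^\ccond_I)(*)\simeq H^m(M)=0$. Even the phrasing of the extension step (``a subobject of $H^m([M/I])$ by a quotient of $H^{m-1}([M/I])$'') matches the paper's description of $A_m$ as an extension of $\ker(f_m)$ by a piece controlled by $H^{m-1}([M/I])$.
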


\paragraph{Complete tensor product.}
In this paragraph we show that for a ring $R$ equipped with a finitely generated ideal $I$ the $I$-completed tensor product functor $\cotimes_R$ is closely related to the solid tensor product functor $\sotimes_{R^\bt_I}$ in $\Mod_{R^\bt_I}(D(\Solid))$. We would like to mention that most proofs here were explained to us by Peter Scholze (and in particular are due to him).
\begin{prop}
Let $R$ be a ring with a finitely generated ideal $I \unlhd R$. Then for a pair of bounded above derived $I$-complete complexes of $R^\bt_I$-modules $X, Y \in D(R^\bt_I)^{<\infty}_{I_\comp}$ their tensor product $X \sotimes_{R^\ccond_I} Y$ is also derived $I$-complete.

\begin{proof}
Note that for any $f\in I$ the natural map $\mbb Z[x]\ra R$ sending $x$ to $f$ extends to a map $\mbb Z[[x]]\ra R^\bt_I$ (since $ \mbb Z[[x]]\simeq \mbb Z[x]^\bt_x$), where $\mbb Z[[x]]$ is considered with $x$-adic topology. Note that a complex of $R^\ccond_I$-modules is $I$-complete if and only if it is $(x)$-complete as a $\mathbb Z[[x]]$-module for each such ring morphism $\mathbb Z[[x]] \to R^\bt_I$ (that maps $x$ to an element of $I$). So it is enough to prove that $X \sotimes_{R^\ccond_I} Y$ is $(x)$-complete for each such homomorphism $\mathbb Z[[x]] \to R^\bt_I$. 

To see the latter note that the tensor product $X \sotimes_{R^\ccond_I} Y$ is equivalent to the geometric realization of the simplicial complex of solid $\mathbb Z[[x]]$-modules with terms given by
\begin{equation}\label{eq_solid_tensor_product_auto_complete}
X \sotimes_{\mathbb Z[[x]]} R^\ccond_I \sotimes_{\mathbb Z[[x]]} R^\ccond_I\sotimes_{\mathbb Z[[x]]} \ldots \sotimes_{\mathbb Z[[x]]} R^\ccond_I \sotimes_{\mathbb Z[[x]]} Y.
\end{equation}
Note that by the boundness assumption on $X$ and $Y$ for each $i\in \mathbb Z$ the $i$-cohomology module of the complex $X \sotimes_{R^\ccond_I} Y$ receives contribution only from finitely many terms of \eqref{eq_solid_tensor_product_auto_complete}. Since derived complete modules are closed under (co)kernels and extensions and since $X \sotimes_{R^\ccond_I} Y$ is $(x)$-complete if and only if all of its cohomology $H^i(X \sotimes_{R^\ccond_I} Y)$ are, it is enough to prove that all $X \sotimes_{\mathbb Z[[x]]} R^\ccond_I \sotimes_{\mathbb Z[[x]]}\ldots \sotimes_{\mathbb Z[[x]]}R^\ccond_I \sotimes_{\mathbb Z[[x]]} Y$ are $(x)$-complete. This reduces the assertion to the special case $R = \mathbb Z[[x]]$, $I = (x)$.

To treat this case let $\omega_1$ be the smallest uncountable cardinal. Since the category $D(\Mod_{\mathbb Z[[x]]}(\Solid))$ is compactly generated by objects of the form $\prod_I \mathbb Z[[x]]$ it is also $\omega_1$-compactly generated by objects of the form $\bigoplus \prod_I \mathbb Z[[x]]$, where the direct sum is at most countable. In particular, both $X, Y$ can be written as a completion of an $\omega_1$-filtered colimit of objects of the form $\bigoplus \prod_I \mathbb Z[[x]]$. But note that the $(x)$-completion functor, being a countable limit (hence $\omega_1$-small), commutes with $\omega_1$-filtered colimits. Hence both sides of the comparison map
$$X\sotimes_{\mathbb Z[[x]]} Y \tto X \cotimes_{\mathbb Z[[x]]}^\solid Y$$
preserve $\omega_1$-filtered colimits in both variables. It follows that it is enough to prove that $X\sotimes_{\mathbb Z[[x]]} Y$ is $(x)$-complete for $X = Y = \widehat \bigoplus_{\mathbb N} \prod_I \mathbb Z[[x]]$.

Concretely, the completed direct sum $\widehat \bigoplus_{\mathbb N} \prod_I \mathbb Z[[x]]$ can be described as
$$\colim_{f\colon \mathbb N \to \mathbb N, f(n) \to \infty} \prod_{n \in \mathbb N} x^{f(n)} \prod_I \mathbb Z[[x]] \xymatrix{\ar@{^(->}[r] &} \prod_{\mathbb N} \prod_{I} \mathbb Z[[x]],$$
where the colimit is taken over the diagram of functions $f\colon \mathbb N \to \mathbb N$ tending to $\infty$ when $n\to \infty$ partially ordered by the pointwise inequality. Indeed, the module above is classically $x$-adicly complete and $x$-torsion free, hence is derived $x$-complete. Moreover, the natural embedding $\bigoplus_{\mathbb Z} \prod_I \mathbb Z[[x]] \inj \prod_{\mathbb Z} \prod_I \mathbb Z[[x]]$ factors through a map
$$\alpha \colon \bigoplus_{\mathbb Z} \prod_I \mathbb Z[[x]] \tto \colim_{f\colon \mathbb N \to \mathbb N, f(n) \to \infty} \prod_{n \in \mathbb N} x^{f(n)} \prod_I \mathbb Z[[x]].$$
So it is enough to see that $\alpha/(x)$ is an isomorphism, which is clear. Under this identification we have
\begin{gather*}
\widehat \bigoplus_{\mathbb N} \prod_I \mathbb Z[[x]] \sotimes_{\mathbb Z[[x]]} \widehat \bigoplus_{\mathbb N} \prod_I \mathbb Z[[x]] \simeq \colim_{f,g\colon \mathbb N \to \mathbb N, f(n),g(n) \to \infty} \prod_{\mathbb N\times \mathbb N} x^{f(n) + g(m)} \prod_{I\times I} \mathbb Z[[x]],\\
\widehat \bigoplus_{\mathbb N} \prod_I \mathbb Z[[x]] \cotimes_{\mathbb Z[[x]]}^\solid \widehat \bigoplus_{\mathbb N} \prod_I \mathbb Z[[x]] \simeq \colim_{h\colon \mathbb N \times \mathbb N \to \mathbb N, h(n, m) \to \infty} \prod_{\mathbb N \times \mathbb N} x^{h(n,m)} \prod_{I\times I} \mathbb Z[[x]].
\end{gather*}
So it is enough to prove that among all functions $h\colon \mathbb N \times \mathbb N \to \mathbb N$ tending to $\infty$ the functions of the form $f(n) + g(m)$ are cofinal. Concretely, for each such $h$ we need to construct a pair of functions $f, g$ such that $h(n,m) \le f(n) + g(m)$ for all $n,m \in \mathbb N$. This inequality is satisfied e.g.~by
$$f(n) = g(n) := \max_{i,j\le n} h(i,j).\qedhere$$
\end{proof}
\end{prop}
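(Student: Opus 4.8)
The plan is to reduce the statement, through a chain of formal manipulations, to an explicit computation with $(x)$-completed direct sums of products of $\mathbb Z[[x]]$, where the combinatorial heart of the matter becomes visible. \emph{First} I would reduce to a principal ideal: for each $f\in I$ the assignment $x\mapsto f$ gives a ring map $\mathbb Z[x]\to R^\ccond_I$ which, $R^\ccond_I$ being derived $I$-complete, extends along $\mathbb Z[x]\to\mathbb Z[[x]]\simeq\mathbb Z[x]^\ccond_{(x)}$. A complex of $R^\ccond_I$-modules is derived $I$-complete if and only if it is derived $(x)$-complete as a $\mathbb Z[[x]]$-module along each such map, and since $I$ is finitely generated it suffices to test this on a finite generating set; so one only needs $X\sotimes_{R^\ccond_I}Y$ to be derived $(x)$-complete for each such $x$.

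\emph{Next} I would present $X\sotimes_{R^\ccond_I}Y$ as the geometric realization of the two-sided bar complex whose terms are $X\sotimes_{\mathbb Z[[x]]}R^\ccond_I\sotimes_{\mathbb Z[[x]]}\cdots\sotimes_{\mathbb Z[[x]]}R^\ccond_I\sotimes_{\mathbb Z[[x]]}Y$. Because $X$ and $Y$ are bounded above, each cohomology group of the realization receives a contribution from only finitely many terms of this bar complex; since derived $(x)$-complete objects are closed under (co)kernels and extensions and a complex is derived $(x)$-complete iff all its cohomology is, it then suffices to know that each bar term is derived $(x)$-complete. This reduces the whole assertion to the universal case $R=\mathbb Z[[x]]$, $I=(x)$.

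In that case the category $D(\Mod_{\mathbb Z[[x]]}(\Solid))$ is compactly generated by the objects $\prod_S\mathbb Z[[x]]$, hence $\omega_1$-compactly generated by countable direct sums of these, so every bounded-above derived $(x)$-complete object is the $(x)$-completion of an $\omega_1$-filtered colimit of countable direct sums $\bigoplus_{\mathbb N}\prod_S\mathbb Z[[x]]$. As the $(x)$-completion functor is a countable limit it commutes with $\omega_1$-filtered colimits, so both $X\sotimes_{\mathbb Z[[x]]}Y$ and its $(x)$-completion preserve $\omega_1$-filtered colimits in each variable, and I may assume $X=Y=\widehat\bigoplus_{\mathbb N}\prod_S\mathbb Z[[x]]$. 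Identifying this module inside $\prod_{\mathbb N}\prod_S\mathbb Z[[x]]$ with $\colim_f\prod_n x^{f(n)}\prod_S\mathbb Z[[x]]$, the colimit running over $f\colon\mathbb N\to\mathbb N$ with $f(n)\to\infty$, the solid tensor product becomes $\colim_{f,g}\prod_{n,m}x^{f(n)+g(m)}\prod_{S\times S}\mathbb Z[[x]]$ and its $(x)$-completion $\colim_h\prod_{n,m}x^{h(n,m)}\prod_{S\times S}\mathbb Z[[x]]$, the latter over functions $h\colon\mathbb N\times\mathbb N\to\mathbb N$ tending to $\infty$; so it remains to check that the functions $(n,m)\mapsto f(n)+g(m)$ are cofinal among all such $h$, which holds by taking $f(n)=g(n)=\max_{i,j\le n}h(i,j)$. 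The $(x)$-completed product is then manifestly derived $(x)$-complete, and since it agrees with $X\sotimes_{\mathbb Z[[x]]}Y$ here, so is the solid tensor product.

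The step I expect to be the main obstacle is the last one: recognizing that derived $(x)$-completeness can be tested on the single universal object $\widehat\bigoplus_{\mathbb N}\prod_S\mathbb Z[[x]]$ — which rests on $\omega_1$-compact generation of the solid derived category and on the fact that countable limits commute with $\omega_1$-filtered colimits — and then making the explicit colimit descriptions precise enough that everything reduces to the elementary statement that sums $f(n)+g(m)$ dominate arbitrary properly divergent $h(n,m)$. The earlier reductions (to a principal ideal, and to the terms of the bar complex) are routine once one grants the standard boundedness facts about bar constructions.
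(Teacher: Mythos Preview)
Your proposal is correct and follows essentially the same approach as the paper's proof: the reduction to a principal ideal via $\mathbb Z[[x]]\to R^\ccond_I$, the bar-complex reduction to $R=\mathbb Z[[x]]$, the $\omega_1$-compact generation argument to pass to $X=Y=\widehat\bigoplus_{\mathbb N}\prod_S\mathbb Z[[x]]$, and the final cofinality check with $f(n)=g(n)=\max_{i,j\le n}h(i,j)$ are all identical to what the paper does.
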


Note that the category $D(\Mod_{R})_{I-\comp}$ is endowed with the natural symmetric monoidal structure given by the $I$-completed tensor product $-\cotimes_R-$ 
\begin{cor}\label{sotims_vs_cotimes}
Let $R$ be a ring and let $I \unlhd R$ be a finitely generated ideal. Then the functor
$$D^{<\infty}(\Mod_{R})_{I-\comp}\tto D(\Mod_{R_I^\bt}(\Solid)), \qquad M \mapsto M^\ccond_I$$
is symmetric monoidal.

\begin{proof}
For $M,N \in \DMod{R}_I^\wedge$ there is a natural comparison map $M^\ccond_I \sotimes_{R^\ccond_I} N^\ccond_I \to (M\cotimes_{R^\ccond_I} N)^\ccond_I$ which becomes an equivalence modulo $I$. Hence it is enough to prove that $M^\ccond_I \sotimes_{R^\ccond_I} N^\ccond_I$ is derived $I$-complete in $D(\Mod_{R^\ccond_I}(\Solid))$, which is a content of the previous proposition.
\end{proof}
\end{cor}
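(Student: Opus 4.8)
The plan is to exhibit $M\mapsto M^\bt_I$ as a composite of strong symmetric monoidal functors, so that the symmetric monoidal structure — together with all its higher coherences, which the terse comparison below does not address — is produced formally, and then to identify the tensor product it lands in with the solid tensor $\sotimes_{R^\bt_I}$ of the statement. Recall (\Cref{constr:blacktriangle} and the remark after it) that $M^\bt_I=(\ul M)^\wedge_I$ is the derived $I$-completion of the discrete solid module $\ul M$, and that $(-)^\bt_I$ depends only on the pro-system $\{M/I^n\}_n$; in particular it inverts the completion map $M\to M^\wedge_I$, so that $(M\cotimes_RN)^\bt_I\simeq(M\otimes^{\mathbb L}_RN)^\bt_I$.

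First I would record the two monoidal inputs. (i) Condensation of discrete modules $\ul{(-)}\colon D^{<\infty}(\Mod_{R})\to D^{<\infty}(\Mod_{\ul R}(\Solid))$ is strong symmetric monoidal, i.e.\ $\ul{M\otimes^{\mathbb L}_RN}\simeq\ul M\sotimes_{\ul R}\ul N$ — a standard fact about solid modules (both functors preserve colimits, are exact, and agree on free modules). (ii) Derived $I$-completion $L\coloneqq(-)^\wedge_I$ is a symmetric monoidal Bousfield localization of $(D(\Mod_{\ul R}(\Solid)),\sotimes_{\ul R})$: its acyclic objects are exactly the $Z$ killed by $\Kos(\ul R;\underline{x})$ for a finite generating set $\underline{x}$ of $I$ (since $Z\mapsto Z^\wedge_I$ is an equivalence after $-\otimes^{\mathbb L}_{\ul R}\Kos(\ul R;\underline{x})$, plus a finite-filtration argument to pass from $\underline{x}$ to $\underline{x}^n$), and that class is visibly a $\sotimes_{\ul R}$-ideal; hence $D(\Mod_{\ul R}(\Solid))_{I-\comp}$ carries the monoidal structure $X\mathbin{\bar\otimes}Y\coloneqq(X\sotimes_{\ul R}Y)^\wedge_I$ for which $L$ is strong symmetric monoidal. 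Composing $L$ with $\ul{(-)}$ and using that $(-)^\bt_I$ inverts $(-)^\wedge_I$, this produces a strong symmetric monoidal functor $(D^{<\infty}(\Mod_{R})_{I-\comp},\cotimes_R)\to(D^{<\infty}(\Mod_{\ul R}(\Solid))_{I-\comp},\bar\otimes)$ with underlying functor $M\mapsto M^\bt_I$ and comparison equivalence $(M\cotimes_RN)^\bt_I\simeq L\ul{M\otimes^{\mathbb L}_RN}\simeq M^\bt_I\mathbin{\bar\otimes}N^\bt_I$.

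Then one checks that on the image of $(-)^\bt_I$ the localized tensor $\bar\otimes$ coincides with the honest solid tensor $\sotimes_{R^\bt_I}$ over $R^\bt_I=(\ul R)^\wedge_I$ appearing in the statement. For bounded-above $I$-complete $R^\bt_I$-modules $X,Y$, the base-change map $X\sotimes_{\ul R}Y\to X\sotimes_{R^\bt_I}Y$ is an isomorphism after $-\otimes^{\mathbb L}_{R^\bt_I}R^\bt_I/I$ (as $\ul R/I\simeq R^\bt_I/I$), hence an $L$-equivalence, while $X\sotimes_{R^\bt_I}Y$ is already derived $I$-complete by the preceding Proposition; therefore $X\mathbin{\bar\otimes}Y=(X\sotimes_{\ul R}Y)^\wedge_I\simeq X\sotimes_{R^\bt_I}Y$. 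Feeding this identification into the previous paragraph exhibits the functor of the statement as strong symmetric monoidal, which is the claim. The hard part will be exactly the ingredient quoted here — derived $I$-completeness of $X\sotimes_{R^\bt_I}Y$ for bounded-above $I$-complete $X,Y$ — which is the content of the preceding Proposition and is also what pins the statement to $D^{<\infty}$: over all of $D(\Mod_{R})_{I-\comp}$ the solid tensor product need not stay complete (cf.\ \Cref{ex:counterexample} and the remark after it), so $\bar\otimes$ would cease to agree with $\sotimes_{R^\bt_I}$ and the argument would break down. The remaining inputs — symmetric monoidality of condensation on discrete modules, and of the $I$-completion localization — are standard once phrased as above, but should be spelled out rather than merely asserted.
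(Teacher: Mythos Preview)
Your proof is correct and rests on the same decisive input as the paper---the preceding Proposition guaranteeing that $X\sotimes_{R^\bt_I}Y$ is derived $I$-complete for bounded-above $I$-complete $X,Y$---but the organization is genuinely different and, in one respect, more careful.

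The paper's proof simply writes down a comparison map $M^\bt_I\sotimes_{R^\bt_I}N^\bt_I\to(M\cotimes_R N)^\bt_I$, notes it is an equivalence modulo $I$, and invokes the Proposition to conclude both sides are $I$-complete. This establishes the binary comparison but says nothing about the higher coherences (associativity, symmetry, unit) required of a symmetric monoidal functor at the $\infty$-categorical level. Your factorization $(-)^\bt_I\simeq L\circ\ul{(-)}$ through two strong symmetric monoidal functors---condensation of discrete modules and the $I$-completion Bousfield localization---produces the full coherent structure formally, landing in the localized tensor $\bar\otimes$; your subsequent identification $\bar\otimes\simeq\sotimes_{R^\bt_I}$ on the image (again via the Proposition, exactly as in the paper) then transports it to the target monoidal structure of the statement. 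So the paper's argument is essentially a sketch at the level of objects, while yours is a structured proof that actually delivers the claimed symmetric monoidal functor. The trade-off is that you must verify the two ``standard'' ingredients you flag (symmetric monoidality of $\ul{(-)}$ for the solid tensor over a discrete ring, and of $I$-completion as a monoidal localization); these are indeed routine, but you are right that they should be spelled out rather than merely asserted.
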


\begin{cor}\label{sotims_vs_cotimes_with_localization}
Let $R$ be a ring equipped with a finitely generated ideal $I \unlhd R$ and assume that $R$ is derived $I$-complete. Let $S$ be a localization of $R$. Then for each $M,N \in D^{<\infty}(\Mod_S(\Solid))$ with condensed structure coming from derived $I$-complete $R$-lattices (i.e.~$M \simeq (M_0)^\ccond_I \sotimes_{R^\ccond_I} S^\ccond_I$ for some derived $I$-complete $R$-module $M_0$ and similarly for $N$) their solid tensor product $M\sotimes_{S^\ccond_I} N$ is equivalent to $(M_0\cotimes_R N_0)^\ccond_I \sotimes_{R^\ccond_I} S^\ccond_I$.

\begin{proof}
This follows from the associativity of the tensor product and the previous corollary:
$$M\otimes^\solid_{S^\ccond_I} N \simeq ((M_0)^\ccond_I\sotimes_{R^\ccond_I} S)\sotimes_{S^\ccond_I} ((N_0)^\ccond_I\sotimes_{R^\ccond_I} S^\ccond_I) \simeq ((M_0)^\ccond_I \sotimes_{R^\ccond_I} (N_0)^\ccond_I)\sotimes_{R^\ccond_I} S^\ccond_I \simeq (M_0\cotimes_R N_0)^\ccond_I \sotimes_{R^\ccond_I} S^\ccond_I.\qedhere$$
\end{proof}
\end{cor}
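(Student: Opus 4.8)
The plan is to deduce this formally from the symmetric monoidality of the solid $I$-completion functor, \Cref{sotims_vs_cotimes}, together with the standard base-change formalism for the solid tensor product; once the hypotheses are unwound essentially everything is bookkeeping. First I would use the assumption to write $M\simeq (M_0)^\ccond_I\sotimes_{R^\ccond_I}S^\ccond_I$ and $N\simeq (N_0)^\ccond_I\sotimes_{R^\ccond_I}S^\ccond_I$ with $M_0,N_0$ bounded-above derived $I$-complete complexes of $R$-modules (boundedness above can be arranged since the lattices are taken to come from bounded-above complexes of $R$-modules). Recall that $R^\ccond_I$ and $S^\ccond_I$ are $\mathbb E_\infty$-algebra objects of $D(\Solid)$ and that the module categories $D(R^\ccond_I)$, $D(S^\ccond_I)$ carry the symmetric monoidal structure given by $\sotimes$; in particular the base-change functor $-\sotimes_{R^\ccond_I}S^\ccond_I\colon D(R^\ccond_I)\to D(S^\ccond_I)$ is symmetric monoidal.

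The computation then proceeds as
\[
M\sotimes_{S^\ccond_I}N\;\simeq\;\bigl((M_0)^\ccond_I\sotimes_{R^\ccond_I}S^\ccond_I\bigr)\sotimes_{S^\ccond_I}\bigl((N_0)^\ccond_I\sotimes_{R^\ccond_I}S^\ccond_I\bigr)\;\simeq\;\bigl((M_0)^\ccond_I\sotimes_{R^\ccond_I}(N_0)^\ccond_I\bigr)\sotimes_{R^\ccond_I}S^\ccond_I,
\]
where the first equivalence is the hypothesis and the second is the symmetric monoidality of $-\sotimes_{R^\ccond_I}S^\ccond_I$ --- equivalently, associativity and commutativity of $\sotimes$ via the identity $(A\sotimes_{R^\ccond_I}S^\ccond_I)\sotimes_{S^\ccond_I}(C\sotimes_{R^\ccond_I}S^\ccond_I)\simeq(A\sotimes_{R^\ccond_I}C)\sotimes_{R^\ccond_I}S^\ccond_I$. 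It then remains to invoke \Cref{sotims_vs_cotimes}: since $M_0,N_0\in D^{<\infty}(\Mod_R)_{I-\comp}$ and the $I$-completion functor is symmetric monoidal on this category, we get $(M_0)^\ccond_I\sotimes_{R^\ccond_I}(N_0)^\ccond_I\simeq(M_0\cotimes_RN_0)^\ccond_I$. Substituting into the last display yields $M\sotimes_{S^\ccond_I}N\simeq(M_0\cotimes_RN_0)^\ccond_I\sotimes_{R^\ccond_I}S^\ccond_I$, as claimed.

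I do not expect a genuine obstacle here; the one step deserving a moment of care is checking the applicability of \Cref{sotims_vs_cotimes}, i.e.\ that $M_0\cotimes_RN_0$ again lies in $D^{<\infty}(\Mod_R)_{I-\comp}$. Derived $I$-completeness of the completed tensor product is automatic, and boundedness above holds as soon as $M_0$ and $N_0$ are bounded above; if one only assumes $M,N\in D^{<\infty}(\Mod_S(\Solid))$, one should first observe that the lattices $M_0,N_0$ may be replaced by bounded-above ones without affecting $M$ and $N$, using that neither $(-)^\ccond_I$ nor $-\sotimes_{R^\ccond_I}S^\ccond_I$ lowers connectivity in a relevant way. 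Beyond that, the argument is pure manipulation inside the symmetric monoidal structure on $D(\Solid)$.
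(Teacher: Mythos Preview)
Your proposal is correct and follows essentially the same approach as the paper: both arguments reduce the computation to associativity/symmetric monoidality of the solid tensor product together with \Cref{sotims_vs_cotimes}, arriving at the same chain of equivalences. Your additional remark about ensuring the lattices $M_0,N_0$ are bounded above is a reasonable bit of care, but otherwise the two proofs are the same.
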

\begin{ex}
Take $R = \mathbb Z_p$, $I = (p)$, $S = \mathbb Q_p$. Then for a pair of $\mathbb Q_p$-vector spaces $V, W$ with topology coming from $(p)$-complete $\mathbb Z_p$-lattices $V_0, W_0$ the solid tensor product of their condensation $V^\ccond_p \sotimes_{\mathbb Q_p} W^\ccond_p$ is equivalent to the "continuous" tensor product $(V_0\cotimes_{\mathbb Z_p} W_0)^\ccond_p\sotimes_{\mathbb Z_p} \mathbb Q_p$.
\end{ex}

We will also need the following variant of \Cref{sotims_vs_cotimes}.
\begin{cor}\label{sotimes_vs_cotimes}
Let $R$ be a ring and let $I \unlhd R$ be a finitely generated ideal. Let $N \in D^{<\infty}(\Mod_R)_{I-\comp}$ be a complex of $R$-modules such that the derived $I$-completed tensor product functor $-\cotimes_R N$ has bounded cohomological amplitude. Then for each $M \in D(\Mod_{R})_{I-\comp}$ the natural map
$$M^\ccond_I \sotimes_{R^\ccond_I} N^\ccond_I \tto (M\cotimes_R N)^\ccond_I$$
is an equivalence.

\begin{proof}
It is enough to show that $M^\ccond_I \sotimes_R N^\ccond_I$ is $I$-complete. Write $M$ as a colimit of its Whitehead tower $M \simeq \indlim \tau^{\le n} M$. By \Cref{sotims_vs_cotimes_with_localization} for each $n$ the tensor product $(\tau^{\le n} M)^\ccond_I \sotimes_{R^\ccond_I} N^\ccond_I$ is equivalent to $(\tau^{\le n}(M) \cotimes_R N)^\ccond_I$, in particular it is $I$-complete. Moreover, since by assumption both functors $-\cotimes_R N$ and $(-)^\ccond_I$ have bounded cohomological amplitude, for each $i\in \mathbb Z$ the cohomology $H^i(M^\ccond_I \sotimes_{R^\ccond_I} N^\ccond_I)$ is isomorphic to $H^i((\tau^{\le n}(M) \cotimes_R N)^\ccond_I)$ for some $n \gg 0$. It follows that all cohomology of $M^\ccond_I \sotimes_R N^\ccond_I$ are derived $I$-complete, hence so is $M^\ccond_I \sotimes_R N^\ccond_I$.
\end{proof}
\end{cor}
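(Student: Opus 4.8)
The strategy follows the bounded-above case: I would reduce the whole assertion to the single claim that the source $M^\ccond_I\sotimes_{R^\ccond_I}N^\ccond_I$ is derived $I$-complete. There is an evident natural comparison map from it to $(M\cotimes_R N)^\ccond_I$, and reducing modulo $I$ identifies both sides with $\underline{(M/I)\otimes^{\mbb L}_{R/I}(N/I)}$ (for the target one uses $X^\wedge_I/I\simeq X/I$ and then distributes the reduction through the derived tensor product; for the source one distributes it through the solid tensor product). Since the target is derived $I$-complete by construction, once the source is known to be derived $I$-complete the comparison map is an equivalence by derived Nakayama, finitely generated $I$ together with $I$-completeness making reduction mod $I$ conservative. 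So the entire content is the $I$-completeness of $M^\ccond_I\sotimes_{R^\ccond_I}N^\ccond_I$.

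To obtain it, I would write $M\simeq\colim_n\tau^{\le n}M$ as the filtered colimit of its Postnikov truncations; each $\tau^{\le n}M$ is bounded above and still derived $I$-complete, because cohomological truncations preserve derived $I$-completeness. On bounded-above $I$-complete complexes \Cref{sotims_vs_cotimes} (symmetric monoidality of $(-)^\ccond_I$) applies and gives $(\tau^{\le n}M)^\ccond_I\sotimes_{R^\ccond_I}N^\ccond_I\simeq(\tau^{\le n}M\cotimes_R N)^\ccond_I$, which is derived $I$-complete, being literally of the form $(-)^\ccond_I$. I would then pass to cohomology degreewise: the cofiber of $\tau^{\le n}M\to\tau^{\le n+1}M$ is the module $H^{n+1}(M)$ placed in degree $n+1$, and, since $-\cotimes_R N$ has bounded amplitude by hypothesis while $(-)^\ccond_I$ is right $t$-exact and left $t$-exact up to a shift by the number of generators of $I$ (\Cref{lem:solid completion is right t-exact}), applying $(-)^\ccond_I\sotimes_{R^\ccond_I}N^\ccond_I$ to that cofiber lands it in cohomological degrees $\ge n+1-C$ for a constant $C$ independent of $n$. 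Hence the transition maps stabilize on each $H^i$, so $H^i\big(M^\ccond_I\sotimes_{R^\ccond_I}N^\ccond_I\big)\simeq H^i\big((\tau^{\le n}M\cotimes_R N)^\ccond_I\big)$ for $n\gg i$, a derived $I$-complete module. As all of its cohomology modules are derived $I$-complete, $M^\ccond_I\sotimes_{R^\ccond_I}N^\ccond_I$ is derived $I$-complete, and the reduction above closes the proof.

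The only genuinely delicate point is this last degreewise argument: one must check that the shift $C$ is uniform in $n$ --- this is exactly where the hypothesis on $-\cotimes_R N$ is used, and it cannot be dropped (see the counterexamples around \Cref{ex:counterexample}) --- and that filtered colimits are $t$-exact in $D(\Solid)$, so that forming $H^i$ commutes with $\colim_n$ and so that $(-)^\ccond_I\sotimes_{R^\ccond_I}N^\ccond_I$ commutes with this colimit. Both are routine, and the substantive input --- monoidality of solid $I$-completion on bounded-above complexes --- is already available as \Cref{sotims_vs_cotimes}; so I do not anticipate a real obstacle beyond careful bookkeeping.
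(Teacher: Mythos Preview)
Your proposal is correct and follows essentially the same approach as the paper: reduce to $I$-completeness of the source, filter $M$ by its Whitehead tower, invoke the bounded-above monoidality result (\Cref{sotims_vs_cotimes}) on each stage, and use the bounded-amplitude hypothesis to stabilize cohomology degreewise. You supply more detail than the paper does (the explicit mod-$I$ reduction and Nakayama step, the cofiber analysis, and the remark on $t$-exactness of filtered colimits in $D(\Solid)$), but the structure and the substantive inputs are identical.
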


\section{Berthelot--Ogus isomorphism}
In this section we prove the Berthelot--Ogus isomorphism between rational crystalline and de Rham cohomology of smooth Artin stacks in a possibly ramified base field case. In the main part of the text we need this isomorphism to take into account the natural $p$-adic topology present on both sides, so we prove the equivalence in question between the crystalline and de Rham cohomology considered as objects of $D(\Cond(\Ab))$. The argument is a more or less straightforward adaptation of the one from \cite[Section 2]{Berthelot_Fisocrystals}. As above let $K$ be a complete discretely valued field of characteristic $0$ with the ring of integers $\mathcal O_K$ and perfect residue field $k$ of characteristic $p$.

\begin{prop}\label{Verschiebung_crys}
Let $S$ be a base scheme of characteristic $p$ and let $S\inj T$ be a pd-thickening. Then for each smooth quasi-compact quasi-separated Artin $S$-stack $\mstack X$ and non-negative integer $n$ there exists a functorial morphism
$$V_{\le n} \colon \tau^{\le n} R\Gamma_\crys(\mstack X/T) \tto \tau^{\le n} R\Gamma_\crys(\mstack X^{(1)}/T)$$
such that the composition of $V_n$ with the $n$-th truncation of the pullback along the relative Frobenius $\phi_{\mstack X} \colon \mstack X \to \mstack X^{(1)}$ (in both orders) is equivalent to the multiplication by $p^n$:
$$\tau^{\le n}(\phi_{\mstack X}^*) \circ V_{\le n} \simeq V_{\le n} \circ \tau^{\le n}(\phi_{\mstack X}^*) \simeq p^n.$$

\begin{proof}
Since homotopy limits are left $t$-exact one has
$$\tau^{\le n} R\Gamma_\crys(-/T) \simeq \tau^{\le n}\left(\Ran_{\Aff_{S}^{\sm, \op}}^{\Stk_{S}^{\sm, \op}} \tau^{\le n} R\Gamma_\crys(-/T)\right),$$
and similarly for $\tau^{\le n} R\Gamma_\crys(-^{(1)}/T)$. It follows that it is enough to construct a functorial map $V_n$ for affine schemes over $S$. This is a content of \cite[Theorem 8.20]{BO_NotesOnCrys}.
\end{proof}
\end{prop}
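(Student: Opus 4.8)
The plan is to reduce the statement to the case of smooth affine $S$-schemes, where it is the classical Berthelot--Ogus Verschiebung of \cite[Theorem 8.20]{BO_NotesOnCrys}, and then transport the construction through the right Kan extension that defines crystalline cohomology of Artin stacks. The engine of the reduction is that right Kan extension along $\Aff_{S}^{\sm,\op}\hookrightarrow \Stk_{S}^{\sm,\op}$ is computed by homotopy limits and is therefore left $t$-exact: for a connective functor $G$ on $\Aff_{S}^{\sm,\op}$ the fiber of $\tau^{\le n}\Ran\,G\to \tau^{\le n}\Ran(\tau^{\le n}G)$ is $\Ran(\tau^{>n}G)$, which is $(n+1)$-connective and hence annihilated by $\tau^{\le n}$. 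Applied to $G=R\Gamma_{\crys}(-/T)|_{\Aff^{\sm}_{S}}$, this identifies $\tau^{\le n}R\Gamma_{\crys}(-/T)$ on all smooth qcqs Artin stacks with $\tau^{\le n}$ of the right Kan extension of its restriction to affine schemes; the same holds for the twisted functor, since $\mstack X\mapsto \mstack X^{(1)}=\mstack X\times_{S,\Frob_{S}}S$ commutes with colimits of stacks and preserves smooth affine schemes, so that $\mstack X\mapsto R\Gamma_{\crys}(\mstack X^{(1)}/T)$ is itself the right Kan extension of $U\mapsto R\Gamma_{\crys}(U^{(1)}/T)$.

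It thus suffices to produce a morphism of functors $V_{\le n}\colon \tau^{\le n}R\Gamma_{\crys}((-)/T)\Rightarrow \tau^{\le n}R\Gamma_{\crys}((-)^{(1)}/T)$ on $\Aff_{S}^{\sm,\op}$ together with homotopies $\tau^{\le n}(\phi^{*})\circ V_{\le n}\simeq p^{n}$ and $V_{\le n}\circ \tau^{\le n}(\phi^{*})\simeq p^{n}$; applying the functor $\tau^{\le n}\Ran(-)$, which preserves composition of natural transformations and carries the self-map $p^{n}$ to $p^{n}$, then yields the assertion for an arbitrary smooth qcqs Artin stack $\mstack X$. For a smooth affine $Y/S$ this is \cite[Theorem 8.20]{BO_NotesOnCrys}, established there by choosing a smooth lift $\tilde Y$ of $Y$ over $T$ together with a lift $\tilde F\colon \tilde Y\to \tilde Y^{(1)}$ of the relative Frobenius (available because $Y$ is affine and smooth), realizing $R\Gamma_{\crys}(Y/T)$ by the de Rham complex of $\tilde Y$ so that $\phi^{*}$ becomes $\tilde F^{*}$, and exploiting the divisibility of $\tilde F^{*}$ by $p^{j}$ on $j$-forms; together with the fact that $\tilde F^{*}$ is a rational equivalence on the de Rham cohomology of a smooth scheme, this yields a $p^{n}$-inverse of $\phi^{*}$ on the $n$-th truncation. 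In the condensed formulation of the appendix one simply runs this discrete argument and then condenses, exactly as in the base-change and crystalline comparison propositions there.

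Since the affine statement itself is imported from \cite{BO_NotesOnCrys}, the only point inside the present argument that genuinely requires care is \emph{functoriality at the affine level}: $V_{\le n}$ and the two homotopies must be built compatibly with the transition maps in $\Aff_{S}^{\sm,\op}$ --- that is, independently of the chosen Frobenius lifts up to coherent homotopy --- because only then do the relations $\tau^{\le n}(\phi^{*})\circ V_{\le n}\simeq p^{n}\simeq V_{\le n}\circ \tau^{\le n}(\phi^{*})$ survive the passage to the right Kan extension. This is precisely what one takes from \cite[Theorem 8.20]{BO_NotesOnCrys}, whose proof is already phrased functorially on affine schemes, so the remaining work is bookkeeping rather than a new difficulty.
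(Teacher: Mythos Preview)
Your proposal is correct and follows essentially the same approach as the paper: reduce to affine schemes via the left $t$-exactness of the right Kan extension (homotopy limits), then invoke \cite[Theorem 8.20]{BO_NotesOnCrys}. Your version merely elaborates on the mechanics of the reduction and on the functoriality needed at the affine level, which the paper leaves implicit.
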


Now recall the definition of condensation of crystalline cohomology \Cref{def_cond_crys}. With these notations we have:
\begin{cor}\label{Frob_invariance_rational_crys}
Let $S$ be a base scheme of characteristic $p$ and let $S\inj T$ be a pd-thickening, where $T$ is a $p$-adic formal scheme. Then for a smooth quasi-compact quasi-separated Artin $S$-stack $\mstack X$ the pullback along the relative Frobenius $\phi_{\mstack X} \colon \mstack X \to \mstack X^{(1)}$ induces an equivalence of rational crystalline cohomology
$$R\Gamma_\crys^\ccond(\mstack X^{(1)}/T)\otimes_{\mathbb Z} \mathbb Q \xymatrix{\ar[r]^-{\phi_{\mstack X}^*}_-\sim & } R\Gamma_\crys^\ccond(\mstack X/T)\otimes_{\mathbb Z} \mathbb Q.$$

\begin{proof}
Since the embedding $D(\Ab) \inj D(\Solid)$ is $t$-exact, since the derived $p$-completion functor
$$(-)_p^\wedge \colon D(\Solid) \tto D(\Solid)$$
is $t$-exact up to a shift, and since $\mathbb Q$ is flat in solid abelian groups we have that
$$R\Gamma_\crys^\ccond(-/T)\otimes_{\mathbb Z}\mathbb Q \simeq \indlim[n] \left((\tau^{\le n} R\Gamma_\crys(-/T))_p^\ccond \otimes_{\mathbb Z}\mathbb Q\right).$$
So it is enough to show that for all $n\in \mathbb Z_{\ge 0}$ the map
$$(\tau^{\le n}(\phi_{\mstack X}^*))_p^\ccond \otimes_{\mathbb Z}\mathbb Q \colon (\tau^{\le n} R\Gamma_\crys(\mstack X^{(1)}/T))_p^\ccond \otimes_{\mathbb Z}\mathbb Q \tto (\tau^{\le n} R\Gamma_\crys(\mstack X/T))_p^\ccond \otimes_{\mathbb Z}\mathbb Q$$
is an equivalence. But by applying the $p$-completion functor $(-)_p^\ccond\colon D(\Ab) \to D(\Solid)$ to the map $V_{\le n}$ from the previous proposition we see that $(\tau^{\le n}(\phi_{\mstack X}^*))_p^\ccond$ admits an inverse up to multiplication by $p^n$, hence rationally becomes an equivalence.
\end{proof}
\end{cor}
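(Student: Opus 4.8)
\noindent\emph{Proof plan.} Since $T=\Spf A$ is a $p$-adic formal scheme, the defining limit of condensed crystalline cohomology collapses to a solid $p$-completion: $\RG_\crys^\ccond(-/T)\simeq (\RG_\crys(-/T))^\ccond_p$, where $(-)^\ccond_p$ is the derived $p$-completion in $D(\Solid)$ of the image under the $t$-exact embedding $D(\Ab)\inj D(\Solid)$. Thus the whole argument takes place inside $D(\Solid)$, and it suffices to understand the effect of $\phi_{\mstack X}^*$ after applying $(-)^\ccond_p$ and inverting $p$.

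First I would rewrite the rationalized source and target as filtered colimits of $p$-completed truncations. Using that $D(\Ab)\inj D(\Solid)$ is $t$-exact, that $(-)^\wedge_p$ on $D(\Solid)$ is right $t$-exact and left $t$-exact up to a shift by $1$, and that $\mbb Q$ is flat in $\Solid$ and commutes with filtered colimits, one gets that for any $M\in D(\Ab)$ the canonical map $(\tau^{\le n}M)^\ccond_p\to M^\ccond_p$ is an isomorphism in cohomological degrees $\le n$, hence $M^\ccond_p\simeq \indlim[n](\tau^{\le n}M)^\ccond_p$ and therefore $M^\ccond_p\otimes_{\mbb Z}\mbb Q\simeq \indlim[n]\bigl((\tau^{\le n}M)^\ccond_p\otimes_{\mbb Z}\mbb Q\bigr)$. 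Applying this to $M=\RG_\crys(-/T)$, and noting the equivalences are natural in $\mstack X$, I reduce the assertion to showing that for each $n\ge 0$ the map $(\tau^{\le n}(\phi_{\mstack X}^*))^\ccond_p\otimes_{\mbb Z}\mbb Q$ is an equivalence.

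For the latter I would invoke \Cref{Verschiebung_crys}, which provides a functorial map $V_{\le n}\colon \tau^{\le n}\RG_\crys(\mstack X/T)\to \tau^{\le n}\RG_\crys(\mstack X^{(1)}/T)$ with $\tau^{\le n}(\phi_{\mstack X}^*)\circ V_{\le n}\simeq V_{\le n}\circ \tau^{\le n}(\phi_{\mstack X}^*)\simeq p^n$. Applying the additive functor $(-)^\ccond_p\colon D(\Ab)\to D(\Solid)$, the map $(\tau^{\le n}(\phi_{\mstack X}^*))^\ccond_p$ acquires a two-sided inverse up to multiplication by $p^n$; after $\otimes_{\mbb Z}\mbb Q$ the scalar $p^n$ is a unit, so the map is an equivalence, and passing to the colimit over $n$ concludes. (The construction of $V_{\le n}$ is itself reduced in \Cref{Verschiebung_crys} to affine schemes, where it is the classical Berthelot--Ogus Verschiebung, via $\tau^{\le n}\RG_\crys(-/T)\simeq\tau^{\le n}(\Ran_{\Aff_{S}^{\sm, \op}}^{\Stk_{S}^{\sm, \op}}\tau^{\le n}\RG_\crys(-/T))$ together with left $t$-exactness of limits; I would take this as given.)

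The only genuinely delicate point — the main obstacle — is the bookkeeping that legitimizes the colimit rewriting: one must know that derived $p$-completion interacts with the $t$-structure on $D(\Solid)$ well enough (right $t$-exact, left $t$-exact up to a bounded shift) that $p$-completing the Whitehead tower of $\RG_\crys(-/T)$ stabilizes in each fixed cohomological degree and thus recovers $\RG_\crys^\ccond(-/T)$, rationally, as $\indlim[n]$ of the $p$-completed truncations. Granting this and \Cref{Verschiebung_crys}, the rest of the argument is purely formal.
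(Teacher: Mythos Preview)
Your proposal is correct and follows essentially the same approach as the paper: both rewrite $R\Gamma_\crys^\ccond(-/T)\otimes_{\mathbb Z}\mathbb Q$ as a filtered colimit of $p$-completed truncations using the $t$-exactness properties of the embedding $D(\Ab)\inj D(\Solid)$, of derived $p$-completion, and the flatness of $\mathbb Q$, then reduce to the truncated statement and invoke the Verschiebung $V_{\le n}$ from \Cref{Verschiebung_crys} to produce a two-sided inverse up to $p^n$. Your identification of the colimit rewriting as the only delicate point is accurate, and your justification is in fact slightly more explicit than the paper's.
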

\begin{prop}[Nil-functoriality of the rational crystalline cohomology]\label{nil_invariance_rational_crys}
Let $i\colon S_0 \inj S$ be a pro-nilpotent thickening of characteristic $p$ schemes and let $S \inj T$ be a pd-thickening, where $T$ is a $p$-adic formal scheme. Then there exists a functor
$$R\Gamma_\crys^{\ccond, \prime}(-/T)\otimes\mathbb Q \colon \Stk_{/S_0}^{sm, \op} \tto \Mod_{\mathbb Q}(D(\Solid))$$
making the following diagram commutative
$$\xymatrix{
\Stk_{/S}^{sm, \op} \ar[d]_{-\times_{S} S_0} \ar[rrr]^-{R\Gamma_\crys^\ccond(-/T)\otimes_{\mathbb Z} \mathbb Q} &&& \Mod_{\mathbb Q}(D(\Solid)). \\
\Stk_{/S_0}^{sm, \op} \ar@{-->}[rrru]_(0.4){\qquad R\Gamma_\crys^{\ccond,\prime}(-/T)\otimes\mathbb Q}
}$$

\begin{proof}
By assumption $S \simeq \prolim S_\alpha$, where $S_\alpha$ are nilpotent thickenings of $S_0$. First we claim that for each $\alpha$ there is a functor
$$R\Gamma_{\crys, \alpha}^{\ccond,\prime}(-/T)\otimes\mathbb Q\colon \Stk_{/S_0}^{sm, \op} \tto \Mod_{\mathbb Q}(D(\Solid))$$
making the following diagram commutative
$$\xymatrix{
\Stk_{/S_\alpha}^{sm, \op} \ar[rr]^-{-\times_{S_\alpha} S} \ar[d]_{-\times_{S_\alpha} S_0} && \Stk_{/S}^{sm, \op} \ar[rrr]^-{R\Gamma_\crys^\ccond(-/T)\otimes_{\mathbb Z} \mathbb Q} &&& \Mod_{\mathbb Q}(D(\Solid)). \\
\Stk_{/S_0}^{sm, \op} \ar@{-->}[rrrrru]_(0.4){\qquad R\Gamma_{\crys,\alpha}^{\ccond,\prime}(-/T)\otimes\mathbb Q}
}$$

To construct it let $\mathcal I$ be the sheaf of ideals of the embedding $S_0 \inj S_\alpha$. Since $\mathcal I$ is nilpotent there exists an integer $n$ large enough so that $\mathcal I^{p^n} = 0$. In particular, there exists a dashed arrow $\rho_{\alpha, n}\colon S_\alpha \to S_0$ making the diagram below commutative
$$\xymatrix{
S_0 \ar[rr]^-{F_{S_0}^n} \ar@{^(->}[d] && S_0 \ar@{^(->}[d] \\
S_\alpha \ar[rr]^-{F_{S_\alpha}^n} \ar@{-->}[rru]^-{\rho_{\alpha, n}} && S_\alpha,
}$$
where $F_{S_o}$ and $F_{S_\alpha}$ are absolute Frobenii of $S_0$ and $S_\alpha$ respectively. We then define $R\Gamma_{\crys, \alpha, n}^{\ccond,\prime}(-/T)\otimes \mathbb Q$
as the composition
$$\xymatrix{\Stk_{/S_0}^{\sm, \op} \ar[rr]^-{-\times_{S_0, \rho_{\alpha, n}} S_\alpha} && \Stk_{/S_\alpha}^{\sm, \op} \ar[rr]^-{-\times_{S_\alpha} S} && \Stk_{/S}^{\sm, \op} \ar[rr]^-{R\Gamma_\crys^\ccond(-/T)} && \Mod_{\mathbb Q}(D(\Solid)).}$$
We claim that there is a natural equivalence
$$R\Gamma_{\crys, \alpha, n}^{\ccond, \prime}(-\times_{S_\alpha} S_0/T)\otimes \mathbb Q \simeq R\Gamma_\crys^\ccond(-\times_{S_\alpha} S/T)\otimes_{\mathbb Z} \mathbb Q.$$
In order to construct it note that by construction the composition of the pullbacks along the embedding $S_0 \inj S_\alpha$, $\rho_{\alpha, n}$ and the projection $S \to S_\alpha$ sends an $S_\alpha$-stack $X$ to the $n$-th relative (over $S$) Frobenius twist of $X\times_{S_\alpha} S$. The $n$-th relative Frobenius then induces a natural morphism
$$\phi_{\alpha, n, X}\colon X\times_{S_\alpha} S \tto (X\times_{S_\alpha} S)^{(n)} \simeq ((X\times_{S_\alpha} S_0) \times_{S_0, \rho_{\alpha, n}} S_\alpha) \times_{S_\alpha} S.$$
By applying the pullback in crystalline cohomology $R\Gamma_\crys^\ccond(-/T)\otimes_{\mathbb Z} \mathbb Q$ to $\phi_{\alpha, n, -}$ we then obtain a natural transformation
$$R\Gamma_\crys^\ccond(-\times_{S_\alpha} S/T) \otimes_{\mathbb Z} \mathbb Q \tto R\Gamma_{\crys, \alpha, n}^{\ccond, \prime}(-\times_{S_{\alpha}} S_0/T)\otimes \mathbb Q$$
which is an equivalence by \Cref{Frob_invariance_rational_crys}.

Moreover, since for any integer $m \ge n$ we have $\rho_{\alpha, m} \simeq \rho_{\alpha, n} \circ F_{S_\alpha}^{m-n}$ it follows that
$$(-\times_{S_0, \rho_{\alpha, m}} S_\alpha)\times_{S_\alpha} S \simeq ((-\times_{S_0, \rho_{\alpha, n}} S_\alpha)\times_{S_\alpha} S)^{(m-n)},$$
and the $(m-n)$-th relative Frobenius and pullback in crystalline cohomology induce a natural transformation of functors
$$R\Gamma_{\crys, \alpha, m}^{\ccond, \prime}(-/T)\otimes \mathbb Q \tto R\Gamma_{\crys, \alpha, n}^{\ccond, \prime}(-/T)\otimes \mathbb Q$$
which is an equivalence again by \Cref{Frob_invariance_rational_crys}. We then define
$$R\Gamma_{\crys, \alpha}^{\ccond, \prime}(-/T)\otimes \mathbb Q := \prolim R\Gamma_{\crys, \alpha, n}^{\ccond, \prime}(-/T)\otimes \mathbb Q$$
which is naturally equivalent to $R\Gamma_{\crys, \alpha, n}^{\ccond, \prime}(-/T)\otimes \mathbb Q$ for any $n$.

Finally, since for a morphism $f\colon S_\alpha \to S_\beta$ and large enough $n$ we have $\rho_{\alpha, n} \simeq \rho_{\beta, n} \circ f$ there is a natural equivalence $R\Gamma_{\crys, \alpha}^{\ccond,\prime}(-/T)\otimes\mathbb Q \simeq R\Gamma_{\crys, \beta}^{\ccond,\prime}(-/T)\otimes\mathbb Q$ under which the equivalence 
$$R\Gamma_{\crys,\beta}^{\ccond,\prime}(-\times_{S_\beta} S_0 /T)\otimes\mathbb Q \simeq R\Gamma_\crys^\ccond(-\times_{S_\beta} S/T)\otimes_{\mathbb Z} \mathbb Q$$
identifies with the restriction along the pullback $\Stk_{/S_\beta}^{sm, \op} \to \Stk_{/S_\alpha}^{sm, \op}$ of the corresponding equivalence for $R\Gamma_{\crys,\alpha}^{\ccond,\prime}(-\times_{S_\alpha} S_0 /T)$. We conclude passing to the colimit over $\alpha$ and using the fact that by \cite[Theorem 2.1.13]{KubrakPrikhodko_HdR} the pullbacks induce an equivalence $\indlim \Stk_{/S_\alpha}^{sm, \op} \areq \Stk_{/S}^{sm, \op}$.
\end{proof}
\end{prop}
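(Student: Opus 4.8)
The plan is to adapt Berthelot's classical argument (cf.\ \cite[Section 2]{Berthelot_Fisocrystals}) to the condensed setting, using the Frobenius-invariance of rational crystalline cohomology established in \Cref{Frob_invariance_rational_crys} as the essential input. First I would reduce to the case of a genuine nilpotent thickening: writing $S\simeq \prolim S_\alpha$ as a cofiltered limit of nilpotent thickenings $S_\alpha$ of $S_0$ and recalling that pullback induces an equivalence $\indlim \Stk_{/S_\alpha}^{\sm,\op}\areq \Stk_{/S}^{\sm,\op}$ by \cite[Theorem 2.1.13]{KubrakPrikhodko_HdR}, it suffices to construct, for each $\alpha$, a functor $R\Gamma_{\crys,\alpha}^{\ccond,\prime}(-/T)\otimes\mathbb Q$ on $\Stk_{/S_0}^{\sm,\op}$ together with a natural equivalence $R\Gamma_{\crys,\alpha}^{\ccond,\prime}((-\times_{S_\alpha}S_0)/T)\otimes\mathbb Q\simeq R\Gamma_\crys^{\ccond}((-\times_{S_\alpha}S)/T)\otimes_{\mathbb Z}\mathbb Q$, all compatibly in $\alpha$, and then to pass to the colimit; the commutativity of the triangle in the statement then follows tautologically, since a stack over $S$ is a pullback of one over some $S_\alpha$ and its base change to $S_0$ does not see whether it is computed over $S_\alpha$ or over $S$.

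Next, I would fix $\alpha$, let $\mathcal I$ be the ideal sheaf of $S_0\inj S_\alpha$, and choose $n$ with $\mathcal I^{p^n}=0$; then the $n$-fold absolute Frobenius of $S_\alpha$ kills $\mathcal I$, hence factors as $S_\alpha\xra{\rho_{\alpha,n}} S_0\inj S_\alpha$ for some $\rho_{\alpha,n}$. I would then define the tentative functor $R\Gamma_{\crys,\alpha,n}^{\ccond,\prime}(-/T)\otimes\mathbb Q$ as the composite of the pullbacks $-\times_{S_0,\rho_{\alpha,n}}S_\alpha$ and $-\times_{S_\alpha}S$ with $R\Gamma_\crys^{\ccond}(-/T)\otimes\mathbb Q$. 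The key observation is that base changing a stack $X$ over $S_\alpha$ down to $S_0$ and back along $\rho_{\alpha,n}$ produces precisely the $n$-th relative (over $S$) Frobenius twist of $X\times_{S_\alpha}S$, so that the relative Frobenius induces a natural transformation $R\Gamma_\crys^{\ccond}((-\times_{S_\alpha}S)/T)\otimes\mathbb Q\to R\Gamma_{\crys,\alpha,n}^{\ccond,\prime}((-\times_{S_\alpha}S_0)/T)\otimes\mathbb Q$, which is an equivalence by \Cref{Frob_invariance_rational_crys}.

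Then I would check that the construction is independent of $n$ and of the various choices, up to canonical equivalence. For $m\ge n$ one has $\rho_{\alpha,m}\simeq\rho_{\alpha,n}\circ F^{m-n}$, so the $(m-n)$-th relative Frobenius yields a natural transformation $R\Gamma_{\crys,\alpha,m}^{\ccond,\prime}\to R\Gamma_{\crys,\alpha,n}^{\ccond,\prime}$, again an equivalence by \Cref{Frob_invariance_rational_crys}; setting $R\Gamma_{\crys,\alpha}^{\ccond,\prime}\coloneqq \prolim[n] R\Gamma_{\crys,\alpha,n}^{\ccond,\prime}$ pins this down. A morphism $S_\alpha\to S_\beta$ of thickenings satisfies $\rho_{\alpha,n}\simeq\rho_{\beta,n}\circ f$ for $n\gg 0$, giving the compatibility in $\alpha$, and one finishes by taking $\colim_\alpha$, using that $R\Gamma_\crys^{\ccond}(-/T)\otimes\mathbb Q$ only depends on the eventually-connective truncations and that these commute with the relevant filtered colimits (as already exploited in the proof of \Cref{Frob_invariance_rational_crys}).

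The step I expect to be the main obstacle is not any individual computation but the homotopy-coherent bookkeeping underlying the word ``functorial'': the lifts $\rho_{\alpha,n}$ are genuinely non-canonical, so one must organize the transition equivalences --- in $n$, in $\alpha$, and along morphisms of stacks --- into an honest diagram of $\infty$-categories rather than merely a pointwise family of equivalences. What makes this tractable is that every such ambiguity is absorbed by the single fact that pullback along a relative Frobenius is a rational equivalence on condensed crystalline cohomology; so the real content of the proof is arranging this fact into a coherent construction and verifying that the final functor assembled over $\alpha$ is well defined, which requires care but no genuinely new ideas beyond those already present in \Cref{Verschiebung_crys} and \Cref{Frob_invariance_rational_crys}.
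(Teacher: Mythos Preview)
Your proposal is correct and follows essentially the same argument as the paper: reduce to a nilpotent thickening $S_\alpha$ via $S\simeq\prolim S_\alpha$ and \cite[Theorem 2.1.13]{KubrakPrikhodko_HdR}, factor $F_{S_\alpha}^n$ through $\rho_{\alpha,n}\colon S_\alpha\to S_0$, define the functor by pulling back along $\rho_{\alpha,n}$ and then to $S$, and use \Cref{Frob_invariance_rational_crys} to verify the required equivalence and the independence of $n$ and $\alpha$. Your closing remark about the homotopy-coherent bookkeeping being the only delicate point is apt; the paper handles it in exactly the way you describe, by stabilizing in $n$ via $\prolim$ and then passing to the colimit over $\alpha$.
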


\begin{prop}[Berthelot--Ogus comparison]\label{prop:Berthelot-Ogus de Rham case}
Let $\mstack X$ be a smooth quasi-compact quasi-separated Artin stack over $\mathcal O_K$. Then there exists a functorial equivalence
$$R\Gamma_\crys^\ccond(\mstack X_k / W(k))\sotimes_{W(k)} K \simeq R\Gamma_\dR^\ccond(\widehat{\mstack X} / \mathcal O_K)\sotimes_{\mathcal O_K} K.$$

\begin{proof}
The ideal $(p)$ in $\mathcal O_K$ admits a (unique) pd-structure and the inclusion $W(k) \inj \mathcal O_K$ is a pd-morphism. It follows from the base change for crystalline cohomology \Cref{basechange_crys} below that we have a natural equivalence
\begin{equation}\label{BerthelotOgus_eq1}
R\Gamma_\crys^\ccond(\mstack X_k / W(k)) \otimes_{W(k)} \mathcal O_K \simeq R\Gamma_\crys^\ccond((\mstack X_{k} \otimes_k \mathcal O_k/p) / \mathcal O_K).
\end{equation}
On the other hand, since $\mstack X$ is a smooth lifting of $\mstack X_{\mathcal O_k/p}$ to $\mathcal O_K$ by smooth descent for crystalline and de Rham cohomology and \cite[Corollary 7.4]{BO_NotesOnCrys} in the affine case we have
\begin{equation}\label{BerthelotOgus_eq2}
R\Gamma_\crys^\ccond(\mstack X_{\mathcal O_K/p} / \mathcal O_K) \simeq R\Gamma_\dR^\ccond(\widehat{\mstack X}	/\mathcal O_K).
\end{equation}
Moreover, since both $\mstack X_{k} \otimes_k \mathcal O_k/p$ and $\mstack X_{\mathcal O_K/p}$ are nil-extensions of $\mstack X_k$ we have
$$R\Gamma_\crys^\ccond((\mstack X_{k} \otimes_k \mathcal O_k/p) / \mathcal O_K) \otimes_{\mathcal O_K} K \simeq R\Gamma_\crys^\ccond(\mstack X_{\mathcal O_K/p} / \mathcal O_K) \otimes_{\mathcal O_K} K$$
by \Cref{nil_invariance_rational_crys}. Combining the last equivalence with the rationalizations of equivalences \eqref{BerthelotOgus_eq1} and \eqref{BerthelotOgus_eq2} we deduce the desired comparison.
\end{proof}
\end{prop}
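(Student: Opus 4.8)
The plan is to factor the comparison through the mod-$p$ reduction $\mstack X_{\mc O_K/p}$ of $\mstack X$, reducing it to three inputs: base change for (condensed, $p$-completed) crystalline cohomology along the pd-morphism $W(k)\hookrightarrow\mc O_K$; the crystalline--de Rham comparison for the smooth formal $\mc O_K$-lifting $\widehat{\mstack X}$ of $\mstack X_{\mc O_K/p}$; and the nil-invariance of rational condensed crystalline cohomology (\Cref{nil_invariance_rational_crys}). Concretely, I would build a chain of equivalences, each functorial in $\mstack X$,
$$
\RG_\crys^\bt(\mstack X_k/W(k))\sotimes_{W(k)} K\;\simeq\;\RG_\crys^\bt\big((\mstack X_k\times_{\Spec k}\Spec(\mc O_K/p))/\mc O_K\big)\sotimes_{\mc O_K} K\;\simeq\;\RG_\crys^\bt(\mstack X_{\mc O_K/p}/\mc O_K)\sotimes_{\mc O_K} K\;\simeq\;\RG_\dR^\bt(\widehat{\mstack X}/\mc O_K)\sotimes_{\mc O_K} K,
$$
where $\mstack X_k\times_{\Spec k}\Spec(\mc O_K/p)$ is the base change of $\mstack X_k$ along the residue map $k=W(k)/p\to\mc O_K/p$; the leftmost link comes from base change, the middle one from nil-invariance, and the rightmost one from the crystalline--de Rham comparison.

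For the leftmost link: the ideal $(p)\subset\mc O_K$ admits a unique divided-power structure --- $\mc O_K$ is $p$-torsion free and $p^n/n!\in\mc O_K$ because $v_K(p^n)=ne\ge e\,v_p(n!)=v_K(n!)$ --- compatible with the pd-structure on $(p)\subset W(k)$, so $W(k)\hookrightarrow\mc O_K$ is a pd-morphism. I would then invoke base change for crystalline cohomology in its solid $p$-completed form, obtained by applying $(-)^\bt_p$ termwise to the usual flat base change statement (using that $(-)^\bt_p$ is compatible with the colimits and right Kan extensions involved, cf.\ \Cref{lem:solid completion is right t-exact}), and finally invert $p$. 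For the rightmost link: since $\mstack X$ is smooth over $\mc O_K$, $\widehat{\mstack X}$ is a smooth $p$-adic formal lifting of $\mstack X_{\mc O_K/p}$; the smooth affine case is Berthelot--Ogus (\cite[Corollary 7.4]{BO_NotesOnCrys}), and I would globalize by smooth descent, which both $\RG_\crys^\bt(-/\mc O_K)$ and $\RG_\dR^\bt(\widehat{-}/\mc O_K)$ inherit from their discrete counterparts via $(-)^\bt_p$ (whose descent-compatibility again rests on \Cref{lem:solid completion is right t-exact}).

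For the middle link: the stacks $\mstack X_k\times_{\Spec k}\Spec(\mc O_K/p)$ and $\mstack X_{\mc O_K/p}$ are both pulled back from $\mstack X_k$ along a nilpotent thickening --- indeed the nilradical $(\pi)$ of $\mc O_K/p$ is nilpotent since $\pi^e\in(p)=0$, so $\Spec k\hookrightarrow\Spec(\mc O_K/p)$ is a (pro-)nilpotent thickening --- so \Cref{nil_invariance_rational_crys}, applied to the pd-thickening $\Spec(\mc O_K/p)\hookrightarrow\Spf\mc O_K$, gives $\RG_\crys^\bt((\mstack X_k\times_{\Spec k}\Spec(\mc O_K/p))/\mc O_K)\otimes_{\mbb Z}\mbb Q\simeq\RG_\crys^\bt(\mstack X_{\mc O_K/p}/\mc O_K)\otimes_{\mbb Z}\mbb Q$; for $\mc O_K$-linear objects $-\otimes_{\mbb Z}\mbb Q=-\sotimes_{\mc O_K}K$. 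Splicing the three links gives the assertion.

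The substantive content lies in the middle link and, through it, in \Cref{Frob_invariance_rational_crys} and \Cref{Verschiebung_crys} (the Verschiebung furnishing an inverse to the relative Frobenius pullback up to $p^n$ on each truncation); with those in hand the remaining work is essentially bookkeeping. The point to watch is that every identification must be made inside $D(\Solid)$ rather than only after evaluation at a point: one has to check that base change, smooth descent, and the nil-invariance comparison all lift to the solid $p$-completed level --- which is precisely what the formal properties of $(-)^\bt_p$ recorded in \Cref{constr:blacktriangle} and \Cref{lem:solid completion is right t-exact} provide --- and that the whole construction is natural in $\mstack X$, hence compatible with the right Kan extension from smooth affine schemes used to define the condensed cohomology theories in the first place.
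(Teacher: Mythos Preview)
Your proposal is correct and follows essentially the same approach as the paper: the same three-link chain (base change along the pd-morphism $W(k)\hookrightarrow\mc O_K$, nil-invariance via \Cref{nil_invariance_rational_crys}, and the crystalline--de Rham comparison from \cite[Corollary 7.4]{BO_NotesOnCrys} globalized by smooth descent), with the same references. Your added justifications for the pd-structure on $(p)\subset\mc O_K$ and the nilpotence of $\Spec k\hookrightarrow\Spec(\mc O_K/p)$ are fine elaborations but not departures from the paper's argument.
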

\begin{prop}\label{basechange_crys}
Let $A \to A^\prime$ be a ring morphism of finite $p$-complete Tor amplitude. Then for a smooth quasi-compact quasi-separated Artin stack $\mstack X$ over $A/(p)$ the natural maps
\begin{gather*}
R\Gamma_\crys(\mstack X/A)\cotimes_A A^\prime \tto R\Gamma_\crys(\mstack X^\prime /A^\prime),\\
R\Gamma_\crys^\ccond(\mstack X/A)\sotimes_{A^\ccond} A^{\prime, \ccond} \tto R\Gamma_\crys^\ccond(\mstack X^\prime /A^\prime)
\end{gather*}
are equivalences, where $\mstack X^\prime$ denotes the base change $\mstack X \times_{\Spec A/(p)} \Spec A^\prime/(p)$.

\begin{proof}
Both sides of the first arrow are derived $p$-complete, hence it is enough to prove the base
change modulo $p$. In this case both parts identify with de Rham cohomology for which the base change is proved e.g.~in \cite[Proposition 1.1.8]{KubrakPrikhodko_HdR}. By construction and \Cref{sotimes_vs_cotimes} the second map is a condensation of the first one, hence is also an equivalence.
\end{proof}
\end{prop}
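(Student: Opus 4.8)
The plan is to reduce both assertions to the already-known base change for algebraic de Rham cohomology \cite[Proposition 1.1.8]{KubrakPrikhodko_HdR}, handling the uncondensed map first and then deducing the condensed one by a formal argument.

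For the first map I would argue as follows. Both $R\Gamma_\crys(\mstack X/A)\cotimes_A A'$ and $R\Gamma_\crys(\mstack X'/A')$ are derived $p$-complete: crystalline cohomology is defined as a limit over the PD-neighborhoods of the ambient thickening (see \Cref{def_cond_crys}) and is therefore derived $p$-complete, while the $p$-completed tensor product $-\cotimes_A A'$ lands in derived $p$-complete objects by construction. Since a map of derived $p$-complete objects is an equivalence as soon as it becomes one modulo $p$, it suffices to check that the map is an equivalence after $-\otimes^{\mbb L}_A A/p$. On the right-hand side, $R\Gamma_\crys(\mstack X'/A')\otimes^{\mbb L}_{A'} A'/p$ is identified, via the crystalline--de Rham comparison, with $R\Gamma_\dR(\mstack X'/(A'/p))$; here one uses the affine statement \cite[Corollary 7.4]{BO_NotesOnCrys} promoted to smooth qcqs Artin stacks by smooth descent and left $t$-exactness of the relevant right Kan extension, exactly as in the proof of \Cref{prop:Berthelot-Ogus de Rham case}. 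On the left-hand side one has $\bigl(R\Gamma_\crys(\mstack X/A)\cotimes_A A'\bigr)\otimes^{\mbb L}_A A/p \simeq \bigl(R\Gamma_\crys(\mstack X/A)\otimes^{\mbb L}_A A/p\bigr)\otimes^{\mbb L}_{A/p} A'/p$, and the same comparison rewrites the inner factor as $R\Gamma_\dR(\mstack X/(A/p))$. Thus modulo $p$ the map under consideration is precisely the de Rham base-change map for the smooth stack $\mstack X$ along $A/p\to A'/p$, and the hypothesis that $A\to A'$ has finite $p$-complete Tor-amplitude is exactly what guarantees $A/p\to A'/p$ has finite Tor-amplitude, so \cite[Proposition 1.1.8]{KubrakPrikhodko_HdR} applies and yields the equivalence.

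For the condensed map I would then observe that it is simply the image of the first map under the solid $p$-completion functor $(-)^\ccond_p$. Indeed $R\Gamma_\crys^\ccond(-/A)\simeq R\Gamma_\crys(-/A)^\ccond_p$ by \Cref{def_cond_crys} and the remark following it, so the target is $R\Gamma_\crys(\mstack X'/A')^\ccond_p$; and $R\Gamma_\crys^\ccond(\mstack X/A)\sotimes_{A^\ccond} A^{\prime,\ccond}\simeq \bigl(R\Gamma_\crys(\mstack X/A)\cotimes_A A'\bigr)^\ccond_p$ by \Cref{sotimes_vs_cotimes}, whose boundedness requirement on the functor $-\cotimes_A A'$ is again supplied by the finite $p$-complete Tor-amplitude hypothesis. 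Since $(-)^\ccond_p$ preserves equivalences, the condensed map is an equivalence once the uncondensed one is.

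The argument is thus essentially formal, and I expect the only genuine care needed to be bookkeeping: making the identification of $R\Gamma_\crys$ modulo $p$ with de Rham cohomology precise for non-affine smooth Artin stacks (the descent and $t$-exactness step), and keeping track of the fact that the finite $p$-complete Tor-amplitude assumption is what simultaneously powers the de Rham base change and the applicability of \Cref{sotimes_vs_cotimes}. No ideas beyond those already used in \Cref{prop:Berthelot-Ogus de Rham case} and in the homological algebra of \Cref{sec:appendix about complete sums} should be required.
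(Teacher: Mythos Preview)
Your proposal is correct and follows essentially the same approach as the paper's proof: reduce the first map to de Rham base change modulo $p$ via the crystalline--de Rham comparison and \cite[Proposition 1.1.8]{KubrakPrikhodko_HdR}, then obtain the condensed statement by applying \Cref{sotimes_vs_cotimes}. You supply more explicit justification for the bookkeeping (derived $p$-completeness, the descent argument for the comparison on stacks, the role of the finite $p$-complete Tor-amplitude hypothesis) than the paper does, but the structure and ingredients are identical.
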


In the main part of the text we also use the following comparison of crystalline cohomology:
\begin{prop}\label{crys_O_C_W_k}
Let $\mstack X$ be a smooth quasi-compact quasi-separated Artin stack over $\mathcal O_K$. There is a natural equivalence
$$R\Gamma_\crys^\ccond(\mstack X_k / W(k))\sotimes_{W(k)} B_\crys^\ccond \simeq R\Gamma_\crys^\ccond(\mstack X_{\mathcal O_{\mathbb C_p}/p} / A_\crys)[\tfrac 1 p].$$

\begin{proof}
By the base change for crystalline cohomology we have the natural equivalence
$$R\Gamma_\crys^\ccond(\mstack X_k/W(k))\sotimes_{W(k)^\ccond} W(\overline k)^\ccond \simeq R\Gamma_\crys^\ccond(\mstack X_{\overline k}/W(\overline k)).$$
Hence it is enough to prove a more general statement: let $\mstack Y$ be a smooth quasi-compact quasi-separated Artin stack over $\mathcal O_{\mathbb C_p}/p$. Then there is a functorial equivalence
\begin{equation}\label{eq_crys_O_C_W_k_1}
R\Gamma_\crys^\ccond(\mstack Y_{\overline k}/ W(\overline k)) \sotimes_{W(\overline k)^\ccond} B_\crys^\ccond \simeq R\Gamma_\crys^\ccond(\mstack Y/A_\crys)[\tfrac 1 p].
\end{equation}
Note that the morphism $\mathcal O_{\mathbb C_p}/(p) \to \overline k$ is an ind-nilpotent extension: indeed, $\mathcal O_{\mathbb C_p}/(p) \simeq \mathcal O_{\overline{\mathbb Q_p}}/(p)$ and $\mathcal O_{\overline{\mathbb Q_p}} \simeq \indlim \mathcal O_L$, where $L$ runs over the poset of finite extensions of the maximal unramified extension $K^{\mathrm{nr}}$ of $K$, and for each such $L$ the ring $\mathcal O_L/(p)$ is a nilpotent extension of $\mathcal O_{K^{\mathrm{nr}}}/(p) \simeq \overline k$. It follows from \Cref{nil_invariance_rational_crys} that there is a natural equivalence
\begin{equation}\label{eq_crys_O_C_W_k_2}
R\Gamma_\crys^\ccond((\mstack Y_{\overline k} \otimes_{\overline k} \mathcal O_{\mathbb C_p}/p)/A_\crys)[\tfrac 1 p] \simeq R\Gamma_\crys^\ccond(\mstack Y/A_\crys)[\tfrac 1 p].
\end{equation}
Moreover, by the base change for crystalline cohomology again we have
\begin{equation}\label{eq_crys_O_C_W_k_3}
R\Gamma_\crys^\ccond(\mstack Y_{\overline k}/ W(\overline k)) \sotimes_{W(\overline k)^\ccond} A_\crys^\ccond \simeq R\Gamma_\crys^\ccond((\mstack Y_{\overline k} \otimes_{\overline k} \mathcal O_{\mathbb C_p}/p)/A_\crys).
\end{equation}
By combining \eqref{eq_crys_O_C_W_k_2} with \eqref{eq_crys_O_C_W_k_3} we deduce \eqref{eq_crys_O_C_W_k_1}.
\end{proof}
\end{prop}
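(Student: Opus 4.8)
The plan is to reduce the statement, via base change for crystalline cohomology, to the case of an algebraically closed residue field, and then to deduce the comparison from the nil-invariance of rational crystalline cohomology. First I would apply \Cref{basechange_crys} to the canonical morphism $W(k)\to W(\overline k)$ to produce a natural equivalence
$$R\Gamma_\crys^\ccond(\mstack X_k/W(k))\sotimes_{W(k)^\ccond} W(\overline k)^\ccond \simeq R\Gamma_\crys^\ccond(\mstack X_{\overline k}/W(\overline k)).$$
Since the structure map $W(k)\to A_\crys$ factors through $W(\overline k)$, and hence $B_\crys^\ccond$ is naturally a $W(\overline k)^\ccond$-algebra, it suffices to prove the following more general assertion: for every smooth quasi-compact quasi-separated Artin stack $\mstack Y$ over $\mathcal O_{\mathbb C_p}/p$ there is a functorial equivalence
$$R\Gamma_\crys^\ccond(\mstack Y_{\overline k}/W(\overline k))\sotimes_{W(\overline k)^\ccond} B_\crys^\ccond \simeq R\Gamma_\crys^\ccond(\mstack Y/A_\crys)[\tfrac 1p].$$
Applying this to $\mstack Y=\mstack X_{\mathcal O_{\mathbb C_p}/p}$, for which $\mstack Y_{\overline k}\simeq \mstack X_{\overline k}$, and combining with the displayed equivalence after inverting $p$, yields the proposition.

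To prove the reduced assertion I would combine two inputs. On the one hand, the morphism $\mathcal O_{\mathbb C_p}/(p)\simeq \mathcal O_{\overline{\mathbb Q_p}}/(p)\to\overline k$ is an ind-nilpotent thickening: writing $\mathcal O_{\overline{\mathbb Q_p}}=\colim_L \mathcal O_L$ over the poset of finite extensions $L$ of the maximal unramified extension of $K$, each $\mathcal O_L/(p)$ is a finite, hence nilpotent, thickening of $\overline k$. Consequently both $\mstack Y$ and the base change $\mstack Y_{\overline k}\otimes_{\overline k}\mathcal O_{\mathbb C_p}/p$ are ind-nilpotent extensions of $\mstack Y_{\overline k}$ over $\mathcal O_{\mathbb C_p}/p$, so \Cref{nil_invariance_rational_crys}, applied to the pd-thickening $\overline k\hookrightarrow \Spf A_\crys$ (with $(p)$ equipped with its canonical divided powers), gives a natural equivalence
$$R\Gamma_\crys^\ccond\big((\mstack Y_{\overline k}\otimes_{\overline k}\mathcal O_{\mathbb C_p}/p)/A_\crys\big)[\tfrac 1p]\simeq R\Gamma_\crys^\ccond(\mstack Y/A_\crys)[\tfrac 1p].$$
On the other hand, the map $W(\overline k)\to A_\crys$ is of finite $p$-complete Tor amplitude, so \Cref{basechange_crys} produces
$$R\Gamma_\crys^\ccond(\mstack Y_{\overline k}/W(\overline k))\sotimes_{W(\overline k)^\ccond} A_\crys^\ccond\simeq R\Gamma_\crys^\ccond\big((\mstack Y_{\overline k}\otimes_{\overline k}\mathcal O_{\mathbb C_p}/p)/A_\crys\big).$$
Since $B_\crys^\ccond\simeq A_\crys^\ccond[\tfrac 1p]$ and $\mathbb Q$ is flat in solid abelian groups, inverting $p$ in the last equivalence and composing with the previous one gives the required comparison.

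The step I expect to be the most delicate is the correct application of the nil-invariance equivalence \Cref{nil_invariance_rational_crys}: one cannot simply base change $W(k)\to A_\crys$ and be done, because crystalline cohomology over $A_\crys$ is taken for stacks over $\mathcal O_{\mathbb C_p}/p$ (equivalently over $A_\crys/(\xi,p)$) rather than over $\overline k$, so the ind-nilpotent bridge is essential, and its construction — passing to $\prolim S_\alpha$, building the equivalences fibrewise over each $S_\alpha$ using the relative-Frobenius pullback (a rational equivalence by \Cref{Frob_invariance_rational_crys}, which in turn rests on the Verschiebung of \Cref{Verschiebung_crys}), and then checking compatibility with all transition maps so as to pass to the colimit via $\colim_\alpha \Stk^{\sm,\op}_{/S_\alpha}\simeq\Stk^{\sm,\op}_{/S}$ — must be carried out in the condensed setting. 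The remaining points are routine: the finite $p$-complete Tor-amplitude of $W(\overline k)\to A_\crys$ and the interchange of $-\sotimes_{W(\overline k)^\ccond} A_\crys^\ccond$ with inverting $p$ follow from $p$-torsion freeness of $A_\crys$ and flatness of $\mathbb Q$, and functoriality in $\mstack X$ is automatic since every equivalence used is produced functorially on smooth quasi-compact quasi-separated Artin stacks.
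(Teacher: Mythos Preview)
Your proposal is correct and follows essentially the same approach as the paper: first base-change along $W(k)\to W(\overline k)$, reduce to the general statement for $\mstack Y$ over $\mathcal O_{\mathbb C_p}/p$, then combine the base change $W(\overline k)\to A_\crys$ with the nil-invariance of rational crystalline cohomology for the ind-nilpotent thickening $\overline k\hookleftarrow \mathcal O_{\mathbb C_p}/p$. One small inaccuracy in your write-up: the pd-thickening to which \Cref{nil_invariance_rational_crys} is applied is $S=\Spec(\mathcal O_{\mathbb C_p}/p)\inj T=\Spf A_\crys$, not $\overline k\hookrightarrow\Spf A_\crys$; the role of $\overline k$ is that of $S_0$ in that proposition.
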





%
%
%

\theoremstyle{plain}
\newtheorem{theorem}[equation]{Theorem}
\newtheorem{conjecture}[equation]{Conjecture}
\newtheorem{proposition}[equation]{Proposition}
\newtheorem{lemma}[equation]{Lemma}
\newtheorem{corollary}[equation]{Corollary}
\newtheorem{guess}[equation]{Guess}
\newtheorem{fact}[equation]{Fact}
\theoremstyle{definition}
\newtheorem{definition}[equation]{Definition}
\newtheorem{hypothesis}[equation]{Hypothesis}
\newtheorem{example}[equation]{Example}
\newtheorem{situation}[equation]{Situation}
\newtheorem{remark}[equation]{Remark}

\section{Cohomological descent, de Rham comparison, and local acyclicity for some singular schemes (by Haoyang Guo)}\label{Appendix: Cohomological descent, de Rham comparison, and resolution of singularities}

In this appendix, we recall cohomological descent for \'etale cohomology of an algebraic variety or a rigid space, and generalize the de Rham comparison in \cite{DiaoKaiWenLiuZhu_LogarithmicRH} to singular varieties.
As an application, we give a small extension of \Cref{thm:even mainer theorem} for some schemes over $\mathcal{O}_K$ that are singular within the generic fiber in \Cref{local acyc for sing}.

We fix a complete discretely valued extension $K$ of $\mathbb{Q}_p$, and its complete algebraic closure $C$ throughout the section.

\subsection{Cohomological descent for \'etale cohomology}
Denote $\mathrm{Var}_K$ to be the category of finite type schemes over $K$, and $\mathrm{Rig}_K$ to be the category of qcqs rigid spaces over $K$.
Let $X$ be either in $\mathrm{Var}_K$ or $\mathrm{Rig}_K$.
\begin{definition}\label{def Ros}
	Let $a\colon X_\bullet \to X$ be a simplicial object with an augmentation over $X$, in either $\mathrm{Var}_K$ or $\mathrm{Rig}_K$.
	It is called a \emph{simplicial resolution of singularities of $X$}\footnote{The name is non-standard.} if it is a proper hypercovering in the sense of \cite[Tag 0DHI]{StacksProject}, such that each $X_{n+1}\to (\mathrm{cosk_n sk_n} X_\bullet)_{n+1}$ is a resolution of singularity as in \cite[Theorem 1.1]{BM08}.
\end{definition}
\begin{remark}
	By definition, a simplicial resolution of singularities for an algebraic variety is preserved under a field extension.
\end{remark}
\begin{remark}\label{embedded ros}
	When $X$ is an open subspace inside of a proper algebraic/rigid variety $\ol X$, with a Zariski closed complement $D$, one can apply the embedded resolution of singularities on the pair $(\ol X, D)$ to get a pair of simplicial spaces $(\ol X_\bullet, D_\bullet)\to (\ol X, D)$, so that each $D_n$ is a simple normal crossing divisor in the smooth variety $\ol X_n$, and both $\ol X_\bullet\to X$ and the simplicial open subset $\ol X_\bullet\backslash D_\bullet \to X$ are simplicial resolution of singularities.
\end{remark}
A simplicial resolution of singularities provides a tool of computing cohomology of singular spaces using that of smooth ones, by the following result.
\begin{theorem}\label{Ros and coh}
	Let $a\colon X_\bullet \to X$ be a simplicial resolution of singularities in either $\mathrm{Var}_K$ or $\mathrm{Rig}_K$.
	Then we have
	\[
	R\Gamma_\et(X_C, \mathbb{Q}_p)\simeq R\Gamma_\et (X_{\bullet C}, \mathbb{Q}_p),
	\]
	where the latter is computed as the homotopy limit  $R\lim_{[n]\in \Delta^\op}R\Gamma_\et (X_{n C}, \mathbb{Q}_p)$ over the simplicial diagram $\Delta^{\op}$.
\end{theorem}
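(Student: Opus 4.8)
The plan is to establish the equivalence $R\Gamma_\et(X_C,\mathbb Q_p)\simeq R\lim_{[n]\in\Delta^\op}R\Gamma_\et(X_{nC},\mathbb Q_p)$ by combining cohomological descent for proper hypercoverings with the comparison between $\mathbb Q_p$-cohomology and $\mathbb Z/p^k$-cohomology. First I would reduce to the torsion coefficient case: since $R\Gamma_\et(-,\mathbb Q_p)$ is by definition $R\Gamma_\et(-,\mathbb Z_p)[\tfrac1p]$ and $R\Gamma_\et(-,\mathbb Z_p)=R\lim_k R\Gamma_\et(-,\mathbb Z/p^k)$, and both inverting $p$ and taking the limit over $k$ commute with the homotopy limit over $\Delta^\op$ (the latter because $\Delta^\op$-limits commute with $\mathbb N^\op$-limits, the former because filtered colimits commute with finite limits and one can use that the cohomologies are degreewise finitely generated, or simply observe that $-[\tfrac1p]$ is exact), it suffices to prove the statement with $\mathbb Z/p^k$-coefficients, i.e. $R\Gamma_\et(X_C,\mathbb Z/p^k)\simeq R\lim_{[n]}R\Gamma_\et(X_{nC},\mathbb Z/p^k)$.

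For the torsion statement I would invoke cohomological descent for proper hypercoverings. In the algebraic setting this is classical: a proper hypercovering is of cohomological descent for torsion \'etale sheaves, see \cite[Tag 0DHK]{StacksProject} or \cite[Exposé V$^{\mathrm{bis}}$]{SGA4}; applied to the base change $a_C\colon X_{\bullet C}\to X_C$ (which remains a proper hypercovering after the base change $\Spec C\to\Spec K$) and to the constant sheaf $\mathbb Z/p^k$, this gives precisely the displayed equivalence. In the rigid-analytic setting one uses the analogous statement for proper hypercoverings of adic spaces; this can be obtained either directly, or by analytifying/base-changing the algebraic statement when the $X_n$ arise as analytifications, or by appealing to the general formalism of cohomological descent in the pro-\'etale/\'etale topology of adic spaces (e.g. via \cite{Huber_adicSpaces} together with proper base change, exactly as in the treatment of hypercovers in \cite{scholze2013adic}). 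The only input needed beyond the abstract descent formalism is that each structure map $X_{n+1}\to(\mathrm{cosk}_n\mathrm{sk}_n X_\bullet)_{n+1}$ is proper and surjective — the first because it is a resolution of singularities of a proper-over-$X$ object hence proper, the second because resolutions are surjective (even isomorphisms over the smooth locus) — so that $a$ is indeed a proper hypercovering in the sense of \cite[Tag 0DHI]{StacksProject}, as already built into \Cref{def Ros}.

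The main obstacle I anticipate is the bookkeeping in the rigid-analytic case: one must make sure that the relevant cohomological descent statement is available for qcqs rigid spaces over $C$ with $\mathbb Z/p^k$-coefficients along proper hypercoverings, and that base change along $\Spa(C,\mathcal O_C)\to\Spa(K,\mathcal O_K)$ preserves the proper-hypercovering property and commutes with the formation of \'etale cohomology — the latter being exactly the content of the base-change results recalled around \Cref{eq:kummer-etale vs etale} and in \cite{Huber_adicSpaces}. Once this is in place, the interchange of $R\lim_{[n]}$ with $[\tfrac1p]$ and with $R\lim_k$ is formal, and the theorem follows. I would present the argument by first stating the torsion descent as a lemma (citing the algebraic and rigid cases), then deducing the $\mathbb Z_p$- and finally the $\mathbb Q_p$-statement by passing to the limit and inverting $p$.
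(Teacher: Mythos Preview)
Your proposal is correct and follows essentially the same approach as the paper: both reduce to cohomological descent for proper hypercoverings with torsion coefficients, with the only non-formal input being proper base change (Stacks Project for varieties, \cite[Theorem~4.4.1(b)]{Huber_adicSpaces} for rigid spaces). The paper's proof is in fact just a two-line citation of these inputs; your version makes the reduction to torsion coefficients and the commutation of $R\lim_{[n]}$ with $R\lim_k$ and $[\tfrac1p]$ explicit, which is fine, though your justification for commuting $[\tfrac1p]$ with the cosimplicial limit (``filtered colimits commute with finite limits'' or ``$-[\tfrac1p]$ is exact'') is imprecise as stated---the clean argument is that the associated hypercover spectral sequence converges and each $E_1$-term is finitely generated over $\mathbb Z_p$, so inverting $p$ preserves convergence.
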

\begin{proof}
For algebraic varieties, this is proved for example in \cite[Theorem 7.7]{Con}, where the only non-formal input is the proper base change theorem for a pullback from a point.
For the case of rigid space, we apply the same proof and the proper base change theorem for rigid spaces as in \cite[Theorem 4.4.1.(b)]{Huber_adicSpaces}.
\end{proof}

\subsection{Du Bois complex}
We then recall the notion of the (Deligne)-Du Bois complex, introduced by Deligne and studied in Du Bois's thesis.
The idea is to generalize the de Rham complex to the non-smooth setting, using cohomological descent and resolution of singularities.
\begin{definition}
	Let $X$ be either in $\mathrm{Var}_K$ or $\mathrm{Rig}_K$, and let $a\colon X_\bullet\to X$ be a simplicial resolution of singularities.
	The \emph{Du Bois complex} of $X$, denoted as $\underline{\Omega}_{X/K}^\bullet$ is an object in the $\mathbb{N}^\op$-filtered derived category of sheaves of $K$-modules over $X$, defined as the homotopy limit of the Hodge-filtered de Rham complexes as below
	\[
	\underline{\Omega}_{X/K}^\bullet:=R\varprojlim_{[n]\in \Delta^\op} Ra_{n *}\Omega_{X_n/K}^\bullet.
	\]
\end{definition}
\begin{remark}\label{DB rmk}
	It can shown that the construction is independent of the choice of the simplicial resolution of singularities $a\colon X_\bullet\to X$.
	See for example \cite[Theorem 7.22]{PS08} for algebraic varieties, and \cite[Section 5]{Guo19} for rigid spaces (with a slightly different notation as $\Omega_{\mathrm{\acute{e}h}}^i$).
	In particular, when $X$ is smooth itself, the Du Bois complex is filtered isomorphic to its de Rham complex with Hodge filtration.
\end{remark}
\begin{remark}
	When $K$ is replaced by the field of complex numbers, it can be shown that cohomology of the Du Bois complex for a complex algebraic variety is isomorphic to singular cohomology of $X^{\mathrm{an}}$, and the induced filtration from  $\underline{\Omega}_{X/K}^\bullet$ is the Hodge filtration on each singular cohomology group.
	See for example \cite[Section 7.3]{PS08}.
\end{remark}
\begin{remark}
	By construction, the $i$-th graded piece of the Du Bois complex, denoted as $\underline{\Omega}_{X/K}^i[-i]$, is a bounded below complex with coherent cohomology over $\mathcal{O}_X$.
	Using either classical Hodge theory (\cite[Section 7.3]{PS08}) or $p$-adic Hodge theory (\cite[Theorem 1.2.2]{Guo19}, or an upcoming work of Bhatt-Lurie on $p$-adic Riemann-Hilbert correspondence), it  can be shown that the coherent complex $\underline{\Omega}_{X/K}^i$ lives in cohomological degree $[0,\dim (X)-i]$. 
\end{remark}
We also provide a useful computation tool for Du Bois complex using resolution of singularities.
\begin{proposition}\label{DB triangle}
	Let $X$ be a finite type variety over $K$, and let $i\colon Z\to X$ be a Zariski closed subspace of $X$.
	Denote by $\pi\colon X'\to X$ the blowup of $X$ at $Z$, and let $Z'$ be the preimage of $Z$.
	Then for each $i\in \mathbb{N}$, there is a natural distinguished triangle as below
	\[
	\underline{\Omega}_{X/K}^i \longrightarrow R\pi_* \underline{\Omega}_{X'/K}^i \oplus i_*\underline{\Omega}_{Z/K}^i \longrightarrow R\pi_*\underline{\Omega}_{Z/K}^i.
	\]
\end{proposition}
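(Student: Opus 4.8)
The plan is to obtain the triangle as the Mayer--Vietoris sequence attached to descent of the Du Bois complex along the abstract blow-up (cdh) square
\[\begin{tikzcd}
Z' \ar[r] \ar[d, "\pi'"'] & X' \ar[d, "\pi"] \\
Z \ar[r, "i"'] & X
\end{tikzcd}\]
in which $\pi$ is proper, restricts to an isomorphism over $X\setminus Z$, and has $Z'=\pi^{-1}(Z)$ (the value of $\underline{\Omega}^i_{(-)/K}$ being unchanged if we replace $Z'$ by its reduction). First I would record that, by \Cref{DB rmk}, the filtered complex $\underline{\Omega}^\bullet_{(-)/K}$ is independent of the chosen simplicial resolution of singularities; the efficient packaging of this is that each graded piece $\underline{\Omega}^i_{(-)/K}[-i]$ underlies a sheaf for the $\mathrm{eh}$-topology on finite type $K$-varieties, which is the content of \cite[Theorem 7.22]{PS08} (and, in the rigid-analytic incarnation $\Omega^i_{\mathrm{\acute{e}h}}$, of \cite[Section 5]{Guo19}). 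Since the square above is a covering square for the $\mathrm{eh}$-topology, descent gives a homotopy-cartesian square
\[\begin{tikzcd}
\underline{\Omega}^i_{X/K} \ar[r] \ar[d] & R\pi_*\underline{\Omega}^i_{X'/K} \ar[d] \\
i_*\underline{\Omega}^i_{Z/K} \ar[r] & R\pi_*\underline{\Omega}^i_{Z'/K}
\end{tikzcd}\]
of coherent complexes on $X$, naturally in $(X,Z)$; rotating it yields the asserted distinguished triangle (with last term $R\pi_*\underline{\Omega}^i_{Z'/K}$), and since every arrow in sight is filtered the statement holds gradedly and hence at the level of the filtered Du Bois complex.

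Alternatively --- and this is the route I would actually write out, to keep things self-contained --- I would build the square by hand from \Cref{def Ros}. Fix a simplicial resolution $Z_\bullet\to Z$; let $Z'_\bullet\to Z'$ be a simplicial resolution dominating $Z_\bullet\times_Z Z'$; and then assemble a simplicial resolution $X_\bullet\to X$ together with a compatible map from a simplicial resolution $X'_\bullet\to X'$, fitting all four objects into one morphism of augmented simplicial resolutions over the blow-up square. This is possible because $\pi$ is an isomorphism away from $Z$, so the only locus that must be resolved coherently across the square lies over $Z$, where it is already controlled by the fixed $Z_\bullet,Z'_\bullet$. Applying $Ra_{n*}\Omega^\bullet_{(-)_n/K}$ levelwise and taking the homotopy limit over $\Delta^\op$ reduces the homotopy-cartesianness to each smooth level $X_n, X'_n, Z_n, Z'_n$; there it is the elementary fact that for the blow-up of a smooth variety along a smooth center the Hodge-filtered de Rham complex satisfies Mayer--Vietoris, which one checks via the projection-formula description of $R\pi_*\Omega^\bullet_{X'_n}$ and $R\pi_*\Omega^\bullet_{Z'_n}$.

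The hard part is the bookkeeping common to both approaches: producing the descent/Mayer--Vietoris square \emph{functorially in $(X,Z)$ and compatibly with the Hodge filtration}, so that the triangle is canonical rather than merely existent, and checking that the relevant descent (the $\mathrm{eh}$-topology version in the algebraic case, its rigid-analytic analogue in the rigid case) is available over a $p$-adic base field $K$ and not only over $\mathbb C$. In the algebraic setting over a characteristic $0$ field this is classical and I would simply cite \cite{PS08}; in the rigid-analytic setting I would cite \cite{Guo19}. Given that input, compatibility with the filtration is automatic, since the pullbacks $\pi^\ast$, $i^\ast$ and all resolution maps are filtered morphisms.
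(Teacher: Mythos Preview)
Your proposal is correct and follows essentially the same approach as the paper: the paper's proof consists entirely of citations to \cite[Example 7.25]{PS08} and \cite[Proposition 5.0.4]{Guo19}, which encode exactly the cdh/eh-descent (abstract blow-up) Mayer--Vietoris argument you outline in your first approach. Your second, hands-on route via compatible simplicial resolutions is the standard way one unwinds those references; one small caution is that at each simplicial level the map $X'_n\to X_n$ need not itself be a blow-up along a smooth center, so the levelwise reduction is better phrased as building a single proper hypercover of $X$ out of the resolutions of $X'$, $Z$, $Z'$ (as in Du Bois's original construction) rather than checking Mayer--Vietoris levelwise.
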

\begin{proof}
	One can prove this for example following a filtered version of \cite[Example 7.25]{PS08}, or \cite[Proposition 5.0.4]{Guo19}.
\end{proof}

\subsection{de Rham comparison for singular varieties}
In this subsection, we state the de Rham comparison for non-smooth algebraic varieties.
\begin{theorem}
	Let $X$ be in $\mathrm{Var}_K$.
	Then there exists a natural isomorphism of complexes of $\mathrm{B_{dR}}$-modules
	\[
	R\Gamma_\et(X_C,\mathbb{Q}_p)\otimes_{\mathbb{Q}_p} \mathrm{B_{dR}} \simeq R\Gamma(X, \underline{\Omega}_{X/K}^\bullet)\otimes_K \mathrm{B_{dR}}.
	\]
\end{theorem}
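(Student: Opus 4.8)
The plan is to deduce the singular de Rham comparison from the smooth logarithmic de Rham comparison of Diao--Lan--Liu--Zhu by descending along a simplicial resolution of singularities, using the Du Bois complex as the natural receptacle for the de Rham side. First I would invoke embedded resolution of singularities (\Cref{embedded ros}) to choose a simplicial resolution of singularities $a\colon X_\bullet \to X$ together with a proper simplicial compactification $\ol X_\bullet$ with $D_\bullet := \ol X_\bullet \setminus X_\bullet$ a simple normal crossings divisor in the smooth proper $\ol X_n$; by \Cref{Ros and coh} applied over $C$ one has
$$
R\Gamma_\et(X_C,\mathbb Q_p) \simeq R\lim_{[n]\in\Delta^\op} R\Gamma_\et(X_{nC},\mathbb Q_p).
$$
Since $-\otimes_{\mathbb Q_p}B_\dR$ commutes with the relevant (uniformly bounded-below) totalizations (as used throughout the paper, cf.\ the argument in \Cref{prop:commutative diagram}), it suffices to compare the two cosimplicial objects term by term and check compatibility of the comparison maps with the cosimplicial structure maps.

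For each fixed $[n]$, the space $X_n$ is smooth but only quasi-projective; here I would apply the logarithmic de Rham comparison of \cite[Theorem 3.2.3(3)]{DiaoKaiWenLiuZhu_LogarithmicRH} to the smooth proper pair $(\ol X_n^\an, D_n^\an)$, exactly as in \Cref{ex: log-BdR vs usual dR} and \Cref{constr:map theta}: this gives a natural equivalence $\Theta_{\ol X_n^\an,D_n^\an}\colon R\Gamma_\et(X_{nC},\mathbb Q_p)\otimes_{\mathbb Q_p}B_\dR \xrightarrow{\sim} R\Gamma_{\log\dR}(\ol X_n/K)\otimes_K B_\dR \simeq R\Gamma_\dR(X_n/K)\otimes_K B_\dR$, together with the GAGA identification. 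By \Cref{DB rmk}, since $X_n$ is smooth, its Du Bois complex $\underline\Omega^\bullet_{X_n/K}$ is filtered isomorphic to $\Omega^\bullet_{X_n/K}$, so the right-hand side is $R\Gamma(X_n,\underline\Omega^\bullet_{X_n/K})\otimes_K B_\dR$. Taking $R\lim$ over $\Delta^\op$ and using the definition of the Du Bois complex $\underline\Omega^\bullet_{X/K} = R\varprojlim_{[n]} Ra_{n*}\Omega^\bullet_{X_n/K}$ together with the projection formula / flatness of $B_\dR$ over $K$ (to commute the totalization past $-\otimes_K B_\dR$), the right-hand side becomes $R\Gamma(X,\underline\Omega^\bullet_{X/K})\otimes_K B_\dR$, as desired. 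Independence of the choice of simplicial resolution on the de Rham side is \Cref{DB rmk}; on the étale side it is automatic since $R\Gamma_\et(X_C,\mathbb Q_p)$ does not reference the resolution.

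The main obstacle is \emph{naturality}: I must produce the comparison isomorphism as a genuine morphism of complexes (or objects of the derived $\infty$-category), compatibly with the cosimplicial structure of $X_\bullet$, rather than just an abstract isomorphism for each $n$. This requires the functoriality of $\Theta$ in the pair $(\ol X_n^\an, D_n^\an)$ with respect to the simplicial face and degeneracy maps $\ol X_\bullet \to \ol X_\bullet$ --- but these maps need not be morphisms of pairs preserving the snc divisors, so one must instead work with the morphisms of log spaces obtained by pulling back the log structures, and check that the comparison maps of \cite{DiaoKaiWenLiuZhu_LogarithmicRH} are functorial for arbitrary morphisms of smooth log adic spaces (which they are, being induced by the map of period sheaves $\mathbb Z_p \to \mathbb B_\dR$ on pro-Kummer-étale sites, cf.\ the construction in \Cref{constr:map theta}). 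Concretely I would organize the whole comparison as a natural transformation between two functors $\Delta^\op \to \DMod{B_\dR}$, namely $[n]\mapsto R\Gamma_\et(X_{nC},\mathbb Q_p)\otimes B_\dR$ and $[n]\mapsto R\Gamma(\ol X_n^\an, \Omega^\bullet_{\ol X_n^\an, D_n^\an,\log\dR}\widehat\otimes B_\dR)$, built from the sheaf-level map $\mathbb Z_p \to \mathbb B_\dR$ on the \emph{total} pro-Kummer-étale site of the simplicial log space, so that cosimplicial compatibility is built in from the start; then passing to $R\lim$ and identifying the two sides with étale cohomology of $X_C$ and $R\Gamma(X,\underline\Omega^\bullet_{X/K})\otimes_K B_\dR$ respectively finishes the proof. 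An alternative, possibly cleaner route to package the same input is to use \Cref{DB triangle} to reduce, by Noetherian induction on $\dim X$ and the blowup square, to the smooth case plus strata of smaller dimension, matching the étale side via the corresponding blowup descent (proper descent, \Cref{Ros and coh}); I would present the simplicial argument as the main one and mention this reduction as a remark.
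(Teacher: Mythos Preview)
Your proposal is correct and follows essentially the same approach as the paper's proof: choose an embedded simplicial resolution $(\ol X_\bullet, D_\bullet) \to (\ol X, D)$, apply the Diao--Lan--Liu--Zhu comparison termwise, use cohomological descent on the \'etale side, and identify the homotopy limit on the de Rham side with the Du Bois complex, commuting $-\otimes B_\dR$ past the totalization via convergence of the associated spectral sequence. You are in fact more careful than the paper about the functoriality of $\Theta_{\ol X_n, D_n}$ in the simplicial direction; the paper simply asserts naturality for each $n$ and passes to the limit, whereas your remark that one should work on the total pro-Kummer-\'etale site (or equivalently use functoriality of the period-sheaf map $\mathbb Z_p \to \mathbb B_\dR$ for morphisms of log adic spaces) is the honest way to package this.
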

\begin{proof}
	Let $X \to \ol X$ be a compatification with the complement $D$, and let $a\colon X_\bullet\to X$ be a simplicial  resolution of singularities, extended to an embedded resolution for $\ol a\colon \ol X_\bullet\to \ol X$ as in Remark \ref{embedded ros}.
	By the result of \cite{DiaoKaiWenLiuZhu_LogarithmicRH} in Construction \ref{constr:map theta} and Example \ref{ex: log-BdR vs usual dR}, for each $n\in \mathbb{N}$ we have a natural isomorphism
	\[
	R\Gamma_\et(X_{n C}, \mathbb{Q}_p)\otimes_{\mathbb{Q}_p} \mathrm{B_{dR}} \simeq R\Gamma_{\mathrm{dR}} (X_n/K)\otimes_K \mathrm{B_{dR}}.
	\]
	On the other hand, by cohomological descent as in Theorem \ref{Ros and coh}, we have
	\[
	R\Gamma_\et(X_C, \mathbb{Q}_p)\simeq R\varprojlim_{[n]\in \Delta^\op} R\Gamma_\et(X_{n C}, \mathbb{Q}_p).
	\]
	Combine the above two isomorphisms, we obtain the following formula
	\[
	R\Gamma_\et(X_C, \mathbb{Q}_p) \otimes_{\mathbb{Q}_p} \mathrm{B_{dR}} \simeq (R\varprojlim_{[n]\in \Delta^\op} R\Gamma_{\mathrm{dR}} (X_n/K))\otimes_K \mathrm{B_{dR}},
	\]
	where we implicitly uses the convergence of the spectral sequence associated to this simplicial diagram (\cite[Tag 0DHP]{StacksProject}) to commute the homotopy limits with the tensor product with $\mathrm{B_{dR}}$.
	Finally, using the definition of the Du Bois complex, the last term above can be rewritten as $R\Gamma(X,\underline{\Omega}_{X/K}^\bullet) \otimes_K \mathrm{B_{dR}}$, thus we get the formula as below
	\[
	R\Gamma_\et(X_C, \mathbb{Q}_p) \otimes_{\mathbb{Q}_p} \mathrm{B_{dR}} \simeq R\Gamma(X,\underline{\Omega}_{X/K}^\bullet) \otimes_K \mathrm{B_{dR}}.
	\]

\end{proof}
As a consequence, we obtain the following result, which was first proved by Kisin using de Jong's alteration.
\begin{corollary}
	Let $X$ be in $\mathrm{Var}_K$.
	Then for each $i\in \mathbb{N}$, its \'etale cohomology $H^i_\et(X_C, \mathbb{Q}_p)$ is a de Rham Galois representation of $G_K$.
\end{corollary}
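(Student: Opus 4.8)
The plan is to deduce this from the de Rham comparison for singular varieties just established, combined with the standard dictionary between period rings and admissibility of Galois representations. First I would recall that a finite-dimensional $\mathbb{Q}_p$-representation $V$ of $G_K$ is \emph{de Rham} precisely when $\dim_K (V \otimes_{\mathbb{Q}_p} B_{\mathrm{dR}})^{G_K} = \dim_{\mathbb{Q}_p} V$, equivalently when $V \otimes_{\mathbb{Q}_p} B_{\mathrm{dR}} \simeq B_{\mathrm{dR}}^{\oplus \dim V}$ as a $B_{\mathrm{dR}}$-semilinear $G_K$-module. So the goal is to show that $H^i_\et(X_C,\mathbb{Q}_p) \otimes_{\mathbb{Q}_p} B_{\mathrm{dR}}$ is a trivial (i.e.\ $G_K$-split free) $B_{\mathrm{dR}}$-module of the expected rank.

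The key step is to take cohomology of the isomorphism
\[
R\Gamma_\et(X_C,\mathbb{Q}_p)\otimes_{\mathbb{Q}_p} B_{\mathrm{dR}} \simeq R\Gamma(X, \underline{\Omega}_{X/K}^\bullet)\otimes_K B_{\mathrm{dR}}.
\]
Since $B_{\mathrm{dR}}$ is a (in fact, a field? no—it's a complete DVR's fraction field, hence flat) flat $\mathbb{Q}_p$-algebra, $H^i$ of the left side is $H^i_\et(X_C,\mathbb{Q}_p)\otimes_{\mathbb{Q}_p} B_{\mathrm{dR}}$. On the right side, $R\Gamma(X,\underline{\Omega}_{X/K}^\bullet)$ is a perfect complex of $K$-modules — here I would invoke that each graded piece $\underline{\Omega}_{X/K}^i$ is a bounded coherent complex on $X$ and $X$ is of finite type, so its hypercohomology is finite-dimensional over $K$ — hence flatness of $B_{\mathrm{dR}}$ over $K$ gives $H^i\big(R\Gamma(X,\underline{\Omega}_{X/K}^\bullet)\otimes_K B_{\mathrm{dR}}\big) \simeq H^i(X,\underline{\Omega}_{X/K}^\bullet)\otimes_K B_{\mathrm{dR}}$, a \emph{free} $B_{\mathrm{dR}}$-module whose rank equals $\dim_K H^i(X,\underline{\Omega}_{X/K}^\bullet)$. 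Comparing the two computations yields a $G_K$-equivariant isomorphism
\[
H^i_\et(X_C,\mathbb{Q}_p)\otimes_{\mathbb{Q}_p} B_{\mathrm{dR}} \simeq H^i(X,\underline{\Omega}_{X/K}^\bullet)\otimes_K B_{\mathrm{dR}},
\]
where $G_K$ acts trivially on the factor $H^i(X,\underline{\Omega}_{X/K}^\bullet)$; this exhibits $H^i_\et(X_C,\mathbb{Q}_p)$ as $B_{\mathrm{dR}}$-admissible, i.e.\ de Rham, and by taking $G_K$-invariants identifies $D_{\mathrm{dR}}(H^i_\et(X_C,\mathbb{Q}_p)) \simeq H^i(X,\underline{\Omega}_{X/K}^\bullet)$.

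The main subtlety I anticipate is the finiteness and perfectness claim for $R\Gamma(X,\underline{\Omega}_{X/K}^\bullet)$ when $X$ is not proper: $\underline{\Omega}_{X/K}^\bullet$ is only a filtered complex with bounded coherent graded pieces, and hypercohomology of a non-proper variety need not be finite-dimensional in general — one needs the simplicial resolution $X_\bullet \to X$ and the embedded resolution $\ol{X}_\bullet \to \ol{X}$ so that each $R\Gamma_{\mathrm{dR}}(X_n/K)$ is finite-dimensional (it is the de Rham cohomology of a smooth quasi-projective variety, hence finite by classical results), and then finiteness of $H^i$ of the homotopy limit follows from the convergent spectral sequence of the simplicial diagram, exactly the same spectral sequence already used to commute the limit past $\otimes B_{\mathrm{dR}}$ in the proof of the theorem. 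The only other point requiring care is tracking $G_K$-equivariance of the comparison isomorphism — this is inherited termwise from the $G_K$-equivariant comparison of \cite{DiaoKaiWenLiuZhu_LogarithmicRH} on each smooth term $X_n$ and survives the homotopy limit since all the transition maps are $G_K$-equivariant.
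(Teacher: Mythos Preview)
Your proposal is correct and is exactly the deduction the paper intends: the corollary is stated without proof as an immediate consequence of the preceding de Rham comparison theorem, and your argument---pass to $H^i$ using flatness of $B_{\mathrm{dR}}$ over $\mathbb{Q}_p$ and over $K$, then observe the right-hand side carries trivial $G_K$-action---is the standard way to extract $B_{\mathrm{dR}}$-admissibility from such a comparison. One minor simplification: your finiteness concern is slightly overworked, since $K$ is a field (so $-\otimes_K B_{\mathrm{dR}}$ commutes with $H^i$ unconditionally), and finite-dimensionality of $H^i_\et(X_C,\mathbb{Q}_p)$ over $\mathbb{Q}_p$ is a standard input from \'etale cohomology of finite-type schemes, which then forces finite-dimensionality of $H^i(X,\underline{\Omega}_{X/K}^\bullet)$ via the isomorphism itself.
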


\subsection{Hodge properness and Hodge proper resolutions}
We consider the $d$-Hodge properness for spaces over $K$ that are not necessarily smooth in this subsection.
\begin{definition}
	Let $X$ be either in $\mathrm{Var}_K$ or $\mathrm{Rig}_K$.
	It is called \emph{$d$-Hodge proper} if for each integer $0\leq i\leq d$, its cohomology of (shifted) graded piece of Du Bois complex $R\Gamma(X,\underline{\Omega}_{X/K}^i)$ is a perfect $K$-linear complex.
\end{definition}
\begin{remark}\label{d Hodge prop rmk}
	When $X$ is a finite type algebraic variety that is smooth over $K$, the notion coincides with the definition as in Definition \ref{def:Hodge-proper up to degree d}, following Remark \ref{DB rmk}.
\end{remark}
Our goal is to prove the following result on $d$-Hodge propernes.
\begin{theorem}\label{d Hodge prop}
	Let $X$ be in $\mathrm{Var}_K$ or $\mathrm{Rig}_K$, and let  $X_{\mathrm{sing}}$ be its singular locus.
	Assume $X$ admits a Cohen-Macaulay \footnote{The Cohen-Macaulay condition for rigid spaces can be found for example in \cite{Ber93}, and is defined as local rings being Cohen-Macaulay.} compactification $X\to \ol X$ with the complement $D$ of codimension $d+2$, such that the closure of $X_{\mathrm{sing}}$ in $\ol X$ is $Z$ itself.
	Then $X$ is $d$-Hodge proper.
\end{theorem}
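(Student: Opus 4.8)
The plan is to reduce the $d$-Hodge properness of $X$ to the smooth $d$-Hodge properness of the complement $\ol X \setminus Z$ (which we already know by \Cref{lem:example of d-Hodge-proper}, via \Cref{d Hodge prop rmk}), by analyzing the graded pieces $\underline{\Omega}^i_{X/K}$ of the Du Bois complex through \Cref{DB triangle}. The point is that $Z$ is exactly the closure of $X_{\mathrm{sing}}$, so away from $Z$ the Du Bois complex is the honest de Rham complex with its Hodge filtration (\Cref{DB rmk}); what we need is to control what happens along $Z$ in cohomological degrees $\le d-i$.

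\textbf{Step 1: Reduce to the proper case via the distinguished triangle.} First I would set up the comparison between $X$ and its compactification. We have the open $j\colon X = \ol X \setminus D \inj \ol X$, and the idea is that $R\Gamma(X, \underline{\Omega}^i_{X/K}) = R\Gamma(\ol X, Rj_* \underline{\Omega}^i_{X/K})$, and since $\ol X$ is proper over $K$, $d$-Hodge properness reduces to showing $\tau^{\le d - i} Rj_* \underline{\Omega}^i_{X/K}$ is a coherent complex on $\ol X$ for each $i \ge 0$ (compare the argument in \Cref{lem:example of d-Hodge-proper}). Because $Z$ is contained in the closed complement $D$-side of the story is orthogonal — more precisely, I should compare with $\ol U := \ol X \setminus Z$, which contains $X$ as a dense open with $D \cap \ol U = D \setminus (D\cap Z)$ still of codimension $d+2$ but now inside a \emph{Cohen-Macaulay} ambient space whose singular locus is entirely removed, i.e. $\ol U$ is smooth. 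Then $\ol U$ is a smooth open of the proper Cohen-Macaulay $\ol X$, so by \Cref{lem:example of d-Hodge-proper} (applied with "$X$"${}=\ol X$, "$Z$"${}=D$, codimension $d+2$) the scheme $\ol X \setminus D$... wait — that recovers smooth $(d)$-Hodge properness of $X$ only if $X$ itself is smooth. So instead the honest reduction is: $\underline{\Omega}^i_{X/K}$ restricted to the smooth locus $X \setminus Z = \ol U \setminus D$ agrees with $\Omega^i$, and by \Cref{DB triangle} (applied iteratively to a resolution $\pi\colon X' \to X$ with center $Z$, together with the blow-up $Z' = \pi^{-1}(Z)$) one gets a triangle
\[
\underline{\Omega}^i_{X/K} \lra R\pi_* \underline{\Omega}^i_{X'/K} \oplus i_* \underline{\Omega}^i_{Z/K} \lra R\pi_* \underline{\Omega}^i_{Z'/K}.
\]
Since $\dim Z = \dim \ol X - d - (\text{codim of }D\text{ in nothing})$ — concretely $Z$ has codimension $\ge d+2$ in $\ol X$ hence $\ge$ some bound in $X$ — the coherent complexes $i_*\underline{\Omega}^i_{Z/K}$ and $R\pi_*\underline{\Omega}^i_{Z'/K}$ are supported on a locus of small dimension and, crucially, live in a range of cohomological degrees bounded by $\dim Z - i$; the dimension hypothesis forces these to be concentrated in degrees $> d - i$ after the relevant twist, so they do not affect $\tau^{\le d-i}$.

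\textbf{Step 2: Control the truncation.} Having the triangle, I would argue by induction on the resolution tower (the blow-up $X'$ is "less singular", $Z'$ and $Z$ are lower-dimensional so handled by dimension reasons or by induction on dimension), that $\tau^{\le d-i} R\Gamma(X, \underline{\Omega}^i_{X/K})$ is computed by $\tau^{\le d-i} R\Gamma(X^{\mathrm{sm}}, \Omega^i)$ up to coherent corrections, where $X^{\mathrm{sm}} = X \setminus Z = \ol U \setminus D$ is smooth and has a Cohen-Macaulay proper compactification $\ol U$ with complement $D$ of codimension $d+2$. By \Cref{lem:example of d-Hodge-proper} this smooth variety is Hodge-proper up to degree $d$, i.e. $\tau^{\le d-i} R\Gamma(\ol U \setminus D, \Omega^i)$ is coherent for all $i \ge 0$. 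Here the codimension bound $d+2$ enters exactly as in that lemma (via $R^k j_* \Omega^i$ being coherent for $k \le d$, using Cohen-Macaulayness and \cite[Tag 0BLT]{StacksProject}). Feeding this back through the triangles, and using that all the error terms coming from $Z$ are coherent (being pushforwards of coherent complexes from proper/projective maps with lower-dimensional source) and concentrated in degrees large enough to not interfere with $\tau^{\le d-i}$, we conclude $\tau^{\le d-i}R\Gamma(X,\underline{\Omega}^i_{X/K}) \in \Coh(K)$ for all $i$, which is the definition of $d$-Hodge properness.

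\textbf{Main obstacle.} The delicate point I expect is the bookkeeping of cohomological amplitudes: making precise that the "correction terms" $i_*\underline{\Omega}^i_{Z/K}$ and $R\pi_*\underline{\Omega}^i_{Z'/K}$ from \Cref{DB triangle} indeed land in cohomological degrees $> d - i$ when pushed forward to $\ol X$ and globalized. This needs: (a) the bound that $\underline{\Omega}^j_{W/K}$ lives in degrees $[0, \dim W - j]$ for any variety $W$ (stated in the excerpt via $p$-adic or classical Hodge theory), (b) that $R\pi_*$ along the blow-up and along the proper map to $\Spec K$ shifts degrees by at most $\dim$ of the fibers, and (c) that $\mathrm{codim}_{\ol X} Z \ge d+2$ translates into $\dim Z$ small enough that $\dim Z - i + (\text{fiber dimensions}) < d - i$ fails to happen, i.e. the total degree of any surviving cohomology of the correction terms exceeds $d$. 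One also needs to handle the rigid-analytic case uniformly, but since \Cref{DB triangle}, \Cref{DB rmk}, and the degree bounds are all available there (via \cite{Guo19}), the same argument applies verbatim. A secondary subtlety is that the resolution of singularities may require several blow-ups, so the induction should be organized on the dimension of the singular locus together with some invariant measuring the complexity of the singularities, exactly as in the proof that \Cref{DB triangle} computes the Du Bois complex.
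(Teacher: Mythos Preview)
Your overall framework --- use the distinguished triangle of \Cref{DB triangle} together with a tower of blowups resolving the singularities, and reduce to the smooth case handled by \Cref{lem:example of d-Hodge-proper} --- matches the paper's. But the mechanism you propose for controlling the correction terms is wrong, and this is a genuine gap.

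You repeatedly assert that $Z$ (the closure of $X_{\mathrm{sing}}$) has codimension $\ge d+2$ in $\ol X$, and you want to use this to push the cohomology of $i_*\underline{\Omega}^i_{Z/K}$ and $R\pi_*\underline{\Omega}^i_{Z'/K}$ into degrees $> d-i$. There is no such hypothesis: the codimension-$(d{+}2)$ assumption applies only to $D = \ol X \setminus X$, not to the singular locus. In general $X_{\mathrm{sing}}$ can have any codimension $\ge 1$. Even when $Z$ \emph{is} low-dimensional, your degree count does not give what you want: $\underline{\Omega}^i_{Z/K}$ lives in degrees $[0,\dim Z - i]$, and $R\Gamma$ over $Z$ lands in degrees $[0, 2\dim Z - i]$, which need not exceed $d-i$.

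The hypothesis you are not using is that the closure of $X_{\mathrm{sing}}$ in $\ol X$ equals $X_{\mathrm{sing}}$ itself. This says $X_{\mathrm{sing}}$ is closed in the \emph{proper} scheme $\ol X$, hence is itself proper over $K$. The paper's argument runs the blowup tower $X=X_0 \leftarrow X_1 \leftarrow \cdots \leftarrow X_t$ with centers $C_i \subset X_{i,\mathrm{sing}}$, observes (inductively) that each $C_i$ is closed in the corresponding compactification $\ol X_i$, hence $C_i$ and its preimage $C_i \times_{X_i} X_{i+1}$ are proper over $K$. Properness makes $R\Gamma(C_i, \underline{\Omega}^j_{C_i/K})$ and $R\Gamma(C_i', \underline{\Omega}^j_{C_i'/K})$ perfect over $K$ in \emph{all} degrees, with no truncation needed. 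The triangle then gives $d$-Hodge properness of $X_i$ from that of $X_{i+1}$ directly, and one descends from the smooth $X_t$.
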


\begin{proof}
	We first notice that when $X$ is smooth admitting a compactification as in the hypothesis, this is the commutative algebra statement as in \Cref{lem:example of d-Hodge-proper} (together with Remark \ref{DB rmk}).

	In general, we reduce to the smooth case as follows.
	Let $Z$ be a Zariski closed subspace of $X$, inside of $X_{\mathrm{sing}}$.
	By assumption, its closure in $\ol X$ is equal to itself, and we can form the following diagram of varieties, with each square being cartesian
	\[
	\xymatrix{ Z' \ar[r] \ar[d] & X'=\mathrm{Bl}_{X}(Z) \ar[r] \ar[d]^\pi & \ol X'=\mathrm{Bl}_{X'}(Z) \ar[d]^{\ol f} & D \ar@{=}[d] \ar[l] \\
		Z \ar[r] & X \ar[r] & \ol X & D\ar[l].}
	\]
	In particular, as $Z$ is a closed subscheme of $\ol X$, its preimage $Z'=Z\times_X X'$ in $X'$ is also a closed subscheme of $\ol X'$.
	Moreover, both $Z$ and $Z'$ are proper and hence Hodge proper over $K$.
	
	To proceed, we use the resolution of singularities in \cite[Theorem 1.1.(2')]{BM08}, which says that there exists a finite sequence of blowups
	\[
	X=X_0 \longleftarrow X_1 \longleftarrow \cdots \longleftarrow X_t,
	\]
	such that each $X_{i+1} \to X_{i}$ is a blowup at a Zariski closed subspace $C_{i}$ of $X_{i}$ for $C_{i}\subseteq X_{i, \mathrm{sing}}$, and $X_t$ is smooth over $K$.
	In particular, inductively as in the paragraph above, the closure of $C_{i+1}$ in $\ol X_{i+1}=\mathrm{Bl}_{\ol X_{i}} (C_{i})$ is $C_{i+1}$ itself, and thus $C_{i}$ and $C_i\times_{X_i} X_{i+1}=C_i\times_{\ol X_i} \ol X_{i+1}$ are proper over $K$ for each $i$.
	
	At last, we apply Proposition \ref{DB triangle} to get the following distinguished triangle
	\[
	\underline{\Omega}_{X_i/K}^j \longrightarrow R\pi_* \underline{\Omega}_{X_{i+1}/K}^j \oplus \underline{\Omega}_{C_i/K}^j \longrightarrow R\pi_*\underline{\Omega}_{C_i\times_{X_i} X_{i+1}/K}^j.
	\]
	In this way, by the properness of $C_i$ and $C_i\times_{X_i} X_{i+1}$ and the descending induction from $X_t$, we get the $d$-Hodge properness of $X=X_0$ using long exact sequences applying at their derived global sections.
	
\end{proof}
\begin{proposition}\label{simp res with compat}
	Let $X$ be in $\mathrm{Var}_K$.
	Assume $X$ admits a Cohen-Macaulay compactification $X \to \ol X$ with the complement $D$ of codimension $d+2$.
	Then there is a proper hypercovering $(X_\bullet, \ol X_\bullet)$ of the pair $(X,\ol X)$, such that
	\begin{itemize}
		\item the map $X_\bullet\to X$ is a  simplicial resolution of singularities;
		\item each $X_n\to \ol X_n$ is a compactification with its complement isomorphic to $D=\ol X\backslash X$.
	\end{itemize} 
	In particular, each $X_n$ is smooth and $d$-Hodge proper.
\end{proposition}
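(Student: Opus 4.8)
The plan is to build $(X_\bullet,\ol X_\bullet)$ by a relative version of Deligne's hyperresolution, carried out with a \emph{functorial} resolution of singularities $\mathcal R$ (e.g.\ Bierstone--Milman, as used already in \Cref{def Ros}, or Hironaka--Temkin). The key structural facts I would use are that $\mathcal R$ commutes with smooth morphisms (in particular with restriction to an open subscheme), that it is the identity over the smooth locus, and that $\mathcal R(Y)\to Y$ is obtained from $Y$ by a canonical finite sequence of blow-ups along \emph{smooth} centers contained in the iterated singular loci. Concretely: set $\ol X_{-1}:=\ol X$ and run the usual bootstrap --- first take $\ol X_0:=\mathcal R(\ol X)$, then inductively let $\ol X_{n+1}$ be $\mathcal R$ applied to $(\cosk_n\sk_n\ol X_\bullet)_{n+1}$, where all fiber products are formed over $\ol X$. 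This produces a proper hypercovering $\ol X_\bullet\to\ol X$ in which every $\ol X_n$ is obtained from the Cohen--Macaulay scheme $\ol X$ by iterated blow-ups along smooth centers. Restricting everything over the open $X\hookrightarrow\ol X$ gives $X_\bullet:=\ol X_\bullet\times_{\ol X}X\to X$; because $\mathcal R$ commutes with open immersions, $X_\bullet\to X$ is again a proper hypercovering and each $X_{n+1}\to(\cosk_n\sk_n X_\bullet)_{n+1}$ is a resolution of singularities (the restriction to $X$ of the corresponding fact over $\ol X$), so $X_\bullet\to X$ is a simplicial resolution of singularities in the sense of \Cref{def Ros}, and in particular each $X_n$ is smooth over $K$.

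Next I would verify the two compatibility assertions. The heart of the matter is that the center of every blow-up occurring in the construction of $\ol X_\bullet$ is disjoint from $D$: since $\mathcal R$ is the identity over the smooth locus, all centers are supported on the closures (inside $\ol X$, resp.\ inside the relevant fiber products over $\ol X$) of the singular loci of the $X_n$'s, and the single input that must be pinned down is that this closure does not meet $D$ --- equivalently, that $\ol X$ is regular along $D$, which is the analogue of the hypothesis used in \Cref{d Hodge prop} that the closure of the singular locus in $\ol X$ is disjoint from the complement, and which I would make explicit here. Granting it, every blow-up in the tower is an isomorphism over a neighbourhood of $D$, hence $\ol X_n\smallsetminus X_n$ is canonically identified with $D$ for all $n$ (which is the second bullet), and the codimension of this complement stays equal to $d+2$. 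For the Cohen--Macaulay property: a blow-up of a Cohen--Macaulay scheme along a regularly embedded (smooth) closed subscheme is again Cohen--Macaulay, so applying this inductively along the blow-up tower producing $\ol X_n$ from $\ol X$ shows each $\ol X_n$ is Cohen--Macaulay (away from $D$ one may even say $\ol X_n$ is smooth; the content is only near $D$, where $\ol X_n$ agrees with $\ol X$).

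For the final clause, each $X_n$ is then a smooth $K$-scheme admitting the Cohen--Macaulay compactification $X_n\hookrightarrow\ol X_n$ with complement $\ol X_n\smallsetminus X_n\cong D$ of codimension $d+2$, so $X_n$ is $d$-Hodge proper by \Cref{lem:example of d-Hodge-proper} (using \Cref{DB rmk} to identify the Du Bois-complex definition of $d$-Hodge properness with the Hodge-cohomology one for the smooth scheme $X_n$). The step I expect to be the main obstacle is organizing the functorial resolution \emph{simultaneously over $X$, over $\ol X$, and compatibly with the coskeleton bootstrap}, while proving that the centers remain disjoint from $D$ at every stage --- this requires knowing that the auxiliary schemes $(\cosk_n\sk_n\ol X_\bullet)_{n+1}$ are singular only "over $X$", so that resolving them never disturbs $D$; the Cohen--Macaulay preservation under blow-ups along regular centers and the codimension bookkeeping are then routine.
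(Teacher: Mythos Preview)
Your proposal is correct and matches the paper's approach: the paper also builds $(X_\bullet,\ol X_\bullet)$ inductively by the coskeleton bootstrap with the Bierstone--Milman resolution, the key invariant being that each $(X_m,\ol X_m)\to(X,\ol X)$ is an isomorphism away from $X_{\mathrm{sing}}$, so that the next coskeleton is singular only over the preimage of $X_{\mathrm{sing}}$ and resolving it leaves $D$ untouched. The only cosmetic difference is that the paper resolves $X$ and \emph{extends} the blow-ups to $\ol X$ (using that the centers, lying in $X_{\mathrm{sing}}$, are already closed in $\ol X$), whereas you resolve $\ol X$ and \emph{restrict} to $X$; under the hypothesis you correctly isolate --- that $\ol X$ is regular along $D$, which the paper invokes implicitly via its reference to the proof of Theorem~\ref{d Hodge prop} --- the two constructions literally coincide by functoriality. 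One minor remark: your Cohen--Macaulay argument via ``blow-up along a regularly embedded smooth center preserves CM'' is shaky, since the centers sit inside the singular locus and need not be lci there; but your parenthetical --- $\ol X_n$ is smooth over $X$ and agrees with $\ol X$ near $D$ --- is the correct argument, and is exactly what the paper uses (deferred to the proof of Theorem~\ref{local acyc for sing}).
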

\begin{proof}
	We give the construction inductively, using the same idea of proof as in Theorem \ref{d Hodge prop}.
	For $n=0$, we apply the resolution of singularity of \cite[Theorem 1.1]{BM08} at $X$, and extends to its compactification $\ol X$ via blowing up at the same centers to get the pair $(X_0, \ol X_0)$.
	By the second and the third paragraph of the proof of Theorem \ref{d Hodge prop}, the map $X_0\to X$ (and similarly for $\ol X_0 \to \ol X$) is a finite composition of blowups $X_{0, i+1}=\mathrm{Bl}_{X_{0,i}}(C_{0,i}) \to X_{0, i} \to \cdots \to X_{0,0}=X$, such that the closure of the blowup center $C_{0,i}$ within the associated compactification $\ol X_{0,i}$ is itself.
	In particular, the complement $\ol X_0\backslash X_0$ is naturally isomorphic to $D:=\ol X\backslash X$, which is Cohen-Macaulay of codimension $d+2$.
	So the pair $(X_0, \ol X_0)$ satisfies the requirement.
	Moreover, as each blowup center $C_{0,i}$ is within the singular loci of $X_{0,i}$, the natural map of pairs $(X_0, \ol X_0)\to (X,\ol X)$ is an isomorphism away from $X_{\mathrm{sing}}$.

	To get the construction for $n+1$, by Definition \ref{def Ros}, it suffices to notice that the singular locus of $(\mathrm{cosk_n sk_n} X_\bullet)_{n+1}$ is within the preimage of $X_{\mathrm{sing}}$ in $(\mathrm{cosk_n sk_n} X_\bullet)_{n+1}$.
	The latter is because by induction the map of pairs $(X_m, \ol X_m) \to (X,\ol X)$ are isomorphisms away from $X_\mathrm{sing}$ for all $m\leq n$.
	In particular, the pair of truncated simplicial diagram $\left( (\mathrm{cosk_n sk_n} X_\bullet)_{n+1}, (\mathrm{cosk_n sk_n} \ol X_\bullet)_{n+1}\right)$ is isomorphic to the constant diagram when restricted to $(X\backslash X_{\mathrm{sing}}, \ol X\backslash X_{\mathrm{sing}})$.
	Thus the singular locus of $(\mathrm{cosk_n sk_n} X_\bullet)_{n+1}$ is contained in $X_{\mathrm{sing}}\times_X  (\mathrm{cosk_n sk_n} X_\bullet)_{n+1}$, whose closure in $(\mathrm{cosk_n sk_n} \ol X_\bullet)_{n+1}$ is disjoint with $D$.
	So we are done.
\end{proof}

\subsection{Local acyclicity in the singular case}
In this subsection, we extend the local acyclicity in \Cref{thm:even mainer theorem} to a finite type scheme $X$ over $\mathcal{O}_K$ that has singularity away from the special fiber.

Precisely, we prove the following.
\begin{theorem}\label{local acyc for sing}
	Let $X$ ba a finite type scheme over $\mathcal{O}_K$. Assume the following conditions hold for $X$:
	\begin{enumerate}
		\item there is an open subscheme $U$ of $X$ containing the special fiber $X_k$, such that $U$ is smooth over $\mathcal{O}_K$;
		\item there is an open immersion $X\subset \ol X$ into a proper and Cohen-Macaulay $\mathcal{O}_K$-scheme $\ol X$, such that the closure of $X\backslash U$ in $\ol X$ is $X\backslash U$ itself, and the complement $D=\ol X\backslash X$ has codimension $d+2$.
	\end{enumerate}
Then the natural map below is an isomorhism for $0\leq i\leq d$ and an injection for $i=d+1$
\[
\Upsilon_{X, \mathbb{Q}_p}\colon \mathrm{H}_\et^i(X_\mathbb{C},\mathbb{Q}_p) \longrightarrow \mathrm{H}_\et^i(\widehat{X}_C, \mathbb{Q}_p).
\]
\end{theorem}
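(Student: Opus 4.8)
The plan is to reduce the statement to \Cref{thm:even mainer theorem} for smooth $\mathcal O_K$-schemes, by resolving the singularities of $X$ (which, by hypothesis, all lie in the generic fibre) and then invoking cohomological descent.

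The elementary starting point is that, since the special fibre $X_k$ is contained in the smooth open $U$, the $p$-adic formal completion of $X$ coincides with that of $U$; hence the Raynaud generic fibres agree, $\widehat X_C \simeq \widehat U_C$ with $U$ smooth over $\mathcal O_K$, and $\Upsilon_{X,\mathbb Q_p}$ in fact takes values in $H^i_\et(\widehat U_C,\mathbb Q_p)$. Next I would construct a simplicial resolution of singularities $a\colon X_\bullet\to X$ \emph{over $\mathcal O_K$}, together with an embedded extension $\bar a\colon\bar X_\bullet\to\bar X$ as in \Cref{embedded ros} and \Cref{simp res with compat} applied to the pair $(X,\bar X)$, taking all blow-up centres inside the singular locus $X\setminus U$; by condition~(1) these lie in the generic fibre, so $a$ is an isomorphism over $U$. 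Consequently each $X_n$ has special fibre $X_{n,k}\cong X_k$ (smooth over $k$, since $U/\mathcal O_K$ is smooth) and generic fibre $X_{n,K}$ smooth over $K$ (being a resolution of $X_K$), and is flat over $\mathcal O_K$ (blowing up centres contained in the generic fibre preserves flatness over the base DVR); hence each $X_n$ is smooth over $\mathcal O_K$. Moreover $\bar X_n\to\bar X$ is an isomorphism over $D$ (as $D$ is disjoint from $X$), so $\bar X_n$ is proper and Cohen--Macaulay over $\mathcal O_K$ with $\bar X_n\setminus X_n\cong D$ of codimension $d+2$; by \Cref{lem:example of d-Hodge-proper} each $X_n$ is then $d$-Hodge-proper over $\mathcal O_K$, so that \Cref{thm:even mainer theorem} applies to it. Finally, since $a$ is an isomorphism over $U\supseteq X_k$, the completions $\widehat{X_n}$ all equal $\widehat U$, so $\widehat{X_\bullet}$ is the constant simplicial formal scheme and $\widehat{(X_n)}_C\simeq\widehat X_C$ for every $n$.

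Now I would apply \Cref{thm:even mainer theorem} to each $X_n$: the map $\Upsilon_{X_n,\mathbb Q_p}\colon R\Gamma_\et(X_{n,C},\mathbb Q_p)\to R\Gamma_\et(\widehat X_C,\mathbb Q_p)$ has fibre concentrated in the appropriate range of cohomological degrees. By cohomological descent along the proper hypercover $X_{\bullet,C}\to X_C$ (\Cref{Ros and coh}) one has $R\Gamma_\et(X_C,\mathbb Q_p)\simeq\Tot_{[n]}R\Gamma_\et(X_{n,C},\mathbb Q_p)$, while $\Tot_{[n]}R\Gamma_\et(\widehat{(X_n)}_C,\mathbb Q_p)\simeq R\Gamma_\et(\widehat X_C,\mathbb Q_p)$ because that cosimplicial diagram is constant; by naturality of the comparison maps, $\Upsilon_{X,\mathbb Q_p}$ is identified with $\Tot_{[n]}\Upsilon_{X_n,\mathbb Q_p}$. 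Since totalization commutes with fibres and a cosimplicial object with terms in $D^{\geq e}$ has totalization in $D^{\geq e}$ (via the descent spectral sequence), the fibre of $\Upsilon_{X,\mathbb Q_p}$ is concentrated in degrees $\geq d+2$, i.e.\ $\Upsilon_{X,\mathbb Q_p}$ is an isomorphism on $H^i$ for $i\le d$ and injective on $H^{d+1}$, as asserted.

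The main obstacle I anticipate is the second step: realising the simplicial resolution over $\mathcal O_K$ with terms that are smooth and admit proper Cohen--Macaulay compactifications whose boundary has the prescribed codimension (i.e.\ checking that flatness and the Cohen--Macaulay property survive the relevant blow-ups, and that $\bar X_n\setminus X_n\cong D$), together with ensuring that $\Upsilon_{X_n,\mathbb Q_p}$ and all the de Rham and prismatic inputs underlying \Cref{thm:even mainer theorem} are functorial enough in $X_n$ for the totalization identification to be legitimate. The precise translation between the codimension of $D$ and the resulting degree range proceeds exactly as in the proof of \Cref{cor:applications for schemes}; an alternative route, more in the spirit of this appendix, is to feed the de Rham comparison for singular varieties and \Cref{d Hodge prop} into a Du Bois analogue of the key square \eqref{eq_main_diagram}, but the descent argument above seems the most direct.
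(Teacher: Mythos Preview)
Your proposal is correct and follows essentially the same route as the paper: build a simplicial resolution over $\mathcal O_K$ by blowing up centres inside the generic fibre (via \Cref{simp res with compat}), check that each $X_n$ is smooth and $d$-Hodge-proper so that \Cref{thm:even mainer theorem} applies termwise, and conclude by cohomological descent together with a fibre/spectral-sequence argument. One pleasant simplification in your version is the observation that, because all blow-up centres lie in the generic fibre, the formal completions $\widehat{X_n}\simeq\widehat X$ assemble into a \emph{constant} cosimplicial diagram---so the totalization on the Raynaud side is immediate---whereas the paper instead invokes cohomological descent for rigid spaces (\Cref{Ros and coh}) on that side; your final fibre-coconnectivity step is just a repackaging of the paper's spectral-sequence bookkeeping.
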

Here we note that by assumption, Raynaud's generic fiber $\widehat{X}_K$, which is equal to $\widehat{U}_K$, is smooth over $K$.
\begin{corollary}
	Assume $X$ is as in \Cref{local acyc for sing}.
	Then for each $i\leq d$, the \'etale cohomology $ \mathrm{H}_\et^i(X_\mathbb{C},\mathbb{Q}_p)$ is a crystalline representation.
\end{corollary}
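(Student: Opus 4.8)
The plan is to deduce crystallinity from the $G_K$-equivariant comparison already provided by \Cref{local acyc for sing}. Since $\Upsilon_{X,\mathbb{Q}_p}$ is the comparison morphism between the \'etale cohomologies of the algebraic and Raynaud generic fibres, it is automatically $G_K$-equivariant, so \Cref{local acyc for sing} yields a $G_K$-equivariant isomorphism $H^i_\et(X_C,\mathbb{Q}_p)\simeq H^i_\et(\widehat X_C,\mathbb{Q}_p)$ for $i\le d$; hence it suffices to show that the right-hand side is crystalline.

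Next I would identify the Raynaud generic fibre with that of the smooth model: because $U\subset X$ is open and contains the whole special fibre $X_k$, the formal completions of $X$ and of $U$ along their special fibres coincide, so $\widehat X\simeq\widehat U$ and therefore $H^i_\et(\widehat X_C,\mathbb{Q}_p)\simeq H^i_\et(\widehat U_C,\mathbb{Q}_p)$ $G_K$-equivariantly. Now $U$ is smooth over $\mathcal{O}_K$ (and $U_k=X_k$ is smooth over $k$), so we are back in the smooth setting and can hope to apply \Cref{cor:fontaine's conjecture for d-de Rham-proper stacks} directly to $U$.

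For that I would show that $U$ is $(d+1)$-de Rham-proper over $\mathcal{O}_K$, by mirroring the proof of \Cref{lem:example of d-Hodge-proper} together with the ``de Rham gains one degree over Hodge'' refinement of \Cref{rem:complement to a locally complete intersection}. For the open immersion $j\colon U\hookrightarrow\ol X$ into the Cohen--Macaulay proper $\mathcal{O}_K$-scheme $\ol X$, the complement $\ol X\setminus U=D\sqcup(X\setminus U)$ is a disjoint union of closed subschemes, so Cohen--Macaulayness of $\ol X$ forces $\underline{R\Gamma}_{\ol X\setminus U}(\mathcal{O}_{\ol X})$ to vanish in degrees below $\mathrm{codim}_{\ol X}(\ol X\setminus U)$, hence $\tau^{\le d}Rj_*\Omega^k_{U/\mathcal{O}_K}\simeq\Omega^k_{\ol X/\mathcal{O}_K}$ for all $k$; feeding this through $R\Gamma(\ol X,-)$ and passing from Hodge to de Rham exactly as in \Cref{rem:complement to a locally complete intersection} yields $\tau^{\le d+1}R\Gamma_\dR(U/\mathcal{O}_K)\in\Coh(\mathcal{O}_K)$. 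Equivalently, one may invoke the Du Bois–theoretic \Cref{d Hodge prop} and the simplicial resolution of \Cref{simp res with compat} adapted over $\mathcal{O}_K$, noting that the resolution only modifies the bad locus $X\setminus U$, which lies entirely in the generic fibre and so leaves $\widehat X$ untouched.

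With $U$ smooth and $(d+1)$-de Rham-proper over $\mathcal{O}_K$, \Cref{cor:fontaine's conjecture for d-de Rham-proper stacks} applies: $H^i_\et(\widehat U_C,\mathbb{Q}_p)$ is crystalline for $i\le d$ with $D_\crys\simeq H^i_\crys(U_k/W(k))[\tfrac1p]=H^i_\crys(X_k/W(k))[\tfrac1p]$. Transporting this along the $G_K$-equivariant isomorphisms above gives that $H^i_\et(X_C,\mathbb{Q}_p)$ is crystalline for $i\le d$, with the same $D_\crys$. I expect the main obstacle to be the $(d+1)$-de Rham-properness of $U$: one must control the codimension of $\ol X\setminus U$ in $\ol X$ (the hypotheses of \Cref{local acyc for sing} bound $\mathrm{codim}\,D$, but one also needs the codimension of the singular locus $X\setminus U$), and then verify that the one-degree improvement over the naive Hodge bound survives in the relative, Cohen--Macaulay, mixed-characteristic setting; everything else is formal given \Cref{local acyc for sing} and \Cref{cor:fontaine's conjecture for d-de Rham-proper stacks}.
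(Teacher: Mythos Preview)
Your overall strategy is sound: reduce via the $G_K$-equivariant isomorphism of \Cref{local acyc for sing} to showing that $H^i_\et(\widehat X_C,\mathbb Q_p)$ is crystalline, then identify $\widehat X$ with the formal completion of a smooth $\mathcal O_K$-scheme to which \Cref{cor:fontaine's conjecture for d-de Rham-proper stacks} applies. The gap you flag at the end, however, is genuine and blocks the direct route through $U$. The hypotheses of \Cref{local acyc for sing} impose no lower bound on the codimension of $X\setminus U$ inside $\ol X$; it could well be a divisor in the generic fibre. Hence $\underline{R\Gamma}_{\ol X\setminus U}(\mathcal O_{\ol X})$ need not vanish below degree $d+1$, and there is no reason for $U$ to be $d$-Hodge-proper, let alone $(d+1)$-de Rham-proper. (A secondary point: $\ol X$ is only Cohen--Macaulay, so $\Omega^k_{\ol X/\mathcal O_K}$ has no good meaning; \Cref{lem:example of d-Hodge-proper} only asserts coherence of $\tau^{\le d}Rj_*\Omega^k_U$, not an identification with anything on $\ol X$.)

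The correct fix is essentially your alternative, and is the paper's intended argument. Rather than $U$, use any level $X_0$ of the simplicial resolution $X_\bullet\to X$ constructed in the proof of \Cref{local acyc for sing} itself (not \Cref{simp res with compat}, which is over $K$). The blowup centres there lie in $X_{\mathrm{sing}}\subset X\setminus U\subset X_K$, so each $X_n\to X$ is an isomorphism over $U$, hence on $p$-adic formal completions: $\widehat X_0=\widehat U=\widehat X$. The payoff is that $X_0$ is smooth over $\mathcal O_K$ and its compactification $\ol X_0$ has complement exactly $D$ --- not $D\cup(X\setminus U)$ --- still of codimension $d+2$ in the Cohen--Macaulay $\ol X_0$ (as verified in that proof). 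Now \Cref{lem:example of d-Hodge-proper} applies to $X_0$, and \Cref{cor:fontaine's conjecture for d-de Rham-proper stacks} gives that $H^i_\et(\widehat X_{0,C},\mathbb Q_p)=H^i_\et(\widehat X_C,\mathbb Q_p)$ is crystalline in the stated range. Passing to the resolved scheme $X_0$ is precisely what trades the uncontrolled complement $D\cup(X\setminus U)$ for $D$ alone.
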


\begin{proof}
	We first notice that if $X$ is smooth over $\mathcal{O}_K$ satisfying the condition (ii), then $X$ is $d$-Hodge proper over $\mathcal{O}_K$ (\Cref{lem:example of d-Hodge-proper}), and the statement is proved in \Cref{thm:even mainer theorem}.
	In general, by assumption (i) the $\mathcal{O}_K$-non-smooth locus $X_\mathrm{sing}\subset X\backslash U$ is supported within the generic fiber $X_K$.
	Moreover, the closure of $X_\mathrm{sing}$ in $\ol X$ is $X_\mathrm{sing}$ itself.
	As a consequence, by blowing up at subschemes within the generic fibers, we can apply the construction of \Cref{simp res with compat} to get a simplicial diagram of $\mathcal{O}_K$-schemes $(X_\bullet, \ol{X}_\bullet)$ over $(X,\ol X)$, satisfying
	\begin{enumerate}
		\item the generic fiber $X_{\bullet K}\to X_K$ is a  simplicial resolution of singularities;
		\item each $X_n\to \ol X_n$ is a Cohen-Macaulay compactification;
		\item the restriction of $\ol X_\bullet \to \ol X$ on the open subset below is isomorphic to the constant augmented diagram
		\[
		\ol X_\bullet\times_X (\ol X\backslash X_\mathrm{sing}) \to \ol X\backslash X_\mathrm{sing}.
		\] 
	\end{enumerate}
Notice that by assumption $(a)$ and $(c)$, each $X_n$ is in particular smooth over $\mathcal{O}_K$, and the compactification $X_n \to \ol X_n \leftarrow D$ satisfies the assumption in \Cref{lem:example of d-Hodge-proper}.
    Here to check $\ol X_n$ is Cohen-Macaulay, it suffices to do so after by restricting at the preimage of the covering $\ol X=(\ol X\backslash X_\mathrm{sing}) \cup X_K$, where the claim then follows from the assumption (ii) of $\ol X$, property (c), and the smoothness in property (a) above.
    
    Now let us consider the following diagram extending the derived version of $\Upsilon_{X, \mathbb{Q}_p}$
    \[
    \xymatrix{
    	R\Gamma_\et (X_C, \mathbb{Q}_p)  \ar[r] \ar[d]^\sim & R\Gamma_\et (\widehat{X}_C, \mathbb{Q}_p)  \ar[d]^\sim \\
    	R\Gamma_\et (X_{\bullet C}, \mathbb{Q}_p)   \ar[r]   & R\Gamma_\et (\widehat{X}_{\bullet C}, \mathbb{Q}_p).}
    \]
    Here the two vertical arrows are isomorphisms by cohomological descent  at $X_{\bullet K} \to X_K$ and $\widehat X_{\bullet K} \to \widehat X_K$ (\Cref{Ros and coh}).
    For each $[n]\in \Delta^\op$, by  applying \Cref{thm:even mainer theorem} at the compactification $X_n \to \ol X_n \leftarrow D$, the map below is an isomorphism at degree $[0,d]$ and induces an injection on $\mathrm{H}^{d+1}$
    \[
    R\Gamma_\et (X_{n C}, \mathbb{Q}_p)  \longrightarrow R\Gamma_\et (\widehat{X}_{n C}, \mathbb{Q}_p).
    \]
    On the other hand, we can consider the spectral sequence associated to the simplicial diagram as in \cite[Tag 0DHP]{StacksProject} (similarly for $\widehat{X}_C$)
    \[
    E_1^{i,j}(X_{\bullet,C}) = \mathrm{H}^j_\et(X_{i C}, \mathbb{Q}_p) \Rightarrow \mathrm{H}^{i+j}_\et(X_C, \mathbb{Q}_p).
    \]
    Then the map of spectral sequences $E_1^{i,j}(X_{\bullet,C})\to E_1^{i,j}(\widehat{X}_{\bullet,C})$ is an equivalence for $j \leq d$ and is an injection for $j=d+1$.
    Notice that since the differential map $d_r^{i,j}$ sends $E_r^{i,j}$ to $E_r^{i+r,j-r+1}$, the total degree  goes from $(i+j)$ to $(i+r)+(j-r+1)=(i+j+1)$.
    In particular, to calculate $E_\infty^{i,j}$, by the vanishing of negative terms in the spectral sequence, it only involves the following finite amount of degrees in the spectral sequence
    \[
    \begin{cases}
    	\{ (0,i+j-1), (1,i+j-2), \ldots, (i-1,j), (i,j), (i+1, j), \ldots, (i+j+1, 0)\},~\text{if}~i>0;\\
    	\{ (0,j), (1,j), \ldots, (j+1,0)\},~\text{if}~i=0.
    \end{cases}
    \]
    As a consequence, we get 
    \[
    \begin{cases}
    	E_\infty^{i,j}(X_{\bullet,C}) = E_\infty^{i,j}(\widehat{X}_{\bullet,C}),~\text{for}~i+j\leq d+1,~i>0;\\
    	E_\infty^{0,d+1}(X_{\bullet,C}) \hookrightarrow E_\infty^{0,d+1}(\widehat{X}_{\bullet,C}),
    \end{cases}
    \]
    which implies the statement by the general fact about computing spectral sequences.
\end{proof}

%

\addcontentsline{toc}{section}{References}
\printbibliography

\bigskip

\noindent Haoyang~Guo, {\sc Max Planck Institute for Mathematics, Bonn, Germany}
\href{mailto:hguo@mpim-bonn.mpg.de}{hguo@mpim-bonn.mpg.de}\smallskip 

\noindent Dmitry~Kubrak, {\sc Institut Hautes \'Etudes Scientifiques, France}
\href{mailto:dmkubrak@gmail.com}{dmkubrak@gmail.com}

\smallskip

\noindent 
Artem~Prikhodko, {\sc Center for Advanced Studies, Skoltech, Moscow; Department of Mathematics, NRU Higher School of Economics, Moscow,}
\href{mailto:artem.n.prihodko@gmail.com}{artem.n.prihodko@gmail.com}

\end{document}